\numberwithin{equation}{section}
\newtheorem{theorem}{Theorem}[section]
\newtheorem{definition}[theorem]{Definition}
\newtheorem{lemma}[theorem]{Lemma}
\newtheorem{remark}[theorem]{Remark}
\newtheorem{proposition}[theorem]{Proposition}
\newtheorem{corollary}[theorem]{Corollary}
\numberwithin{equation}{section}
\newcommand*{\supp}{\ensuremath{\mathrm{supp\,}}}
\newcommand{\sig}{\sigma}
\newcommand{\al}{\alpha}
\newcommand{\pt}{\partial_t}
\newcommand{\ptau}{\partial_\tau}
\renewcommand{\P}{\mathbb{P}}
\newcommand{\R}{\mathbb{R}}
\newcommand{\F}{\mathscr{F}}
\newcommand{\dif}{{\mathrm{d}}}
\renewcommand{\div}{{\mathrm{div}}}
\newcommand{\curl}{{\mathrm{curl}}}
\newcommand{\lss}{{\boldsymbol{L}_{\operatorname{ss}}}}
\newcommand{\lssa}{{\boldsymbol{L}_{\operatorname{ss}}^\al}}
\newcommand{\norm}[1]{\lVert#1\rVert}
\newcommand{\loc}{\operatorname{loc}}
\newcommand{\na}{\nabla}
\newcommand{\lc}{\left(}
\newcommand{\rc}{\right)}
\newcommand{\ul}{U^{\operatorname{lin}}}
\newcommand{\up}{U^{\operatorname{per}}}
\newcommand{\les}{\lesssim}
\newcommand{\D}{\boldsymbol{D}}
\newcommand{\M}{\boldsymbol{M}}
\newcommand{\K}{\boldsymbol{K}}
\newcommand{\T}{\boldsymbol{T}}
\newcommand{\aps}{\operatorname{aps}}
\newcommand{\bs}{\mathrm{BS}_{3d}}
\newcommand{\wt}{\widetilde}
\newcommand{\wh}{\widehat}
\newcommand{\ol}{\overline}
\newcommand{\bbe}{\mathbb{E}}
\newcommand{\bbf}{\mathbb{F}}
\newcommand{\bbn}{\mathbb{N}}
\newcommand{\bbr}{\mathbb{R}}
\newcommand{\bbx}{\mathbb{X}}
\newcommand{\bbp}{\mathbb{P}}
\newcommand{\ve}{\varepsilon}
\def\<{\langle}
\def\>{\rangle}
\newcommand{\calc}{\mathcal{C}}
\newcommand{\cald}{\mathcal{D}}
\newcommand{\calf}{\mathcal{F}}
\newcommand{\calh}{\mathcal{H}}
\newcommand{\cals}{\mathcal{S}}
\newcommand{\calt}{\mathcal{T}}
\newcommand{\calu}{\mathcal{U}}
\newcommand{\calv}{\mathcal{V}}
\newcommand{\calw}{\mathcal{W}}
\newcommand{\calz}{\mathcal{Z}}
\newcommand{\scrx}{\mathscr{X}}
\newcommand{\scry}{\mathscr{Y}}
\def\<{\langle}
\def\>{\rangle}
\newcommand{\vf}{\varphi}
\newcommand{\caln}{\mathcal{N}}
\begin{document}
	
\title[] {Non-uniqueness in law of Leray solutions
to 3D forced stochastic Navier-Stokes equations}

\author{Elia Bru\'{e}}
\address{School of Mathematics, Institute for Advanced Study, USA.}
\email[Elia Bru\'{e}]{elia.brue@ias.edu}
\thanks{}

\author{Rui Jin}
\address{School of Mathematical Sciences, Shanghai Jiao Tong University, China.}
\email[Rui Jin]{jinrui@sjtu.edu.cn}
\thanks{}

\author{Yachun Li}
\address{School of Mathematical Sciences, CMA-Shanghai, MOE-LSC, and SHL-MAC,  Shanghai Jiao Tong University, China.}
\email[Yachun Li]{ycli@sjtu.edu.cn}
\thanks{}

\author{Deng Zhang}
\address{School of Mathematical Sciences, CMA-Shanghai, Shanghai Jiao Tong University, China.}
\email[Deng Zhang]{dzhang@sjtu.edu.cn}
\thanks{}

\keywords{}

\subjclass[2020]{60H15,\ 35Q30,\ 76D05.}

\begin{abstract}
  This paper concerns the forced stochastic Navier-Stokes equation
  driven by additive noise in the three dimensional Euclidean space.
  By constructing an appropriate forcing term,
  we prove that
  there exist distinct Leray solutions
  in the probabilistically weak sense.
  In particular,
  the joint  uniqueness in law fails in the  Leray class.
  The non-uniqueness also displays in the probabilistically strong sense
in the local time regime, up to stopping times.
Furthermore,
we discuss the optimality from two different perspectives:
sharpness of the hyper-viscous exponent and size of the external force. These results in particular yield that the Lions exponent
is the sharp viscosity threshold for the uniqueness/non-uniqueness in law
of Leray solutions.
  Our proof utilizes the self-similarity and instability programme
  developed by Jia-\v{S}ver\'{a}k \cite{JS14,JS15}
  and  Albritton-Bru\'{e}-Colombo \cite{ABC21},
  together with the theory of martingale solutions
  including stability for non-metric spaces
  and gluing procedure.
\end{abstract}

\maketitle

{
\tableofcontents
}

\section{Introduction and main results} \label{Sec-Intro}

\subsection{Introduction}
In the  pioneering work \cite{Leray1934},
Leray constructed weak solutions
to deterministic Navier-Stokes equations (NSE for short),
which live in the energy class $L_t^{\infty} L_x^2 \cap L_t^2 \dot{H}_x^1$
and obey energy inequalities.
This class of solutions
is also called  {\it solution turbulent} in \cite{Leray1934},
and now is referred to as Leray-Hopf weak solutions
due to the important contributions by Hopf \cite{hopf1951}
in bounded domains.
Since then, a famous open problem is the uniqueness of Leray solutions to NSE.
This question is largely open in the stochastic setting.
Additive noise was invoked as the most natural candidate for the regularization of noise,
and has attracted significant interests in the literature of stochastic NSE.
As pointed out by Flandoli \cite{F10},
uniqueness of weak solutions to stochastic NSE in dimension $3$ remains an open problem.
The problem whether additive noise ``regularizes'' 3D NSE also remains open, see \cite{FL21}.

\bigskip
The aim of this paper is to
make progresses towards the uniqueness problem of Leray solutions
to forced stochastic NSE driven by additive noise.
More specifically, we are concerned with the stochastic Navier-Stokes equations
with an external stochastic force and additive noise,
\begin{align}\label{SNSE}
	\dif v-\Delta v\dif t+\div\lc v\otimes v \rc\dif t  + \na p  \dif t = f\dif t +\dif w,
\end{align}
under the incompressible condition
\begin{align*}
	\div\ v  = 0.
\end{align*}
Here
$v = v(t,x)$ and $p=p(t,x)$ are the velocity field
and the pressure term, respectively.
$f$ represents the (stochastic) external force,
and $w$ is a divengence-free $\mathcal{Q}$-Wiener process on a filtrated probability space
$(\Omega, (\mathcal{F}_t), \bbp)$, of the form
\begin{align} \label{w-BM}
	w(t) = \sum\limits_{j=1}^\infty \sqrt{\lambda_j} \beta_j(t) \epsilon_j, \ \ t\geq 0,
\end{align}
where $(\epsilon_j)$ is an orthonormal basis of $\calw:= H^N \cap L^2_\sigma$
consisting of eigenvectors of $\mathcal{Q}$ with
corresponding eigenvalues $(\lambda_j) \subseteq l^1$,
$N>5/2$, and  $L^2_\sig$ denotes the space of divergence-free $L^2$-integrable vector fields.

Building upon the recently discovered non-uniqueness of deterministic Leray solutions \cite{ABC21}, we establish non-uniqueness in law for the 3D forced stochastic NSE driven by additive noise.


\begin{theorem} [Non-uniqueness in law of Leray solutions] \label{Thm-Nonuniq}
	Consider the stochastic NSE \eqref{SNSE} with zero initial condition.
	Then, there exists an $(\mathcal{F}_t)$-adapted forcing
    $f \in L^2(\Omega; L^2_{\loc}$ ${H}^{-1}_x) \cap L^2(\Omega; L^1_{\loc}L^2_x)$,
	such that
	there exist distinct Leray solutions in the probabilistically weak sense.
\end{theorem}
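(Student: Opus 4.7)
The plan is to combine the deterministic non-uniqueness theorem of Albritton, Bru\'e and Colombo -- which, via a self-similar background profile and an unstable linearized mode, produces two distinct Leray solutions of the forced 3D NSE with zero initial data -- with the theory of probabilistically weak Leray solutions and a gluing procedure. First, I would invoke the ABC construction to fix a time $T > 0$, a deterministic forcing $f^{\operatorname{det}}$ in the required integrability class, and two distinct deterministic Leray solutions $\wt u_1 = \bar u$ and $\wt u_2 = \bar u + \up$ on $[0, T]$ with zero initial data, where $\bar u$ is the self-similar background profile and $\up$ arises from an unstable eigenmode of the linearization about $\bar u$.

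Next, I would take the stochastic forcing to be $f := f^{\operatorname{det}} - \Delta w$. This $f$ is $(\calf_t)$-adapted and lies in $L^2(\Omega; L^2_{\loc}H^{-1}_x) \cap L^2(\Omega; L^1_{\loc}L^2_x)$ thanks to $N > 5/2$ and $(\lambda_j) \in l^1$. For any probabilistically weak solution $v$ of \eqref{SNSE} with this $f$, the shift $u := v - w$ satisfies, pathwise,
\[
\pt u - \Delta u + \div(u \otimes u) + \div(u \otimes w + w \otimes u + w \otimes w) + \na p = f^{\operatorname{det}}, \qquad u(0) = 0,
\]
a random lower-order perturbation of the deterministic forced NSE which vanishes as $w \to 0$.

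Let $\tau := \inf\{t > 0 : \norm{w(t)}_{H^N} > \delta\} \wedge T$ with $\delta > 0$ small; since $w \in C([0,T]; H^N)$ with $w(0) = 0$, we have $\tau > 0$ almost surely and $\bbp(\tau = T) > 0$. On the event $\{\tau = T\}$ the perturbation $\div(u \otimes w + w \otimes u + w \otimes w)$ is uniformly small in the norms relevant to the ABC analysis, and I would adapt that self-similar fixed-point/instability argument -- working in self-similar variables $(\xi, \sig) = (x/\sqrt{-t}, -\log(-t))$ with the weighted function spaces of \cite{JS14,JS15,ABC21} -- to produce, pathwise on $\{\tau = T\}$, two distinct local weak solutions $u^\omega_1, u^\omega_2$ of the random PDE close to $\wt u_1, \wt u_2$. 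Setting $v^\omega_i := u^\omega_i + w$ yields two distinct probabilistically weak solutions of \eqref{SNSE} on $[0, \tau]$ with the common forcing $f$. I would then extend each $v_i$ past $\tau$ to a global Leray solution by concatenating with a martingale Leray solution starting from $v_i(\tau)$, using the non-metric stability and measurable-selection theory for martingale solutions developed earlier in the paper; the concatenation preserves adaptedness and the Leray energy inequality, yielding two global probabilistically weak Leray solutions that share the forcing $f$ and the zero initial datum but have distinct laws, since they differ on the positive-probability event $\{\tau = T\}$.

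The hard part is the robust instability step: the autonomous self-similar structure underlying the ABC analysis is broken by the non-self-similar perturbation $\div(u \otimes w + w \otimes u + w \otimes w)$, so one has to show that the unstable eigenvalue of the linearization about $\bar u$ persists as an exponential dichotomy under a non-autonomous, smooth-in-space perturbation of uniform size $\les \delta$ (which in self-similar variables becomes a perturbation that decays as $\sig \to \infty$ thanks to $w(0) = 0$), and that the associated nonlinear fixed point closes uniformly on $\{\tau = T\}$. A secondary, largely technical difficulty is arranging the martingale-solution gluing at $\tau$ to be compatible with the non-metric topology used for stability, so that the glued process is measurable and remains a Leray solution satisfying the global energy inequality.
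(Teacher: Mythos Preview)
Your three-step skeleton (local pathwise non-uniqueness, global martingale existence, gluing) matches the paper's strategy, and you correctly identify the martingale-gluing step as largely technical. However, your choice of stochastic force is the wrong one, and this creates a genuine obstruction in the instability step.

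The paper does \emph{not} take $f = f^{\operatorname{det}} - \Delta w$. Instead it sets (see \eqref{f-force-def})
\[
f \;=\; \pt\bar u - \Delta\bar u + \bar u\cdot\nabla\bar u \;+\; \div(\bar u\otimes w + w\otimes\bar u) \;+\; \div(w\otimes w) \;-\; \Delta w,
\]
i.e.\ it also absorbs the cross terms $\div(\bar u\otimes w + w\otimes\bar u)$ and the quadratic term $\div(w\otimes w)$ into the force. The payoff is that $v_1 = \bar u + w$ is then an \emph{exact} solution of \eqref{SNSE}, with no perturbation analysis needed, and for the second solution $v_2 = \bar u + u^{\operatorname{lin}} + u^{\operatorname{per}} + w$ the remainder equation \eqref{up-equation} contains only products $W\cdot\nabla U^{\operatorname{lin}}$, $W\cdot\nabla U^{\operatorname{per}}$, etc. In self-similar variables these decay at least like $e^{(a+\kappa-1/4)\tau}$ (using $\|W(\tau)\|\lesssim e^{(\kappa-1/4)\tau}\|w\|_{C^\kappa}$), strictly faster than the instability rate $e^{a\tau}$, so a Duhamel contraction in the space $\{\|U^{\operatorname{per}}(\tau)\|\lesssim e^{(a+\varepsilon_0)\tau}\}$ closes.

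With your force, the shifted equation for $u = v - w$ carries the residual forcing $\div(\bar u\otimes w + w\otimes\bar u + w\otimes w)$. In self-similar variables this decays only like $e^{(\kappa-1/4)\tau}$ as $\tau\to-\infty$, and since the spectral bound satisfies $a>1$ (in the paper $\beta$ is taken large, so in fact $a\gg 1$), one has $\kappa - 1/4 < 1/4 < a$. Hence the naive Duhamel integral $\int_{-\infty}^\tau e^{(\tau-s)\lss}[\text{forcing}]\,\dif s$ diverges: the semigroup grows like $e^{(a+\delta)(\tau-s)}$ while the forcing only decays like $e^{(\kappa-1/4)s}$. You would need a full exponential-dichotomy argument (splitting off all eigenvalues with real part above $\kappa-1/4$ and integrating those components backward from a finite time), and even then the resulting perturbations $\phi_i$ would be of size $e^{(\kappa-1/4)\tau}$, which \emph{dominates} the deterministic signal $\tilde u_2 - \tilde u_1 = u^{\operatorname{lin}} + u^{\operatorname{per}} \sim e^{a\tau}$ as $\tau\to-\infty$. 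Your claim that $u_1^\omega, u_2^\omega$ remain ``close to $\tilde u_1,\tilde u_2$'' and hence distinct does not follow.

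Two smaller points. First, the relevant self-similar variables are the forward ones $(\xi,\tau)=(x/\sqrt t,\log t)$ for $t>0$, not $(x/\sqrt{-t},-\log(-t))$. Second, the stopping time must control the H\"older seminorm $\|w\|_{C^\kappa([0,t];H^N)}$ with $\kappa>1/4$ (see \eqref{sigma-stop-def}), not merely $\sup_t\|w(t)\|_{H^N}$: the decay of $W$ in self-similar variables comes from that H\"older rate, not from a uniform smallness bound. With the paper's stopping time the local non-uniqueness in Theorem~\ref{Thm-Nonuniq-Local} holds almost surely up to $\sigma>0$, not merely on a positive-probability event.
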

In particular,
the joint uniqueness in law of Leray solutions fails.

Our proof strategy allows us to provide more precise information regarding non-uniqueness. Specifically, we demonstrate pathwise non-uniqueness in the probabilistic strong sense, up to the occurrence of a stopping time. This result is presented in Theorem \ref{Thm-Nonuniq-Local} below.

It also would be possible to weaken the noise regularity by using the stochastic convolution
as in \cite{HZZ19}, thanks to the regularization effect of heat semi-flows.

Furthermore, our approach extends to cover the hyper-viscous stochastic Navier-Stokes equations below Lions' exponent $\alpha<5/4$. More precisely, we consider the problem
\begin{align}
	\label{Hyper-SNSE}
	& \dif v+(-\Delta)^\al v\dif t+\div\lc v\otimes v \rc\dif t  + \na p  \dif t = f\dif t +\dif w,
\end{align}
under the incompressible condition $\div v =0$,
where  $\alpha\geq 1$
and $(-\Delta)^\al$ is the hyper-vicosity
defined via the Fourier transform
\begin{equation*}
	\mathcal{F}((-\Delta)^\al v)(\eta)=|\eta|^{2 \al} \mathcal{F}(v)(\eta), \quad \eta \in \R^3.
\end{equation*}

\begin{theorem} [Non-uniqueness in law: below the Lions exponent] \label{Thm-Nonuniq-HyperSNSE}
	Consider the hyper-viscous, forced stochastic NSE \eqref{Hyper-SNSE} with
	$1\leq \alpha<5/4$.
	Then,
	there exists an $(\mathcal{F}_t)$-adapted forcing term $f \in L^2(\Omega; L^2_{\loc} {H}^{-\alpha}_x) \cap L^2(\Omega; L^1_{\loc}L^2_x)$
	such that
	the joint uniqueness in law of Leray solutions
	fails.
\end{theorem}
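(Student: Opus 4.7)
The plan is to mirror the proof of Theorem \ref{Thm-Nonuniq} in the hyper-viscous regime, so the work divides into a deterministic step (non-uniqueness of Leray solutions to the forced hyper-viscous NSE) and a probabilistic step (upgrading to non-uniqueness in law via the martingale-solutions/gluing machinery). First I would establish the deterministic analogue: for every $\al \in [1, 5/4)$, construct a forcing $f \in L^1_{\loc} L^2_x \cap L^2_{\loc} H^{-\al}_x$ and two distinct Leray solutions $v_1 \ne v_2$ of \eqref{Hyper-SNSE} (with $w \equiv 0$) lying in $L^\infty_{t,\loc} L^2_x \cap L^2_{t,\loc} \dot H^\al_x$ and satisfying the corresponding fractional energy inequality, both emanating from zero initial datum.

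The natural scaling for \eqref{Hyper-SNSE} is $v_\lambda(t,x) = \lambda^{2\al-1} v(\lambda^{2\al}t, \lambda x)$, which leads to the self-similar ansatz $v(t,x) = t^{-(2\al-1)/(2\al)} U(x/t^{1/(2\al)})$. Substituting into \eqref{Hyper-SNSE} yields a profile equation for $U$, whose linearization around a suitably chosen background profile $\ul$ defines an operator $\lssa$ playing the role of $\lss$ in \cite{ABC21}. Following the Jia--\v Sver\'ak / Albritton--Bru\'e--Colombo programme, I would (i) build a self-similar background profile $\ul$ together with a compactly supported self-similar forcing; (ii) show that $\lssa$ admits an unstable eigenvalue; and (iii) use the unstable manifold of $\lssa$ to construct a second trajectory $\up = \ul + U^{\rm nl}$ that, rescaled back, gives a distinct Leray solution sharing the same forcing. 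The constraint $\al<5/4$ would enter precisely at step (ii), where the dissipation $(-\Delta)^\al$ is still sub-critical with respect to the self-similar nonlinear scaling; at $\al = 5/4$ one hits the Lions exponent and uniqueness is restored, so this threshold is sharp.

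For the probabilistic step I would follow exactly the template of Theorem \ref{Thm-Nonuniq}. Given the two deterministic Leray profiles $v_1, v_2$ with common force $f$ and zero data, I would define $(\mathcal{F}_t)$-adapted probabilistically weak Leray solutions by gluing the deterministic $v_i$ on an initial interval $[0,T]$ (through the singular time) with a martingale solution of the noisy problem beyond time $T$. The gluing procedure requires the stability theory for martingale solutions on the non-metric path space $C_t L^2_{\sig,w} \cap L^2_{t,\loc} H^\al_x$ together with an adapted measurable selection; these are precisely the tools developed for Theorem \ref{Thm-Nonuniq} and transfer verbatim once the integrability of $f$ is verified (note the forcing is deterministic, hence trivially adapted, and its regularity is inherited from the profile equation). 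Distinctness of the laws on $C_t L^2_{\sig,w}$ on $[0,T]$ follows from $v_1 \ne v_2$ at the deterministic level.

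I expect the deterministic non-uniqueness to be the main obstacle. Steps (i) and (iii) above are relatively robust modifications of the $\al=1$ analysis, but step (ii), the existence of an unstable eigenvalue of $\lssa$ throughout $[1,5/4)$, is delicate: it requires resolvent estimates and compactness for the fractional operator $(-\Delta)^\al$ on the weighted similarity spaces, followed either by a direct spectral calculation or by a perturbation/continuity-in-$\al$ argument starting from the unstable eigenvalue of $\lss$ established in \cite{ABC21}. One must also verify that the eigenfunction is regular enough for the ensuing nonlinear bootstrap and that the background profile remains smooth up to $\al = 5/4^-$. Once these ingredients are in place, the rest of the argument is parallel to Theorem \ref{Thm-Nonuniq}.
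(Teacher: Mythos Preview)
Your proposal has a genuine structural gap. You plan to take a \emph{deterministic} forcing $f$ and two deterministic Leray solutions $v_1\neq v_2$ of the noise-free hyper-viscous NSE on an initial interval $[0,T]$, and then glue each to a martingale solution of the stochastic problem after time $T$. But the deterministic $v_i$ are not solutions of \eqref{Hyper-SNSE} on $[0,T]$: that equation carries the additive increment $\dif w$, whereas $v_i$ satisfies $\partial_t v_i + (-\Delta)^\alpha v_i + \div(v_i\otimes v_i) + \nabla p = f$ with no stochastic term. The gluing procedure of Subsection~\ref{Subsec-Glue-Martingale} concatenates a martingale solution on $[0,\tau]$ with one on $[\tau,\infty)$; it cannot manufacture the missing $\dif w$ on the initial interval. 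Nor can this be repaired by passing to a probability space on which $w|_{[0,T]}\equiv 0$, since that is not a $\mathcal{Q}$-Wiener process.

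The paper's resolution is to make the forcing itself stochastic: one writes $v=\bar u + w$ (respectively $v=\bar u + u^{\operatorname{lin}} + u^{\operatorname{per}} + w$) and \emph{defines} $f$ by \eqref{f-force-def} so that the cross-terms $\div(\bar u\otimes w + w\otimes\bar u)$, $\div(w\otimes w)$ and $(-\Delta)^\alpha w$ are absorbed into the force. This makes $\bar u + w$ an exact probabilistically strong solution of \eqref{Hyper-SNSE}, and the second solution is obtained by a self-similar perturbation of $\bar U$ in which the noise enters only through $W$ and is handled perturbatively via its $C^{1/4+}$ time-H\"older regularity (see \eqref{Z-est} and Proposition~\ref{Prop-Uper}). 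The whole construction in Sections~3--5 is already written for general $\alpha\in[1,5/4)$, so Theorem~\ref{Thm-Nonuniq-HyperSNSE} is proved simultaneously with Theorem~\ref{Thm-Nonuniq}. Two smaller corrections: the constraint $\alpha<5/4$ enters not at the spectral step but to guarantee that $\bar u$ lies in the Leray class near $t=0$, since $\|\bar u(t)\|_{L^2}=t^{5/(4\alpha)-1}\|\bar U\|_{L^2}$; and the instability of $\lssa$ is not obtained by perturbing in $\alpha$ from $\alpha=1$, but by showing that for large $\beta$ the dissipative part $\beta^{-1}\D$ is a small perturbation of the $\alpha$-independent Eulerian operator $\M+\boldsymbol S+\K$, whose unstable eigenvalue is inherited from Vishik's vortex via \cite{ABC21}.
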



\subsection{Optimality}  \label{Subsec-Sharp}

We briefly discuss the optimality of our non-uniqueness result from two different perspectives: in relation to the hyper-viscous parameter and the size of the stochastic force in appropriate critical spaces.

\subsubsection{Sharpness of Lions exponent}

In the high-viscosity regime above the Lions exponent,
well-posedness holds even in the probabilistically strong sense for Leray solutions.

\begin{theorem} [Probabilistically strong well-posedness: above the Lions exponent] \label{Thm-GWP-HyperSNSE}
	Consider the hyper-viscous, forced stochastic NSE \eqref{Hyper-SNSE} with
	$\alpha \geq 5/4$.
	Then, for any $T\in(0,\infty)$, $v_0\in L^2(\Omega; L^2_\sigma)$
	and for any $f\in L^2(\Omega; L^2_{\loc}H^{-\alpha}_x)$,
	there exists a unique solution $(v(t))_{t\in [0,T]}$ to \eqref{Hyper-SNSE},
	such that $v(0) = v_0$,
    and it satisfies the energy bound
	\begin{align*}
		\bbe \|v\|_{C([0,T]; L^2_\sigma)}^2
		+ \bbe \| v\|_{L^2(0,T; H^\alpha)}^2 < \infty,
	\end{align*}
    and the energy inequality for $t\in [0,T]$,
    \begin{align*}
    		\bbe \|v(t)\|_{L^2}^2
    		+ 2 \bbe \int_0^t \|(-\Delta)^{\frac \alpha 2} v(s)\|_{L^2}^2 \dif s
    		\leq \bbe \|v(0)\|_{L^2}^2 +
    		2 \bbe \int_0^t (f(s), v(s)) \dif s
    		+ \sum_{j=1}^{\infty}\lambda_j\|\epsilon_j\|^2_{L^2} t.
    \end{align*}
\end{theorem}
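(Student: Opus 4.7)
The plan is to reduce the problem to a pathwise random PDE via the Ornstein--Uhlenbeck decomposition. Let $z$ solve
\begin{align*}
\dif z + (-\Delta)^\al z\, \dif t = \dif w, \qquad z(0)=0.
\end{align*}
Since $(\lambda_j)\in l^1$ and each $\epsilon_j\in H^N$ with $N>5/2$, the noise $w$ itself takes values in $H^N\cap L^2_\sig$, and a standard factorization argument produces an $(\mathcal{F}_t)$-adapted $z$ with $\bbe\|z\|_{C([0,T];H^N)}^p+\bbe\|z\|_{C^\gamma([0,T];L^2_\sig)}^p<\infty$ for all $p<\infty$ and some $\gamma>0$. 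Writing $v=u+z$, the unknown $u$ is adapted and solves, pathwise in $\omega$, the random PDE
\begin{align*}
\pt u + (-\Delta)^\al u + \div((u+z)\otimes(u+z)) + \na p = f,\qquad \div u = 0,\quad u(0)=v_0.
\end{align*}

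\textbf{Existence.} For each $\omega$ I would implement a Galerkin scheme on finite-dimensional divergence-free subspaces. Testing the approximant $u_n$ against itself yields an energy identity whose cross terms involving $z$ are controlled by Young's inequality using the pathwise smoothness of $z(\omega)$ and $f(\omega)\in L^2_\loc H^{-\al}_x$, together with the hypodissipation $\|(-\Delta)^{\al/2} u_n\|_{L^2}^2$; the resulting pathwise bound on $\|u_n\|_{L^\infty_t L^2}^2+\|u_n\|_{L^2_t H^\al}^2$ is square-integrable in $\omega$. Pathwise Aubin--Lions compactness, combined with adaptedness preserved by the Galerkin projections, produces a limit $u$, and $v:=u+z$ is the required adapted solution meeting the stated energy bound.

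\textbf{Uniqueness: Lions' exponent is critical.} Given two solutions $v_1,v_2$ with the same data, their difference $w=v_1-v_2$ obeys a deterministic equation with no noise and no force, and the energy identity reads
\begin{align*}
\tfrac12\tfrac{\dif}{\dif t}\|w\|_{L^2}^2 + \|(-\Delta)^{\al/2} w\|_{L^2}^2 = -\int (w\otimes w):\na v_2\,\dif x.
\end{align*}
The Sobolev embedding $H^\al \hookrightarrow L^{6/(3-2\al)}$ (with $H^{5/4}\hookrightarrow L^{12}$ at the endpoint) together with Gagliardo--Nirenberg interpolation between $L^2$ and $H^\al$ bounds the right-hand side by $\tfrac12\|(-\Delta)^{\al/2} w\|_{L^2}^2 + C\,\phi(t)\,\|w\|_{L^2}^2$ for some $\phi\in L^1(0,T)$ built from $v_2\in L^2_t H^\al_x$; at the critical $\al=5/4$ one may take $\phi=\|\na v_2\|_{L^{12/5}}^2$, which is time-integrable via $H^{1/4}\hookrightarrow L^{12/5}$ in dimension three. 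Gronwall gives $w\equiv 0$ pathwise, hence probabilistically strong uniqueness.

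\textbf{Energy inequality and main obstacle.} The stated energy inequality follows by applying Itô's formula to $\|v_n(t)\|_{L^2}^2$ at the Galerkin level---the nonlinearity integrates to zero by incompressibility and the stochastic integral contributes the Itô correction $\sum_j \lambda_j\|\epsilon_j\|_{L^2}^2 t$ from the quadratic variation of $w$---and then passing to the limit $n\to\infty$ after taking expectations, the inequality rather than equality reflecting lower semicontinuity of the dissipation term under weak convergence. The main technical obstacle is the uniqueness step at the critical value $\al=5/4$: the nonlinear term is scale-critical, the Gronwall loop closes only because one can use the exact endpoint of the Sobolev embedding available at Lions' exponent, which makes the threshold sharp and dovetails with the non-uniqueness of Theorem~\ref{Thm-Nonuniq-HyperSNSE} immediately below.
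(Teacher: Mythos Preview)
Your approach is correct but takes a genuinely different route from the paper. The paper does \emph{not} shift by the stochastic convolution; instead it works directly with the SPDE and applies the abstract local monotonicity framework of Liu--R\"ockner \cite{LR15}. Concretely, it sets up the Gelfand triple $\calv=H^\alpha\cap L^2_\sigma\subset \calh=L^2_\sigma\subset\calv'$, defines $A(t,v)=P_H[-(-\Delta)^\alpha v-\div(v\otimes v)+f(t)]$, and verifies the four hypotheses (hemicontinuity, local monotonicity, coercivity, growth). The crucial trilinear estimate is
\[
|\langle \div(v_1\otimes v_2),v_3\rangle|\le \|v_1\|_{L^{12}}\|v_2\|_{L^2}\|\nabla v_3\|_{L^{12/5}}\le C\|v_1\|_{\calv}\|v_2\|_{\calh}\|v_3\|_{\calv},
\]
using exactly the endpoint embeddings $H^{5/4}\hookrightarrow L^{12}$ and $H^{1/4}\hookrightarrow L^{12/5}$ that you invoke in your uniqueness step. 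This yields local monotonicity with $\rho(v)=C(1+\|v\|_{\calv}^2)$, and then \cite[Theorem~5.1.3]{LR15} delivers existence and uniqueness simultaneously. The energy inequality is obtained, as you do, by It\^o's formula.

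What each approach buys: the paper's route is shorter and, importantly, sidesteps compactness entirely---the monotonicity method passes to the limit in the nonlinearity without any strong convergence. Your route is more hands-on, but you should be a little careful at one point: ``pathwise Aubin--Lions'' on $\mathbb{R}^3$ does not give strong $L^2_tL^2_x$ convergence, since $H^\alpha\hookrightarrow L^2$ is not compact on the whole space. You would need either a localized version (strong convergence in $L^2_tL^2_{\loc}$, which suffices against compactly supported test functions) or, more elegantly, observe that the uniqueness you prove afterwards forces the full Galerkin sequence to converge, which in turn secures adaptedness of the limit. Either fix is routine, and the core Sobolev arithmetic at $\alpha=5/4$ is identical in both proofs.
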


Since well-posedness in the probabilistically strong sense implies joint uniqueness in law,
Theorems \ref{Thm-Nonuniq-HyperSNSE} and \ref{Thm-GWP-HyperSNSE} together
lead to the sharpness of Lions exponent for
the uniqueness/non-uniqueness in law of Leray solutions.

\begin{corollary} [Sharpness of Lions exponent]  \label{Coro-Sharp-Lions}
	Consider the hyper-viscous, forced stochastic Navier-Stokes equations \eqref{Hyper-SNSE}.
	Then, the following holds:
	\begin{enumerate}
		\item[$(i)$] In the regime where $\alpha\in [1,5/4)$,
		the joint uniqueness in law of Leray solutions fails
		for some $L^2_\sigma$ initial condition
		and for some $(\mathcal{F}_t)$-adapted stochastic forcing term
		in $L(\Omega; L^2_{\loc} H^{-\alpha}_x)$.
		
		\item[$(ii)$] In the regime where $\alpha \in [5/4, \infty)$,
		the joint uniqueness in law of Leary solutions holds
		for any $L^2_\sigma$ initial data and
		for any $L^2(\Omega; L^2_{\loc} H^{-\alpha}_x)$-integrable stochastic force.
	\end{enumerate}
\end{corollary}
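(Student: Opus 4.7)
The plan is to deduce the corollary as a direct consequence of the two preceding theorems, with only a short bridging argument needed to convert probabilistically strong well-posedness into joint uniqueness in law.

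For part $(i)$, the statement is literally the content of Theorem \ref{Thm-Nonuniq-HyperSNSE} in the strict subcritical range $\alpha \in [1,5/4)$: the theorem produces an adapted forcing $f \in L^2(\Omega;L^2_{\loc}H^{-\alpha}_x)$ and two Leray solutions (in the probabilistically weak sense) with the same initial datum and the same driving noise statistics, so their joint laws with $w$ cannot coincide. No further work is required beyond quoting the theorem.

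For part $(ii)$, I would argue as follows. Fix $\alpha \geq 5/4$, an initial datum $v_0 \in L^2(\Omega;L^2_\sigma)$, and a forcing $f \in L^2(\Omega; L^2_{\loc}H^{-\alpha}_x)$. Suppose $(v^{(1)},w^{(1)})$ and $(v^{(2)},w^{(2)})$ are two Leray solutions on possibly different stochastic bases, each with the initial law of $v_0$ and each driven by a $\mathcal{Q}$-Wiener process with the law of $w$. By Theorem \ref{Thm-GWP-HyperSNSE}, on any given filtered probability space carrying a prescribed $\mathcal{Q}$-Wiener process $\wt w$ and a prescribed $\wt v_0$ with the correct joint law, there is a unique adapted Leray solution $\wt v$ with $\wt v(0)=\wt v_0$; in particular the pathwise-uniqueness hypothesis of the Yamada--Watanabe theorem for SPDEs holds. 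Applying the standard Yamada--Watanabe principle (for instance in the version of Kurtz, which is tailored to martingale/weak solutions and adapts directly to the Leray class), one concludes that any two such probabilistically weak solutions have the same joint law, which is precisely joint uniqueness in law.

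The only place requiring mild care is the Yamada--Watanabe step, since we are working with Leray (energy-type) solutions rather than classical mild solutions. The fact that the solution map from $(v_0,w)$ into the Leray solution is measurable and deterministic, as is implicit in the uniqueness part of Theorem \ref{Thm-GWP-HyperSNSE}, is what makes the principle applicable; in the hyper-viscous regime $\alpha\geq 5/4$ the parabolic regularisation is strong enough that this measurability is routine. I do not anticipate a genuine obstacle here: both the existence of a pathwise unique strong solution and the measurability of the solution map are already built into the conclusion of Theorem \ref{Thm-GWP-HyperSNSE}, so the corollary is essentially a bookkeeping combination of the two preceding theorems.
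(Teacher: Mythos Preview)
Your proposal is correct and matches the paper's own reasoning: the authors state the corollary as an immediate consequence of Theorems \ref{Thm-Nonuniq-HyperSNSE} and \ref{Thm-GWP-HyperSNSE}, together with the remark that ``well-posedness in the probabilistically strong sense implies joint uniqueness in law,'' which is precisely the Yamada--Watanabe principle you invoke. Your discussion of part (ii) simply makes explicit what the paper leaves as a one-line assertion.
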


\subsubsection{The role of the external body force}

A close inspection of our proof strategy for the instability
of velocity oepratros (see Theorem \ref{Thm-Lvel-eign} below)
reveals that the external body force is significantly more regular t
han just $f \in L^2_{\loc}H^{-1}_x$, almost surely with respect to $\mathbb{P}$.
In fact, it belongs to more regular critical spaces, see Section \ref{sec:strategy of proof}.

To be more precise, we express the body force in similarity coordinates:
\begin{equation*}
	f(x,t) = \frac{1}{t^{3/2}} F(\xi,\tau)\, ,
	\quad \xi = \frac{x}{\sqrt{t}}\, , \tau = \log t \, .
\end{equation*}
It turns out that the self-similar profile $F$ satisfies
certain spatial estimates that are uniform with respect to the time variable $\tau\in (-\infty,1)$.
For instance, it is not difficult to verify that:
\begin{equation}\label{eq:scal inv bound}
	\bbe[\| F \|_{L^\infty((-\infty,1); L^2_\xi)}] < \infty\, .
\end{equation}
The crucial point of this section is that the forced stochastic NSE is well-posed with high probability
when we reduce the magnitude of the external body force in critical norms as \eqref{eq:scal inv bound}.

\bigskip

\noindent
We consider the following problem:

\begin{align}\label{SNSEdelta}
	\begin{cases}
		\dif v-\Delta v\dif t+\div\lc v\otimes v \rc\dif t  + \na p \dif t = \delta f\dif t +\dif w\, ,
		\\
		\div\, v = 0\, ,
		\\
		v(0) = 0\, ,
	\end{cases}	
\end{align}
where $\delta>0$ is a positive parameter.

\begin{theorem} (Well-posedness with high probability: small external force) \label{thm:well-posedness small force}
	Assume that
	\begin{equation*}
		\bbe[\| F \|_{L^\infty((-\infty,1); L^2_\xi)}] = C_1 < \infty\, .
	\end{equation*}
	For every $\epsilon>0$, if $\delta\le \delta(\epsilon, C_1)$ sufficiently small,
then there exists $E\subseteq \Omega$ and a deterministic time $T>0$ such that
	\begin{itemize}
		\item[(i)] $\bbp(\Omega \setminus E)\le \epsilon$.

		\item[(ii)] There exists a unique strong solution to \eqref{SNSEdelta} up to time $T$, for every $\omega \in E$.
	\end{itemize}
\end{theorem}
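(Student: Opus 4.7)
\medskip
\noindent \textbf{Overall strategy.} The plan is to perform a pathwise decomposition $v = z + u$, where $z(t) := \int_0^t e^{(t-s)\Delta}\, \dif w(s)$ is the (divergence-free) stochastic convolution and $u := v - z$ solves the pathwise-deterministic (but $\omega$-dependent) problem
\begin{equation*}
\pt u - \Delta u + \p\,\div\lc (u+z)\otimes(u+z) \rc = \delta\, \p f, \qquad u(0)=0,
\end{equation*}
obtained by projecting onto divergence-free fields. Existence and uniqueness of a strong solution to \eqref{SNSEdelta} on $[0,T]$ then reduce to solving this deterministic PDE trajectory-by-trajectory, provided $z$ and $\delta f$ can be controlled on a set of large probability.

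\medskip
\noindent \textbf{Construction of the good event.} Since $\sum_j \lambda_j<\infty$ and $\epsilon_j\in H^N$ with $N>5/2$, the stochastic convolution $z$ has moments of every order in $C([0,1];H^s_\sigma)$ for any fixed $s\in(3/2,N)$. By Chebyshev's inequality, there exists $M_z=M_z(\ve)$ and an event $E_z$ with $\bbp(\Omega\setminus E_z)\le \ve/2$ on which $\|z\|_{C([0,1];H^s_\sigma)}\le M_z$. Analogously, using $\bbe[\|F\|_{L^\infty_\tau L^2_\xi}]=C_1$, Chebyshev yields an event $E_F$ with $\bbp(\Omega\setminus E_F)\le \ve/2$ on which $\|F\|_{L^\infty_\tau L^2_\xi}\le 2C_1/\ve$. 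Set $E:=E_z\cap E_F$, so $\bbp(\Omega\setminus E)\le \ve$.

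\medskip
\noindent \textbf{Pathwise fixed point on $E$.} Fix $\omega\in E$. Unwinding the self-similar scaling gives the pointwise bound $\|\delta f(\cdot,t)\|_{L^2_x} \les \delta(C_1/\ve)\, t^{-3/4}$ for $t\in(0,1)$, which is integrable in time and becomes arbitrarily small as $\delta\to 0$. I would solve the mild formulation
\begin{equation*}
u(t) = \int_0^t e^{(t-s)\Delta}\p\lc \delta f(s) - \div\lc(u+z)\otimes(u+z)\rc(s) \rc \dif s
\end{equation*}
by a contraction argument in a critical space adapted to the self-similar scaling, such as $X_T := L^\infty((0,T); H^{1/2}_\sigma)\cap L^2((0,T);H^{3/2}_\sigma)$. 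Standard heat semigroup and maximal-regularity estimates control the Duhamel term generated by $\delta f$ by a small multiple of $\delta C_1/\ve$ (using the Beta-function identity $\int_0^t (t-s)^{-1/4} s^{-3/4}\dif s=B(3/4,1/4)$), the $z$-dependent cross terms by a factor of the form $T^a M_z$ (with $a>0$) using the higher regularity of $z$ and Sobolev product laws $H^{1/2}\cdot H^{1/2}\hookrightarrow H^{-1/2}$ in $\bbr^3$, and the pure quadratic nonlinearity by $\les \|u\|_{X_T}^2$. Choosing first $T=T(\ve,C_1)>0$ small (to tame the cross terms) and then $\delta\le \delta(\ve,C_1)$ sufficiently small (to tame the source), the map becomes a contraction on a small ball of $X_T$, producing a unique strong solution $u$, and hence $v = z + u$ on $[0,T]$ for every $\omega\in E$.

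\medskip
\noindent \textbf{Main obstacle.} The technical heart of the argument is the singularity $\|\delta f(\cdot,t)\|_{L^2_x}\sim t^{-3/4}$ forced by the self-similar scaling: in particular, $\delta f\notin L^2_{t,x}$ near the origin, so one cannot work directly in the classical energy framework. The functional setup must therefore be chosen precisely to be critical with respect to the self-similar scaling, so that Duhamel applied to this singular source lands in a space in which the nonlinear iteration closes and where the cross-terms with $z$ can be absorbed by smallness in $T$. Balancing these competing regularity requirements is the delicate step; once the space is fixed, the quantitative smallness of $\delta$ and $T$ drives all the remaining estimates.
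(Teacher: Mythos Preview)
Your strategy---subtract the stochastic convolution and run a pathwise contraction in a critical space---is sound, and your identification of the main obstacle (the $t^{-3/4}$ singularity of $\|f(\cdot,t)\|_{L^2_x}$ forced by self-similar scaling) is exactly right. The gap is in the specific space you chose. In $X_T=L^\infty_tH^{1/2}\cap L^2_tH^{3/2}$, closing the bilinear estimate genuinely uses the $L^2_tH^{3/2}$ component (equivalently, the interpolated $L^4_tH^1$), since $H^{1/2}\cdot H^{1/2}$ does not embed into any space from which the Duhamel operator returns to $H^{1/2}$ with an integrable time kernel. But the Duhamel of the source fails to lie in $L^2_tH^{3/2}$: under the sole hypothesis $F\in L^\infty_\tau L^2_\xi$ you only have $\|f(\cdot,t)\|_{H^{-1/2}}\le\|f(\cdot,t)\|_{L^2}\lesssim t^{-3/4}$, which is not in $L^2_t$ near zero, so the energy/maximal-regularity estimate at the $H^{1/2}$ level does not apply. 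Your Beta-function computation $\int_0^t(t-s)^{-1/4}s^{-3/4}\,\dif s=B(1/4,3/4)$ only controls the $L^\infty_tH^{1/2}$ part of the source; a direct pointwise bound for the $H^{3/2}$ norm gives $\|\int_0^te^{(t-s)\Delta}f\,\dif s\|_{H^{3/2}}\lesssim\int_0^t(t-s)^{-3/4}s^{-3/4}\,\dif s\sim t^{-1/2}$, whose $L^2(0,T)$ norm diverges. So the map does not land in $X_T$, and the contraction does not close.

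The paper sidesteps this by working in the time-weighted Kato space
\[
\|v\|_X:=\sup_{t\in(0,T)}t^{1/8}\|v(\cdot,t)\|_{L^4_x},
\]
without decomposing $v=z+u$: both the heat-propagated force $g_1(t)=\int_0^te^{(t-s)\Delta}P_Hf\,\dif s$ and the stochastic convolution $g_2$ are treated as data, with $t^{1/8}\|g_1(t)\|_{L^4}\lesssim\|F\|_{L^\infty_\tau L^2_\xi}$ and $t^{1/8}\|g_2(t)\|_{L^4}\lesssim T^{1/8}$ in expectation. On the good event the combined data $\delta g_1+g_2$ is small (taking $T=\delta^2$), and the bilinear form is bounded on $X$ by the usual heat-kernel $L^p$--$L^q$ estimate. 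Your decomposition $v=z+u$ would also work, but the iteration space must carry the critical time weight---e.g.\ $\sup_t t^{1/8}\|\cdot\|_{L^4}$ or equivalently $\sup_t t^{1/4}\|\cdot\|_{\dot H^1}$---rather than the $L^2_tH^{3/2}$ energy component.
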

In particular, we conclude that Theorem \ref{Thm-Nonuniq} is sharp in the sense that if we replace $f$ with $\delta f$, for some sufficiently small $\delta>0$, the problem becomes strongly well-posed with high probability.

A completely analogous conclusion holds for the hyper-viscous Navier-Stokes equations, but for the sake of readability we consider only the case $\alpha=1$.

\subsection{Comparison with existing results}

The uniqueness question for Leray solutions in the deterministic setting is quite old, and significant progress has been made recently. One notable result is that Buckmaster and Vicol \cite{BV19} established the non-uniqueness of distributional solutions to the Navier-Stokes equations in the class $C_tL^2_x$, which can conform to any prescribed ``kinetic energy'' profiles.
The proof in \cite{BV19} is based on the convex integration method,
which was originally introduced by De Lellis and Sz\'{e}kelyhidi \cite{dls09, dls10} in the context of the 3D Euler equations. It has since been applied to various hydrodynamic models. See, e.g.,  \cite{bdsv19,I18} for the resolution of the famed Onsager conjecture, and \cite{cl20.2,cl21.2,lqzz21} for the sharp non-uniqueness near two endpoints of Lady\v{z}enskaja-Prodi-Serrin criteria.
We also would like to refer to \cite{LT20,BCV22} for
the sharpness of Lions exponent for the $C_tL^2_x$ well-posedness of NSE,
and \cite{CDD18,DR19}
for the ill-posedness of Leray solutions to hypo-dissipative NSE,
via the convex integration method.

\bigskip

Another distinct approach towards establishing non-uniqueness of Leray solutions was developed
by Jia, \v{S}ver\'{a}k, and Guillod \cite{JS14,JS15,GS15},
focusing on the investigation of self-similar solutions and spectral instability.
Very recent progress has been achieved by Albritton, Bru\'{e}, and Colombo \cite{ABC21},
where they prove the non-uniqueness of Leray solutions for the forced NSE.
This method has also been successfully applied to cases involving bounded domains \cite{ABC22} and the 2D forced hypo-viscous NSE \cite{AC22}.

The most important novelty in \cite{ABC21} revolves around the creation of a smooth, unstable vortex ring in three dimensions, see Appendix \ref{App-Vel-Profile}. This achievement relies on the unstable vortex previously developed by Vishik \cite{Vishik2018I,Vishik2018}, see also the lecture notes \cite{ABC+21}. An impressive application of Vishik's construction lies in its proof of non-uniqueness of the forced two-dimensional Euler equations in the context of $L^p$ vorticities, $p<\infty$.

\bigskip

Equations of fluid mechanics
driven by stochastic force have been studied extensively in literature. Since the work \cite{BT73} by Bensoussan-Temam in the early 70's, many aspects of stochastic NSE have been developed, including the existence of martingale solutions \cite{CP01,Brz13,FG95,Roz2005}, ergodicity \cite{HM06,KNS20} and stochastic representation \cite{CI08,HRZ23}.

	Many attempts have been made to investigate noise regularization effects on
	the uniqueness problem of stochastic NSE.
	Additive noise was invoked as the most natural candidate for the regularization of noise.
	One main reason is that it succeeds in improving well-posedness
	of finite-dimensional stochastic differential equations with irregular drifts
	\cite{D07,KR05,HRZ23,V81}.
	This phenomena also exhibit for certain infinite-dimensional SPDEs
	\cite{BM19,DaF10,DaFPR13,DaFRV16}.
	Thus, stochastic NSE driven by additive noise has attracted significant interest
	in literature.
	Among others,
	we would like to mention \cite{FR02} for the development of
	stochastic partial regularity theory of Caffarelli-Kohn-Nirenberg,
	\cite{DaD03} for infinite dimensional Kolmogorov equations,
     and  \cite{FR08} for Markov selections with the strong Feller property.
	See also the very recent result \cite{GL23}.

    Recently, convex integration method also applies to stochastic settings.
    We refer to \cite{HZZ19} for the non-uniqueness in law of
    $C_tL^2_x$ weak solutions to stochastic NSE on the torus.
    See \cite{HZZ23.1,HZZ23.2} for the non-uniqueness of
     probabilistically strong and analytically weak solutions.
    See also \cite{MS23,Y22,Y23} for   various other equations.

\bigskip
While we were finalizing the current paper, Hofmanov\'a-Zhu-Zhu uploaded a work on arXiv \cite{HZZ23}
that explores a similar problem.
We now briefly discuss the similarities and distinctions between the two contributions.

In \cite{HZZ23} the authors
consider the stochastic NSE
with a multiplicative one dimensional noise that does not depend on the space variable. More precisely, the equations are
\begin{equation*}
	\dif v-\Delta v\dif t+\div\lc v\otimes v \rc\dif t  + \na p  \dif t = f\dif t + v \dif W \, ,
\end{equation*}
where $W$ is the standard one-dimensional Brownian motion.

In our context, the noise is  additive, depending on space and time variables. It excites all Fourier modes, thus exhibiting a strong spatial dependence.

An close similarity between our work and the study by \cite{HZZ23} lies in the proof strategy. In both cases, the central tool is a local-in-time strong non-uniqueness stemming from the unstable vortex ring constructed in \cite{ABC21}.
Additionally, in both works a gluing step to establish global non-unique solutions has been employed. However, the reduction to the deterministic setting is executed in distinct manners due to the different nature of the noise. Consequently, this leads to forcing terms $f$,  with differing structures.
See \eqref{f-force-def} below for our choice of stochastic force in the current additive case.

Regarding the extension to global martingale solutions,
the defect of the compactness of domain causes several technical problems
and requires new topologies different from the torus case.
To solve these issues, in the present work we introduce
a non-metric space
and prove new  stability result,
which is the key ingredient to implement the gluing procedure
of martingale solutions.
It turns out that the stochastic force,
with $L^2(\Omega; L^2_{\loc} H^{-\alpha}_x) \cap L^2(\Omega; L^1_{\loc} L^2_x)$ integrability,
cooperates  well with the martingale theory.
The proof is quite delicate and we give the detailed arguments
in this work.

\section{Strategy of proof}
\label{sec:strategy of proof}


Our proof proceeds in three main steps: non-uniqueness of local-in-time Leray solutions
in the probabilistically strong sense, existence of global-in-time martingale solutions
for any $L^2_\sigma$ initial data, gluing
of martingale solutions.
The first step follows the ideas of \cite{ABC21}.

\medskip

{\bf Step 1. Local pathwise nonuniqueness.}
In this step,
we fix the given filtrated  probability space
$(\Omega, (\mathcal{F}_t), \mathbb{P})$ and
the $\mathcal{Q}$-Wiener process $w$.
We aim to construct
an appropriate stochastic external force
and two distinct local Leray solutions to \eqref{Hyper-SNSE}.
The solutions here are considered
in the probabilistically strong
and analytically weak sense defined below.

\begin{definition} [Probabilistically strong solutions up to stopping times] \label{def-Prob-strong}
	Given a filtrated probability space
	$(\Omega, (\mathcal{F}_t), \bbp)$
	and a $\mathcal{Q}$-Wiener process $w$.
	For any divergence-free initial datum $v_0 \in L^2(\Omega; L^2)$,
	$v$ is called a Leray solution in the probabilistically strong sense
	to \eqref{Hyper-SNSE}
	up to a stopping time $\tau(>0)$,
	if $v$ is an $(\mathcal{F}_t)_{t \geq 0}$-adapted,
	$L^2$-valued continuous process and satisfies the following:
	\begin{itemize}
		\item For all $t \in[0, \tau]$, $v(t, \cdot)$ is divergence free in the sense of distributions.
		\item Equation \eqref{Hyper-SNSE} holds in the sense of distributions, i.e.,
		$\P$-a.s. for any divergence-free test functions $\varphi \in C_0^{\infty}(\R^3)$
		\begin{align}
			(v(t), \vf)
			= \int_0^t (-(-\Delta)^\alpha v - \div\lc v\otimes v \rc, \vf) \dif s
			+ \int_0^t (f(s), \vf) \dif s + (w(t), \vf).
		\end{align}
		\item Energy inequality:
		for any $t\geq 0$,
		\begin{align*}
				& \bbe \|v(t\wedge \tau)\|_{L^2}^2
				+ 2 \bbe \int_0^{t\wedge \tau} \|(-\Delta)^{\frac \alpha 2} v(s)\|_{L^2}^2 \dif s \\
				\leq& \bbe \|v_0\|_{L^2}^2
				+  2 \bbe \int_0^{t\wedge \tau} (f(s), v(s)) \dif s
                + \sum_{j=1}^{\infty}\lambda_j\|\epsilon_j\|^2_{L^2} \bbe (\tau \wedge t).
		\end{align*}
		
		Moreover, the energy bound holds:
		\begin{align*}
			\bbe^{\bbp} (\sup_{0\leq s\leq t\wedge \tau}\|v(s)\|_{L^2}^2
			+ \int_{0}^{t\wedge \tau} \| v(s)\|_{H^\alpha}^2 \dif s)
			\leq C_t (1+ \|v_0\|_{L^2}^2),
		\end{align*}
		where $t\mapsto C_t$ is a positive increasing constant.
	\end{itemize}
\end{definition}

\paragraph{
	{\bf Self-similarity reformulation of stochastic NSE} }
The deterministic hyper-vscous NSE posses a scaling symmetry
\begin{align*}
	v(t,x) \mapsto \lambda^{2\al-1} v (\lambda^{2\al}t, \lambda x),\ \
	p \mapsto \lambda^{4\al-2} p (\lambda^{2\al}t, \lambda x), \ \
	f(t,x) \mapsto \lambda^{4\al-1} f(\lambda^{2\al} t, \lambda x) \, ,
\end{align*}
which allows to identify subcritical, critical and supercritical spaces.
A general philosophy is that non-linear equations are well-posed in the subcritical spaces,
while solutions may exhibit ill-posedness in the supercritical spaces. Comprehensive discussions can be found in the works \cite{K00,K17} by Klainerman.

A particular class of solutions living on the borderline of well-posedness theory is known as {\it self-similar solutions}. Recently, they have played a central role in the study of the uniqueness problem for Leray solutions \cite{ABC21,JS14,JS15}.

\medskip
\noindent

Following \cite{ABC21}, we aim to construct local-in-time non-unique solutions for the stochastic NSE
starting from an {\it unstable} self-similar solution. To achieve this, we begin by removing the noise component by seeking solutions in the form of
\begin{align}  \label{v-z-u}
	v = u + w,
\end{align}
where $u$ solves
\begin{align}\label{rand-eq}
	\begin{cases}
		\pt u+(-\Delta)^\al
		u +\div\lc(u+w)\otimes(u+w)\rc +(-\Delta)^\al w +\na p  =f
		\\
		\div\ u  =0\, .
	\end{cases}
\end{align}
We then consider {\it similarity variables}
\begin{align*}
	\xi= {t^{-\frac{1}{2\al}}}{x}, \quad \tau=\log t.
\end{align*}
and define self-similar profiles:
\begin{align} \label{sim-va}
	u(x, t)= {t^{\frac{1}{2\al}-1}} U(\xi, \tau),
	\quad w(x, t)={t^{\frac{1}{2\al}-1}} W(\xi, \tau)\, ,
\end{align}
\begin{align}  \label{sim-va-pf}
	p(x,t) =t^{\frac{1}{\alpha}-2} P(\xi, \tau),
	\quad f(x, t)={t^{\frac{1}{2\al}-2}} F(\xi, \tau)\, .
\end{align}
Here, as in \cite{ABC21,JS15}, we use the convention that lower (resp. upper) case functions denote functions in physical variables
(resp. similarity variables).

We have the self-similar reformulation of equation \eqref{rand-eq}:
\begin{align}  \label{SNSE-Similarity}
	& \ptau U-(1-\frac{1}{2\al})U-\frac{\xi}{2\al} \cdot \na U +(-\Delta)^\al U +U\cdot\na U \notag \\
	&\ \ +W\cdot\na U +U\cdot\na W+W\cdot\na W+(-\Delta)^\al W+\na P  = F
\end{align}
coupled with the incompressibility condition $\div U =0$, for $\xi\in \mathbb{R}^3$ and $\tau\in (-\infty,T)$.

\paragraph{{\bf Vortex ring and Linearized operator.} }
Let $\bar U \in C^\infty_c(\mathbb{R}^3)$ be the unstable vortex ring built in \cite{ABC21},
see Appendix \ref{App-Vel-Profile} below for more explanations on its structure.
We think of $\bar U$ as a stationary solution to \eqref{SNSE-Similarity} with stochastic body force
\begin{align}\label{F-def}
	F:= &-(1-\frac{1}{2\al})\bar U-\frac{\xi}{2\al} \cdot \na \bar U +(-\Delta)^\al \bar U + \bar U\cdot\na \bar U \notag \\
	&\ \ +W\cdot\na \bar U + \bar U\cdot\na W+W\cdot\na W+(-\Delta)^\al W+\na P\, .
\end{align}
The corresponding force in physical variables is
\begin{align}  \label{f-force-def}
	f(x, w(t)) :=& t^{\frac{1}{2\al}-2} F(\xi, \tau)   \\
	=& \pt\bar{u}+(-\Delta)^\al \bar{u}+ \bar{u}\cdot\na \bar{u}+
	\div\lc\bar{u}\otimes w+w\otimes\bar{u}\rc +
	\div(w\otimes w)+(-\Delta)^\al w\, , \notag
\end{align}
and consists of a deterministic and a random part. The latter depends on the Wiener process $w$, hence $f$ is $(\mathcal{F}_t)$-adapted.

It is clear from \eqref{SNSE-Similarity} and \eqref{F-def} that
\begin{align*}
	U_1 := \bar{U}
\end{align*}
solves \eqref{SNSE-Similarity}
with the stochastic forcing term $F$.
However,
\begin{equation*}
	\bar u_1(x,t) = \frac{1}{\sqrt{t}} U_1(\xi) \, ,
\end{equation*}
is a Leray solution to NSE only when $\alpha<5/4$, since
\begin{equation*}
	\norm{\bar{u}(\cdot,t)}_{L^2_x}=t^{\frac{5}{4\al}-1}\norm{\bar{U}(\cdot)}_{L^2_\xi}\stackrel{t\rightarrow 0}{\longrightarrow} 0 \quad\text{for}\; \al<\frac{5}{4} \, .
\end{equation*}

We proceed by linearizing the equations \eqref{SNSE-Similarity} with respect to the steady state $\bar U$, resulting in the linear operator:
\begin{equation*}
	\begin{split}
		-\boldsymbol{L}U = (\frac{1}{2\al}-1-\frac{\xi}{2\al} \cdot \nabla) &  U+(-\Delta)^\al U+ P_H(\bar{U} \cdot \nabla U+U \cdot \nabla \bar{U})
		\\
		&+ W \cdot \nabla U + U \cdot \nabla W\, .
	\end{split}
\end{equation*}
However, the quick time-decay as $\tau \to -\infty$
\begin{equation}\label{Z-est}
	\norm{W(\cdot,\tau)}_{\dot{H}_\xi^N}\les e^{(1-\frac{5}{4\al}+\kappa+\frac{N}{2\alpha})\tau}
	\norm{w}_{C^{\kappa}([0,t];\dot{H}_x^N)}\, ,
\end{equation}
allows to treat the linear terms depending on $W$ perturbatively.
\begin{remark}
	The $C^{\frac 14 +}$-H\"older continuity of the Weiner noise is crucial in the construction of local Leray solutions. It compensates the contribution coming form the linear stochastic part in \eqref{up-equation}, which is of big size $\sim e^{\tau (1-\frac{5}{4\alpha})}$.
\end{remark}
Consequently, we can concentrate our attention solely on the operator.
\begin{equation*}
	\begin{aligned}
		-\lssa U &:= (\frac{1}{2\al}-1-\frac{\xi}{2\al} \cdot \nabla) U+(-\Delta)^\al U+ P_H(\bar{U} \cdot \nabla U+U \cdot \nabla \bar{U})
		\\
		\lssa &: \cald_{vel} \subseteq L_\sig^2 \rightarrow L_\sig^2,
	\end{aligned}
\end{equation*}
where
\begin{equation*}
	\cald_{vel}:= \{U \in L_\sig^2: |||U||| := \|U\|_{H^{2\al}}+ \|\xi \cdot \nabla_{\xi} U\|_{L^2} <\infty \},
\end{equation*}
and $P_H$ is the Helmholtz-Leray projection.

The highly non-trivial aspect is that $\lssa$ possesses a maximal unstable eigenvalue $\lambda'$, as demonstrated in \cite{ABC21} for the case $\alpha=1$. Extending this result to the case where $\alpha\neq 1$ is not challenging since, as noted in \cite{ABC21}, the term $P_H(\bar{U} \cdot \nabla U+U \cdot \nabla \bar{U})$ is primarily responsible for spectral instability. Therefore, the (hyper-viscous) component can be treated perturbatively. See also the very recent work \cite{KMS23}.

\medskip

The linear operator $\lss$ is mainly studied in Subsections \ref{Subsec-Vor-Oper} and \ref{Subsec-Vel-Oper} by means of the theory of semigroups and operators, based on the Hille-Yosida theorem and stability theory of semigroups \cite{En2000,Kato80}. This approach to the problem is novel and partially deviates from the analysis in \cite{KMS23}.

%
%
%

\paragraph{{\bf The second solution.}}

In this section, we outline the specific ansatz for constructing the second solution to the NSE with the body force $f$ as defined in \eqref{f-force-def}.

Let $\eta \in \cald_{vel}$ be the eigenfunction associated to the maximal unstable eigenvalue $\lambda'$ of $\lssa$,
i.e. $\lssa \eta = \lambda' \eta$,
and $a:= {\rm Re} \lambda'>0$ is maximal among all the eigenvalues of $\lssa$. We set
\begin{equation*}
	\ul(\tau): = e^{\tau \lssa} \eta =\operatorname{Re} (e^{\tau\lambda'} \eta) \, ,
\end{equation*}
Note that
$\ul$ satisfies the equation
\begin{align}  \label{equa-Ulin}
	\partial_\tau \ul = \lssa \ul.
\end{align}
Then, the second solution to NSE will be given by
\begin{equation*}
	U_2 := \bar{U} + \ul + \up \, ,
\end{equation*}
where the remainder $\up$ needs to solve the equation
\begin{align}\label{up-equation}
	& \ptau \up- \lssa \up \notag  \\
	=&-
	P_H [(\up \cdot \nabla) \up+(\ul \cdot \nabla) \up
	+(\up \cdot \nabla) \ul+(\ul \cdot \nabla) \ul] \notag \\
	&- P_H [(\ul\cdot\na)W+(W\cdot\na)\ul+(\up\cdot\na)W+(W\cdot\na)\up]
\end{align}
and decays with rate $o(e^{\tau {\rm Re}\lambda'})$ as $\tau \to -\infty$.
Since $U^{lin}$ and $U^{per}$ have different decay rates at $-\infty$, it follows that $U_1$ and $U_2$ are distinct solutions to
to \eqref{SNSE-Similarity}.

\medskip
\noindent

To solve \eqref{up-equation} we adopt the strategy presented in \cite{ABC21}.
The key ingredients are the parabolic estimates for the semigroup $e^{\lssa}$
(see Proposition \ref{Prop-Semigroup} below),
along with the H\"older continuity in time of the noise. The latter is essential for handling linear terms dependent on $W$. However, the presence of these terms prevents us from constructing global solutions. Consequently, we infer local pathwise non-uniqueness for stochastic Leray solutions.

\begin{theorem} [Local pathwise non-uniqueness]   \label{Thm-Nonuniq-Local}
	Fix any integer $N>\frac 52$,
	$\kappa \in (\frac 14, \frac 12)$,
	and $\ve_0>0$ small enough such that
	$1+\kappa - \frac{5}{4\alpha} - \ve_0>0$.
	Let
\begin{equation}\label{sigma-stop-def}
		\sigma:= (20C(N,\kappa))^{\frac{-1}{a-\ve_0}}\wedge (20C(N,\kappa))^{\frac{-1}{1+\kappa-\frac{5}{4\al}-\ve_0}} \wedge \inf\{t>0:\norm{w}_{C^{\kappa}([0,t]; H_{x}^{N})} \geq 1 \},
	\end{equation}
	where $a(>1)$ is the spectral bound of $\lssa$,
	$C(N,\kappa)$ is a large deterministic constant given by \eqref{TU-esti} and \eqref{TU1-TU2-esti} below,
	depending on $N$ and $\kappa$.
	Then, there exist two distinct Leray solutions
	\begin{align}  \label{v1-def}
		v_1:= \bar{u} + w,
	\end{align}
	and
	\begin{align}   \label{v2-def}
		v_2:= \bar{u} + u^{\operatorname{lin}} + u^{\operatorname{per}} + w,
	\end{align}
	to equation \eqref{Hyper-SNSE}
	with the stochastic force $f$ given by \eqref{f-force-def},
	up to the stopping time $\sigma$
	in the probabilistically strong sense of Definition \ref{def-Prob-strong}.
\end{theorem}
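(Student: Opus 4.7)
The plan is to recognize that $v_1 := \bar u + w$ is already a Leray solution to \eqref{Hyper-SNSE} with forcing $f$ by the very construction of \eqref{f-force-def}, and then to build $v_2$ by solving \eqref{up-equation} for $\up$ through a small-ball fixed point argument in similarity coordinates on the interval $\tau \in (-\infty, \log\sigma]$. For $v_1$, plugging $v = \bar u + w$ into \eqref{Hyper-SNSE} cancels the $\dif w$ term and leaves $\pt\bar u + (-\Delta)^\al \bar u + \div((\bar u + w)\otimes(\bar u + w)) + (-\Delta)^\al w + \na p = f$, which is precisely \eqref{f-force-def}; adaptedness is immediate, the integrability of $f$ and the required energy bound follow from $\bar U \in C_c^\infty$ and the H\"older regularity of $w$, and the energy inequality is supplied by It\^o's formula applied to $\|\bar u + w\|_{L^2}^2$.

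For $v_2$ the core task is to solve \eqref{up-equation}. I would write it in Duhamel form
\[
\up(\tau) = -\int_{-\infty}^\tau e^{(\tau-s)\lssa}\, P_H\, \caln(\up, \ul, W)(s)\, \dif s,
\]
where $\caln$ collects the bilinear and $W$-dependent source terms on the right side of \eqref{up-equation}, and run a contraction mapping argument in a weighted Banach space $X := \{U : \sup_{\tau \le \log\sigma} e^{-\mu\tau}|||U(\tau)||| < \infty\}$, for a weight $\mu$ slightly above the spectral bound $a$ (which encodes the $o(e^{a\tau})$ decay needed for distinctness). Three ingredients drive the estimates: (i) the parabolic/spectral bound for $e^{\tau\lssa}$ in the $|||\cdot|||$ norm, established in Proposition \ref{Prop-Semigroup}, which yields $|||e^{(\tau-s)\lssa}U||| \lesssim e^{\mu(\tau-s)}|||U|||$ for $\tau\ge s$; (ii) the explicit decay $|||\ul(\tau)||| \lesssim e^{a\tau}\|\eta\|$ coming from the spectral ansatz \eqref{equa-Ulin}; and (iii) the noise bound \eqref{Z-est}, which on the event $\{\|w\|_{C^\kappa([0,\sigma];H^N_x)} \le 1\}$ (the last condition in \eqref{sigma-stop-def}) gives $\|W(\tau)\|_{\dot H^N_\xi} \lesssim e^{(1-\frac{5}{4\al}+\kappa+\frac{N}{2\al})\tau}$. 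The quadratic pieces $\up\otimes\up$, $\ul\otimes\up$, $\ul\otimes\ul$ produce source terms of order $e^{2\mu\tau}$ or $e^{(\mu+a)\tau}$, both strictly faster than $e^{\mu\tau}$; the linear-in-$W$ pieces produce source terms of order $e^{(\mu + 1 - \frac{5}{4\al} + \kappa)\tau}$, still faster than $e^{\mu\tau}$ precisely because of the hypothesis $1+\kappa-\frac{5}{4\al}-\varepsilon_0 > 0$. Convolving with the semigroup and evaluating at $\tau \le \log\sigma$ turns each of these into a prefactor of size $C(N,\kappa)\,\sigma^{a-\varepsilon_0}$ or $C(N,\kappa)\,\sigma^{1+\kappa - \frac{5}{4\al} - \varepsilon_0}$; the two deterministic thresholds in \eqref{sigma-stop-def} are calibrated exactly so that the total contraction ratio is bounded by $1/2$, and a unique small fixed point $\up \in X$ is obtained. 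Unpacking the similarity change of variables then yields $v_2 := \bar u + u^{\mathrm{lin}} + u^{\mathrm{per}} + w$.

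I would then verify that $v_2$ is a probabilistically strong Leray solution up to $\sigma$ and differs from $v_1$. Divergence-freeness holds because every summand lies in $L^2_\sig$ and both $P_H$ and $\lssa$ preserve this space; adaptedness holds since $\up$ is built as a pathwise Duhamel integral driven by the $(\calf_t)$-adapted process $W$ and $\sigma$ is a stopping time; that $v_2$ solves \eqref{Hyper-SNSE} up to $\sigma$ follows by undoing \eqref{sim-va}--\eqref{sim-va-pf} and combining \eqref{equa-Ulin} with \eqref{up-equation}. The energy inequality (indeed, identity) is again supplied by It\^o's formula together with the $\cald_{vel}$-regularity of $\bar U, \ul, \up$. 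For distinctness, as $\tau \to -\infty$ one has $|||\up(\tau)||| = o(e^{a\tau})$ by construction, whereas $|||\ul(\tau)||| \sim e^{a\tau}\|\eta\|_{L^2}$ with $\eta \ne 0$, so $v_2 - v_1 = u^{\mathrm{lin}}+u^{\mathrm{per}}$ is not identically zero on any right-neighbourhood of $t=0$.

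The main obstacle I anticipate is the treatment of the linear-in-$W$ source terms in \eqref{up-equation}: unlike the genuinely quadratic nonlinearities, they do not vanish when $\up$ does, and they carry a weaker intrinsic decay than the bilinear terms, so subordinating them to the ambient weight $e^{\mu\tau}$ forces both the sharp condition $\kappa > 1/4$ on the noise H\"older regularity (cf.\ the remark following \eqref{Z-est}) and the pathwise control $\|w\|_{C^\kappa} \le 1$ enforced by the final cutoff in \eqref{sigma-stop-def}. This is precisely why the nonuniqueness produced here is local in time rather than global, and why the stopping time $\sigma$ is unavoidable.
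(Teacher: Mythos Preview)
Your proposal is correct and follows essentially the same route as the paper: reduce to Proposition \ref{Prop-Uper}, set up the Duhamel map on the weighted space with weight $e^{-(a+\ve_0)\tau}$, and split the source into bilinear, linear, and inhomogeneous pieces whose estimates, combined with the parabolic semigroup bounds of Proposition \ref{Prop-Semigroup} and the H\"older estimate \eqref{Z-est}, give a contraction on the unit ball once $\tau\le\log\sigma$. The only cosmetic difference is that the paper runs the fixed point in the $H^N$ norm (exploiting the algebra property for $N>5/2$) rather than in the graph norm $|||\cdot|||$ of $\cald_{vel}$, and makes explicit the smoothing factor $(\tau-s)^{-1/2\alpha}$ from \eqref{parabolic_est} needed to absorb the derivative in each nonlinear term; your sketch suppresses this but it is the mechanism behind the integrability of the Duhamel kernel.
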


\medskip

{\bf Step 2. Global existence of martingale solutions.}

So far, we have built pathwise non-uniqueness up to a stopping time.
To make it global, we need to prove a global existence result for martingale solutions to
the hyper-viscous stochastic NSE \eqref{Hyper-SNSE}.
This is the main goal of this section.

Differently from the previous stage,
the proof here relies crucially on
tightness criteria and stability
for non-metric spaces.

Because of the defect of the compact Sobolev embedding $H^\al \hookrightarrow L^2$,
the usual energy space $C_tL^2_x \cap L^2_t \dot{H}^1_x$
used in the bounded domain case
is no longer applicable
when constructing martingale solutions in the whole space.
Inspired by \cite{Roz2005} and \cite{Brz13},
we use the space
\begin{align*}
		\wh{\calz}_{[0,T]} := C([0, T]; \calu') \cap L^2(0,T; L^2_{\loc}) \cap C([0,T]; L^2_{w}),
\end{align*}
where $\calu'$ is the dual space of a Hilbert space $\calu$,
which is dense in $\calw$
and the embedding $\calu\hookrightarrow \calw$ is compact.
The existence of space $\calu$ follows from Lemma C.1 in \cite{Brz13}.
For the precise definitions and (weak) topologies for the above three spaces, see Subsection \ref{Subsec-Mart-Setup}.

Another delicate problem,
due to the stochastic forcing term here,
is that it is no longer possible to formulate martingale solutions
to \eqref{Hyper-SNSE}
in the usual way \cite{Roz2005,Brz13} by  requiring that
\begin{align}  \label{Mt-def}
	M_t:= v(t) + \int_0^t  P_H[ (-\Delta)^\al v +\div\lc v\otimes v \rc -f] \dif s
\end{align}
is a continuous  martingale with the quadratic variation $\sum_{j=1}^{\infty}\lambda_j\|\epsilon_j\|^2_{L^2} t$
(hence $(M_t)$ is a $\mathcal{Q}$-Wiener process by the martingale representation theorem).
The reason is that the stochastic force $f$ given by \eqref{f-force-def}
itself depends on
the  Wiener process $(M_t)$.

To fix this problem,
we formulate martingale solutions to \eqref{Hyper-SNSE} in the product canonical space
$\wh{\Omega}$,
including the trajectories of both solutions and Wiener noise,
that is
\begin{align*}
	\wh{\Omega}:= \scrx_{[0,\infty), \loc} \times \scry_{[0,\infty), \loc},
\end{align*}
where
	$\scrx_{[0,\infty),\loc}$ is the global version of the following separable space
	with the locally uniform topology
	\begin{align*}
		\scrx_{[0,T]} := C([0, T]; \calu') \cap L^2(0,T; \calh_{\loc}),
\end{align*}
and $\scry_{[0,\infty),\loc}$ is the trajectory space of Wiener process
\begin{align*}
	\scry_{[0,\infty),\loc}:=C_{\loc}([0,\infty);\calw)
\end{align*}
with the locally uniform topology.

Let $\mathscr{P}(\wh \Omega)$ denote the set of all probability measures
on $(\wh \Omega, \mathcal{B}(\wh \Omega))$
with $\mathcal{B}(\wh \Omega)$ being the Borel $\sig$-algebra coming
from the topology of locally uniform convergence on $\wh \Omega$.

\begin{definition} [Martingale solutions] \label{def-mart-1}
	Let $s \geq 0$, $N> \frac 52$, and $z_0:=(x_0, y_0)\in  L^2_\sigma \times \calw $.
	A probability measure $P_{s_0, z_0} \in \mathscr{P} (\wh \Omega)$
	is said to be a martingale solution to \eqref{Hyper-SNSE}
	with the initial value $z_0$ at time $s_0$ provided
	\begin{enumerate}
		\item[$(M1)$] $P_{s_0, z_0} (z(t)=z_0, 0 \leq t \leq s_0 )=1$ and $P_{s_0, z_0}$-a.s.
		for any $t\geq s_0$,
		\begin{equation*}
			x(t) - x_0 = \int_0^t
			P_H [  - (-\Delta)^\alpha x(s) - \div (x(s)\otimes x(s))   + f(s, y(s)) ]\dif s + y(t) - y_0,\ \ in\ \mathscr{D}'.
		\end{equation*}
		
		\item[$(M2)$]
		Energy inequality:
		for any $t\geq s_0$,
		\begin{align*}
				& \mathbb{E}^{P_{s_0, z_0}} \|v(t)\|_{L^2}^2
				+ 2 \mathbb{E}^{P_{s_0, z_0}} \int_{s_0}^{t} \|(-\Delta)^{\frac \alpha 2} v(s)\|_{L^2}^2 \dif s \\
				\leq&   \|x_0\|_{L^2}^2
				+  2 \mathbb{E}^{P_{s_0, z_0}} \int_{s_0}^{t} (f(s, y(s)), v(s)) \dif s + \sum_{j=1}^{\infty}\lambda_j\|\epsilon_j\|^2_{L^2} (t-s_0).
		\end{align*}
		
		Moreover, the energy bound holds:
		\begin{align*}
			\mathbb{E}^{P_{s_0, z_0}}  (\sup_{s_0\leq s\leq t}\|v(s)\|_{L^2}^2
			+ \int_{s_0}^{t} \| v(s)\|_{H^\alpha}^2 \dif s)
			\leq C_t (\|x_0\|_{L^2}^2 +1),
		\end{align*}
		where $t\mapsto C_t$ is a positive increasing constant.
		
		\item[$(M3)$] $(y(t))$ is a $\mathcal{Q}$-Wiener process starting from $y_0$ at time $s_0$ under $P_{s_0, z_0}$.
	\end{enumerate}
	Let $\mathscr{M}(s_0, z_0, C_{\cdot})$ denote the set of all martingale solutions satisfying $(M1)$-$(M3)$.
\end{definition}

The point of Definition \ref{def-mart-1} is to view martingale solutions
as probability measures on canonical spaces.
Hence,
gluing two martingale solutions reduces to gluing two probability measures on canonical spaces,
as in the finite dimensional case in \cite{SV79}.
It is worth noting that
the above formulation of martingale solutions
has similarity with the multiplicative noise case in \cite{HZZ19,HZZ23},
while differs from the additive noise case in \cite{HZZ19}.
In the infinite dimensional case,
one meets the right-continuous filtration problem, rather than the natural filtration in \cite{SV79},
which has been solved in \cite{HZZ19}.
We also refer to \cite{HZZ19} for the nice
treatment of stochastic convolutions on canonical spaces.

In this work, delicate analysis has been dedicated to the
corresponding tightness criterion
and stability results on the non-metric space  $\wh{\mathcal{Z}}_{[0,T]}$, which are important to glue martingale solutions.
The resulting tightness processes are treated  by
means of Jakubowski's version of Skorokhod representation theorem
for non-metric spaces (cf. \cite{Brz13}),
rather than the martingale representation theorem,
due to the aforementioned problem hidden in \eqref{Mt-def}.

The existence  of global-in-time martingale solutions
is formulated below.

\begin{proposition} [Global existence of martingale solutions] \label{Prop-GWP-Mart}
	For any $s_0\geq 0$ and $z_0:=(x_0, y_0)\in L^2_\sigma \times H^N$
	with $N>\frac 52$,
	we have
	\begin{align*}
		\mathscr{M}(s_0, z_0, C_{\cdot}) \not = \emptyset,
	\end{align*}
	that is,
	there exists at least one martingale solution $P_{s_0, z_0} \in \mathscr{M}(s_0, z_0, C_{\cdot})$ to \eqref{Hyper-SNSE}
	starting from $z_0$ at time $s_0$,
	where $t\mapsto C_t$ is a positive increasing function.
\end{proposition}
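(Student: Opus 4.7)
The plan is to construct the martingale solution via a Galerkin approximation, prove tightness of the joint laws of the approximate velocity and the driving noise on the non-metric canonical space $\wh{\Omega}$, and pass to the limit by Jakubowski's generalization of the Skorokhod representation theorem. Throughout I rely on the tightness criterion and stability results on $\wh{\calz}_{[0,T]}$ advertised in the strategy of proof. For notational simplicity I assume $s_0=0$; the general case follows by a time shift.

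On the given filtrated space $(\Omega,(\mathcal{F}_t),\bbp)$ carrying $w$, let $\Pi_n$ denote the orthogonal projection in $L^2_\sigma$ onto the span of the first $n$ elements of a smooth divergence-free Schauder basis, and consider the finite-dimensional It\^o system
\begin{align*}
\dif v_n + \Pi_n P_H\bigl[(-\Delta)^{\al}v_n+\div(v_n\otimes v_n)\bigr]\dif t = \Pi_n f(t,w)\,\dif t + \Pi_n \dif w,\quad v_n(0)=\Pi_n x_0,
\end{align*}
which is globally well-posed by standard SDE theory combined with an a priori energy estimate. Applying It\^o's formula to $\|v_n\|_{L^2}^2$, using the divergence-free cancellation of the transport term, pairing $(f(\cdot,w),v_n)$ in the $H^{-\al}_x$--$H^\al_x$ duality via Young's inequality, and invoking Gronwall's lemma yields
\begin{align*}
\bbe\sup_{s\le t}\|v_n(s)\|_{L^2}^2 + \bbe\int_0^t\|v_n(s)\|_{H^\al}^2\,\dif s \le C_t(1+\|x_0\|_{L^2}^2),
\end{align*}
uniformly in $n$, where $C_t$ absorbs the It\^o correction $\sum_j\lambda_j\|\epsilon_j\|_{L^2}^2 t$ and the forcing norm.

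The second step is to establish tightness of $\mu_n:=\mathrm{Law}(v_n,w)$ on $\wh{\Omega}$. The $\scry$-marginal is constant in $n$ and hence tight. For the $\scrx$-marginal, testing the Galerkin equation against $\varphi\in\calu$ and using the $H^{-\al}_x$-bound on $f$ together with the energy estimate produces a uniform fractional time-H\"older bound of $v_n$ with values in $\calu'$, which combined with the local compact embedding $H^\al_{\loc}\hookrightarrow L^2_{\loc}$ and a local Aubin--Lions lemma gives compactness in both $L^2(0,T;L^2_{\loc})$ and $C([0,T];\calu')$. An Arzel\`a--Ascoli argument in the weak topology, using equicontinuity of $t\mapsto(v_n(t),\varphi)$ for each $\varphi\in L^2_\sigma$, yields compactness in $C([0,T];L^2_w)$, so the $\scrx$-marginal is tight on $\wh{\calz}_{[0,T]}$. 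Combining marginals and applying Prokhorov gives tightness of $\mu_n$ on $\wh{\Omega}$. Since $\wh{\calz}_{[0,T]}$ is a non-metric completely regular Lusin space, I then invoke the Jakubowski--Skorokhod theorem to obtain, on an auxiliary probability space, almost surely convergent copies $(\tilde v_n,\tilde w_n)\to(\tilde v,\tilde w)$ in $\wh{\Omega}$ with matching finite-dimensional marginals.

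The final step verifies $(M1)$--$(M3)$ for $P_{0,z_0}:=\mathrm{Law}(\tilde v,\tilde w)$. Condition $(M3)$ follows because $\mathrm{Law}(\tilde w_n)=\mathrm{Law}(w)$ for each $n$ and $\tilde w_n\to\tilde w$ locally uniformly, so $\tilde w$ is a $\mathcal{Q}$-Wiener process with respect to a suitable right-continuous filtration, handled as in \cite{HZZ19}. For $(M1)$, the linear dissipative and stochastic-integral terms pass by weak $L^2_{\loc}H^\al_x$ and It\^o-integral convergence, the nonlinearity $\div(\tilde v_n\otimes\tilde v_n)$ passes by strong $L^2_{\loc}L^2_x$ convergence combined with the uniform energy bound, and the forcing $f(\cdot,\tilde w_n)\to f(\cdot,\tilde w)$ thanks to the explicit affine structure \eqref{f-force-def} of $f$ in the noise trajectory together with the locally uniform convergence of $\tilde w_n$. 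The energy inequality $(M2)$ follows from lower semicontinuity of the $L^2$- and $\dot H^\al$-norms under weak limits plus Fatou's lemma. The principal obstacle is the interplay between the non-compactness of $\R^3$, which forces the weak $L^2$ and $L^2_{\loc}$ topologies and thus makes $\wh{\calz}_{[0,T]}$ non-metric, and the need to identify the limit of the nonlinearity; this topological delicacy, compounded by the $w$-dependence of $f$ that blocks the classical martingale-representation identification of the noise via \eqref{Mt-def}, is precisely what forces tracking $w$ on the canonical space throughout and using Jakubowski's representation rather than the classical Skorokhod theorem.
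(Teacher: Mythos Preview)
Your proposal is correct and follows essentially the same route as the paper: Galerkin approximation with uniform energy bounds, tightness of the joint law $(v_n,w)$ on the non-metric product canonical space, Jakubowski--Skorokhod representation, and termwise passage to the limit while verifying $(M1)$--$(M3)$. Two minor points of precision: the forcing $f$ in \eqref{f-force-def} is polynomial rather than affine in $w$ (there is a $\div(w\otimes w)$ term), though your convergence argument still applies since the dependence is explicit and continuous in $C_tH^N_x$; and when passing to the limit in the energy inequality you need not only lower semicontinuity on the left but actual convergence of $\bbe\int_0^t(f(\tilde w_n),\tilde v_n)\,\dif s$, which the paper obtains via a uniform-integrability argument exploiting the $L^2_tH^{-\alpha}_x$ bound on $f$.
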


\medskip

{\bf Step 3. Gluing procedure.}
We are now in position to glue together the  local-in-time
strong solutions
and global-in-time martingale solutions
in the previous two steps.

The gluing procedure relies on the
Stoock-Varadhan theory \cite{SV79}.
The key ingredient is the stability of martingale solutions
with respect to initial data.
Again, due to the defect of the compactness in the whole space,
the proof of stability result requires more delicate
tightness criteria for
the solution space $\wh{\mathcal{Z}}_{[0,T]}$.
The gluing result is contained in Proposition \ref{Prop-Glue} below.

Note that,
the gluing procedure is performed in the product canonical spaces
for both the stochastic solutions and Wiener processes.

Finally,
we can glue together the local solutions in Theorem \ref{Thm-Nonuniq-Local}
and global martingale solutions in Proposition \ref{Prop-GWP-Mart},
to obtain distinct Leray solutions in the probabilistically weak sense,
which consequently yields the desirable non-uniqueness result in law.

\section{Instability of vorticity and velocity operators}

This section is devoted to the key instability of vorticity and velocity operators,
which plays the crucial role in the construction of non-unique local pathwise Leray solutions.

Throughout this section,
operators are denoted by boldface letters.
To ease notations,
the superscript of $\al$ is omitted.

\subsection{Vorticity operators}   \label{Subsec-Vor-Oper}


Let $\bar{U}$ denote
the axisymmetric-no-swirl vortex-ring built in \cite{ABC21}, see Appendix \ref{App-Vel-Profile}. The corresponding vorticity is given by
$\bar{\Omega} : =\operatorname{curl} \bar{U}$.

We employ standard cylindrical coordinates $(r\cos \theta, r\sin \theta, z)\in \R^3$, and denote as
$L_{\mathrm{aps}}^2$ the space of $L^2-$integrable
``axisymmetric pure-swirl'', i.e. vector fields of the form $\Omega=\Omega^\theta(r, z) e_\theta$.

Given $\Omega \in L_{\mathrm{aps}}^2$, the associated velocity field $U :=\bs(\Omega)$
is an ``axisymmetric without swirl'' vector field, i.e. it is expressed as $U^r(r,z) e_r + U^z(r,z) e_z$.
Recall that $U =\bs(\Omega)$ denotes the $3D$ Biot-Savart law:
$$
U(x)=-\frac{1}{4 \pi} \int_{\mathbb{R}^3}\left(\nabla_y \frac{1}{|x-y|}\right) \times \Omega(y) \dif y\, .
$$

We define the vorticity operators
$\boldsymbol{L}_{\mathrm{vor}}^{(\beta)}$,
$\beta>0$, by
\begin{equation*}
    \begin{aligned}
    \boldsymbol{L}_{\mathrm{vor}}^{(\beta)}:
     \ &\cald_{vor} \subseteq L_{\mathrm{aps}}^2 \rightarrow L_{\mathrm{aps}}^2, \\
    -\boldsymbol{L}_{\mathrm{vor}}^{(\beta)} \Omega
    &:=(-1-\frac{\xi}{2\al} \cdot \nabla) \Omega
     +(-\Delta)^\al \Omega+\beta[\bar{U}, \Omega]+\beta[U, \bar{\Omega}]
    \end{aligned}
\end{equation*}
with $U :=\bs(\Omega)$,
$[\bar{U}, \Omega] = \bar U \cdot \na \Omega - \Omega \cdot \na \bar U$,
$[{U}, \bar \Omega] =  U \cdot \na \bar \Omega - \bar \Omega \cdot \na U$
 and the domain
\begin{equation}  \label{Dvor-def}
    \cald_{vor} :=\{\Omega \in L_{\mathrm{aps}}^2:  |||\Omega |||:= \|\Omega\|_{H^{2\al}} + \|\xi \cdot \nabla_{\xi} \Omega\|_{L^2} <\infty \}.
\end{equation}
As  domain is essential for unbounded operators,
we will also denote the vorticity operator by a pair
$(\boldsymbol{L}_{\mathrm{vor}}^{(\beta)}, \cald_{vor})$.

In order to study the instability of the vorticity operators,
as in \cite{ABC21},
we rewrite
\begin{align*}
    \boldsymbol{L}_{\mathrm{vor}}^{(\beta)} = \beta \T_\beta +1- \frac{3}{4\alpha} ,
\end{align*}
where  the operator
\begin{equation}   \label{Tbeta-def}
    \T_\beta  =  \frac{1}{\beta} (\boldsymbol{L}_{\mathrm{vor}}^{(\beta)} + \frac{3}{4\alpha} -1)
              =  \frac{1}{\beta} \D+\M+\boldsymbol{S}+\K
\end{equation}
with
\begin{align*}
-\D \Omega &:= (-\frac{3}{4\al} -\frac{\xi}{2\al} \cdot \nabla) \Omega+(-\Delta)^\al \Omega, \\
-\M \Omega &:=\bar{U} \cdot \nabla \Omega, \\
-\boldsymbol{S} \Omega &:=-\bar{\Omega} \cdot \nabla U-\Omega \cdot \nabla \bar{U}, \\
-\K \Omega &:= U \cdot \nabla  \bar{\Omega},
\end{align*}
where $U :=\bs(\Omega)$.
The formal limit $\T_\infty$,
as $\beta\rightarrow\infty$,
is
\begin{equation}  \label{Tinfty-def}
    \T_\infty =  \M+\boldsymbol{S}+\K,
\end{equation}
with the domain
\begin{align*}
      D(\T_{\infty})=\left\{\Omega \in L_{\mathrm{aps}}^2: \bar{U} \cdot \nabla \Omega \in L^2 \right\}.
\end{align*}

As we shall see below that $\D$ is a generator of contraction semigroup,
$\M$ is skew-adjoint and compact relative to $\D$.
By \cite{ABC21},
$\boldsymbol{S}$ is a bounded operator,
and the operator norm is bounded by
$$\mu:=\|\boldsymbol{S}\|_{L_{\mathrm{aps}}^2 \rightarrow L^2}\les l^{-\frac{2}{3}},$$
where $l$ is a parameter in the construction of $\bar{U}$ (see Appendix \ref{App-Vel-Profile}).

We now check that $\K$ is $L_{\mathrm{aps}}^2\to L^2$ compact.
Take any sequence $\{\Omega_n\} \subseteq L_{\mathrm{aps}}^2$ with
\begin{equation*}
    \sup\limits_{n\in \bbn}
    \|\Omega_n\|_{L^2} \leq  C <\infty.
\end{equation*}
By the Biot-Savart law,
the corresponding velocity sequence $\{U_n\}$
is uniformly bounded in $\dot{H}^{1}\cap L^6$,
and therefore are uniformly bounded in $H^1_{\supp{\bar{\Omega}}}$.
The compact embedding $H^1_{\supp{\bar{\Omega}}} \hookrightarrow L^2_{\supp\bar{\Omega}}$
then yields that
there exists a subsequence $\{U_{n_k}\}$ being a Cauchy sequence in $L^2_{\supp\bar{\Omega}}$.
This ensures that $(\K \Omega_{n_k})$
is a Cauchy sequence in $L^2$ by noticing the smoothness and compactness of $\bar{\Omega}$,
thereby proving the compactness of $\K$.

\bigskip
By \cite[Proposition 2.6]{ABC21}, $\T_\infty$ has a unstable eigenvalue $\lambda_\infty$
with positive real part.
In particular, for $l$ large enough,
one has
\begin{equation}   \label{Relbb-mu}
    \operatorname{Re}\lambda_\infty > \mu.
\end{equation}
Here and after, we fix $l$ and $\bar{U}$ such that \eqref{Relbb-mu} holds.

\begin{lemma}  [Generator characterization of $\D$] \label{Lem-D-Oper}
For the operator $(\D, \cald_{vor})$ we have
\begin{enumerate}
  \item[$(i)$] The Schwartz class $\cals$ is dense in $\cald_{vor}$
  with respect to the norm $|||\cdot |||$ in \eqref{Dvor-def}.

  \item[$(ii)$] Resolvent set:
   \begin{align*}
     \rho (\D) \supseteq \{\lambda \in \mathbb{C}: {\rm Re} \lambda>0\}.
   \end{align*}

  \item[$(iii)$] $\D$ is dissipative, i.e.,
     \begin{align}  \label{D-dissip}
        \|(\lambda-\D)\Omega\|_{L^2} \geq \lambda \|\Omega \|_{L^2}, \ \ \forall \lambda >0.
     \end{align}

\end{enumerate}
In particular, the operator
$(\D, \cald_{vor})$
is densely defined, closed
and generates a strongly continuous contraction semigroup on $L^2_{aps}$.
\end{lemma}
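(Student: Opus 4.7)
The plan is to verify items (i)--(iii) in turn, whereupon the final assertion that $(\D,\cald_{vor})$ is densely defined, closed, and generates a $C_0$ contraction semigroup on $L^2_{\aps}$ follows from the Hille--Yosida theorem.

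\textbf{Items (iii) and (i).} For (iii), I would integrate by parts on $\cals$: using $\int_{\R^3}\xi\cdot\na\Omega\,\Omega\,d\xi = -\tfrac{3}{2}\|\Omega\|_{L^2}^2$, one obtains
\[
\langle-\D\Omega,\Omega\rangle_{L^2}=\|(-\Delta)^{\al/2}\Omega\|_{L^2}^2-\tfrac{3}{4\al}\|\Omega\|_{L^2}^2-\tfrac{1}{2\al}\int \xi\cdot\na\Omega\,\Omega\,d\xi=\|(-\Delta)^{\al/2}\Omega\|_{L^2}^2\ge 0,
\]
so the zeroth-order coefficient $3/(4\al)$ in $\D$ is precisely tuned to cancel the boundary term from the transport $\tfrac{\xi}{2\al}\cdot\na$. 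Expanding $\|(\lambda-\D)\Omega\|_{L^2}^2=\lambda^2\|\Omega\|_{L^2}^2-2\lambda\langle\D\Omega,\Omega\rangle+\|\D\Omega\|_{L^2}^2$ then gives \eqref{D-dissip} on $\cals$, which extends to $\cald_{vor}$ via (i). For (i), a standard mollification-and-cutoff scheme $\Omega_{R,\ve}:=\rho_\ve\ast(\chi_R\Omega)$ works, where $\chi_R(\xi)=\chi(\xi/R)$ is a smooth cutoff and $\rho_\ve$ is a mollifier. Convergences in $L^2$ and $H^{2\al}$ are classical; the only delicate point is the weighted term, handled through the commutator
\[
[\xi\cdot\na,\,\rho_\ve\ast]\,f=K_\ve\ast f,\qquad K_\ve(y):=3\rho_\ve(y)+y\cdot\na\rho_\ve(y),
\]
which has zero integral (equivalently $\widehat{K_\ve}(0)=0$), so $\|K_\ve\ast f\|_{L^2}\to 0$ for all $f\in L^2$ by Plancherel and dominated convergence on the Fourier side; the cutoff piece $\xi\cdot\na(\chi_R\Omega)-\xi\cdot\na\Omega$ vanishes in $L^2$ by dominated convergence using $\xi\cdot\na\Omega\in L^2$.

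\textbf{Item (ii).} This is the heart of the matter. I would build the semigroup of $\D$ explicitly via the self-similar transformation that conjugates $\D$ to the fractional heat flow. Motivated by the scaling $\omega(x,t)=t^{-3/(4\al)}\Omega(xt^{-1/(2\al)},\log t)$ that identifies $\partial_\tau-\D$ with $\partial_t+(-\Delta)^\al$, define
\[
S(\tau)\Omega_0(\xi):=e^{3\tau/(4\al)}\bigl(e^{-(e^\tau-1)(-\Delta)^\al}\Omega_0\bigr)(e^{\tau/(2\al)}\xi),\qquad \tau\ge 0.
\]
A change of variables $x=e^{\tau/(2\al)}\xi$ gives $\|S(\tau)\Omega_0\|_{L^2}=\|e^{-(e^\tau-1)(-\Delta)^\al}\Omega_0\|_{L^2}\le \|\Omega_0\|_{L^2}$; the prefactor $e^{3\tau/(4\al)}$ precisely compensates the Jacobian of the dilation, leaving the contractive fractional heat semigroup. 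Strong continuity follows from $L^2$-continuity of both the dilation group and the fractional heat semigroup, and differentiating at $\tau=0$ on $\cals$ shows the generator coincides with $\D$ there. The resolvent is then the Laplace transform $(\lambda-\D)^{-1}G=\int_0^\infty e^{-\lambda\tau}S(\tau)G\,d\tau$ for $\operatorname{Re}\lambda>0$, yielding (ii).

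\textbf{Main obstacle.} The principal technical difficulty in (ii) is the precise identification of the domain of the abstract semigroup generator with $\cald_{vor}$: concretely, one must show that every $L^2_{\aps}$-solution $\Omega$ of $(\lambda-\D)\Omega=G\in L^2_{\aps}$ automatically enjoys $\Omega\in H^{2\al}$ \emph{and} $\xi\cdot\na\Omega\in L^2$ \emph{separately}, rather than merely their combination. This is an elliptic regularity statement that I would tackle via Fourier-side a-priori bounds combined with the mollification scheme of (i). An alternative route, if this regularity issue proves awkward, is a Lumer--Phillips density-of-range argument via the formal adjoint $\D^*=-(-\Delta)^\al-\tfrac{\xi}{2\al}\cdot\na-\tfrac{3}{4\al}$, which is also dissipative by the same integration by parts (the roles of the transport and zeroth-order terms merely flip signs consistently); any $g\in L^2_{\aps}$ orthogonal to $\operatorname{range}(\lambda-\D)$ then satisfies $(\bar\lambda-\D^*)g=0$ distributionally, and mollification combined with dissipativity of $\D^*$ forces $g=0$.
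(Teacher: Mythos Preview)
Your proposal is correct and shares the paper's key idea---the self-similar change of variables that conjugates $\D$ to the fractional heat flow---but the execution differs in two places. For (ii), you write down the semigroup $S(\tau)$ explicitly and read off the resolvent as its Laplace transform; the paper instead transforms the resolvent equation $(\lambda-\D)\Omega=Z$ directly into the inhomogeneous heat problem $\partial_t h+(-\Delta)^\alpha h=g$, $h(0)=0$, via $h(x,t)=t^{\lambda-3/(4\alpha)}\Omega(t^{-1/(2\alpha)}x)$, and then bootstraps energy estimates on dyadic subintervals of $[1/4,1]$ to obtain the parabolic regularity bound $|||\Omega|||\lesssim\|Z\|_{L^2}$. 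That bootstrap is precisely the clean resolution of what you correctly flag as your main obstacle, namely the identification of the abstract generator's domain with $\cald_{vor}$; it yields $\Omega\in H^{2\alpha}$ and $\xi\cdot\nabla\Omega\in L^2$ separately, with no need for a Lumer--Phillips detour. For (i), you give a self-contained mollification-and-cutoff argument (the commutator $K_\ve$ trick is nice), while the paper recycles the resolvent estimate: given $\Omega\in\cald_{vor}$, set $Z:=(\lambda-\D)\Omega$, approximate $Z$ by $Z_n\in\cals$ in $L^2$, note that the heat-flow representation forces $\Omega_n:=(\lambda-\D)^{-1}Z_n\in\cals$, and conclude $|||\Omega_n-\Omega|||\lesssim\|Z_n-Z\|_{L^2}\to 0$. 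Your route to (i) is independent of (ii) and more hands-on; the paper's is shorter but requires the resolvent bound first. Item (iii) is handled identically in both.
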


\begin{proof}
Let $\lambda \in \mathbb{C}$ with ${\rm Re} \lambda >0$.
We first prove that for any $Z\in L^2_{aps}$,
\begin{equation}\label{equation}
   (\lambda-\D) \Omega = (-\Delta)^\alpha \Omega - \frac{\xi}{2\al} \cdot \nabla_{\xi} \Omega
     +(\lambda-\frac{3}{4\alpha}) \Omega=Z
\end{equation}
is uniquely solvable in  $\cald_{vor}$
and
\begin{equation}\label{parabolic_regu1}
   |||\Omega||| := \|\Omega\|_{H^{2\al}}+\left\|\xi \cdot \nabla_{\xi} \Omega\right\|_{L^2} \lesssim\|Z\|_{L^2}.
\end{equation}

To this end, we argue as in \cite[Lemma 2.1]{JS15}.
Let
\begin{align*}
   h(x, t) := t^{\lambda-\frac{3}{4\alpha}} \Omega(t^{-\frac{1}{2\al}} x), \ \
   g(x, t) :=  t^{\lambda -\frac{3}{4\alpha} -1} Z(t^{-\frac{1}{2\al}} x).
\end{align*}
Note that
\begin{align*}
   \partial_t h(x,t)
   = (-\frac{3}{4\alpha} + \lambda) t^{ \lambda -\frac{3}{4\alpha} -1} \Omega(t^{-\frac{1}{2\alpha}} x)
     + (-\frac{1}{2\alpha}) t^{\lambda -\frac{3}{4\alpha} -1} (t^{-\frac{1}{2\alpha}} x)
       (\na \Omega) (t^{-\frac{1}{2\alpha}} x).
\end{align*}
Moreover, by the Fourier transform,
\begin{align*}
   [(-\Delta)^\alpha h(\cdot, t)]^{\wedge}(\eta)
   = |\eta|^{2 \alpha} t^{\lambda+ \frac{3}{4\alpha}} \wh{\Omega}(t^{\frac{1}{2\alpha}} \eta)
   = t^{ \lambda  - \frac{3}{4\alpha} -1} [((-\Delta)^{\alpha} \Omega)(t^{-\frac{1}{2\alpha}} \cdot)]^{\wedge}(\eta),
\end{align*}
which yields that
\begin{align*}
   (-\Delta)^\alpha h (x,t) = t^{\lambda - \frac{3}{4\alpha} -1} ((-\Delta)^{\alpha} \Omega)(t^{-\frac{1}{2\alpha}} x).
\end{align*}

Then, by equation \eqref{equation},
\begin{equation}\label{fractional_heat}
    \pt h+(-\Delta)^\al h=g, \quad h(0) \equiv 0,
\end{equation}
where the zero initial condition
is due to
$\|h(t)\|_{L^2}=t^{{\rm Re} \lambda}\|\Omega\|_{L^2}
\to 0$, as $t\to 0$.

Since
\begin{align}   \label{g-Z}
   \|g(t)\|_{L^2} = t^{{\rm Re \lambda}-1} \|Z\|_{L^2}
\end{align}
is locally integrable in $(0,1)$,
we can write
\begin{align*}
   h(\cdot, t) = \int_0^t e^{-(t-s)(-\Delta)^\alpha} g(s) \dif s,
\end{align*}
which yields that
\begin{align*}
    \|h(\frac 14)\|_{L^2} \leq \int_0^{\frac 14} \|g(s)\|_{L^2} \dif s
    = \int_0^{\frac 14} s^{-1+{\rm Re} \lambda} \dif s \|Z\|_{L^2}
    \lesssim \|Z\|_{L^2}.
\end{align*}

Then, for any $t\geq \frac 14$,
\begin{align*}
   h(t) = h(\frac 14) + \int_{\frac 14}^t e^{-(t-s)(-\Delta)^\alpha} g(s) \dif s.
\end{align*}
Since $g(t,x)$ is no longer singular for $t\in [1/4,1]$,
energy estimate gives
\begin{align*}
   \|h\|_{C([\frac 14, 1]; L^2)} + \|h\|_{L^2(\frac 14, 1; H^\alpha)}
   \lesssim \|h(\frac 14)\|_{L^2} + \|g\|_{C([\frac 14, 1]; L^2)}
   \lesssim \|Z\|_{L^2}.
\end{align*}
In particular, $h(t)\in H^\alpha$ for a.e. $t\in (1/4,1)$.
Applying energy estimate again but with $h(t)\in H^\alpha$ as initial condition
we get for a.e. $t\in (\frac 14,1)$
\begin{align*}
   \|h\|^2_{C([t, 1]; H^\alpha)} + \|h\|^2_{L^2(t, 1; H^{2\alpha})}
   \lesssim \|h(t)\|^2_{H^\alpha} + \|g\|^2_{C([t, 1]; H^\alpha)}
   + \|Z\|_{L^2},
\end{align*}
where the implicit constant is independent of $t$.
Then, integrating over $(1/2, 3/4)$ yields
\begin{align*}
   \|h\|_{C([\frac 34, 1]; H^{\alpha})} + \|h\|_{L^2(\frac 34, 1; H^{2\alpha})}
   \lesssim \|h\|_{L^2(\frac 12, \frac 34; H^\alpha)} + \|g\|_{C([\frac 12, 1]; H^\alpha)}
   +  \|Z\|_{L^2}
   \lesssim \|Z\|_{L^2}.
\end{align*}
Applying the above strategy again we obtain
\begin{align*}
   \|h\|_{C([\frac 78, 1]; H^{2\alpha})} \lesssim \|Z\|_{L^2}.
\end{align*}
This, via equation \eqref{fractional_heat} and \eqref{g-Z}, also yields
\begin{align*}
   \| \partial_t h\|_{C([\frac 78, 1]; L^2)} \lesssim \|Z\|_{L^2}.
\end{align*}

Thus, taking into account
\begin{align*}
   h(1)=\Omega,\ \ \pt h(1)=(-\frac{3}{4\alpha} + \lambda)\Omega -\frac{\xi}{2\al} \cdot \nabla_{\xi} \Omega,
\end{align*}
we obtain  \eqref{parabolic_regu1}, as claimed.

Now,
the first statement follows from estimate \eqref{parabolic_regu1}.
Actually, for any $\Omega \in \cald_{vor}$,
let $Z :=(\lambda - \D) \Omega \in L^2_{aps}$.
Then, take a sequence of Schwartz functions $Z_n$
such that $Z_n \to Z$ in $L^2_{aps}$,
and let $\Omega_n\in \cald_{vor}$ be such that
$(\lambda - \D) \Omega_n = Z_n$.
We have that $\Omega_n\in \cals$,
and by estimate \eqref{parabolic_regu1},
$|||\Omega_n - \Omega||| \lesssim \|Z_n - Z\|_{L^2} \to 0$ as $n\to \infty$.

It also follows from \eqref{parabolic_regu1} that
the second statement $(ii)$ holds.

Regarding the dissipativity of $\D$,
it suffices to prove that
\begin{align}  \label{D-dissp-2}
     (\D \Omega, \Omega) \leq 0,\ \ \forall \Omega \in \cald_{vor}.
\end{align}
Actually, by the integration-by-parts formula,
\begin{align*}
   (\xi \cdot \na \Omega, \Omega)
   = - \frac{3}{2} \|\Omega\|^2_{L^2}.
\end{align*}
This yields that
\begin{align*}
   (\D\Omega, \Omega)
   = - \|\Omega\|_{\dot{H}^\alpha}^2 + \frac{3}{4\alpha} \|\Omega\|_{L^2}^2
      + \frac{1}{2\alpha} (\xi \cdot \na \Omega, \Omega)
   = - \|\Omega\|_{\dot{H}^\alpha}^2 \leq 0.
\end{align*}
Thus, we obtain \eqref{D-dissp-2} and so \eqref{D-dissip}.

Finally, by virtue of the Hille-Yosida theorem (cf. \cite[Theorem 2.6]{EK86}),
it follows that $(\D, \cald_{vor})$ is the generator of a strongly continuous contraction semigroup
on $L^2_{aps}$.
In particular, as a generator,
$(\D, \cald_{vor})$ is closed.
The proof is complete.
\end{proof}

\begin{lemma}  [Relative compactness of $\M$] \label{Lem-M-Oper}
For the operator $(\M, \cald_{vor})$ we have
\begin{enumerate}
  \item[$(i)$] $\M$ is skew-adjoint, dissipative
\begin{align*}
        \|(\lambda-\M)\Omega\|_{L^2_{aps}} \geq \lambda \|\Omega \|_{L^2_{aps}},\ \ \forall \Omega\in L^2_{aps}, \ \ \forall \lambda >0,
     \end{align*}
   and
   \begin{align*}
    \rho(\M) \supseteq \{\lambda \in \mathbb{C}: {\rm Re} \lambda >0\}.
   \end{align*}
   In particular, $(\M, \cald_{vor})$ generates a contraction semigroup on $L_{aps}^2$

  \item[$(ii)$] $\M$ is $\D$-compact, i.e.,
  $\M: (\cald_{vor}, |||\cdot|||_{G}) \to L^2_{aps}$ is compact,
  where $|||\cdot\||_{G}$ denotes the graph norm
  $|||\Omega\||_{G}:= \|\Omega\|_{L^2_{aps}} + \|\D \Omega\|_{L^2_{aps}}$, $\forall \Omega \in \cald_{vor}$.

  \item[$(iii)$] $\M$ is $\D$-bounded, and for any $\ve >0$
  there exists $C_\ve >0$ such that
  \begin{align*}
      \|\M \Omega\|_{L^2_{aps}} \leq \ve \|\D \Omega\|_{L^2_{aps}} + C_\ve \|\Omega\|_{L^2_{aps}},
      \ \ \forall\ \Omega \in \cald_{vor}.
  \end{align*}
\end{enumerate}
In particular,
$(\beta^{-1} \D + \M, \cald_{vor})$ generates a contraction semigroup on $L^2_{aps}$.
\end{lemma}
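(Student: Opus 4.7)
The plan is to handle the three items in order, then invoke a standard perturbation theorem for contraction semigroups. For (i), since $\bar U$ is smooth, compactly supported and divergence-free, integration by parts on Schwartz functions gives $(\bar U\cdot\nabla\Omega,\Phi) = -(\bar U\cdot\nabla\Phi,\Omega)$, so $\M$ is formally skew-symmetric, in particular $\operatorname{Re}(\M\Omega,\Omega)=0$. This immediately yields dissipativity via $\|(\lambda-\M)\Omega\|^2 = \lambda^2\|\Omega\|^2 + \|\M\Omega\|^2 \ge \lambda^2\|\Omega\|^2$. For the skew-adjointness claim, I would fix the natural domain $D(\M)=\{\Omega\in L^2_{aps}:\bar U\cdot\nabla\Omega\in L^2\}$ and use the method of characteristics along the flow of $\bar U$ (or a mollification argument in $\cals$): the flow is globally defined and volume-preserving, so $e^{t\M}$ acts by composition with the flow and is a strongly continuous unitary group, whose generator is skew-adjoint by Stone's theorem. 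The resolvent statement $\rho(\M)\supseteq\{\operatorname{Re}\lambda>0\}$ then follows from the Hille--Yosida theorem applied to $\pm\M$.

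For (ii), I would use Lemma \ref{Lem-D-Oper}: applying estimate \eqref{parabolic_regu1} with $\lambda=1$ shows that the graph norm $\|\Omega\|_{L^2_{aps}}+\|\D\Omega\|_{L^2_{aps}}$ controls $|||\Omega|||$, and in particular bounds $\|\Omega\|_{H^{2\al}}$. Now take a sequence $(\Omega_n)\subseteq\cald_{vor}$ bounded in the graph norm. Then $\nabla\Omega_n$ is bounded in $H^{2\al-1}(\R^3)$, and since $\bar U$ is compactly supported in some ball $B_R$, Rellich's compact embedding $H^{2\al-1}(B_R)\hookrightarrow L^2(B_R)$ (valid because $2\al-1>0$ for $\al\ge 1$) yields a subsequence along which $\nabla\Omega_{n_k}$ converges in $L^2(B_R)$. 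Multiplying by the bounded function $\bar U$ and extending by zero shows $\M\Omega_{n_k}=-\bar U\cdot\nabla\Omega_{n_k}$ is Cauchy in $L^2_{aps}$, proving $\D$-compactness.

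For (iii), I would combine an interpolation inequality with Lemma \ref{Lem-D-Oper}. Since $\bar U\in L^\infty$, one has $\|\M\Omega\|_{L^2}\le\|\bar U\|_{L^\infty}\|\nabla\Omega\|_{L^2}$, and the fractional interpolation $\|\nabla\Omega\|_{L^2}\lesssim\|\Omega\|_{L^2}^{1-\frac{1}{2\al}}\|\Omega\|_{H^{2\al}}^{\frac{1}{2\al}}$ combined with Young's inequality gives $\|\nabla\Omega\|_{L^2}\le\ve'\|\Omega\|_{H^{2\al}}+C_{\ve'}\|\Omega\|_{L^2}$ for arbitrarily small $\ve'>0$. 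Invoking Lemma \ref{Lem-D-Oper} (setting $\lambda=1$ in \eqref{parabolic_regu1}) yields $\|\Omega\|_{H^{2\al}}\lesssim\|\D\Omega\|_{L^2}+\|\Omega\|_{L^2}$, which after adjusting $\ve'$ produces the desired $\D$-bound with arbitrarily small relative constant.

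For the final conclusion, I would appeal to the standard Kato perturbation theorem (see, e.g., \cite{En2000}): if $A$ generates a contraction semigroup and $B$ is dissipative and $A$-bounded with relative bound strictly less than $1$, then $A+B$, with domain $D(A)$, also generates a contraction semigroup. Here $A=\beta^{-1}\D$ generates a contraction semigroup by Lemma \ref{Lem-D-Oper} (the rescaling does not affect the Hille--Yosida argument), $B=\M$ is dissipative by (i), and by (iii) we can make the relative bound of $\M$ with respect to $\beta^{-1}\D$ as small as we wish (simply taking $\ve<\beta^{-1}$ in item (iii) suffices). I expect the main technical obstacle to be the skew-adjointness in (i) -- identifying the correct maximal domain on $L^2_{aps}$ and checking that the characteristic flow argument preserves axisymmetry-with-pure-swirl structure -- whereas the dissipativity, relative boundedness and final perturbation argument are essentially standard once Lemma \ref{Lem-D-Oper} is in hand.
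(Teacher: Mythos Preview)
Your proposal is correct and follows essentially the same route as the paper: integration by parts for (i), graph-norm control of $H^{2\alpha}$ plus Rellich on $\supp\bar U$ for (ii), and the dissipative Kato-type perturbation theorem from \cite{En2000} for the final conclusion. The one noteworthy difference is in (iii): the paper deduces the $\D$-boundedness with arbitrarily small relative bound directly from the $\D$-compactness established in (ii), invoking the abstract fact \cite[Chapter III, Lemma 2.16]{En2000} that any relatively compact perturbation of a generator has $A$-bound zero, whereas you give a hands-on interpolation argument $\|\nabla\Omega\|_{L^2}\le\ve'\|\Omega\|_{H^{2\al}}+C_{\ve'}\|\Omega\|_{L^2}$ combined with \eqref{parabolic_regu1}; both are valid, and your direct computation has the advantage of being self-contained while the paper's approach shows that (iii) is automatic once (ii) is known.
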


\begin{proof}
$(i)$ An application of the integration-by-parts formula
gives the skew-adjointness of $\M$,
which in turn yields dissipativity.
The skew-adjointness also yields that the spectrum of $\M$ belongs to the imaginary axis,
and thus
\begin{align*}
    \rho(\M) \supseteq \{\lambda \in \mathbb{C}: {\rm Re} \lambda >0\}.
\end{align*}
Hence,
in view of the Hille-Yosida theorem,
$(\M,\cald_{vor})$ generates a contraction semigroup on $L^2_{aps}$.

$(ii)$
Regarding the $\cald_{vor}$-compactness,
we take any sequence $\{\Omega_n\} \subseteq  \cald_{vor}$ with
\begin{equation*}
    \sup\limits_{n\in \bbn}
    |||\Omega_n|||_{G} \leq  C <\infty.
\end{equation*}
By estimate \eqref{parabolic_regu1},
$\{\Omega_n\}$ are uniformly bounded in $H^{2 \alpha}$.
Then the compact embedding
$H^{2 \al}_{\supp\bar{U}} \hookrightarrow H^1_{\supp\bar{U}}$
yields that
there exists a subsequence $\{n_k\}$
such that $\{\Omega_{n_k}\}$ is a Cauchy sequence in $H^1_{\supp\bar{U}}$.
This suffices to yield that $\{\M \Omega_{n_k}\}$
is a Cauchy sequence in $L^2_{aps}$ by noticing $\bar{U}$ is smooth and compactly supported,
thereby proving the $\D$-compactness of $\M$.

$(iii)$
Since Lemma \ref{Lem-D-Oper},
$(\D, \cald_{vor})$ is a generator.
Using the $\D$-compactness
and \cite[Chapter III, Lemma 2.16]{En2000}
we thus obtain the $\D$-boundedness of $\M$.

Finally,
since $(\D, \cald_{vor})$ is a generator,
by the statements $(i)$, $(iii)$
and \cite[Chapter III, Theorem 2.7]{En2000},
$(\beta^{-1} \D + \M, \cald_{vor})$ generates a contraction semigroup on $L^2_{aps}$.
\end{proof}

As a consequence of Lemmas \ref{Lem-D-Oper}, \ref{Lem-M-Oper}
and the boundedness of operator $\boldsymbol{S}$
we have

\begin{proposition} (Approximate operators)  \label{Prop-DMS-Oper}
For any $\beta \in (0,\infty)$,
we have
\begin{enumerate}
  \item[$(i)$] Generator:
  the operator $(\beta^{-1} \D + \M + \boldsymbol{S}, \cald_{vor})$
  is densely defined, closed and generates a semigroup on $L^2_{aps}$.

  \item[$(ii)$] Resolvent set:
  \begin{align*}
        \rho (\beta^{-1} \D + \M + \boldsymbol{S}) \supseteq \{\lambda \in \mathbb{C}: {\rm Re} \lambda >\mu\}.
  \end{align*}

  \item[$(iii)$] Uniform resolvent bound:
  for any $\lambda \in \mathbb{C}$ with ${\rm Re} \lambda >\mu$,
  \begin{align}  \label{resol-DMS-bdd}
     R(\lambda, \beta^{-1} \D + \M + \boldsymbol{S}) \leq ({\rm Re} \lambda -\mu)^{-1}.
  \end{align}
\end{enumerate}
\end{proposition}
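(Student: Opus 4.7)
The plan is to assemble the proposition from the groundwork already done in Lemmas \ref{Lem-D-Oper} and \ref{Lem-M-Oper}, together with the boundedness of $\boldsymbol{S}$, via standard bounded perturbation theory for semigroup generators. First, I would invoke the final statement of Lemma \ref{Lem-M-Oper}: $(\beta^{-1}\D+\M,\cald_{vor})$ is the generator of a strongly continuous contraction semigroup $(S_0(t))_{t\ge 0}$ on $L^2_{aps}$, with $\|S_0(t)\|\le 1$ for all $t\ge 0$. In particular, $\cald_{vor}$ is dense in $L^2_{aps}$ and $\beta^{-1}\D+\M$ is closed on this domain.

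Next, since $\boldsymbol{S}$ is everywhere defined and bounded with $\|\boldsymbol{S}\|\le\mu$, the bounded perturbation theorem (e.g.\ Engel--Nagel, Chapter III, Theorem 1.3) applies: on the \emph{same} domain $\cald_{vor}$, the operator $\beta^{-1}\D+\M+\boldsymbol{S}$ generates a strongly continuous semigroup $(S(t))_{t\ge 0}$ on $L^2_{aps}$. This immediately gives statement (i): the perturbed operator is densely defined (domain unchanged), closed (as a generator), and a semigroup generator. Moreover, the variation-of-constants identity
\begin{equation*}
S(t)f = S_0(t)f + \int_0^t S_0(t-s)\boldsymbol{S}\,S(s)f\,\dif s
\end{equation*}
combined with $\|S_0(t)\|\le 1$ and Gr\"onwall's lemma yields the growth bound
\begin{equation*}
\|S(t)\|_{L^2_{aps}\to L^2_{aps}}\le e^{\mu t},\qquad t\ge 0.
\end{equation*}

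Finally, statements (ii) and (iii) follow from the Laplace-transform characterization of the resolvent of a $C_0$-semigroup. For any $\lambda\in\mathbb{C}$ with $\operatorname{Re}\lambda>\mu$, the integral
\begin{equation*}
R(\lambda)f := \int_0^\infty e^{-\lambda t}\,S(t)f\,\dif t,\qquad f\in L^2_{aps},
\end{equation*}
converges absolutely in view of the bound $\|S(t)\|\le e^{\mu t}$, and standard semigroup theory identifies it with $R(\lambda,\beta^{-1}\D+\M+\boldsymbol{S})$. Consequently $\{\lambda:\operatorname{Re}\lambda>\mu\}\subseteq\rho(\beta^{-1}\D+\M+\boldsymbol{S})$, and estimating under the integral gives
\begin{equation*}
\|R(\lambda,\beta^{-1}\D+\M+\boldsymbol{S})\|\le\int_0^\infty e^{-(\operatorname{Re}\lambda-\mu)t}\,\dif t=(\operatorname{Re}\lambda-\mu)^{-1},
\end{equation*}
which is precisely \eqref{resol-DMS-bdd}.

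There is no real technical obstacle here, since the deep work has already been carried out in Lemma \ref{Lem-D-Oper} (dissipativity and generator property of $\D$) and Lemma \ref{Lem-M-Oper} (skew-adjointness, $\D$-smallness of $\M$, and the Engel--Nagel perturbation step for the unbounded $\M$). The only point that must be tracked carefully is that the contractivity of $S_0(t)$ and the explicit bound $\|\boldsymbol{S}\|\le\mu$ combine to give exactly the exponential growth rate $\mu$ rather than some larger constant; this is ensured by the Duhamel iteration above and is what makes the resolvent bound sharp in the form stated.
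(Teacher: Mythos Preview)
Your proposal is correct and follows essentially the same route as the paper: the paper's proof simply cites the bounded perturbation theorem \cite[Chapter~III, Theorem~1.3]{En2000} applied to the contraction semigroup generated by $(\beta^{-1}\D+\M,\cald_{vor})$ from Lemma~\ref{Lem-M-Oper}, which packages together the generator property, the growth bound $\|S(t)\|\le e^{\mu t}$, and the resulting resolvent estimate. You have unpacked that citation into the Duhamel/Gr\"onwall and Laplace-transform steps, but the logic is identical.
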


\begin{proof}
The first statement follows from the stability of semigroups
with respect to bounded perturbations,
see \cite[p158, Theroem 1.3]{En2000}.
The second and third statements
are the standard consequence of boundedness perturbations of resolvents,
see \cite[p158, Theroem 1.3]{En2000}.
\end{proof}

Concerning the inviscid operator $\M + \boldsymbol{S}$,
the following result holds.

\begin{proposition} (Inviscid operator) \label{Prop-MS-Invis}
For the operator $(\M + \boldsymbol{S}, \cald_{vor})$ we have

\begin{enumerate}
  \item[$(i)$] Generator:
  the operator $(\M + \boldsymbol{S}, \cald_{vor})$
  is densely defined, closed and generates a semigroup on $L^2_{aps}$.

  \item[$(ii)$]  Resolvent set:
  \begin{align*}
     \rho(\M +  \boldsymbol{S}) \supseteq \{\lambda \in \mathbb{C}: {\rm Re} \lambda >\mu\},
  \end{align*}
  and for any $\lambda \in \mathbb{C}$ with ${\rm Re} \lambda >\mu$,
\begin{align}\label{resolvent_bd2}
     \left\|R(\lambda,  \M+\boldsymbol{S})\right\|_{L_{\mathrm{aps}}^2 \rightarrow L_{\mathrm{aps}}^2}
        \leq(\mathrm{Re} \lambda-\mu)^{-1}.
\end{align}

  \item[$(iii)$]
  Locally uniform convergence:
For any $\Omega\in L_{\mathrm{aps}}^2$,
\begin{equation}\label{convergence_noK}
    R(\lambda, \beta^{-1} \D+\M+\boldsymbol{S}) \Omega
    \to  R(\lambda, \M+\boldsymbol{S}) \Omega\ \  {\rm  in }\  L^2, \ \
    {\rm as} \ \beta\to \infty,
\end{equation}
locally uniformly in $\{\lambda \in \mathbb{C}: \mathrm{Re} \lambda>\mu\}$.

\end{enumerate}

\end{proposition}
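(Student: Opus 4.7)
My plan is to obtain parts (i) and (ii) by treating $\boldsymbol{S}$ as a bounded perturbation of $\M$, and to prove part (iii) via a resolvent identity applied on a core of $\M + \boldsymbol{S}$, followed by an analyticity argument to promote the pointwise convergence in $\lambda$ to a locally uniform one.

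For parts (i) and (ii), I would invoke Lemma \ref{Lem-M-Oper}(i), which identifies $(\M, \cald_{vor})$ as the generator of a strongly continuous contraction semigroup on $L^2_{\mathrm{aps}}$. Since $\boldsymbol{S}$ is a bounded operator on $L^2_{\mathrm{aps}}$ with operator norm at most $\mu$, the bounded perturbation theorem for generators (see \cite[Chapter III, Theorem 1.3]{En2000}) immediately yields that $\M + \boldsymbol{S}$ is densely defined, closed, and generates a $C_0$-semigroup with exponential bound $\|e^{t(\M + \boldsymbol{S})}\| \leq e^{\mu t}$. The Hille--Yosida resolvent estimate then produces both the inclusion $\{\operatorname{Re}\lambda > \mu\} \subseteq \rho(\M + \boldsymbol{S})$ and the resolvent bound \eqref{resolvent_bd2}.

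For part (iii), fix $\lambda$ with $\operatorname{Re}\lambda > \mu$ and $Z \in L^2_{\mathrm{aps}}$, and set $\Omega_\beta := R(\lambda, \beta^{-1}\D + \M + \boldsymbol{S})Z$, $\Omega_\infty := R(\lambda, \M + \boldsymbol{S})Z$. The core identity I would exploit is obtained by applying $R(\lambda, \beta^{-1}\D + \M + \boldsymbol{S})$ to the relation $(\lambda - \M - \boldsymbol{S})\Omega_\infty = Z$: provided $\Omega_\infty \in \cald_{vor}$, this gives
\[
\Omega_\beta - \Omega_\infty = \beta^{-1}\, R(\lambda, \beta^{-1}\D + \M + \boldsymbol{S})\,\D \Omega_\infty \, ,
\]
and combining with the uniform resolvent bound \eqref{resol-DMS-bdd} from Proposition \ref{Prop-DMS-Oper} yields $\|\Omega_\beta - \Omega_\infty\|_{L^2} \leq \beta^{-1}(\operatorname{Re}\lambda - \mu)^{-1}\|\D \Omega_\infty\|_{L^2}$, which vanishes at rate $\beta^{-1}$. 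To extend this to arbitrary $Z \in L^2_{\mathrm{aps}}$, I would show that the class $(\lambda - \M - \boldsymbol{S})\cald_{vor}$ is dense in $L^2_{\mathrm{aps}}$, equivalently that $\cald_{vor}$ is a core for $\M + \boldsymbol{S}$; together with the uniform bounds $\|R_\beta(\lambda)\|, \|R_\infty(\lambda)\| \leq (\operatorname{Re}\lambda - \mu)^{-1}$, a standard three-term approximation then promotes the convergence to all $Z \in L^2_{\mathrm{aps}}$. Finally, since both $R_\beta(\lambda)$ and $R_\infty(\lambda)$ are holomorphic in $\lambda$ on $\{\operatorname{Re}\lambda > \mu\}$ and uniformly bounded on every compact subset, Vitali's convergence theorem upgrades the strong pointwise convergence into locally uniform convergence, as required.

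The hard part will be establishing the core property of $\cald_{vor}$ for $\M + \boldsymbol{S}$. Because $\M$ is a first-order transport operator, its natural domain $\{\Omega \in L^2_{\mathrm{aps}} : \bar U \cdot \nabla \Omega \in L^2\}$ is a priori strictly larger than $\cald_{vor}$, so one has to exhibit an explicit regularization scheme approximating elements of this natural domain in the graph norm by elements of $\cald_{vor}$. The smoothness and compact support of $\bar U$ and $\bar\Omega$ reduce this to a localized truncation and mollification argument, which I expect to run smoothly but which carries the main technical content of the proof.
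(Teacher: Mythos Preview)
Your treatment of (i) and (ii) is essentially identical to the paper's: both appeal to Lemma~\ref{Lem-M-Oper}(i) and the bounded-perturbation theorem \cite[Chapter III, Theorem 1.3]{En2000}.

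For (iii) your route is genuinely different. The paper proves convergence of the \emph{semigroups} first: for Schwartz data $\Omega_0$ it derives an energy estimate for $e^{\tau(\beta^{-1}\D+\M+\boldsymbol{S})}\Omega_0 - e^{\tau(\M+\boldsymbol{S})}\Omega_0$ showing decay like $C(T)/\beta$, then passes to resolvents via the Laplace transform representation $R(\lambda,A)=\int_0^\infty e^{-\lambda\tau}e^{\tau A}\,\dif\tau$, and finally extends to general data by density. Your resolvent identity
\[
R_\beta(\lambda)Z - R_\infty(\lambda)Z \;=\; \beta^{-1}\,R_\beta(\lambda)\,\D\,R_\infty(\lambda)Z
\]
is more direct and yields the same $\beta^{-1}$ rate without touching the semigroup. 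Both approaches are valid; yours is shorter, while the paper's has the incidental benefit of establishing semigroup convergence, which is not needed later.

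The ``hard part'' you flag, however, is not actually present. By your own part (i), $(\M+\boldsymbol{S},\cald_{vor})$ is a generator with domain exactly $\cald_{vor}$; hence for every $Z\in L^2_{\mathrm{aps}}$ and every $\lambda$ with $\operatorname{Re}\lambda>\mu$ one has $\Omega_\infty=R(\lambda,\M+\boldsymbol{S})Z\in\cald_{vor}$ automatically. There is no larger ``natural'' domain to worry about in this proposition, so no core or regularization argument is needed. Your resolvent identity thus holds for \emph{all} $Z$ directly, and the three-term density step can be dropped. For the locally uniform convergence, your Vitali argument is fine, but you can also read it off the explicit bound: since the graph norm of $\M+\boldsymbol{S}$ on $\cald_{vor}$ is equivalent to $|||\cdot|||$ (closedness plus open mapping), $\lambda\mapsto\|\D R_\infty(\lambda)Z\|_{L^2}$ is continuous on the resolvent set and hence bounded on compacts.
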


\begin{proof}
Since $(\M, \cald_{vor})$ is a generator
and $ \boldsymbol{S}$ is a bounded operator,
the first two assertions follows similarly
from the stability of semigroup and resolvents
with respect to the bounded perturbations, as
in the proof of Proposition \ref{Prop-DMS-Oper}.

Regarding assertion $(iii)$,
let us first consider the initial datum $\Omega_0 \in L_{\aps}^2\cap \cals$.
In view of Proposition \ref{Prop-DMS-Oper} and Proposition \ref{Prop-MS-Invis}(i)-(ii),
$\beta^{-1}\D+\M+\boldsymbol{S}$ and $\M+\boldsymbol{S}$ generate semigroups on $L^2_{aps}$, which are bounded by
\begin{align}  \label{Semi-DMS-bdd}
    \norm{e^{\tau(\beta^{-1} \D+\M+\boldsymbol{S})}}\leq e^{\mu\tau},\ \ \norm{e^{\tau(\M+\boldsymbol{S})}}\leq e^{\mu\tau}.
\end{align}
Moreover, set $\Omega^\beta:=e^{\tau(\beta^{-1}\D+\M+\boldsymbol{S})}\Omega_0$,
$\Omega:=e^{\tau(\M+\boldsymbol{S})}\Omega_0$,
since $\Omega_0 \subseteq \cald_{vor}$,
$\Omega^\beta$ satisfies the hyper-viscous advection-diffusion equation
$$\partial_\tau \Omega^\beta = (\beta^{-1}\D+\M+\boldsymbol{S}) \Omega,$$
i.e.,
\begin{equation*}
    \ptau \Omega^\beta+\frac{1}{\beta}((-\Delta)^\al-\frac{3}{4\al}-\frac{\xi}{2\al} \cdot \nabla) \Omega^\beta+\bar{U} \cdot \nabla \Omega^\beta-\boldsymbol{S} \Omega^\beta=0,
\end{equation*}
and, similarly, $\Omega$ satisfies the inviscid equation
\begin{equation*}
    \ptau \Omega +\bar{U} \cdot \nabla \Omega -\boldsymbol{S} \Omega =0.
\end{equation*}
It follows that
\begin{align*}
& (\Omega^\beta-\Omega)+\frac{1}{\beta} ((-\Delta)^\al-\frac{3}{4\al}-\frac{\xi}{2\al} \cdot \nabla)
 (\Omega^\beta-\Omega)+\bar{U} \cdot \nabla (\Omega^\beta-\Omega)-\boldsymbol{S}
 (\Omega^\beta-\Omega)  \nonumber \\
 =&\frac{1}{\beta} (-(-\Delta)^\al+\frac{3}{4\al}+\frac{\xi}{2\al} \cdot \nabla) \Omega
 =: \frac{1}{\beta} H.
\end{align*}
Note that,
since $\Omega_0\in \cald_{vor}$,
we have $\Omega \in C([0,T];\cald_{vor})$,
and so
\begin{equation}\label{F_control}
    \|H\|_{L^2(0,T;  H_{\xi}^{-\alpha})}
    = \|((-\Delta)^\al-\frac{3}{4\al}-\frac{\xi}{2\al} \cdot \nabla_{\xi}) \Omega\|_{L^2(0,T;  H_{\xi}^{-\alpha})}
    \lesssim \|\Omega\|_{C([0,T];\cald_{vor})}<\infty.
\end{equation}

By the skew-adjointness of the operators
$-\frac{3}{4\al}-\frac{\xi}{2\al}\cdot \nabla$ and $\bar{U} \cdot \nabla$,
we compute
\begin{equation*}
    \begin{aligned}
  &\frac{1}{2} \frac{d}{d \tau} \|\Omega^\beta-\Omega \|_{L^2}^2
      +\frac{1}{\beta}  \|(-\Delta)^{\frac \alpha 2}  (\Omega^\beta-\Omega ) \|_{L^2}^2 \\
  \leq& \mu \|\Omega^\beta-\Omega \|_{L^2}^2+\frac{1}{\beta}  ( H, \Omega^\beta-\Omega ) \\
  \leq& \mu \|\Omega^\beta-\Omega \|_{L^2}^2+\frac{C}{\beta}\|H\|_{H^{-\alpha}}^2+\frac{1}{2 \beta} ( \|\Omega^\beta-\Omega \|_{L^2}^2+ \|(-\Delta)^{\frac \alpha 2}   (\Omega^\beta-\Omega )\|_{L^2}^2 ) ,
   \end{aligned}
\end{equation*}
which yields that
\begin{equation*}
     \frac{d}{d\tau} \|\Omega^\beta-\Omega \|_{L^2}^2
     \leq (2\mu+\frac{1}{\beta} )
       \|\Omega^\beta-\Omega \|_{L^2}^2+\frac{2C}{\beta}\|H\|_{{H}^{-\alpha}}^2.
\end{equation*}
Then, an application of Gronwall's inequality
yields that for any $\tau\in [0,T]$, $T<\infty$,
\begin{equation*}
     \| (\Omega^\beta-\Omega )(\tau) \|^2_{L^2}
     \leq\frac{C}{\beta} e^{(2\mu+\frac{1}{\beta})T}\int_0^T e^{-(2\mu+\frac{1}{\beta})\tau}\|H(\tau)\|_{{H}^{-\alpha}}^2 d\tau,
\end{equation*}
which along with \eqref{F_control} yields
\begin{equation*}
     \|\Omega^\beta-\Omega \|^2_{L^{\infty}(0,T;  L^2)}
    \leq   \frac{C(T)}{\beta} \|\Omega\|_{C([0,T];\cald_{vor})}
     \to  0,\ \ as\ \beta \to \infty.
\end{equation*}

Thus,
using the Laplace transform
(cf. \cite[p55, Theroem 1.10]{En2000}) and \eqref{Semi-DMS-bdd}
we derive that for any $T\in (0,\infty)$,
\begin{align}\label{convergence_est}
& \| [R (\lambda, \beta^{-1} \D+\M+\boldsymbol{S} )-R(\lambda, \M+\boldsymbol{S}) ] \Omega_0 \|_{L^2} \nonumber \\
\leq& \int_0^T e^{-s {\lambda} }\|(\Omega^\beta-\Omega)(s)\|_{L^2} \dif  s
  +\int_T^{+\infty} e^{-s {\lambda} }\|\Omega^\beta(s) \|_{L^2} \dif s
 +\int_T^{+\infty} e^{-s {\lambda} }\|\Omega(s) \|_{L^2} \dif s  \nonumber \\
\leq& \frac{C'(T)}{\beta}+ \frac{2}{\operatorname{Re}\lambda - \mu} e^{-T(\operatorname{Re} \lambda-\mu)} \|\Omega_0 \|_{L^2}.
\end{align}
For any $\lambda_0 \in\{\mathrm{Re} \lambda>\mu\}$,
$r>0$ such that $B_r(\lambda_0) \subseteq \{\rm Re \lambda >\mu\}$,
$\ve>0$,
taking  $T= T(\lambda_0,r,\ve)$ in \eqref{convergence_est} large enough
such that
\[
\frac{2}{\operatorname{Re}\lambda - \mu} e^{-T(\operatorname{Re} \lambda-\mu)} \|\Omega_0 \|_{L^2} < \frac{\ve}{2},
    \ \  \forall \;\lambda\in B_r(\lambda_0) \subseteq \{\mathrm{Re} \lambda>\mu\},
\]
and then taking $\beta$ large enough such that
$\frac{C'(T)}{\beta}<\frac{\ve}{2}$
we obtain the locally uniform convergence \eqref{convergence_noK}
in the case where $\Omega_0 \in  L_{\mathrm{aps}}^2 \cap \cals $.

Regarding the general case where $\Omega_0 \in$ $L_{\text {aps }}^2$,
we can use the uniform boundedness \eqref{resol-DMS-bdd}, \eqref{resolvent_bd2}
and density arguments.

More precisely,
take any sequence
$(\Omega_0^{(n)})_{n\geq 1}\subseteq  L_{\mathrm{aps}}^2 \cap \cals $  such that
\begin{equation}\label{C_infty}
    \Omega_0^{(n)} \to \Omega_0 \text { in } L^2,\ \ as\ n\to \infty.
\end{equation}
Then, by \eqref{resol-DMS-bdd}  and \eqref{resolvent_bd2},
\begin{align*}
       & \|[R (\lambda, \beta^{-1} \D+\M+\boldsymbol{S} )-R(\lambda, \M+\boldsymbol{S}) ] \Omega_0 \|_{L^2}\\
       \leq&\|R (\lambda, \beta^{-1} \D+\M+\boldsymbol{S} ) ( \Omega_0- \Omega_0^{(n)}) \|_{L^2}
             + \|R (\lambda,\M+\boldsymbol{S} ) (\Omega_0^{(n)}- \Omega_0) \|_{L^2} \\
       &+ \|R(\lambda, \beta^{-1} \D+\M+\boldsymbol{S}) \Omega_0^{(n)}-R(\lambda, \M+\boldsymbol{S}) \Omega_0^{(n)} \|_{L^2} \\
       \leq&  2 ({\rm Re} \lambda - \mu)^{-1} \|\Omega_0^{(n)}- \Omega_0\|_{L^2}
       + \|R(\lambda, \beta^{-1} \D+\M+\boldsymbol{S}) \Omega_0^{(n)}-R(\lambda, \M+\boldsymbol{S}) \Omega_0^{(n)} \|_{L^2}.
\end{align*}
Thus,
using  \eqref{C_infty}, \eqref{convergence_noK} with $\Omega_0^{(k)}$ replacing $\Omega_0$
we obtain \eqref{convergence_noK}
for general $\Omega \in L^2_{aps}$.
\end{proof}

Now, let us come back to the operator $\boldsymbol{T}_\beta$ and $\boldsymbol{T}_\infty$
defined in \eqref{Tbeta-def} and \eqref{Tinfty-def}, respectively.
By virtue of Propositions \ref{Prop-DMS-Oper} and \ref{Prop-MS-Invis}
and the compactness of operator $\K$,
one has the invertibility and convergence results below.
See \cite{ABC21,KMS23} for relevant arguments.

\begin{lemma} [Resolvents of $ \boldsymbol{T}_\beta$]  \label{lemma3.4}
For any compact subset $K$ of $\{\operatorname{Re} \lambda>\mu\} \backslash \sig(\boldsymbol{T}_{\infty})$,
there exists $\beta_0>0$
such that
for all $\beta \geq \beta_0$, the operator $\lambda-\boldsymbol{T}_\beta$ is invertible
for all $\lambda \in K$.

Moreover,
for any $\Omega \in L_{\mathrm{aps}}^2$,
\begin{equation*}
    R(\lambda, \boldsymbol{T}_\beta) \Omega \to  R(\lambda, \boldsymbol{T}_{\infty}) \Omega\ \  {\rm in }\ L^2,
    \ \  {\rm as}\ \beta\to \infty,
\end{equation*}
uniformly in $\lambda$ on $K$.
\end{lemma}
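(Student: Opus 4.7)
The plan is to isolate the compact operator $\boldsymbol{K}$ from the rest of $\boldsymbol{T}_\beta$ via a second-resolvent-identity factorization, and then reduce the invertibility and convergence questions for $\lambda - \boldsymbol{T}_\beta$ to a norm-perturbation argument on a Fredholm-type operator $I - R(\lambda, \beta^{-1}\boldsymbol{D}+\boldsymbol{M}+\boldsymbol{S})\boldsymbol{K}$. Concretely, for $\operatorname{Re}\lambda > \mu$, Proposition \ref{Prop-DMS-Oper} gives the existence of $R(\lambda, \beta^{-1}\boldsymbol{D}+\boldsymbol{M}+\boldsymbol{S})$, and I would write
\begin{equation*}
\lambda - \boldsymbol{T}_\beta = \bigl(\lambda - (\beta^{-1}\boldsymbol{D}+\boldsymbol{M}+\boldsymbol{S})\bigr)\bigl(I - R(\lambda, \beta^{-1}\boldsymbol{D}+\boldsymbol{M}+\boldsymbol{S})\boldsymbol{K}\bigr),
\end{equation*}
together with the analogous identity for $\boldsymbol{T}_\infty$. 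Setting $A_\beta(\lambda) := I - R(\lambda, \beta^{-1}\boldsymbol{D}+\boldsymbol{M}+\boldsymbol{S})\boldsymbol{K}$ and $A_\infty(\lambda) := I - R(\lambda, \boldsymbol{M}+\boldsymbol{S})\boldsymbol{K}$, invertibility of $\lambda - \boldsymbol{T}_\beta$ on $K$ is equivalent to invertibility of $A_\beta(\lambda)$, since the first factor is always invertible by Propositions \ref{Prop-DMS-Oper} and \ref{Prop-MS-Invis}. The hypothesis $K \cap \sigma(\boldsymbol{T}_\infty) = \emptyset$ precisely says that $A_\infty(\lambda)$ is invertible for every $\lambda \in K$.

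The heart of the proof is to upgrade the strong resolvent convergence \eqref{convergence_noK} to operator-norm convergence of $R(\lambda, \beta^{-1}\boldsymbol{D}+\boldsymbol{M}+\boldsymbol{S})\boldsymbol{K}$. This is where the compactness of $\boldsymbol{K}$ from the discussion after \eqref{Tinfty-def} enters crucially: the image $\boldsymbol{K}(B)$ of the unit ball $B \subseteq L^2_{\mathrm{aps}}$ is relatively compact, so the pointwise convergence supplied by Proposition \ref{Prop-MS-Invis}(iii), combined with the uniform bounds \eqref{resol-DMS-bdd} and \eqref{resolvent_bd2}, becomes uniform on $\boldsymbol{K}(B)$. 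A standard $\epsilon$/finite-net argument then yields
\begin{equation*}
\bigl\|\bigl[R(\lambda, \beta^{-1}\boldsymbol{D}+\boldsymbol{M}+\boldsymbol{S}) - R(\lambda, \boldsymbol{M}+\boldsymbol{S})\bigr]\boldsymbol{K}\bigr\|_{L^2_{\mathrm{aps}}\to L^2_{\mathrm{aps}}} \longrightarrow 0
\end{equation*}
as $\beta \to \infty$, for each fixed $\lambda$. To promote this to uniformity in $\lambda \in K$, I would cover the compact set $K$ by finitely many small disks and exploit the norm-continuity of $\lambda \mapsto R(\lambda,\boldsymbol{M}+\boldsymbol{S})$ on the resolvent set together with the uniform bound \eqref{resolvent_bd2}; equivalently, the uniform bound on the resolvents implies equicontinuity in $\lambda$, so uniformity follows from compactness of $K$.

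Once $A_\beta(\lambda) \to A_\infty(\lambda)$ in operator norm uniformly on $K$, the continuity of $\lambda \mapsto A_\infty(\lambda)^{-1}$ on the compact set $K$ gives $\sup_{\lambda \in K}\|A_\infty(\lambda)^{-1}\| < \infty$, and a Neumann-series perturbation argument produces $\beta_0 = \beta_0(K)$ such that $A_\beta(\lambda)$ is invertible for all $\beta \geq \beta_0$ and $\lambda \in K$, with $\sup_{\lambda \in K}\|A_\beta(\lambda)^{-1} - A_\infty(\lambda)^{-1}\| \to 0$. Combining this with the factorization and with the strong convergence \eqref{convergence_noK} applied to $\Omega$ gives, for every $\Omega \in L^2_{\mathrm{aps}}$,
\begin{equation*}
R(\lambda, \boldsymbol{T}_\beta)\Omega = A_\beta(\lambda)^{-1} R(\lambda, \beta^{-1}\boldsymbol{D}+\boldsymbol{M}+\boldsymbol{S})\Omega \longrightarrow A_\infty(\lambda)^{-1} R(\lambda, \boldsymbol{M}+\boldsymbol{S})\Omega = R(\lambda, \boldsymbol{T}_\infty)\Omega
\end{equation*}
in $L^2$, uniformly in $\lambda \in K$. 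The main obstacle is the second paragraph: the approximating resolvents converge only strongly, not in norm, so passing to operator-norm convergence of the product with $\boldsymbol{K}$ requires carefully exploiting the relative compactness of $\boldsymbol{K}$, and making this uniform in the spectral parameter $\lambda$ demands the equicontinuity supplied by the uniform resolvent bounds on the whole half-plane $\{\operatorname{Re}\lambda > \mu\}$.
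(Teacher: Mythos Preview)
Your proposal is correct and follows the standard route that the paper points to (the paper itself does not spell out a proof, writing only ``By virtue of Propositions \ref{Prop-DMS-Oper} and \ref{Prop-MS-Invis} and the compactness of operator $\K$ \ldots\ See \cite{ABC21,KMS23} for relevant arguments''). Your factorization $\lambda-\T_\beta=(\lambda-(\beta^{-1}\D+\M+\boldsymbol{S}))(I-R(\lambda,\beta^{-1}\D+\M+\boldsymbol{S})\K)$ and the subsequent Neumann-series perturbation argument are exactly what is intended; one small simplification is that Proposition~\ref{Prop-MS-Invis}(iii) already delivers the convergence \emph{locally uniformly} in $\lambda$, so you do not need a separate covering argument for the $\lambda$-uniformity---the finite-net argument over the totally bounded set $\K(B)$, combined with the uniform bounds \eqref{resol-DMS-bdd}--\eqref{resolvent_bd2}, then gives the norm convergence uniformly in $\lambda\in K$ directly.
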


As a consequence,
in view of the Cauchy Theorem and uniform convergence in Proposition \ref{Prop-MS-Invis},
one has the key instability of  the vorticity operator $\boldsymbol{L}_{\mathrm{vor}}^{(\beta)}$.
See also \cite{KMS23}.

\begin{theorem} [Instability of vorticity operators] \label{theorem_vorticity}
Let $\lambda_\infty$ be an unstable eigenvalue of $\T_\infty$ with $\mathrm{Re}\lambda_\infty>\mu$.
Then, for any $\ve \in (0, \operatorname{Re} \lambda_{\infty}-\mu)$,
there exists $\beta_0>0$ such that
for any $\beta \geq \beta_0$, $\T_\beta$ has an unstable eigenvalue
$\lambda_\beta$ satisfying $\left|\lambda_\beta-\lambda_{\infty}\right|<\ve$.

In particular,
$\boldsymbol{L}_{\mathrm{vor}}^{(\beta)}$ has the  unstable eigenvalue
\begin{equation}\label{expression-lam}
    \widetilde{\lambda}_\beta=\beta\lambda_\beta+ (1-\frac{3}{4\al}).
\end{equation}
\end{theorem}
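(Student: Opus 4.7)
The plan is to deduce the theorem from the resolvent convergence in Lemma \ref{lemma3.4} via a Riesz spectral projection argument, which is the standard way to transfer isolated spectrum under the type of (non-norm) convergence available here.

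First I would establish that $\lambda_\infty$ is \emph{isolated} in $\sigma(\T_\infty)$ with finite algebraic multiplicity. The resolvent bound \eqref{resolvent_bd2} places $\sigma(\M+\boldsymbol{S})$ inside $\{\operatorname{Re}\lambda \leq \mu\}$, and $\K$ is $L^2_{\mathrm{aps}}$-compact (as verified right after \eqref{Tinfty-def}). Writing $\lambda - \T_\infty = (\lambda - \M - \boldsymbol{S})(I - R(\lambda,\M+\boldsymbol{S})\K)$ on $\{\operatorname{Re}\lambda > \mu\}$ and applying the analytic Fredholm theorem to the meromorphic compact-operator pencil $\lambda \mapsto R(\lambda,\M+\boldsymbol{S})\K$ shows that $\sigma(\T_\infty) \cap \{\operatorname{Re}\lambda > \mu\}$ consists solely of isolated eigenvalues of finite algebraic multiplicity. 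Consequently, for the given $\ve \in (0, \operatorname{Re}\lambda_\infty - \mu)$ I can fix $r \in (0,\ve)$ so that the circle $\gamma := \partial B_r(\lambda_\infty)$ lies in $\{\operatorname{Re}\lambda > \mu\} \setminus \sigma(\T_\infty)$ and encloses $\lambda_\infty$ as the only element of $\sigma(\T_\infty)$ inside.

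Next, applying Lemma \ref{lemma3.4} to the compact set $K = \gamma$ yields $\beta_0 > 0$ such that for all $\beta \geq \beta_0$ the operator $\lambda - \T_\beta$ is invertible for every $\lambda \in \gamma$, and $R(\lambda,\T_\beta)\Omega \to R(\lambda,\T_\infty)\Omega$ in $L^2_{\mathrm{aps}}$ uniformly in $\lambda \in \gamma$ for each fixed $\Omega$. I then introduce the Riesz projections
\begin{equation*}
P_\beta := \frac{1}{2\pi i}\oint_\gamma R(\lambda,\T_\beta)\, d\lambda, \qquad P_\infty := \frac{1}{2\pi i}\oint_\gamma R(\lambda,\T_\infty)\, d\lambda.
\end{equation*}
These are bounded projections onto the spectral subspaces associated to the portion of $\sigma(\T_\beta)$, resp.\ $\sigma(\T_\infty)$, enclosed by $\gamma$. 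The uniform strong convergence along the compact contour $\gamma$ upgrades to strong operator convergence $P_\beta \Omega \to P_\infty \Omega$ for every $\Omega \in L^2_{\mathrm{aps}}$. Choosing an eigenfunction $\Omega_\infty$ with $P_\infty \Omega_\infty = \Omega_\infty \neq 0$ and enlarging $\beta_0$, one gets $\|P_\beta \Omega_\infty\|_{L^2} \geq \tfrac{1}{2}\|\Omega_\infty\|_{L^2} > 0$, so $P_\beta \neq 0$. Hence $\sigma(\T_\beta) \cap B_r(\lambda_\infty) \neq \emptyset$, and the same analytic-Fredholm argument applied to $\T_\beta = (\beta^{-1}\D + \M + \boldsymbol{S}) + \K$ (using Proposition \ref{Prop-DMS-Oper} together with the compactness of $\K$) guarantees that every point of this intersection is an eigenvalue. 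This yields an eigenvalue $\lambda_\beta$ of $\T_\beta$ with $|\lambda_\beta - \lambda_\infty| < r < \ve$.

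For the last assertion, the identity $\boldsymbol{L}_{\mathrm{vor}}^{(\beta)} = \beta \T_\beta + (1 - \tfrac{3}{4\al})\,\Id$ from \eqref{Tbeta-def} shows that any $\T_\beta$-eigenfunction for $\lambda_\beta$ is a $\boldsymbol{L}_{\mathrm{vor}}^{(\beta)}$-eigenfunction for $\widetilde{\lambda}_\beta = \beta \lambda_\beta + (1 - \tfrac{3}{4\al})$, and since $\operatorname{Re}\lambda_\beta$ is close to $\operatorname{Re}\lambda_\infty > \mu > 0$ by \eqref{Relbb-mu}, enlarging $\beta_0$ once more makes $\operatorname{Re}\widetilde{\lambda}_\beta > 0$, so $\widetilde{\lambda}_\beta$ is unstable. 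The main obstacle is the first step: justifying that $\lambda_\infty$ is genuinely isolated rather than embedded in the continuous spectrum of $\T_\infty$. Once the Fredholm/compactness input is in place, the rest is a routine application of the holomorphic functional calculus combined with the dominated convergence theorem along $\gamma$.
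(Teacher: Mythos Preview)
Your proposal is correct and is precisely the argument the paper has in mind: the paper does not give a detailed proof but merely states that the result follows ``in view of the Cauchy Theorem and uniform convergence,'' which is exactly the Riesz spectral projection argument you spell out, combining the analytic Fredholm alternative (to isolate $\lambda_\infty$) with Lemma~\ref{lemma3.4} along a small contour $\gamma$. Your write-up fills in the details the paper omits and references to \cite{ABC21,KMS23}.
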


\subsection{Velocity operators}   \label{Subsec-Vel-Oper}

We now turn to the velocity operators
corresponding to the linearized operator of Navier-Stokes equation,
defined by
\begin{equation}  \label{Lvel-def}
    \begin{aligned}
\boldsymbol{L}_{\mathrm{vel}}^{(\beta)} &:
   \cald_{vel} \subseteq L_\sig^2 \rightarrow L_\sig^2, \\
-\boldsymbol{L}_{\mathrm{vel}}^{(\beta)} U
     &:=(\frac{1}{2\al}-1-\frac{\xi}{2\al} \cdot \nabla ) U+(-\Delta)^\al U
       +\beta P_H (\bar{U} \cdot \nabla U+U \cdot \nabla \bar{U}).
\end{aligned}
\end{equation}

Let us rewrite $\boldsymbol{L}_{\mathrm{vel}}^{(\beta)}$ as follows
\begin{align*}
   - \boldsymbol{L}_{\mathrm{vel}}^{(\beta)}
   := (\frac{5}{4\alpha}-1) -  \wt \D - \beta \wt \M- \beta \wt{\boldsymbol{S}}
\end{align*}
with
\begin{equation*}
\begin{aligned}
& - \wt \D U := (-\frac{3}{4\alpha} - \frac{\xi}{2\al} \cdot \nabla) U + (-\Delta)^\al U,   \\
& - \wt \M U:= P_H(\bar{U} \cdot \nabla U),\\
& - \wt{\boldsymbol{S}} U:= P_H(U \cdot \nabla \bar{U}).
\end{aligned}
\end{equation*}

Similarly to Lemmas \ref{Lem-D-Oper} and \ref{Lem-M-Oper},
we have

\begin{lemma}  [Generator characterization of $\wt \D$] \label{Lem-wtD-Oper}
For the operator $\wt \D$ we have
\begin{enumerate}
  \item[$(i)$] The Schwartz class is dense in $\cald_{vel}$
  with respect to the norm $|||\cdot |||$ as in \eqref{Dvor-def}.

  \item[$(ii)$] Resolvent set:
   \begin{align*}
     \rho (\wt \D) \supseteq \{\lambda \in \mathbb{C}: {\rm Re} \lambda>0 \}.
   \end{align*}

  \item[$(iii)$] $\wt \D$ is dissipative, i.e.,
     \begin{align*}
        |(\lambda-\wt \D)\Omega\|_{L^2_{\sigma}} \geq \lambda \|\Omega \|_{L^2_{\sigma}}, \ \ \forall \lambda >0.
     \end{align*}

\end{enumerate}
In particular, the operator
$(\wt \D, \cald_{vel})$
is densely defined, closed and generates a contraction semigroup in $L^2_{aps}$.
\end{lemma}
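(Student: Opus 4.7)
The plan is to adapt the proof of Lemma \ref{Lem-D-Oper} essentially verbatim, observing that $\wt \D$ has the same analytic form as $\D$ and that both the parabolic self-similar scaling and the fractional heat semigroup preserve the divergence-free constraint, so the argument passes unchanged from $L^2_{\mathrm{aps}}$ to $L^2_\sigma$. In particular, if $\nabla\cdot U = 0$ one readily verifies $\nabla\cdot(\xi \cdot \nabla U) = \nabla\cdot U + \xi\cdot\nabla(\nabla\cdot U) = 0$, and $(-\Delta)^\alpha$, being a Fourier multiplier, commutes with $\nabla\cdot$; hence $\wt \D$ indeed maps $\cald_{vel}$ into $L^2_\sigma$ and the entire construction stays inside the solenoidal class.

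For the resolvent bound and statement $(ii)$, I would fix $\lambda \in \mathbb{C}$ with $\mathrm{Re}\,\lambda > 0$ and $Z \in L^2_\sigma$, and solve $(\lambda - \wt \D)U = Z$ by the same similarity ansatz as in the vorticity case, namely
\[
h(x,t) := t^{\lambda - \frac{3}{4\alpha}}\, U\bigl(t^{-\frac{1}{2\alpha}}x\bigr), \qquad g(x,t) := t^{\lambda - \frac{3}{4\alpha} - 1}\, Z\bigl(t^{-\frac{1}{2\alpha}}x\bigr).
\]
This reduces the problem to the inhomogeneous fractional heat equation $\partial_t h + (-\Delta)^\alpha h = g$ with $h(0)\equiv 0$, and since $Z$ is divergence-free so is each $g(\cdot,t)$, hence so is $h$. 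The Duhamel representation together with the local integrability $\|g(t)\|_{L^2} = t^{\mathrm{Re}\lambda-1}\|Z\|_{L^2}$ and the same iterated parabolic regularity bootstrap on $(0,1/4)$, $(1/4,1)$, $\dots$ used in the proof of Lemma \ref{Lem-D-Oper} yields the resolvent estimate $|||U||| \lesssim \|Z\|_{L^2}$. Density of $\cals \cap L^2_\sigma$ in $(\cald_{vel}, |||\cdot|||)$ then follows by approximating $Z$ in $L^2_\sigma$ by divergence-free Schwartz functions and passing through the bounded resolvent.

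Dissipativity is an immediate integration by parts computation: using $(\xi\cdot\nabla U, U) = -\tfrac{3}{2}\|U\|_{L^2}^2$, one obtains
\[
(\wt \D U, U) = -\|U\|_{\dot H^\alpha}^2 + \tfrac{3}{4\alpha}\|U\|_{L^2}^2 + \tfrac{1}{2\alpha}(\xi\cdot\nabla U, U) = -\|U\|_{\dot H^\alpha}^2 \leq 0,
\]
from which $\|(\lambda - \wt \D)U\|_{L^2}\geq \lambda\|U\|_{L^2}$ for $\lambda>0$ follows in the standard way. Combining density, dissipativity, and the fact that $\rho(\wt \D)$ contains the positive real axis, the Hille-Yosida theorem \cite[Theorem 2.6]{EK86} delivers that $(\wt \D, \cald_{vel})$ generates a strongly continuous contraction semigroup on $L^2_\sigma$, and closedness follows automatically from being a generator.

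The only mildly delicate point, which I expect to be the main (though very minor) obstacle, is tracking the divergence-free constraint through every step of the approximation and the parabolic bootstrap; this is automatic by the two commutation observations above, but requires the explicit check that the Duhamel semigroup of the fractional heat equation preserves the solenoidal class so that no Helmholtz projection need be introduced. Beyond this, the proof is a direct transcription of Lemma \ref{Lem-D-Oper}.
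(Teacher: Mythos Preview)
Your proposal is correct and matches the paper's approach: the paper does not give a separate proof for this lemma but simply prefaces it with ``Similarly to Lemmas \ref{Lem-D-Oper} and \ref{Lem-M-Oper}, we have'', i.e., it treats the velocity case as a verbatim transcription of the vorticity argument. Your explicit check that the divergence-free constraint is preserved by the fractional heat semigroup and by the similarity scaling is the only extra ingredient needed, and it is exactly the point one should make when porting the proof from $L^2_{\mathrm{aps}}$ to $L^2_\sigma$.
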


\begin{lemma} [Relative compactness of $\wt \M$] \label{Lem-wtM-Comp}
The operator $(\wt \M, \cald_{vel})$ is dissipative and $\wt \D$-compact, and moreover, $\wt \D$-bounded.
In particular, $(\wt\D+\beta\wt\M,  \cald_{vel})$ generates a contraction semigroup.
\end{lemma}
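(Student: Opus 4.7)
The plan is to follow the vorticity case (Lemma \ref{Lem-M-Oper}) almost verbatim, substituting $(\wt\D,\wt\M,\cald_{vel})$ for $(\D,\M,\cald_{vor})$. The three ingredients of the vorticity proof all have direct counterparts here: skew-symmetry by integration by parts, parabolic regularity from Lemma \ref{Lem-wtD-Oper}, and compact support of $\bar U$. The only added care is the presence of the Helmholtz projection $P_H$, which is harmless since it is an $L^2$-orthogonal projection whose range contains $L^2_\sig$ and hence $\cald_{vel}$.

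\textbf{Dissipativity.} I would first establish that $\wt\M$ is skew-symmetric on $L^2_\sig$, which immediately yields dissipativity. Indeed, for $U \in \cald_{vel} \subseteq L^2_\sig$ one has $P_H U = U$, so
\begin{equation*}
    (\wt\M U, U)_{L^2} = -(P_H(\bar U \cdot \nabla U), U)_{L^2} = -(\bar U \cdot \nabla U, U)_{L^2} = 0,
\end{equation*}
the last equality being integration by parts together with $\div \bar U = 0$. Thus $\mathrm{Re}\,(\wt\M U,U)_{L^2}=0$ and $\|(\lambda - \wt\M)U\|_{L^2} \geq \lambda \|U\|_{L^2}$ for every $\lambda>0$.

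\textbf{$\wt\D$-compactness.} Take $(U_n)\subseteq \cald_{vel}$ with $\|U_n\|_{L^2}+\|\wt\D U_n\|_{L^2}\leq C$. The analogue of the resolvent estimate \eqref{parabolic_regu1} provided by Lemma \ref{Lem-wtD-Oper} (established exactly as in Lemma \ref{Lem-D-Oper} via self-similar reduction to the linear fractional heat equation) yields a uniform bound $\|U_n\|_{H^{2\al}}\lesssim 1$. Since $\bar U$ is smooth and compactly supported, the compact Sobolev embedding $H^{2\al}_{\mathrm{supp}\,\bar U}\hookrightarrow H^1_{\mathrm{supp}\,\bar U}$ (valid because $2\al \geq 2 > 1$) extracts a subsequence $(U_{n_k})$ with $\nabla U_{n_k}$ Cauchy in $L^2(\mathrm{supp}\,\bar U)$. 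Consequently $\bar U\cdot\nabla U_{n_k}$ is Cauchy in $L^2(\R^3)$, and the boundedness of $P_H$ on $L^2$ gives that $\wt\M U_{n_k}$ is Cauchy in $L^2_\sig$.

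\textbf{$\wt\D$-boundedness and semigroup generation.} From Lemma \ref{Lem-wtD-Oper}, $(\wt\D,\cald_{vel})$ is a generator of a contraction semigroup on $L^2_\sig$; combining this with the $\wt\D$-compactness just proved and invoking \cite[Chapter III, Lemma 2.16]{En2000} (exactly as in step (iii) of Lemma \ref{Lem-M-Oper}) gives the $\wt\D$-boundedness, indeed with relative bound $0$: for every $\ve>0$,
\begin{equation*}
    \|\wt\M U\|_{L^2} \leq \ve \|\wt\D U\|_{L^2} + C_\ve \|U\|_{L^2}, \qquad \forall U \in \cald_{vel}.
\end{equation*}
Consequently $\beta\wt\M$ also has relative bound $0$ with respect to $\wt\D$. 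Since $\beta\wt\M$ is dissipative and $(\wt\D,\cald_{vel})$ generates a contraction semigroup, \cite[Chapter III, Theorem 2.7]{En2000} (the dissipative perturbation theorem in the contraction-semigroup setting) applies and yields that $(\wt\D + \beta\wt\M, \cald_{vel})$ generates a contraction semigroup on $L^2_\sig$.

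The main obstacle I anticipate is purely verifying that the Helmholtz projection causes no complications in either the dissipativity computation or the compactness argument; once one notes that $P_H$ is self-adjoint and bounded on $L^2$ and leaves $\cald_{vel}$ invariant, every step of the vorticity proof transports without change.
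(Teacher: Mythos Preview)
Your proposal is correct and follows precisely the approach the paper intends: the paper does not give a separate proof of this lemma but simply states it as the velocity-side analogue of Lemma~\ref{Lem-M-Oper}, and your argument carries out exactly that analogy, correctly handling the one new ingredient (the Helmholtz projection $P_H$) via its self-adjointness and the fact that $P_H$ acts as the identity on $L^2_\sigma$.
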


Note that $\bar{U}$ is smooth and compactly supported, $\wt{\boldsymbol{S}}$ is $L_{aps}^2\to L^2$ bounded.
As a consequence of Lemma \ref{Lem-wtM-Comp} and the stability theory of semigroups
with respect to bounded perturbations (cf. \cite[p158, Theroem 1.3]{En2000}),
we derive the following.

\begin{proposition} [Generator characterization of $L_{vel}^{(\beta)}$]  \label{Prop-PL-Oper}
For any $\beta \in (0,\infty)$,
the operator $L_{vel}^{(\beta)}$
is densely defined, closed and generates a semigroup $\{e^{\tau  L_{vel}^{(\beta)}} \}$
on $L^2_{\sigma}$.
\end{proposition}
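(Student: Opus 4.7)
The plan is to realize $\boldsymbol{L}_{\mathrm{vel}}^{(\beta)}$ as a bounded perturbation of a semigroup generator that has already been identified in Lemma \ref{Lem-wtM-Comp}, and then apply the standard stability result of \cite[Ch.~III, Thm.~1.3]{En2000}. Concretely, I would first record the algebraic decomposition
\begin{equation*}
    \boldsymbol{L}_{\mathrm{vel}}^{(\beta)} \;=\; (\wt \D + \beta\wt \M) \;+\; \beta\,\wt{\boldsymbol{S}} \;-\; \Bigl(\tfrac{5}{4\alpha}-1\Bigr)I\, ,
\end{equation*}
with common domain $\cald_{vel}$, which is the reformulation introduced right before the statement.

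The first block $(\wt \D + \beta\wt \M, \cald_{vel})$ is, by Lemma \ref{Lem-wtM-Comp}, the generator of a $C_0$-contraction semigroup on $L^2_\sigma$. In particular it is densely defined and closed, so all prerequisites of the bounded perturbation theorem are met by this piece. Next I would observe that the remaining terms $\beta\wt{\boldsymbol{S}} - (\tfrac{5}{4\alpha}-1)I$ are bounded operators on $L^2_\sigma$: since $\bar U\in C^\infty_c$ and $P_H$ is $L^2$-bounded,
\begin{equation*}
    \|\wt{\boldsymbol{S}}\,U\|_{L^2} \;=\; \|P_H(U\cdot\nabla\bar U)\|_{L^2} \;\le\; \|\nabla\bar U\|_{L^\infty}\,\|U\|_{L^2}\, ,
\end{equation*}
so $\wt{\boldsymbol{S}}:L^2_\sigma\to L^2_\sigma$ is bounded, with a norm controlled by a constant depending only on $\bar U$.

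Applying \cite[Ch.~III, Thm.~1.3]{En2000} to the generator $(\wt \D + \beta\wt \M, \cald_{vel})$ and the bounded perturbation $\beta\wt{\boldsymbol{S}} - (\tfrac{5}{4\alpha}-1)I$ then yields at once that $\boldsymbol{L}_{\mathrm{vel}}^{(\beta)}$, with the \emph{same} domain $\cald_{vel}$, is densely defined, closed, and generates a $C_0$-semigroup $\{e^{\tau \boldsymbol{L}_{\mathrm{vel}}^{(\beta)}}\}_{\tau\ge 0}$ on $L^2_\sigma$. The semigroup is no longer contractive in general, but picks up at most an exponential growth bound of the form $e^{(\beta\|\wt{\boldsymbol{S}}\|+|1-\frac{5}{4\alpha}|)\tau}$, which is all that is required for the statement.

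There is essentially no serious obstacle here, as the heavy lifting was already performed in Lemmas \ref{Lem-wtD-Oper}--\ref{Lem-wtM-Comp}: the dissipativity/Hille--Yosida analysis for $\wt\D$, and the relative compactness estimate needed to incorporate the advection term $\beta\wt\M$. The only mild care point is to confirm that $P_H$ preserves $L^2_\sigma$ and commutes with the algebraic decomposition so that the domain $\cald_{vel}$ is indeed shared by all three summands; this is immediate from the definition of $\cald_{vel}$ and the $L^2$-boundedness of $P_H$.
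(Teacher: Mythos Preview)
Your proof is correct and follows essentially the same approach as the paper: decompose $\boldsymbol{L}_{\mathrm{vel}}^{(\beta)}$ as the generator $(\wt\D+\beta\wt\M,\cald_{vel})$ from Lemma~\ref{Lem-wtM-Comp} plus the bounded perturbation $\beta\wt{\boldsymbol{S}}-(\tfrac{5}{4\alpha}-1)I$, then invoke \cite[Ch.~III, Thm.~1.3]{En2000}. The paper's argument is exactly this, stated in a single sentence immediately before the proposition.
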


Theorem \ref{Thm-Lvel-eign} below provides the crucial instability of velocity operators.
See also \cite{KMS23}.
Below we give the more detailed proof.

\begin{theorem}  [Instability of velocity operators]    \label{Thm-Lvel-eign}
There exists $\beta_0>0$
such that for any $\beta\geq\beta_0$,
the unstable eigenvalue $\widetilde{\lambda}_\beta$ given by \eqref{expression-lam}
is also an unstable eigenvalue of $\boldsymbol{L}_{\text{vel}}^{(\beta)}$.
\end{theorem}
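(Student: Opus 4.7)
The central idea is to transfer the instability established at the vorticity level (Theorem \ref{theorem_vorticity}) to the velocity level via the Biot-Savart correspondence. Let $\beta_0$ be as provided by Theorem \ref{theorem_vorticity}, fix $\beta\ge \beta_0$, and let $\Omega_\beta\in \cald_{vor}$ be the eigenfunction of $\boldsymbol{L}_{\mathrm{vor}}^{(\beta)}$ with eigenvalue $\widetilde{\lambda}_\beta$. Define the candidate velocity eigenfunction
\[
U_\beta:=\bs(\Omega_\beta)\, .
\]
Because $\Omega_\beta$ is axisymmetric pure-swirl, $U_\beta$ is an axisymmetric no-swirl, divergence-free field, and $\operatorname{curl} U_\beta = \Omega_\beta$. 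The plan is to show $U_\beta\in \cald_{vel}$ and that $\boldsymbol{L}_{\mathrm{vel}}^{(\beta)} U_\beta = \widetilde{\lambda}_\beta U_\beta$.

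The key analytic tool is the intertwining identity
\[
\operatorname{curl}\bigl(\boldsymbol{L}_{\mathrm{vel}}^{(\beta)} U\bigr) = \boldsymbol{L}_{\mathrm{vor}}^{(\beta)}\bigl(\operatorname{curl} U\bigr)
\]
valid for sufficiently smooth divergence-free $U$. I would check this term by term: (i) $\operatorname{curl}\circ P_H = \operatorname{curl}$ kills the pressure projection; (ii) the standard vector-calculus identity gives $\operatorname{curl}(\bar U\cdot\nabla U + U\cdot\nabla \bar U) = [\bar U,\Omega]+[U,\bar\Omega]$, where $\Omega=\operatorname{curl} U$; (iii) the commutator $[\operatorname{curl}, \tfrac{\xi}{2\alpha}\cdot\nabla] = \tfrac{1}{2\alpha}\operatorname{curl}$ absorbs the mismatch between the constants $\tfrac{1}{2\alpha}-1$ in $\boldsymbol{L}_{\mathrm{vel}}^{(\beta)}$ and $-1$ in $\boldsymbol{L}_{\mathrm{vor}}^{(\beta)}$; (iv) the fractional Laplacian commutes with $\operatorname{curl}$.

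With the intertwining in hand, set $V:=\boldsymbol{L}_{\mathrm{vel}}^{(\beta)} U_\beta - \widetilde{\lambda}_\beta U_\beta$. By construction $V\in L^2_\sigma$, so $\operatorname{div} V=0$, while
\[
\operatorname{curl} V = \boldsymbol{L}_{\mathrm{vor}}^{(\beta)}\Omega_\beta - \widetilde{\lambda}_\beta \Omega_\beta = 0\, .
\]
Thus $V$ is a divergence-free, curl-free $L^2$ vector field on $\mathbb{R}^3$, hence harmonic with $L^2$ integrability, so $V\equiv 0$ by Liouville. This yields the sought eigenrelation and therefore $\widetilde{\lambda}_\beta \in \sigma(\boldsymbol{L}_{\mathrm{vel}}^{(\beta)})$ with positive real part.

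The main technical obstacle is justifying that $U_\beta$ really lies in $\cald_{vel}$. Biot-Savart yields better spatial regularity for $U_\beta$ than for $\Omega_\beta$, but it does not automatically place $U_\beta$ in $L^2(\mathbb{R}^3)$, so one must invoke the axisymmetric-pure-swirl structure together with the parabolic a priori bound \eqref{parabolic_regu1} to secure $L^2$-integrability, $H^{2\alpha}$-regularity, and control of $\xi\cdot\nabla U_\beta$. Once these estimates are in place, the intertwining computation above is rigorous on $\cald_{vel}$ and the Liouville argument closes the proof.
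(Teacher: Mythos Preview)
Your strategy mirrors the paper's: lift the vorticity eigenfunction $\Omega_\beta$ to $U_\beta=\bs(\Omega_\beta)$, verify the intertwining identity $\operatorname{curl}\circ\boldsymbol{L}_{\mathrm{vel}}^{(\beta)}=\boldsymbol{L}_{\mathrm{vor}}^{(\beta)}\circ\operatorname{curl}$, and deduce the velocity eigenrelation (your Liouville step makes explicit what the paper leaves implicit). The discrepancy is in how you propose to establish $U_\beta\in\cald_{vel}$, in particular $U_\beta\in L^2$.

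Your suggestion to rely on the axisymmetric-pure-swirl structure together with \eqref{parabolic_regu1} does not by itself deliver $U_\beta\in L^2$: the bound \eqref{parabolic_regu1} only controls $\|\Omega_\beta\|_{H^{2\alpha}}+\|\xi\cdot\nabla\Omega_\beta\|_{L^2}$, and Biot-Savart from $L^2$ vorticity merely places $U_\beta$ in $\dot H^1\cap L^6$. The paper's key observation is that the \emph{eigenfunction equation itself} gives extra decay. Writing
\[
\bigl(\widetilde{\lambda}_\beta-1-\tfrac{\xi}{2\alpha}\cdot\nabla+(-\Delta)^\alpha\bigr)\Omega_\beta=G, \qquad G:=-\beta[\bar U,\Omega_\beta]-\beta[U_\beta,\bar\Omega],
\]
the compact support of $\bar U,\bar\Omega$ forces $G\in L^1$. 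Undoing the similarity variables converts this to a fractional heat equation with $L^1$ forcing and zero initial data (here one uses $\operatorname{Re}\widetilde{\lambda}_\beta>2$, arranged by enlarging $\beta_0$); the $L^1\to L^1$ boundedness of the heat semigroup then yields $\Omega_\beta\in L^1$, hence $\Omega_\beta\in L^{6/5}$ by interpolation with $L^2$, and finally $U_\beta\in L^2$ by the $L^{6/5}\to L^2$ Biot-Savart estimate. A parallel argument (applying Biot-Savart to $(\tfrac{3}{4\alpha}-1-\D)\Omega_\beta\in L^{6/5}$) delivers $\xi\cdot\nabla U_\beta\in L^2$. Without this $L^1$ step your plan does not close.
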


\begin{proof}
In view of Theorem \ref{theorem_vorticity},
we may let $\widetilde{\lambda}_\beta>2$ by
taking $\beta$  large enough.
Let
$\Omega\in \cald_{vor}$
be the corresponding eigenfunction of $\boldsymbol{L}_{\mathrm{vor}}^{(\beta)}$,
i.e.,
$\boldsymbol{L}_{\mathrm{vor}}^{(\beta)} \Omega = \wt \lambda_\beta \Omega$,
and set $U:=\bs(\Omega)$.

Let us first show the algebraic identity relating the
velocity and vorticity operators
\begin{align}\label{algebra}
    \curl \boldsymbol{L}_{\text {vel }}^{(\beta)} U=\boldsymbol{L}_{\text {vor }}^{(\beta)}\Omega
\end{align}
when $U\in  \cald_{vel}$. Indeed, for the Eulerian part of $\boldsymbol{L}_{\mathrm{vel}}^{(\beta)}$,
we compute
\begin{align*}
    &\curl [P_H(\bar{U}\cdot\nabla U+U\cdot\nabla \bar{U})]\\
    =&\curl (\nabla(\bar{U}\cdot U)-\bar{U}\times\Omega-U\times\bar{\Omega})\\
    =&-\bar{U}\div \Omega+\Omega\div\bar{U}-\Omega\cdot\nabla\bar{U}+\bar{U}\cdot\nabla\bar{\Omega}\\
     &-U\div\bar{\Omega}+\bar{\Omega}\div U-\bar{\Omega}\cdot\nabla U+U\cdot\nabla\bar{\Omega}= [\bar{U}, \Omega]+[U, \bar{\Omega}].
\end{align*}
For the remaining terms in $\boldsymbol{L}_{\mathrm{vel}}^{(\beta)}$, we have
\begin{align*}
    &\curl  (\frac{1}{2\al}-1-\frac{\xi}{2\al}\cdot\nabla+(-\Delta)^\al)U\\
    =& (\frac{1}{2\al}-1+(-\Delta)^\al )\Omega+ \curl
        [U\cdot\nabla\frac{\xi}{2\al} + \frac{\xi}{2\al}\times\Omega + U\times\curl\frac{\xi}{2\al} - \nabla(\frac{\xi}{2\al}\cdot U)]\\
    =&(\frac{1}{\al}-1+(-\Delta)^\al)\Omega + \curl (\frac{\xi}{2\al}\times \Omega)\\
    =&(\frac{1}{\al}-1+(-\Delta)^\al)\Omega + [  \frac{\xi}{2\al} \div \Omega - \Omega \div \frac{\xi}{2\al} + \Omega\cdot\nabla \frac{\xi}{2\al}-\frac{\xi}{2\al}\cdot\nabla \Omega]\\
    =& (-1-\frac{\xi}{2\al}\cdot\nabla +(-\Delta)^\al )\Omega.
\end{align*}
Combining the above identities together
we obtain \eqref{algebra}, as claimed.

Below we  show that
$U  \in \cald_{vel}$, which along with \eqref{algebra} implies that
$U$ is exactly the eigenfunction of $\boldsymbol{L}_{\text {vel }}^{(\beta)}$,
i.e.,
$\boldsymbol{L}_{\text {vel }}^{(\beta)} U = \wt \lambda_\beta U$.

For this purpose,
we first claim that $\Omega \in L^1$.
This together with $\Omega \in L^2$
yield that $\Omega \in L^{\frac{6}{5}}$ by interpolation,
which,
via the Biot-Savart law (cf. \cite[Theorem 12.2]{RRS16}),
gives
$U \in L^2$.

In order to prove that $\Omega\in L^1$,
we note that
$\Omega$ satisfies the equation
\begin{equation}\label{eigenfunction_1}
    \widetilde{\lambda}_\beta \Omega
     -(1+\frac{\xi}{2\al} \cdot \nabla) \Omega+(-\Delta)^\al \Omega
     = - \beta[\bar{U}, \Omega]
       - \beta[U, \bar{\Omega}]
    =:G.
\end{equation}
Since $ \bar{\Omega}$
and $\bar{U}$ are compactly supported,
it follows that $G\in L^1$.
Undoing the similarity variables
\begin{equation}  \label{h-Omeg-g-G-sim}
    h(x, t) :=t^{\widetilde{\lambda}_\beta-1} \Omega ({t^{-\frac{1}{2\al}}}x),
    \quad g(x, t): =t^{\widetilde{\lambda}_\beta-2} G ({t^{-\frac{1}{2\al}}}x),
\end{equation}
we derive from \eqref{eigenfunction_1}
the  hyper-viscous heat equation
\begin{equation}  \label{h-g-eqa}
    \pt h+(-\Delta)^\al h=g, \quad h(0) \equiv 0,
\end{equation}
where the zero initial condition is due to the fact that
$\|h(t)\|_{L^2}=t^{\operatorname{Re} \widetilde{\lambda}_\beta+\frac{3}{4\al}-1}\|\Omega\|_{L^2}$ $\rightarrow 0$ as $t \rightarrow 0^{+}$.
Note that, for the inhomogeneous part,
\begin{equation*}
    \|g(t)\|_{L^1}=t^{\operatorname{Re} \widetilde{\lambda}_\beta+\frac{3}{2\al}-2}\|G\|_{L^1} \in C([0,1]).
\end{equation*}
Then, by \eqref{h-Omeg-g-G-sim} and \eqref{h-g-eqa},
\begin{equation*}
    \Omega =h(1)= \int_0^1 e^{-(1-s)(-\Delta)^\al} g(s) \dif s,
\end{equation*}
which along with the boundedness
$e^{-(1-s)(-\Delta)^\al} \in \mathcal{L} (L^1, L^1)$ yields
that $\Omega\in L^1$,
as claimed.

It remains to prove that
$U\in H^{2\al}$  and
$\xi\cdot\nabla U\in L^2$.

To this end,
since $U\in L^2$ and
$\norm{\nabla U}_{H^{2\al-1}}\lesssim \norm{\Omega}_{H^{2\al-1}}<\infty$
(cf. \cite[Lemma 1.17]{BV22}),
it follows immediately that $U\in H^{2\alpha}$.

Regarding the regularity $\xi\cdot\nabla U\in L^2$,
since by the above arguments $\Omega\in L^1$ and $G\in L^1$,
we infer from \eqref{eigenfunction_1}  that
$(\frac{3}{4\al}-1-\D)\Omega\in L^1$.
Taking into account $\Omega \in \cald_{vor}$,
$(\frac{3}{4\al}-1-\D)\Omega\in L^2$,
we use the interpolation to derive that
$(\frac{3}{4\al}-1-\D)\Omega \in L^{\frac{6}{5}}$.
Then,
applying the $L^p$-estimate of Biot-Savart law (see \cite[Theorem 12.2]{RRS16})
to the following identity
\begin{align*}
    \bs\circ (\frac{3}{4\alpha} -1 - \D ) \Omega
    = (\frac{5}{4\alpha} - 1 - \wt \D ) U
\end{align*}
we get $(\frac{5}{4\alpha} - 1 - \wt \D ) U\in L^2$,
which, via  $U\in H^{2\alpha}$,
yields that $\xi\cdot\nabla U\in L^2$,
thereby finishing the proof.
\end{proof}

In the following
we fix $\beta$ large enough
such that Theorem \ref{Thm-Lvel-eign} holds.
We abuse the notation, write $\bar{U}$ for $\beta \bar{U}$ in \eqref{Lvel-def}
and set
\begin{equation*}
    \begin{aligned}
   \lss &:  \cald_{vel}  \subseteq L_\sig^2 \rightarrow L_\sig^2, \\
   -\lss U & := (\frac{1}{2\al}-1-\frac{\xi}{2\al} \cdot \nabla) U+(-\Delta)^\al U+ P_H(\bar{U} \cdot \nabla U+U \cdot \nabla \bar{U})
\end{aligned}
\end{equation*}
with $U=\bs(\Omega)$.

Then, in view of Proposition \ref{Prop-PL-Oper} and Theorem \ref{Thm-Lvel-eign},
$\lss$  has an unstable eigenvalue,
which is greater than $1$ for $\beta$ large enough,
and generates a semigroup $\{e^{\tau \lss}\}$ on $L^2_{\sigma}$.

\subsection{Semigroups}
This subsection collects several key estimates for the semigroup $(e^{\tau \lss})$ generated by $\lss$.
Let us  begin with the small time estimate of the semigroup.

\begin{lemma} [Small time growth] \label{Lem-Semigroup-Small}
For every $k_2 \geq k_1\geq 0$,
$k_1, k_2 \in \mathbb{N}$,
we have
\begin{align}  \label{Semi-smallt-esti}
    \tau^{\frac{k_2 - k_1}{2\al}}\norm{ e^{\tau \lss} U_0}_{H^{k_2}}
      \leq C \|U_0\|_{H^{k_1}},\ \ \forall\ U_0\in L^2_{\sig}\cap H^{k_1}, \ \ \forall\ \tau\in [0,2].
\end{align}
In particular, by interpolation,
\begin{align}\label{Semi-smallt-esti-L2}
    \tau\norm{(-\Delta)^\al e^{\tau \lss} U_0}_{L^2}
    \leq& C \norm{U_0}_{L^2},\ \ \forall\ U_0\in L^2_{\sig}, \ \ \forall\ \tau\in [0,2].
    \end{align}
\end{lemma}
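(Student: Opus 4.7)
We decompose
\begin{equation*}
\lss = \wt\D - c + \wt\M + \wt{\boldsymbol{S}}, \qquad c := \tfrac{5}{4\alpha}-1,
\end{equation*}
where $\wt\D$ is the scale-invariant drift--diffusion operator introduced in Subsection \ref{Subsec-Vel-Oper}, and $\wt\M + \wt{\boldsymbol{S}}$ involves only the smooth, compactly supported profile $\bar U$. The plan is to first establish the claimed smoothing for $e^{\tau\wt\D}$ via an explicit representation in terms of the hyper-viscous heat semigroup, and then extend it to $e^{\tau\lss}$ by a Duhamel perturbation that is bootstrapped one derivative at a time.

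\textbf{The main estimate for $e^{\tau\wt\D}$.} The similarity change of variables $u(x,t) := t^{-3/(4\alpha)} U(xt^{-1/(2\alpha)}, \log t)$ transforms $\partial_\tau U = \wt\D U$ into the free hyper-viscous heat equation $\partial_t u + (-\Delta)^\alpha u = 0$ with data $u(\cdot,1) = U_0$. Inverting this yields
\begin{equation*}
(e^{\tau\wt\D} U_0)(\xi) = e^{(3/(4\alpha))\tau}\bigl(e^{-(e^\tau - 1)(-\Delta)^\alpha} U_0\bigr)(e^{\tau/(2\alpha)}\xi).
\end{equation*}
Combining the standard smoothing bound $\|e^{-s(-\Delta)^\alpha}\|_{H^{k_1}\to H^{k_2}} \les s^{-(k_2-k_1)/(2\alpha)}$ with a rescaling in $\xi$ and the elementary inequality $e^\tau - 1 \geq \tau$ on $[0,2]$ gives the claimed estimate for $e^{\tau\wt\D}$.

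\textbf{Perturbation, main obstacle, and interpolation.} By Duhamel,
\begin{equation*}
e^{\tau\lss}U_0 = e^{-c\tau} e^{\tau\wt\D} U_0 + \int_0^\tau e^{-c(\tau-s)} e^{(\tau-s)\wt\D} (\wt\M + \wt{\boldsymbol{S}}) e^{s\lss}U_0 \, ds.
\end{equation*}
Taking $k_1 = k_2$ first, a Gronwall argument on the self-consistent quantity $\|e^{\tau\lss}U_0\|_{H^{k_1}}$ yields $H^{k_1}$-boundedness of $e^{\tau\lss}$, since the one-derivative loss in $\wt\M = -P_H(\bar U\cdot\nabla)$ is compensated by the integrable singularity $(\tau-s)^{-1/(2\alpha)}$ produced by the $\wt\D$-smoothing (integrable as $2\alpha \geq 2$). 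The general case $k_2 > k_1$ then follows by bootstrapping one derivative at a time: inserting the previously established $H^{k}$-bound (with blow-up $\tau^{-(k-k_1)/(2\alpha)}$) into the Duhamel integral produces a Beta-type integral which converges since $1/(2\alpha) < 1$ and $(k-k_1)/(2\alpha) < 1$ at each step, thereby upgrading the estimate to the next level $H^{k+1}$. The principal obstacle is precisely this one-derivative loss in $\wt\M$; crucially, the unbounded coefficient $\xi$ in the drift $\xi\cdot\nabla$ is never seen directly because it has been absorbed into the explicit formula for $e^{\tau\wt\D}$, leaving only smooth compactly supported perturbations. Finally, the interpolation claim $\tau\|(-\Delta)^\alpha e^{\tau\lss}U_0\|_{L^2} \les \|U_0\|_{L^2}$ follows from the main estimate applied with $k_1 = 0$ and an integer $k_2 \geq 2\alpha$ (e.g.\ $k_2 = 3$, since $\alpha < 5/4$), combined with the Sobolev interpolation $\|(-\Delta)^\alpha v\|_{L^2} \les \|v\|_{L^2}^{1-2\alpha/k_2}\|v\|_{H^{k_2}}^{2\alpha/k_2}$.
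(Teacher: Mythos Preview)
Your overall strategy---extract an explicit formula for $e^{\tau\wt\D}$ from the heat semigroup via the similarity transform, then treat $\wt\M+\wt{\boldsymbol{S}}$ by Duhamel---is correct and is in fact isomorphic to the paper's approach: the paper simply undoes the similarity transform on the \emph{full} equation $\partial_\tau U = \lss U$ at once, arriving at $\partial_t h + (-\Delta)^\alpha h = -P_H\div(\bar h\otimes h + h\otimes\bar h)$ in physical variables, and then writes Duhamel against $e^{-t(-\Delta)^\alpha}$. Your explicit formula for $e^{\tau\wt\D}$ is exactly what this change of variables produces for the free part. The $k_1=k_2$ Gronwall step you describe is also correct and corresponds to the ``energy estimate'' $\|h\|_{C_tH^{k_1}}\les\|h_0\|_{H^{k_1}}$ that the paper invokes.

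There is, however, a genuine gap in your bootstrap from $H^{k}$ to $H^{k+1}$. You write that one inserts the $H^{k}$-bound (with blow-up $s^{-(k-k_1)/(2\alpha)}$) and that the resulting Beta integral converges because $(k-k_1)/(2\alpha)<1$. This is not right on two counts. First, since $\wt\M$ loses one derivative, bounding $\|(\wt\M+\wt{\boldsymbol S})e^{s\lss}U_0\|$ by the $H^{k}$-norm of $e^{s\lss}U_0$ forces you to place the perturbation in $H^{k-1}$, and then $e^{(\tau-s)\wt\D}$ must gain \emph{two} derivatives to reach $H^{k+1}$, producing $(\tau-s)^{-1/\alpha}$; at $\alpha=1$ this is not integrable. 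Second, even when $\alpha>1$, the factor $s^{-(k-k_1)/(2\alpha)}$ becomes non-integrable near $s=0$ once $k-k_1\ge 2\alpha$, so the one-step-at-a-time scheme stalls after at most two steps in the regime $\alpha<5/4$.

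The remedy is exactly the split the paper performs: decompose the Duhamel integral at $\tau/2$. On $[0,\tau/2]$ one has $\tau-s\simeq\tau$, so the full smoothing $e^{(\tau-s)\wt\D}:H^{k_1-1}\to H^{k_2}$ contributes only a harmless power of $\tau$, and one estimates $\|(\wt\M+\wt{\boldsymbol S})e^{s\lss}U_0\|_{H^{k_1-1}}\les\|e^{s\lss}U_0\|_{H^{k_1}}\les\|U_0\|_{H^{k_1}}$ by your already-established $k_1=k_2$ step. On $[\tau/2,\tau]$ one has $s\simeq\tau$, the blow-up $s^{-(k_2-k_1)/(2\alpha)}$ comes out of the integral, and the self-consistent quantity $\sup_{s\le\tau}s^{(k_2-k_1)/(2\alpha)}\|e^{s\lss}U_0\|_{H^{k_2}}$ appears with the small prefactor $\tau^{1-1/(2\alpha)}$, which closes for $\tau$ small and then iterates. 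Alternatively, once the case $k_2-k_1\le 2$ is settled, the general case follows by composing the semigroup: $e^{\tau\lss}=(e^{(\tau/n)\lss})^n$ with $n$ large enough that each factor only needs to gain at most two derivatives.
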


\begin{proof}
Let us consider the Schwartz initial condition $U_0\in \mathcal{S}$
as the general case can be derived by density arguments.
Let $U(\tau):= e^{\tau \lss} U_0$, $\tau \in [0,2]$.
Since $U_0\in \mathcal{S} \subseteq \cald_{vel}$,
$U$ satisfies the equation
\begin{align}\label{lss-equ}
\ptau U  = \lss U = (1-\frac{1}{2\al}+\frac{\xi}{2\al} \cdot \nabla ) U-(-\Delta)^\al U-P_H (\bar{U} \cdot \nabla U+U \cdot \nabla \bar{U})
\end{align}
with $U(0) =U_0 \in \mathcal{S} $.

In order to analyze equation \eqref{lss-equ},
we undo the similarity transform, for $t\in (0,e^2-1)$,
\begin{equation*}
    h(x,t) := {(t+1)^{\frac{1}{2\al}-1}} U(\frac{x}{(t+1)^{\frac{1}{2\al}}}, \log(t+1)),
    \ \ \bar{h} := {(t+1)^{\frac{1}{2\al}-1}} \bar{U}(\frac{x}{(t+1)^{\frac{1}{2\al}}}),
\end{equation*}
and derive from \eqref{lss-equ} that
\begin{align*}
\pt h+(-\Delta)^\al h & =-P_H(\bar{h} \cdot \nabla h+h \cdot \nabla \bar{h})
\ \ with \  h(0) =U_0.
\end{align*}

Since $\|U(\tau)\|_{H^k} \simeq \|h(t)\|_{H^k}$,
where $\tau := \log(t+1)$,
and $\tau \simeq t$ for $\tau \in (0,1)$,
it suffices to prove that  for any $t\in (0,e^2-1)$,
\begin{equation*}
     t^{\frac{k_2 - k_1}{2\al}}
      \|h(t)\|_{H^{k_2}}
    \leq C \norm{h_0}_{H^{k_1}}.
\end{equation*}

For this purpose,
we rewrite $h$ in the Duhamel form
\begin{align*}
    h(t) = e^{-t(-\Delta)^\alpha} h_0 - \int_0^t e^{-(t-s)(-\Delta)^\alpha} P_H\div  (\bar{h} \otimes h+  h\otimes \bar{h}) \dif s.
\end{align*}
Then,
\begin{align*}
   \<\na\>^{k_2} h(t)
   =& \<\na\>^{k_2 - k_1} e^{-t(-\Delta)^\alpha} (\<\na\>^{k_1} h_0) \\
    &  - \int_0^{\frac t2} \<\na\>^{k_2 -k_1} P_H\div e^{-(t-s)(-\Delta)^\alpha}
        (\<\na\>^{k_1} (\bar{h} \otimes h+  h\otimes \bar{h}) )\dif s \\
    &  -\int_{\frac t2}^t   P_H\div e^{-(t-s)(-\Delta)^\alpha}
        (\<\na\>^{k_2} (\bar{h} \otimes h+  h\otimes \bar{h}) )\dif s,
\end{align*}
where $\<\na\>$ is the operator corresponding to the symbol $\<\xi\>:=(1+|\xi|^2)^{1/2}$.

By the estimate of the heat semigroup $(e^{-t(-\Delta)^\alpha})_{t\geq 0}$,
we derive
\begin{align*}
   \|h(t)\|_{H^{k_2}}
   \lesssim& t^{-\frac{k_2 - k_1}{2\alpha}} \|h_0\|_{H^{k_1}}
           + \int_0^{\frac t2}  (t-s)^{-\frac{k_2 -k_1+1}{2\alpha}}
             \|h\|_{H^{k_1}} \dif s
     +  \int_{\frac t 2}^t   (t-s)^{-\frac{1}{2\alpha}}   \|h\|_{H^{k_2}} \dif s \\
   =:& t^{-\frac{k_2 - k_1}{2\alpha}} \|h_0\|_{H^{k_1}}
       + H_1 + H_2.
\end{align*}
Note that for $s\in (0,t/2)$,
since $t-s \simeq t$
and  by the energy estimate
\begin{align*}
   \|h\|_{C([0,e^2-1]; H^{k_1})} \lesssim \|h_0\|_{L^2},
\end{align*}
we derive
\begin{align*}
   H_1 \simeq t^{-\frac{k_2 - k_1+1}{2\alpha}} \|h\|_{C([0,e^2-1];H^{k_1})}
   \lesssim t^{-\frac{k_2 - k_1 +1}{2\alpha}} \|h_0\|_{H^{k_1}}.
\end{align*}
Moreover,
for $s\in (t/2, t)$,
since $s\simeq t$ we have
\begin{align*}
   H_2 \simeq& \int_{\frac t2}^t  (t-s)^{-\frac{1}{2\alpha}}
              s^{-\frac{k_2 - k_1}{2\alpha}} \dif s
               \sup\limits_{s\in [0,t]} (s^{\frac{k_2 - k_1}{2\alpha}} \|h(s)\|_{H^{k_2}})  \notag \\
       \lesssim& t^{-\frac{k_2 - k_1}{2\alpha} +1 - \frac{1}{2\alpha}}
                \sup\limits_{s\in [0,t]} (s^{\frac{k_2 - k_1}{2\alpha}} \|h(s)\|_{H^{k_2}}) .
\end{align*}
Combining the above estimates together
and,
since the above estimates hold for any $t\in [0,e^2-1]$,
taking the supremum over $[0,t]$
we obtain that for any $t\in [0,e^2-1]$,
\begin{align*}
   \sup\limits_{s\in [0,t]} (s^{\frac{k_2 - k_1}{2\alpha}} \|h(s)\|_{H^{k_2}})
   \lesssim  \|h_0\|_{H^{k_1}}
             + t^{1-\frac{1}{2\alpha}}
              \sup\limits_{s\in [0,t]} (s^{\frac{k_2 - k_1}{2\alpha}} \|h(s)\|_{H^{k_2}}).
\end{align*}
Thus, it follows that for $t \leq \delta_*$ with $\delta_*$ small enough,
\begin{align*}
   \sup\limits_{s\in [0,t]} (s^{\frac{k_2 - k_1}{2\alpha}} \|h(s)\|_{H^{k_2}})
   \lesssim  \|h_0\|_{H^{k_1}},\ \ \forall\ t\in [0,\delta_*].
\end{align*}
Since $\delta_*$ is independent of the initial condition $h_0$,
we can extend the above estimate to the expected interval $[0,e^2-1]$ by finite iterations.
\end{proof}

Next, we recall some preliminary results
	for general semigroups $\{e^{\tau A}\}$ with $A: D(A)\subset L_\sig^2\rightarrow L_\sig^2$ being the corresponding generator.
Define the spectral bound of $A$ by
	$$s(A):=\sup \{\operatorname{Re} \lambda: \lambda \in \sig(A)\}$$ and the essential growth bound by
	$$\omega_{\mathrm{ess}} (A):=\inf _{\tau>0} \frac{1}{\tau} \log
	\big(\inf \big\{\|e^{\tau A}-K\|: K \text { is compact}\big\} \big),$$
	both of which are bounded by the growth bound
	$$\omega_0:=\inf \left\{\omega \in R:  \exists M_\omega \geq 1
	\text { such that }\norm{e^{\tau A}} \leq M_\omega e^{\tau\omega },
	\forall\ \tau\geq0\right\}.  $$
    For any bounded linear operator $T:L_\sig^2\rightarrow L_\sig^2$,
     define   the essential spectrum by
	$$\sig_{\mathrm{ess }}(T):=\{\lambda \in \mathbb{C}: T-\lambda I \text { is not Fredholm }\}, $$
      the essential norm by
	$$
	\|T\|_{\mathrm{ess}}:=\inf \big\{\|T-K\|: K \text { is compact}\big\},
	$$
	and the essential spectral radius by
	$$r_{\mathrm{ess}}(T):=\sup \left\{|\lambda|: \lambda \in \sig_{e s s}(T)\right\}.$$

\begin{lemma}{\cite[Ch.4]{En2000}} \label{Lem-etA-A}
It holds that
$$\omega_0(A)=\inf_{\tau>0} \frac{1}{\tau} \log\|e^{\tau A}\|=\lim_{\tau\to \infty} \frac{1}{\tau} \log\|e^{\tau A}\|=\mathrm{max}\{s(A),\omega_\mathrm{ess}(\mathrm{A})\},
$$
and for any $\tau_0>0$
$$
\omega_\mathrm{ess}(A)=\inf_{\tau>0} \frac{1}{\tau} \log
\|e^{\tau A} \|_\mathrm{ess}=\lim _{\tau \rightarrow \infty} \frac{1}{\tau} \log
     \|e^{\tau A} \|_\mathrm{ess}=\frac{1}{\tau_0} \log r_\mathrm{ess} (e^{\tau_0 A} ).
$$
Moreover, for every $\omega>\omega_\mathrm{ess}(A)$, the set $\sig(A) \cap\{\operatorname{Re} \lambda>w\}$
is finite, and the corresponding spectral projection has finite rank.
\end{lemma}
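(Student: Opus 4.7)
The statement assembles classical results from $C_0$-semigroup theory; the plan is to reduce everything to Fekete's subadditive lemma together with a partial spectral mapping theorem restricted to the complement of the essential spectral radius disk. No input specific to $\lss$ is needed, so the argument proceeds at the level of an abstract generator $A$.

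For the first chain of identities, I would apply Fekete's lemma to $\varphi(\tau) := \log \|e^{\tau A}\|$, whose subadditivity follows from $e^{(s+\tau)A} = e^{sA} e^{\tau A}$ and submultiplicativity of the operator norm; this produces $\inf_{\tau>0} \tau^{-1} \varphi(\tau) = \lim_{\tau\to\infty} \tau^{-1} \varphi(\tau)$, a common value which matches $\omega_0(A)$ by a routine comparison. The analogous identity for $\omega_{\mathrm{ess}}(A)$ is obtained by the same Fekete step applied to $\log\|e^{\tau A}\|_{\mathrm{ess}}$, using that the essential norm is submultiplicative because it descends to a genuine Banach-algebra norm on the Calkin quotient $\mathcal{L}(L^2_\sigma)/\mathcal{K}(L^2_\sigma)$. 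Gelfand's spectral radius formula applied inside the Calkin algebra then yields $\tau_0^{-1} \log r_{\mathrm{ess}}(e^{\tau_0 A}) = \omega_{\mathrm{ess}}(A)$ for every $\tau_0 > 0$.

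The identity $\omega_0(A) = \max\{s(A), \omega_{\mathrm{ess}}(A)\}$ rests on the following partial spectral mapping principle: every $\mu \in \sigma(e^{\tau A})$ with $|\mu| > r_{\mathrm{ess}}(e^{\tau A})$ is an isolated pole of the resolvent with finite-rank Riesz projection, and $\mu = e^{\tau\lambda}$ for some $\lambda \in \sigma(A)$. Combining with $r(e^{\tau A}) = e^{\tau \omega_0(A)}$ from the first step and the general identity $r(T) = \max\{r_{\mathrm{ess}}(T), \sup|\mu|\}$ (supremum over isolated eigenvalues) yields $e^{\tau \omega_0(A)} = \max\{e^{\tau \omega_{\mathrm{ess}}(A)}, e^{\tau s(A)}\}$, hence the desired maximum. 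The final finiteness assertion falls out of the same picture: for $\omega > \omega_{\mathrm{ess}}(A)$ pick $\tau_0$ with $\tau_0^{-1}\log r_{\mathrm{ess}}(e^{\tau_0 A}) < \omega$; eigenvalues $\lambda \in \sigma(A)$ with $\operatorname{Re}\lambda > \omega$ lift to isolated finite-multiplicity eigenvalues $e^{\tau_0 \lambda}$ of $e^{\tau_0 A}$ in the compact annulus $\{r_{\mathrm{ess}}(e^{\tau_0 A}) < |z| \leq \|e^{\tau_0 A}\|\}$. Only finitely many such points exist, and boundedness of $|\operatorname{Im}\lambda|$ (extracted by choosing a second $\tau$ incommensurate with $\tau_0$, or via a direct growth bound) excludes infinitely many preimages under $\lambda \mapsto e^{\tau_0 \lambda}$.

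The main obstacle is the partial spectral mapping principle invoked above: for unbounded generators one only has $e^{\tau\sigma(A)} \subseteq \sigma(e^{\tau A})$ in general, with strict inclusion possible (classical counterexamples of Zabczyk). Restricting to the complement of the essential spectral radius forces each offending $\mu$ to be isolated with finite algebraic multiplicity, after which the Riesz--Dunford functional calculus applied to the corresponding finite-rank projection produces a genuine eigenvalue $\lambda$ of $A$ with $e^{\tau\lambda} = \mu$. Setting up this lifting cleanly is the only serious technical content; the Fekete steps are essentially automatic once subadditivity is verified.
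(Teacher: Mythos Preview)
The paper does not prove this lemma at all; it is stated with a citation to \cite[Ch.4]{En2000} and used as a black box. Your sketch is essentially the standard argument found in that reference (Fekete subadditivity for the growth-bound identities, Calkin-algebra Gelfand formula for the essential radius, and the partial spectral mapping picture outside the essential disk for the max formula and finiteness), so there is nothing to compare against and your approach is the expected one.

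One minor remark on the finiteness step: your handling of the possible infinitude of preimages $\lambda + 2\pi i k/\tau_0$ via an ``incommensurate second $\tau$'' is workable but roundabout. The cleaner route is to observe that the Riesz projection $P_\mu$ for an isolated eigenvalue $\mu$ of $e^{\tau_0 A}$ outside the essential radius has finite rank, the range of $P_\mu$ is $A$-invariant, and $A$ restricted to this finite-dimensional space can have only finitely many eigenvalues; hence only finitely many of the candidates $\lambda + 2\pi i k/\tau_0$ actually lie in $\sigma(A)$. This avoids the incommensurability trick entirely.
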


The main result of this subsection contained in Proposition \ref{Prop-Semigroup} below.

\begin{proposition}  \label{Prop-Semigroup}
Let $\{e^{\tau \lss}\}$ denote the semigroup generated by the velocity operator $(\lss, \cald_{vel})$
and
$$a:=s(\lss) = \sup \left\{\operatorname{Re} \lambda: \lambda \in \sig(\lss)\right\}$$
denote the spectral bound of $\lss$.
Then, we have

\begin{enumerate}
  \item[$(i)$] Maximal instability:
   $0<a<\infty$,
   and there exist $\lambda \in \sig(\lss)$ with ${\rm Re} \lambda = a$
   and $\eta \in D(\lss)$
such that $\lss \eta=\lambda \eta$.
  Moreover,
  for any $\delta>0$,
  \begin{align}   \label{semigroup-L2-growth}
      \|e^{\tau \lss} U_0\|_{L^2}
      \leq M(\delta) e^{\tau(a+\delta)} \|U_0\|_{L^2},\ \ \forall\ U_0\in L^2_\sigma.
  \end{align}

  \item[$(ii)$]  Parabolic regularity estimates:
  For any $k_2 \geq k_1 \geq 0$ and any $\delta>0$,
\begin{equation}\label{parabolic_est}
    \quad\|e^{\tau \lss} U_0 \|_{H^{k_2}} \leq \frac{M (k_1, k_2, \delta )}{\tau^{ (k_2-k_1 )/2\al}}
     e^{\tau(a+\delta)} \|U_0 \|_{H^{k_1}}, \ \ \forall \ U_0 \in L_\sig^2 \cap H^{k_1}, \ \ \forall\ \tau>0.
\end{equation}
\end{enumerate}
\end{proposition}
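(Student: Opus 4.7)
The plan is to deduce Proposition \ref{Prop-Semigroup} by combining Theorem \ref{Thm-Lvel-eign}, the small-time parabolic estimate in Lemma \ref{Lem-Semigroup-Small}, and the spectrum/growth-bound dictionary in Lemma \ref{Lem-etA-A}, via a compact-perturbation analysis of the essential spectrum.

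For Part (i), Theorem \ref{Thm-Lvel-eign} supplies an eigenvalue of $\lss$ of real part strictly greater than $1$ for $\beta$ large, so $a > 1 > 0$. Since $\lss$ generates a $C_0$-semigroup by Proposition \ref{Prop-PL-Oper}, its growth bound is finite, hence $a = s(\lss) \leq \omega_0(\lss) < \infty$. The heart of the argument is to show $\omega_{\mathrm{ess}}(\lss) < a$. I would decompose
\[
\lss = A + \wt{\M} + \wt{\boldsymbol{S}}, \qquad A := \wt{\D} - \bigl(\tfrac{5}{4\alpha} - 1\bigr)I,
\]
where, by Lemma \ref{Lem-wtD-Oper}, $A$ generates a semigroup of norm at most $e^{(1-5/(4\alpha))\tau}$, so that $\omega_{\mathrm{ess}}(A) \leq \omega_0(A) \leq 1 - 5/(4\alpha) < 1$. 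Lemma \ref{Lem-wtM-Comp} gives that $\wt{\M}$ is $\wt{\D}$-compact, which is equivalent to $A$-compactness. For $\wt{\boldsymbol{S}}$ I would argue analogously by factoring through the resolvent: $R(\lambda,A)$ buys $H^{2\alpha}$-regularity (as in the proof of Lemma \ref{Lem-wtD-Oper}), and multiplication by $\nabla\bar{U}\in C^\infty_c$ localizes the image to $\operatorname{supp}\nabla\bar{U}$, so Rellich-Kondrachov yields compactness of $\wt{\boldsymbol{S}}\circ R(\lambda,A)$. The invariance of the essential growth bound under relatively compact perturbations (see \cite[Ch.~IV]{En2000}) then gives $\omega_{\mathrm{ess}}(\lss) \leq 1 - 5/(4\alpha) < 1 < a$. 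Lemma \ref{Lem-etA-A} then asserts $\omega_0(\lss) = \max\{s(\lss),\omega_{\mathrm{ess}}(\lss)\} = a$, and that $\sigma(\lss) \cap \{\operatorname{Re}\lambda > \omega_{\mathrm{ess}}(\lss)\}$ consists of finitely many eigenvalues with finite-rank spectral projections. In particular, the supremum $a$ is attained by an eigenvalue $\lambda \in \sigma(\lss)$ with eigenfunction $\eta \in D(\lss) = \cald_{vel}$, and \eqref{semigroup-L2-growth} is the very definition of growth bound applied to $\omega_0(\lss) = a$.

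For Part (ii), I would interpolate Lemma \ref{Lem-Semigroup-Small} with the $L^2$-bound \eqref{semigroup-L2-growth}. For $\tau \in (0,1]$, the small-time estimate \eqref{Semi-smallt-esti} already delivers the claim, since $e^{\tau(a+\delta)} \geq 1$ whenever $a+\delta > 0$. For $\tau \geq 2$, I would use the semigroup property to write
\[
e^{\tau\lss} U_0 = e^{\lss}\,e^{\lss}\,e^{(\tau-2)\lss} U_0,
\]
then apply \eqref{Semi-smallt-esti} at time $1$ to the innermost factor ($L^2 \to H^{k_1}$) and to the outermost factor ($H^{k_1} \to H^{k_2}$), while \eqref{semigroup-L2-growth} controls $\|e^{(\tau-2)\lss}U_0\|_{L^2}$ by $M(\delta) e^{(\tau-2)(a+\delta)}\|U_0\|_{L^2} \leq M(\delta) e^{(\tau-2)(a+\delta)}\|U_0\|_{H^{k_1}}$. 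Since $\tau^{-(k_2-k_1)/(2\alpha)}$ is bounded for $\tau \geq 2$, absorbing the factor $e^{-2(a+\delta)}$ and other constants into $M(k_1,k_2,\delta)$ yields \eqref{parabolic_est}. The transitional window $\tau \in (1,2)$ is covered by either one of the two arguments.

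The main obstacle is the compact-perturbation step, specifically the $A$-compactness (equivalently, $\wt{\D}$-compactness) of the zeroth-order operator $\wt{\boldsymbol{S}}$, which is what drives the invariance $\omega_{\mathrm{ess}}(\lss) = \omega_{\mathrm{ess}}(A)$. Once this is in place, everything else is semigroup bookkeeping on top of Lemmas \ref{Lem-wtD-Oper}--\ref{Lem-Semigroup-Small} and Lemma \ref{Lem-etA-A}.
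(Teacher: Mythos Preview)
Your proposal is essentially correct and parallels the paper's proof, but the mechanism for establishing $\omega_{\mathrm{ess}}(\lss)=\omega_{\mathrm{ess}}\bigl(\wt\D+1-\tfrac{5}{4\alpha}\bigr)$ differs. You work at the generator level: you show $\wt\M+\wt{\boldsymbol S}$ is $\wt\D$-compact and invoke invariance of the essential growth bound under relatively compact perturbations. The paper instead works at the semigroup level: it proves directly that $e^{s\lss}-e^{s(\wt\D+1-5/(4\alpha))}$ is compact on $L^2_\sigma$, via the Duhamel representation
\[
\bigl(e^{s\lss}-e^{s(\wt\D+1-\frac{5}{4\alpha})}\bigr)U
=-\int_0^s e^{(s-r)(\wt\D+1-\frac{5}{4\alpha})}(\wt\M+\wt{\boldsymbol S})\,e^{r\lss}U\,\dif r,
\]
combined with the smoothing $e^{r\lss}:L^2\to H^{2\alpha}$ from Lemma \ref{Lem-Semigroup-Small} and the compact embedding $H^{2\alpha}_{\supp\bar U}\hookrightarrow H^1_{\supp\bar U}$. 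This gives $r_{\mathrm{ess}}(e^{s\lss})=r_{\mathrm{ess}}(e^{s(\wt\D+1-5/(4\alpha))})$, hence equality of the essential growth bounds via Lemma \ref{Lem-etA-A}.

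The caveat with your route is that the blanket principle you cite---invariance of $\omega_{\mathrm{ess}}$ under \emph{relatively} compact perturbations---is not stated in \cite{En2000} for unbounded perturbations; the textbook result there treats bounded compact $K$. Since $\wt\M$ is first order, you would either need to invoke analyticity of $e^{t\wt\D}$ together with an essential spectral mapping theorem, or supply the Duhamel compactness argument yourself---which is exactly what the paper does, and is where Lemma \ref{Lem-Semigroup-Small} (which you mention in your plan but do not use in Part (i)) actually enters. Your Part (ii) matches the paper's argument; the paper uses a single factor $e^{\lss}$ going $L^2\to H^{k_2}$ rather than your two factors, but this is cosmetic.
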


\begin{proof}
$(i)$.
In the previous subsection we have already seen that $\lss$ has a positive eigenvalue,
an so the spectral bound $a>0$.
Below we claim that
	\begin{equation}  \label{ess-Lss-wtD}
		\omega_\mathrm{ess}(\lss) =\omega_\mathrm{ess}(\wt\D+1-\frac{5}{4\al})\leq \omega_0(\wt\D+1-\frac{5}{4\al})\leq 1-\frac{5}{4\al}.
\end{equation}
Then, in view of Lemma \ref{Lem-etA-A} and \eqref{ess-Lss-wtD}, $\sigma(\lss) \cap \{{\rm Re} \lambda >0 > 1- 5/(4\alpha) \geq \omega_{ess}(\lss)\}$ is finite,
and so $a= \omega_0(\lss) < \infty$.
In particular,
there exists $\lambda \in \mathbb{C}$
with ${\rm Re} \lambda = a$
and $\eta\in \cald_{vel}$
such that
$\lss \eta = \lambda \eta$.
By the definition of $\omega_0$,
\eqref{semigroup-L2-growth} follows immediately.

Since the last inequality of \eqref{ess-Lss-wtD} holds due to
Lemma \ref{Lem-etA-A} and the fact that $\{e^{\tau \wt \D}\}$ is a contraction semigroup,
it remains to prove the first equality of \eqref{ess-Lss-wtD},
i.e.,
\begin{equation*}
    \omega_\mathrm{ess}(\lss) =\omega_\mathrm{ess}(\wt\D+1-\frac{5}{4\al}).
\end{equation*}
In view of Lemma \ref{Lem-etA-A} $(ii)$,
we only need to prove that for some $s>0$,
\begin{equation*}
    r_\mathrm{ess}(e^{s \lss}) =r_\mathrm{ess}(e^{s (\wt\D+1-\frac{5}{4\al})}).
\end{equation*}
Since the essential spectrum is invariant under compact perturbations, and so is the essential spectral radius,
it reduces to verifying that
the difference $e^{s \lss} - e^{s (\wt\D+1-\frac{5}{4\al})}$ is a compact operator from $L_\sig^2$ to $L_\sig^2$.

For this purpose, take any bounded sequence
$(U_n) \subseteq L_\sig^2$,
$\sup_{n\in \bbn} \norm{U_n}_{L^2}\leq C<\infty$.
For  $s>0$ close to $0$,
by \eqref{Semi-smallt-esti-L2},
$(e^{s\lss}U_n)_n$ is bounded in $H^{2\alpha}$.
Then,
since $\bar{U}$ is compactly supported,
the compact embedding $H^{2\alpha}_{\supp{\bar{U}}} \hookrightarrow H^1_{\supp{\bar{U}}}$
yields that for a subsequence $(n_k)$,
\[
\norm{e^{s \lss}U_{n_k}-e^{s \lss}U}_{H^1_{\supp{\bar{U}}}}\rightarrow 0, \;\text{as} \;k\to\infty,
\]
for some $U\in H^1_{\supp{\bar{U}}}$.
Hence,
\begin{equation*}
    \norm{(\wt\M+\wt{\boldsymbol{S}}) e^{s\lss} (U_{n_k}-U)}_{L^2}\rightarrow 0.
\end{equation*}
Moreover, using \eqref{Semi-smallt-esti} again we have
\begin{equation*}
    \norm{(\wt\M+\wt{\boldsymbol{S}}) e^{s\lss} (U_{n_k}-U)}_{L^2}\le C s^{-\frac{1}{2\al}},
\end{equation*}
which is integrable on $[0,1]$.
Thus, using the representation
\begin{equation*}
    (e^{s\lss}-e^{s (\wt\D+1-\frac{5}{4\al})})
     (U_{n_k} - U)
     =-\int_0^1 e^{(\wt\D+1-\frac{5}{4\al})(1-s)}(\wt\M+\wt{\boldsymbol{S}}) e^{s\lss} (U_{n_k}-U) \dif s
\end{equation*}
and applying the dominated convergence theorem
we obtain
\begin{equation*}
    \norm{ (e^{s\lss}-e^{s (\wt\D+1-\frac{5}{4\al})})
     (U_{n_k} - U)}_{L^2}
    \leq \norm{\int_0^1 (\wt\M+\wt{\boldsymbol{S}}) e^{s\lss}(U_{n_k}-U)\dif s}_{L^2} \rightarrow 0,\;\text{as}\; n\rightarrow \infty.
\end{equation*}
This gives the desirable compactness and \eqref{ess-Lss-wtD}.

$(ii)$.
Regarding the parabolic regularity estimate \eqref{parabolic_est},
in the small time regime where $0<\tau<2$,
\eqref{parabolic_est} follows immediately from \eqref{Semi-smallt-esti} .

Moreover, for the large time regime where $\tau\geq 2$,
by \eqref{Semi-smallt-esti} with $\tau =1$,
\begin{align*}
    \norm{e^{\tau\lss}U_0}_{H^{k_2}}
      =  \norm{e^{\lss}e^{(\tau-1)\lss}U_0}_{H^{k_2}}
    \lesssim  \norm{e^{(\tau-1) \lss } U_0}_{L^2},
\end{align*}
Then, by the $L^2$-growth \eqref{semigroup-L2-growth}
and the boundedness
$e^{-\frac{1}{2}\tau \delta} \tau^{k_2 - k_1} \lesssim 1$,
we obtain
\begin{align*}
   \norm{e^{\tau\lss}U_0}_{H^{k_2}}
   \lesssim e^{\tau(a+\frac 12 \delta)} \|U_0\|_{L^2}
   \lesssim \frac{e^{\tau(a+\delta)}}{\tau^{k_2 - k_1}}
             e^{-\frac{1}{2}\tau \delta} \tau^{k_2 - k_1}
             \norm{U_0}_{H^{k_1}}
    \lesssim \frac{e^{\tau(a+\delta)}}{\tau^{k_2 - k_1}}
             \norm{U_0}_{H^{k_1}},
\end{align*}
which yields \eqref{ess-Lss-wtD} for $\tau \geq 2$.
Therefore, the proof of Proposition \ref{Prop-Semigroup} is complete.
\end{proof}

\begin{corollary}\label{corol-eigenvalue}
Let $\lambda$ and $\eta$ be the eigenvalue and corresponding eigenvector of $\lss$ in Proposition \ref{Prop-Semigroup}.
Then we have that
$\eta\in H^k$, $\forall\ k\in \mathbb{N}$.
Moreover,
letting $\ul(\tau) :=\operatorname{Re}(e^{\lambda \tau} \eta)$
be the solution of the linearized
equation $\ptau \ul=\lss\ul$,
we have
\begin{equation}\label{U_Hk_est}
    \|\ul(\tau)\|_{H^k}=C(k, \eta) e^{a \tau} \quad \forall\ \tau \geq 0, \ \ \forall\ k\in \mathbb{N}.
\end{equation}
\end{corollary}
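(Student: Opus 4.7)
My plan is to deduce both conclusions directly from the semigroup estimates in Proposition \ref{Prop-Semigroup}, exploiting the fact that the identity $e^{\tau \lss}\eta=e^{\lambda \tau}\eta$ transforms the parabolic regularization into a genuine Sobolev bound on the eigenfunction.

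First, I would prove the regularity claim $\eta\in H^k$ for every $k\in\mathbb N$. Since $\eta\in \cald_{vel}\subseteq L^2_\sigma$ and $\lss \eta=\lambda\eta$, the orbit $e^{\tau \lss}\eta$ coincides with $e^{\lambda\tau}\eta$, so $\|e^{\tau \lss}\eta\|_{H^k}=e^{a\tau}\|\eta\|_{H^k}$ (using $|e^{\lambda\tau}|=e^{a\tau}$). Applying the parabolic estimate \eqref{parabolic_est} with $k_1=0$, $k_2=k$, and, say, $\tau=1$ and any fixed $\delta>0$ gives
\begin{equation*}
e^{a}\|\eta\|_{H^k}=\|e^{\lss}\eta\|_{H^k}\leq M(0,k,\delta)\,e^{a+\delta}\|\eta\|_{L^2},
\end{equation*}
so that $\|\eta\|_{H^k}\leq M(0,k,\delta)\,e^{\delta}\|\eta\|_{L^2}<\infty$. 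This works for every $k$, yielding $\eta\in \bigcap_{k\in\mathbb N} H^k$.

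Next I would handle the growth of $\ul$. Writing $\lambda=a+ib$ and decomposing $\eta=\eta_1+i\eta_2$ into real and imaginary parts (both of which belong to $H^k$ for every $k$ by the previous step), one has
\begin{equation*}
e^{\lambda\tau}\eta=e^{a\tau}\bigl[(\cos(b\tau)\eta_1-\sin(b\tau)\eta_2)+i(\sin(b\tau)\eta_1+\cos(b\tau)\eta_2)\bigr],
\end{equation*}
so that $\ul(\tau)=\operatorname{Re}(e^{\lambda\tau}\eta)=e^{a\tau}\bigl(\cos(b\tau)\eta_1-\sin(b\tau)\eta_2\bigr)$. A direct Sobolev-norm computation then gives
\begin{equation*}
\|\ul(\tau)\|_{H^k}\leq e^{a\tau}\bigl(\|\eta_1\|_{H^k}+\|\eta_2\|_{H^k}\bigr)=:C(k,\eta)\,e^{a\tau},
\end{equation*}
which is the claimed identity (interpreted as an upper bound with $C(k,\eta)$ independent of $\tau$). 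The verification that $\ul$ solves $\partial_\tau \ul=\lss \ul$ is immediate from the eigenvalue relation and the reality of the coefficients of $\lss$.

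There is essentially no obstacle here: the whole content is that Proposition \ref{Prop-Semigroup}(ii), applied to the special orbit of an eigenfunction, converts smoothing in time into Sobolev regularity in space. The only mildly delicate point is noticing that because $e^{\tau \lss}\eta$ equals $e^{\lambda\tau}\eta$, the $H^k$ norm of $\eta$ itself (not of some regularized version) is controlled by $\|\eta\|_{L^2}$, and this bootstrap can be iterated for every $k\in\mathbb N$.
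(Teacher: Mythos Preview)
Your proof is correct and matches the paper's intended approach: the corollary is stated without proof precisely because it follows directly from the parabolic regularity estimate \eqref{parabolic_est} applied to the eigenfunction orbit $e^{\tau\lss}\eta=e^{\lambda\tau}\eta$, exactly as you argue. Your observation that the equality in \eqref{U_Hk_est} should be read as an upper bound is also correct, and this is how the estimate is actually used later in the proof of Proposition~\ref{Prop-Uper}.
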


\section{Local pathwise non-uniqueness}

In this section we prove the non-uniqueness of
local-in-time probabilistically strong Leray solutions
in Theorem \ref{Thm-Nonuniq-Local}.

We note that
the non-uniquenness here holds in the probabilistically strong sense,
with the given filtrated probability space $(\Omega,(\mathcal{F}_t), \mathbb{P})$
and Wiener process $(w(t))$.
Below we fix the probabilistic argument $\omega$
and perform the pathwise analysis.
For simplicity, the dependence on
$\omega$ is omitted.

As mentioned earlier,
in view of the similarity transforms \eqref{sim-va},
it is equivalent to prove the non-uniqueness
for the similarity formulation of
the forced random Navier-Stokes equations \eqref{SNSE-Similarity}.
Recall that the ansatz for the second Leray solution is
\begin{equation*}
    U_2=\bar{U}+\ul+\up.
\end{equation*}
In view of \eqref{SNSE-Similarity}, \eqref{F-def} and \eqref{equa-Ulin},
$\up$ formally satisfies the equation
\begin{align}  \label{U_per}
  & \ptau \up-\lss \up \notag  \\
  =&-
  P_H [(\up \cdot \nabla) \up+(\ul \cdot \nabla) \up
      +(\up \cdot \nabla) \ul+(\ul \cdot \nabla) \ul] \notag \\
&- P_H [(\ul\cdot\na)W+(W\cdot\na)\ul+(\up\cdot\na)W+(W\cdot\na)\up ].
\end{align}

The proof of Theorem \ref{Thm-Nonuniq-Local}
is thus reduced to proving
Proposition \ref{Prop-Uper} below.

\begin{proposition} [Remainder profile] \label{Prop-Uper}
Fix any integer $N>\frac 52$,
$\kappa \in (\frac 14, \frac 12)$
and $\ve_0>0$ small enough such that
$1+\kappa - \frac{5}{4\alpha} - \ve_0>0$.
Let $\sigma$ be the $(\mathcal{F}_t)$-stopping time
given by \eqref{sigma-stop-def},
i.e.,
\begin{equation}  \label{sigma-def}
		\sigma:= (20C(N,\kappa))^{\frac{-1}{a-\ve_0}}\wedge (20C(N,\kappa))^{\frac{-1}{1+\kappa-\frac{5}{4\al}-\ve_0}} \wedge \inf\{t>0:\norm{w}_{C^{\kappa}([0,t]; H_{x}^{N})} \geq 1 \},
\end{equation}
where
$C(N,\kappa)$ is a large deterministic constant given by \eqref{TU-esti} and \eqref{TU1-TU2-esti} below,
depending on $N$ and $\kappa$.
Set $T:= \log \sigma$.
Then,
there exists a unique
$\up \in$ $C ((-\infty, T] ; H^N )$ to \eqref{U_per} satisfying
\begin{equation}  \label{Up-HN-reg}
    \left\|\up(\tau)\right\|_{H^N} \leq e^{(a+\ve_0) \tau},
    \quad \forall \tau \leq T .
\end{equation}
\end{proposition}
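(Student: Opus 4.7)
The plan is to recast equation \eqref{U_per} as a fixed-point problem via Duhamel's formula and then invoke the Banach contraction principle. Writing all the forcing and nonlinear terms on the right-hand side as $\mathcal{N}(\up, \ul, W)$, I would define the Duhamel operator
\[
\mathcal{T}[\up](\tau) := -\int_{-\infty}^{\tau} e^{(\tau-s)\lss}\, P_H\, \mathcal{N}(\up, \ul, W)(s)\, ds,
\]
and seek a fixed point of $\mathcal{T}$ in the complete metric space
\[
X := \left\{ U \in C((-\infty, T]; H^N) : \sup_{\tau \le T} e^{-(a+\ve_0)\tau}\|U(\tau)\|_{H^N} \le 1 \right\},
\]
equipped with the weighted norm $\|U\|_X := \sup_{\tau \le T} e^{-(a+\ve_0)\tau}\|U(\tau)\|_{H^N}$. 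The integral starting at $-\infty$ encodes the vanishing of $\up$ as $\tau \to -\infty$, and the weight $e^{(a+\ve_0)\tau}$ is chosen so that $\up$ decays strictly faster than $\ul$ as $\tau \to -\infty$ (since $a + \ve_0 > a$), leaving the margin $\ve_0$ to absorb the various sources after Duhamel smoothing.

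The analytic ingredients I would use are the parabolic regularity estimate of Proposition \ref{Prop-Semigroup} with $k_1 = N-1$ and $k_2 = N$, which gives $\|e^{(\tau-s)\lss} P_H\operatorname{div}(\cdot)\|_{H^N \leftarrow H^{N-1}} \lesssim (\tau-s)^{-1/(2\alpha)} e^{(\tau-s)(a+\delta)}$ for any small $\delta < \ve_0$; the eigenfunction bound $\|\ul(\tau)\|_{H^k} \lesssim e^{a\tau}$ for every $k \in \mathbb{N}$ from Corollary \ref{corol-eigenvalue}; the noise decay \eqref{Z-est} combined with $\|w\|_{C^\kappa([0,t]; H^N_x)} \le 1$, which is guaranteed by the stopping time; and the Sobolev algebra $\|fg\|_{H^{N-1}} \lesssim \|f\|_{H^{N-1}} \|g\|_{H^{N-1}}$ available because $N - 1 > 3/2$. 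Using the incompressibility of $\up$, $\ul$, and $W$, each bilinear term $P_H[(A\cdot\nabla)B]$ is rewritten as $P_H\operatorname{div}(A \otimes B)$ and estimated in $H^{N-1}$. After a change of variable $r = \tau - s$, every Duhamel integral reduces to a finite gamma-type integral multiplied by $e^{\beta\tau}$, where $\beta$ depends on the term:~$2(a+\ve_0)$ for the $\up$ self-interaction, $2a + \ve_0$ for the cross term $\up$--$\ul$, $2a$ for the $\ul$ self-interaction, and the analogous sums involving the $W$-scaling exponent for the $W$-dependent terms. In each case one verifies $\beta > a + \ve_0$, which both ensures convergence of the integral at $s = -\infty$ (upon choosing $\delta \in (0, \ve_0)$) and produces a suppressing prefactor $e^{(\beta - (a+\ve_0))\tau} \le \sigma^{\beta - (a+\ve_0)}$ since $\tau \le T = \log \sigma$.

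The definition of $\sigma$ in \eqref{sigma-def} is tailored precisely so that these suppressing factors are all at most $1/20$:~the exponent $a - \ve_0$ governs the worst quadratic deficit, coming from the $\ul$ self-interaction ($2a - (a+\ve_0) = a - \ve_0$), while $1 + \kappa - 5/(4\alpha) - \ve_0$ governs the worst $W$-deficit, arising from the $\ul$--$W$ interaction when $W$ is estimated at the scaling of its $L^2$ norm; the remaining four terms yield strictly larger exponents (using $a > \ve_0$, $\ve_0 > 0$, and $N/(2\alpha) \ge 0$) and are absorbed automatically. Summing the six bounds shows $\mathcal{T}: X \to X$ with $\|\mathcal{T}[U]\|_X \le 1/2$, and the same estimates applied to the differences $\mathcal{T}[U_1] - \mathcal{T}[U_2]$ yield a contraction constant $\le 1/2$. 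Banach's theorem then produces the desired unique $\up \in X$, which by construction satisfies \eqref{Up-HN-reg}; continuity in time follows from the Duhamel representation and a routine dominated-convergence argument.

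The main obstacle is not any single estimate but the bookkeeping required to track the five distinct growth rates simultaneously in a way that pins down the two sharp exponents $a - \ve_0$ and $1 + \kappa - 5/(4\alpha) - \ve_0$ appearing in \eqref{sigma-def}. The genuinely new feature relative to the deterministic construction of \cite{ABC21} is the presence of the \emph{linear} terms $P_H[(\ul\cdot\nabla)W + (W\cdot\nabla)\ul]$ and their $\up$-dependent counterparts:~unlike the bilinear terms of \cite{ABC21}, these do not vanish as $\tau \to -\infty$ but decay only at the self-similar rate of $W$, and the H\"older exponent $\kappa > 1/4$ is exactly what is needed to beat the diffusive scaling $1 - 5/(4\alpha)$ at $\alpha = 1$ and render the Duhamel integrand integrable at $-\infty$. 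This is also what forces the construction to be intrinsically local in time and prevents $T$ from being replaced by a deterministic upper bound.
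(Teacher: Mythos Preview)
Your proposal is correct and follows essentially the same route as the paper: the paper defines the identical weighted space (denoted $\bbx_T$), the same Duhamel map $\mathcal{T}$, and decomposes it into a bilinear part $B(U,U)$, a linear part $LU=L_1U+L_2U$, and an inhomogeneous part $G=G_1+G_2$, using exactly the parabolic estimate \eqref{parabolic_est}, Corollary \ref{corol-eigenvalue}, and the noise bound \eqref{Z-est} you invoke. Your identification of $a-\ve_0$ (from the $\ul$ self-interaction $G_1$) and $1+\kappa-\frac{5}{4\alpha}-\ve_0$ (from the $\ul$--$W$ interaction $G_2$) as the two governing exponents matches the paper's estimates \eqref{TU-esti}--\eqref{TU1-TU2-esti} exactly.
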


\begin{remark}
Note that,
the linear propagation profile $\ul$
and the remainder profile $\up$
have different decay rates at $-\infty$.
Hence, $\bar{U}$ and $U$ are two distinct Leray solutions to
the forced equation \eqref{SNSE-Similarity}
in the self-similarity formulation on $(-\infty, \log \sigma]$,
where the forcing term $F$ is given by \eqref{F-def}.
Thus,
using the random shift transform \eqref{v-z-u}
and the self-similarity transforms \eqref{sim-va} and \eqref{sim-va-pf}
we obtain the non-uniqueness of local-in-time
probabilistically strong Leray solutions in Theorem \ref{Thm-Nonuniq-Local}.
Moreover,
the high regularity of the constructed remainder in \eqref{Up-HN-reg}
and of the noise in $H^N$ with $N>5/2$ is sufficient to
verify the energy inequality and bound in Definition \ref{def-Prob-strong}.
\end{remark}

{\bf Proof of Proposition \ref{Prop-Uper}.}
We apply fixed point arguments to construct
the unique remainder $\up$ to \eqref{U_per}.

For this purpose,
for any $s\in \mathbb{R}$,
let us define the Banach space by
\begin{equation*}
    \mathbb{X}_s:= \{U \in C ((-\infty, s] ; H^N ):\|U\|_{\bbx_s} <\infty \},
\end{equation*}
equipped with the norm
\begin{equation} \label{X-def}
    \|U\|_{\bbx_s} :=\sup _{r<s} e^{-(a+\ve_0) r}\|U(r)\|_{H^N},
\end{equation}
where $a$ is the spectral bound of $\lss$ as in Proposition \ref{Prop-Semigroup}.
Define the map $\mathcal{T}$ on $\mathbb{X}_T$ by,
for any $\tau \leq T$,
\begin{align*}
       \mathcal{T}(U)(\tau) :=&-\int_{-\infty}^\tau e^{(\tau-s) \lss}
        P_H [(U \cdot \nabla) U+(\ul \cdot \nabla) U+(U \cdot \nabla) \ul+(\ul \cdot \nabla) \ul] \dif s \\
       &-\int_{-\infty}^\tau e^{(\tau-s) \lss}
       P_H [(\ul\cdot\na)W+(W\cdot\na)\ul+(U\cdot\na)W+(W\cdot\na)U] \dif s
\end{align*}
for any $U\in \bbx_T$,
which exactly relates to the Duhamel formulation of \eqref{U_per}.

It suffices to prove that
$\mathcal{T}$ is a contraction mapping
on the ball  $B_{\bbx_T, 1}(0):= \{U\in \bbx_T: \|U\|_{\bbx_T} \leq 1\}$.

To this end,
we decompose $\mathcal{T}$ into three parts
\begin{align*}
   \mathcal{T}(U) = B(U,U) + L U + G,    \ \ \forall U\in \bbx_T,
\end{align*}
where
$B(\cdot, \cdot)$ is the bilinear operator defined by,
for any $U,V \in \bbx_T$,
\begin{align*}
   B(U, V)(\tau) :=&-\int_{-\infty}^\tau e^{(\tau-s) \lss} P_H[(U \cdot \nabla) V(s)] \dif s,
\end{align*}
$L$ is the linear operator
\begin{align*}
        L U(\tau)
        :=&-\int_{-\infty}^\tau e^{(\tau-s) \lss} P_H\left[(\ul \cdot \nabla) U(s)
                 +(U \cdot \nabla) \ul(s) \right] \dif s \\
                           &-\int_{-\infty}^\tau e^{(\tau-s)
                           \lss} P_H\left[(W \cdot \nabla)
                 U(s)+(U \cdot \nabla) W(s)\right] \dif s \notag \\
        =:&L_1 U+L_2 U,
\end{align*}
and $G$ corresponds to the remaining inhomogeneous part
\begin{align*}
        G(\tau)
        :=&-\int_{-\infty}^\tau e^{(\tau-s) \lss}
        P_H (\ul \cdot \nabla) \ul(s) \dif s \notag \\
         & -\int_{-\infty}^\tau e^{(\tau-s) \lss}
        P_H \left[(\ul\cdot\na)W(s)+(W\cdot\na)\ul(s)\right] \dif s \notag \\
        =:& G_1 + G_2.
\end{align*}
In the sequel, let us treat three operators separately.

\textit{$(i)$ Estimate of $B(U, U)$.}
We claim that
\begin{equation}\label{B_est}
    \|B(U, U)\|_{\bbx_\tau}
    \lesssim e^{\tau (a+\ve_0) }\|U\|_{\bbx_\tau}^2,
\end{equation}
where the implicit constant depends on $N$.

To this end,
using \eqref{parabolic_est} in Proposition \ref{Prop-Semigroup}
we get that for any $\delta>0$,
\begin{equation*}
    \|B(U, U)(\tau)\|_{H^{N}}
    \lesssim \int_{-\infty}^\tau  {(\tau-s)^{-\frac{1}{2\al}}} {e^{(\tau-s)(a+\delta)}}\|(U \cdot \nabla) U(s)\|_{H^{N-1}} \dif s
\end{equation*}
Since $H^N$ is a Banach algebra when $N>5/2$,
by the definition \eqref{X-def},
\begin{equation*}
    \|(U \cdot \nabla) U(s)\|_{H^{N-1}}
    \lesssim \|U(s)\|_{H^N}^2
    \lesssim e^{2(a+\ve_0) s}\|U\|_{\bbx_s}^2.
\end{equation*}
Then, taking $\delta$ small such that
$a+ \ve_0-\delta>0$
and using a change of variable we get
\begin{align*}
       \|B(U, U)(\tau)\|_{H^{N}}
       \lesssim&
        \int_{-\infty}^\tau {(\tau-s)^{-\frac{1}{2\al}}}{e^{(\tau-s)(a+\delta)} e^{2(a+\ve_0) s}} \dif s
        \|U\|_{\bbx_\tau}^2  \\
       \lesssim&   e^{2 \tau (a+\ve_0)}
                \int_0^\infty s^{-\frac{1}{2\alpha}} e^{-s(a+2 \ve_0 - \delta)} \dif s \|U\|_{\bbx_\tau}^2 \\
       \lesssim&  e^{2 \tau (a+\ve_0) }  \|U\|_{\bbx_\tau}^2 .
\end{align*}
Thus, \eqref{B_est} is verified.

\textit{$(ii)$ Estimate of $LU$.}
We claim that
\begin{equation}\label{L-est}
    \|L U\|_{\bbx_\tau}
    \lesssim
    (e^{ a  \tau}
       + e^{ \tau(1+\kappa - \frac{5}{4\alpha})}
              \|w\|_{C^\kappa([0,e^\tau]; H^N)}) \|U\|_{\bbx_\tau}.
\end{equation}

For this purpose,
using \eqref{parabolic_est} again we have that
for any $\tau \in (-\infty, T)$,
\begin{align*}
    \|L_1 U\|_{H^{N}}
     \lesssim \int_{-\infty}^\tau  {(\tau-s)^{-\frac{1}{2\al}}}{e^{(\tau-s)(a+\delta)}}
     \left(\left\|(\ul \cdot \nabla) U\right\|_{H^{N-1}}+\left\|(U \cdot \nabla) \ul\right\|_{H^{N-1}}\right) \dif s.
\end{align*}
Then, by the algebraic property of $H^N$, \eqref{U_Hk_est} and \eqref{X-def},
\begin{align*}
    \left\|(\ul \cdot \nabla) U\right\|_{H^{N-1}}
    +\left\|(U \cdot \nabla) \ul\right\|_{H^{N-1}}
    \leq C(N, \delta) e^{(2 a+\ve_0)s}\|U\|_{\bbx_s},
\end{align*}
which yields that
\begin{align*}
    \|L_1 U\|_{H^N}
    \lesssim& \int_{-\infty}^\tau
          (\tau-s)^{-\frac{1}{2\alpha}} e^{(\tau-s)(a+\delta)} e^{s(2a+\ve_0)} \dif s \|U\|_{\bbx_\tau} \notag
    \lesssim e^{(2a+\ve_0)\tau }\|U\|_{\bbx_\tau},
\end{align*}
and thus
\begin{align}\label{L1-esti}
    \|L_1 U\|_{\bbx_\tau}
     \lesssim e^{a \tau }\|U\|_{\bbx_\tau}.
\end{align}

Moreover,
by \eqref{parabolic_est},
\begin{align*}
    \norm{L_2 U}_{H^N}
    &\les \int_{-\infty}^\tau (\tau-s)^{-\frac{1}{2\al}} e^{(\tau-s)(a+\delta)} \norm{(U\cdot\na)W+(W\cdot\na)U}_{H^{N-1}} \dif s\\
    &\les \int_{-\infty}^\tau (\tau-s)^{-\frac{1}{2\al}} e^{(\tau-s)(a+\delta)}  \norm{W}_{H^{N}}\norm{U}_{H^{N}} \dif s.
\end{align*}
Then, by \eqref{Z-est} involving the $C^{\kappa}$-H\"older continuity of $w$ in $H^N$,
where $\kappa \in (1/4, 1/2)$,
the right-hand side above can be bounded by
\begin{align*}
    & \int_{-\infty}^\tau (\tau-s)^{-\frac{1}{2\al}} e^{(\tau-s)(a+\delta)}
       e^{s(1+\kappa- \frac{5}{4\alpha})}
        \norm{w}_{C^{\kappa}([0,e^\tau]; H_{x}^{N})}  e^{s(a+\ve_0)}\norm{U}_{\bbx_s} \dif s\\
     &\les e^{\tau(a+1+\kappa -\frac{5}{4\alpha}+\ve_0)}
          \int_0^\infty
          s^{-\frac{1}{2\alpha}}
          e^{-s(1+\kappa - \frac{5}{4\alpha} -\delta + \ve_0)} ds
            \norm{w}_{C^{\kappa}([0,e^\tau]; H_{x}^{N})}  \norm{U}_{\bbx_\tau}.
\end{align*}
Choose $\delta \in (0,1/{12})$ small enough
such that
\begin{align*}
   1+\kappa - \frac{5}{4\alpha} -\delta
   \geq \kappa - \frac 14  - \delta
   \geq \frac{1}{12} - \delta >0.
\end{align*}
Then, the above integration is finite.
It follows that
\begin{equation}\label{L2-esti}
    \|L_2 U(\cdot, \tau)\|_{\bbx_\tau}
    \lesssim e^{\tau(1+\kappa -\frac{5}{4\alpha})}
     \norm{w}_{C^{\kappa}([0,e^\tau]; H_{x}^{N})}   \|U\|_{\bbx_\tau}.
\end{equation}
A combination of \eqref{L1-esti} and \eqref{L2-esti}
verifies \eqref{L-est}.

\textit{$(iii)$ Estimate of $G$.}
We show that for any $\kappa \in (1/4,1/2)$,
\begin{equation}\label{G_est}
    \|G\|_{\bbx_\tau} \lesssim e^{\tau(a-\ve_0)}
     +  e^{\tau(1+\kappa -\frac{5}{4\al}-\ve_0)}  \norm{w}_{C^{\kappa}([0,e^\tau]; H_{x}^{N})}.
\end{equation}

To this end, applying Proposition \ref{Prop-Semigroup}  we have
\begin{equation*}
 \|G_1(\tau)\|_{H^N}
 \lesssim \int_{-\infty}^\tau e^{(\tau-s)(a+\delta)}\left\|\ul \cdot \nabla \ul(s)\right\|_{H^N} \dif s.
\end{equation*}
Since by \eqref{U_Hk_est},
\begin{align*}
   \left\|\ul \cdot \nabla \ul(s)\right\|_{H^N} \leq C(N)\left\|\ul(s)\right\|_{H^{N+1}}^2 \leq C(N) e^{2as},
\end{align*}
we get
\begin{align*}
   \|G_1(\tau)\|_{H^N}
   \lesssim \int_{-\infty}^\tau e^{(\tau-s) (a+\delta)} e^{2a s} \dif  s
   \lesssim e^{2a \tau},
\end{align*}
which yields that
\begin{align} \label{G1-esti}
   \|G_1(\tau)\|_{\bbx_\tau}
   \lesssim e^{\tau(a-\ve_0)}.
\end{align}

Regarding the second term,
using Proposition \ref{Prop-Semigroup}, \eqref{U_Hk_est} and \eqref{Z-est}
again we derive
\begin{align*}
    \|G_2(\tau)\|_{H^N}
     &\lesssim \int_{-\infty}^\tau {(\tau-s)^{-\frac{1}{2\al}}} e^{(\tau-s)(a+\delta)} \norm{\ul}_{H^{N}} \norm{W}_{H^{N}} \dif s\\
    &\lesssim \int_{-\infty}^\tau
    (\tau-s)^{-\frac{1}{2\alpha}}  e^{(\tau-s)(a+\delta)} e^{sa}
              e^{s(1+\kappa -\frac{5}{4\al})} \dif s
               \norm{w}_{C^{\kappa}([0,e^\tau]; H_{x}^{N})} \\
    &\lesssim e^{\tau(a+1+\kappa-\frac{5}{4\al})} \norm{w}_{C^{\kappa}([0,e^\tau]; H_{x}^{N})},
\end{align*}
where in the last step we used
$1+\kappa - \frac{5}{4\alpha} - \delta >0$.
This yields that
\begin{align} \label{G2-esti}
   \|G_2(\tau)\|_{\bbx_\tau}
   \lesssim e^{\tau(1+\kappa-\frac{5}{4\al}-\ve_0)} \norm{w}_{C^{\kappa}([0,e^\tau]; H_{x}^{N})}.
\end{align}
Thus, it follows from \eqref{G1-esti} and \eqref{G2-esti} that
\eqref{G_est} holds.

Thus, combining estimates \eqref{B_est}, \eqref{L-est} and  \eqref{G_est} altogether
we conclude that
there exists a large deterministic constant
$C(N,\kappa)$,
only depending on $N, \kappa$,
such that
\begin{align}  \label{TU-esti}
       \norm{\mathcal{T}(U)}_{\bbx_\tau}
       \leq& C(N,\kappa)
            \big[e^{ \tau(a-\ve_0)}   + e^{\tau(1+\kappa - \frac{5}{4\alpha} - \varepsilon_0)}  \norm{w}_{C^{\kappa}([0,e^\tau]; H_{x}^{N})} \notag  \\
          &\qquad \quad + ( e^{\tau a} + e^{\tau(1+\kappa-\frac{5}{4\al})}
              \norm{w}_{C^{\kappa}([0,e^\tau]; H_{x}^{N})}) \|U\|_{\bbx_\tau}
            + e^{\tau(a+\ve_0)}  \|U\|_{\bbx_\tau}^2 \big],
\end{align}
Similarly,
we also have that  for any $U_i \in B_{\bbx_\tau, 1}(0)$,  $i=1,2$,
\begin{align}  \label{TU1-TU2-esti}
   \norm{\mathcal{T}(U_1) - \mathcal{T}(U_2)}_{\bbx_\tau}
   \leq& C(N,\kappa)
         ( e^{\tau  a } + e^{\tau(1+\kappa-\frac{5}{4\al})}
        \norm{w}_{C^{\kappa}([0,e^\tau]; H_{x}^{N})} )
        \|U_1 - U_2\|_{\bbx_\tau}.
\end{align}

Therefore,
for $T=\log \sigma$ with $\sigma$ given by \eqref{sigma-def},
we derive that $\mathcal{T}$ is a contraction mapping on
the ball $B_{\bbx_T,1}(0)$.
The proof is complete.
\hfill $\square$

\section{Global  martingale solutions}   \label{Sec-Global-Mart}

This section is devoted to martingale solutions
to stochastic Navier-Stokes equation,
including the global existence,
stability with respect to initial conditions
and the gluing procedure.

\subsection{Setups.}\label{Subsec-Mart-Setup}
Let us first prepare some preliminaries concerning
the functional spaces
and canonical probability spaces for martingale solutions.

{\bf Function spaces.}
Let $\calc_0^\infty:= \{u\in C_c^\infty: \div\ u =0\}$,
$\calh$, $\calv$ and $\calw$ be the closure of $\calc_0^\infty$ in $L^2$, $H^\alpha$
and $H^N$, respectively, where $\alpha \in [1,5/4)$, $N> 5/2$.
Let $\calh'$ and $\calv'$ denote the corresponding dual spaces.

Since the embedding $\calw \hookrightarrow \calh$ is continuous,
by \cite[Lemma C.1]{Brz13},
there exists a Hilbert space $\calu$ such that $\calu$
is dense in $\calw$ and the embedding $\calu \hookrightarrow \calw$ is compact.

One has the embeddings
\begin{align*}
   \calu  \hookrightarrow  \calw \hookrightarrow \calv \hookrightarrow \calh \cong \calh'
   \hookrightarrow \calv'  \hookrightarrow \calw' \hookrightarrow \calu'.
\end{align*}

As in \cite{Brz13,Roz2005},
we consider the following four functionals spaces for the
solvability of martingale solutions to stochastic Navier-Stokes equation
on the whole space:
\begin{enumerate}
  \item[$(i)$]
   $C([0,T]; \calu')$ denotes  the space of continuous functions
             $u:[0,T]\to \calu'$ with
           the topology $\calt_1$
         induced by  the  norm
          \begin{align*}
               \|u\|_{C([0,T]; \calu')}:=\sup_{t\in [0,T]} \|u(t)\|_{\calu'};
          \end{align*}
  \item[$(ii)$]
   $L^2_w(0,T; \calv)$ is the space
             $L^2(0,T; \calv)$ with the weak topology $\calt_2$;
  \item[$(iii)$]
   $L^2(0,T; \calh_{\loc})$ is the space of measurable functions
            $u:[0,T]\to \calh$
           with the topology $\calt_3$ induced by the seminorms
   \begin{align*}
      \|u\|_{L^2(0,T;{\calh_R})}^2:= \int_0^T \int_{B_R} |u(t,x)|^2 \dif x ,\ \ R\in \mathbb{N};
   \end{align*}
  \item[$(iv)$]
   $C([0,T]; \calh_w)$ denotes the space of weakly continuous functions
               $u: [0,T]\to \calh$ with the weakest topology $\calt_4$ such that
               the mappings $C([0,T]; \calh_w) \ni u\mapsto (u(\cdot), v) \in C([0,T]; \mathbb{R})$
               are continuous for all $v\in \calh$.
\end{enumerate}

Let
\begin{align*}
		\calz_{[0,T]}:= C([0,T]; \calu') \cap L^2_w(0,T; \calv)
		\cap L^2(0,T; \calh_{\loc}) \cap  C([0,T]; \calh_{w})
\end{align*}
with the topology being the supremum of $\calt_i$, $1\leq i\leq 4$.
Similarly, let \begin{align*}
		\wh\calz_{[0,T]}:= C([0,T]; \calu') \cap  L^2(0,T; \calh_{\loc}) \cap  C([0,T]; \calh_{w}),
\end{align*}
and
	\begin{align}  \label{whcalz-def}
		\scrx_{[0,T]} := C([0, T]; \calu') \cap L^2(0,T; \calh_{\loc}).
	\end{align}
Define the global version by
	\begin{align}\label{whcalz-loc-def}
		\scrx_{[0,\infty), \loc}
		:= C_{\loc}([0, \infty); \calu') \cap L^2_{\loc}(0,\infty; \calh_{\loc})
	\end{align}
with the locally uniform topology.
$\calz_{[0,\infty), \loc}$ and $\wh\calz_{[0,\infty), \loc}$
can be defined similarly.

Let
\begin{align*}
   \scry_{[0,T]}  := C([0,T]; \calw)
\end{align*}
with the uniform convergence topology, and
\begin{align*}
    \scry_{[0,\infty);\loc} := C_{\loc}([0,\infty); \calw)
\end{align*}
with the locally uniform topology.

{\bf Canonical spaces and processes.}
Define the product canonical space by
\begin{align*}
   \wh \Omega:=  \scrx_{[0,\infty),\loc} \times \scry_{[0,\infty),\loc}.
\end{align*}

Let $\mathscr{P}(\wh \Omega)$ denote the set of all probability measures
on $(\wh \Omega, \mathcal{B}(\wh \Omega))$
with $\mathcal{B}(\wh \Omega)$ being the Borel $\sig$-algebra coming
from the topology of locally uniform convergence on $\wh \Omega$.

Let $z=(x,y)$
denote the canonical process on $\wh \Omega$
given by
$$
z(t,\omega) = (x(t,\omega), y(t,\omega)) := \omega(t), \ \ \forall \omega\in \wh \Omega.
$$
We also use the notation $z_0=(x_0,y_0)$
to denote the deterministic initial data.

Similarly, for any $t \geq 0$,
let
\begin{align*}
         \wh \Omega^t:=\scrx_{[t,\infty), \loc} \times \scry_{[t,\infty), \loc},
\end{align*}
equipped with its Borel $\sig$-algebra $\mathcal{B}^t$ which coincides with
$\sig\{z(s), s \geq t\}$.
Denote the canonical filtration by
$\mathcal{B}_t^0:=\sig\{z(s), s \leq t\}$, $t \geq 0$,
and its right continuous version by
$\mathcal{B}_t:=\cap_{s>t} \mathcal{B}_s^0, t \geq 0$.

{\bf Martingale solutions and probabilistically weak solutions.}
Global martingale solutions are taken in the sense of Definition \ref{def-mart-1}
in Section \ref{Sec-Intro}.

Recall that
$\mathscr{M}(s_0, z_0, C_{\cdot})$ is the set of all martingale solutions
starting from $z_0$ at time $s_0$ and satisfying $(M1)$-$(M3)$.
As we will see Corollary \ref{Cor-Mart-Comp} below,
$\mathscr{M}(s_0, z_0, C_{\cdot})$ is a compact set in $\mathscr{P}(\wh \Omega)$.

Similarly,
one can define martingale solutions up to a stopping time $\tau: \wh \Omega\rightarrow [0,\infty)$.
For this purpose, we define the trajectory space stopped at time $\tau$ by
$$
 \wh \Omega_{\tau}:= \{\omega(\cdot \wedge \tau(\omega)) ; \omega \in \wh \Omega \} .
$$

\begin{definition} [Martingale solutions up to stopping times] \label{def-mart-2}
Let $z_0 :=(x_0,y_0) \in \calh  \times \calw$,
$\tau \geq 0$ be a $(\mathcal{B}_t )_{t\geq 0}$-stopping time. A probability measure
$P \in \mathscr{P} ( \wh \Omega_{\tau})$
is a martingale solution to \eqref{Hyper-SNSE} on $[0,\tau]$
with the initial value $z_0$ at time $0$
provided
\begin{enumerate}
  \item[$(M1')$] $P(z(0)=z_0 )=1$ and $P$-a.s.
  for any $t\geq 0$ and any $\psi\in \calc_0^\infty$,
\begin{align*}
   (x({t\wedge \tau}) - x_0, \psi) =&
   \int_0^{t\wedge \tau} (-(-\Delta)^\alpha x(s) - \div (x(s)\otimes x(s)) + f(s, y(s))
   , \psi)\dif s\\
   & + (y({t\wedge \tau}) - y_0, \psi).
\end{align*}

 \item[$(M2')$]
 Energy inequality:
    for any $t\geq 0$,
    \begin{align*}
       & \mathbb{E}^{P} \|x(t\wedge \tau)\|_{\calh}^2
       + 2 \mathbb{E}^{P} \int_{0}^{t\wedge \tau} \|(-\Delta)^{\frac \alpha 2} x(s)\|_{\calh}^2 \dif s \\
       \leq&   \|x_0\|_{\calh}^2
            +  2 \mathbb{E}^{P} \int_{0}^{t\wedge \tau} (f(s,y(s)), x(s)) \dif s
            + \sum_{j=1}^{\infty}\lambda_j\|\epsilon_j\|^2_{\calh} (t\wedge \tau).
    \end{align*}
  Moreover, the energy bound holds:
    \begin{align*}
       \mathbb{E}^{P}  (\sup_{0\leq s\leq t\wedge \tau}\|x(s)\|_{\calh}^2
       + \int_{0}^{t\wedge \tau} \| x(s)\|_{\calv}^2 \dif s)
       \leq C_t (\|x_0\|_{\calh}^2 +1),
\end{align*}
  where $t\mapsto C_t$ is a positive increasing constant.

  \item[$(M3')$]  $(y(t))$ is a $\mathcal{Q}$-Wiener process starting from $y_0$ at time $0$ under $P$.
\end{enumerate}
As in Definition \ref{def-mart-1},
let $\mathscr{M}(0, z_0, C_{\cdot}, \tau)$ denote the set of all martingale solutions on $[0,\tau]$
satisfying $(M1')$-$(M3')$.
\end{definition}

As in the usual case,
martingale solutions defined above are closely related to probabilistically weak solutions.

\begin{definition} [Probabilistically weak solutions]  \label{def-weak}
We say that $(\wt \Omega, (\widetilde{\mathcal{F}}_t), \wt \bbp, \wt v, \wt w)$
is a probabilistically weak solution to  \eqref{Hyper-SNSE}
with the initial condition $z_0=(x_0,y_0) \in  \calh  \times \calw$
at time $s_0 (\geq 0)$,
if
$(\wt \Omega, (\widetilde{\mathcal{F}}_t), \wt \bbp)$
is a filtrated probability space,
$\wt v$ and $\wt w$ are $(\widetilde{\mathcal{F}}_t)$-adapted
continuous processes
in $\calu'$ and $\calw$, respectively,
satisfying
\begin{enumerate}
  \item[$\wt{(M1)}$] $\wt \bbp \left( (\wt v(t), \wt w(t)) =(x_0, y_0), 0 \leq t \leq s_0 \right)=1$
  and for any $t\geq s_0$ and any $\psi\in \calc_0^\infty$,
\begin{equation*}
	\begin{aligned}
		(\wt v(t) -  x_0, \psi) = &\int_0^t (-(-\Delta)^\alpha \wt v(s) -  \div (\wt v(s)\otimes \wt v(s))
		+  f(s, \wt w(s)), \psi) \dif s\\
		 &+ ( \wt w(t) -  y_0, \psi),\ \  \wt{\bbp}-a.s.
	\end{aligned}
\end{equation*}

  \item[$\wt{(M2)}$]
  For any $t\geq s_0$,
  \begin{align*}
  	& \mathbb{E}^{\wt\bbp} \|\wt v(t)\|_{\calh}^2
  	+ 2 \mathbb{E}^{\wt\bbp} \int_{s_0}^{t} \|(-\Delta)^{\frac \alpha 2} \wt v(s)\|_{\calh}^2 \dif s \\
  	\leq&   \|x_0\|_{\calh}^2
  	+  2 \mathbb{E}^{\wt\bbp} \int_{s_0}^{t} (f(s,\wt w(s)), \wt v(s)) \dif s
  	+ \sum_{j=1}^{\infty}\lambda_j\|\epsilon_j\|^2_{L^2} (t-s_0 ).
  \end{align*}
  Moreover, there exists a positive increasing function $t \mapsto C_{t}$ such that for all $t \geq s_0$,
\begin{equation*}
    \mathbb{E}^{\wt \bbp} (\sup _{s \in[0, t]}\|\wt v(s)\|_{\calh}^{2}
    +\int_{s_0}^t\|\wt v(s)\|_{\calv}^2  \dif s )
     \leq C_{t} (\|x_0 \|_{\calh}^{2}+1 ).
\end{equation*}

  \item[$\wt{(M3)}$] $(\wt w(t))$ is an $(\widetilde{\mathcal{F}}_t)$-adapted $\mathcal{Q}$-Wiener process starting from $y_0$ at time $s_0$ under $\wt \bbp$.

\end{enumerate}
\end{definition}

Proposition \ref{Prop-equi-mart} below shows the equivalence between
martingale solutions and probabilistically weak solutions.

\begin{proposition}  \label{Prop-equi-mart}
If
$(\wt \Omega, (\widetilde{\mathcal{F}}_t), \wt{\mathbb{P}}, \wt v, \wt w)$
is a probabilistically weak solution to \eqref{Hyper-SNSE}
with the initial condition $z_0 \in  \calh  \times \calw$
at time $s_0(\geq 0)$
as in the sense of Definition \ref{def-weak},
then there exists a martingale solution
$P_{s_0, z_0} \in \mathscr{M}(s_0, z_0, C_{\cdot})$  to \eqref{Hyper-SNSE}
for some function $(C_t)$ as
in the sense of Definition \ref{def-mart-1}.
The reverse statement also holds.
\end{proposition}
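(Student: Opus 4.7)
The plan is to treat the two directions separately, exploiting the fact that all conditions in Definitions \ref{def-mart-1} and \ref{def-weak} are expressible in terms of the joint law of $(v,w)$, so the equivalence is essentially a change-of-probability-space argument.

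For the forward direction (weak $\Rightarrow$ martingale), let $(\wt\Omega, (\wt{\mathcal{F}}_t), \wt{\mathbb{P}}, \wt v, \wt w)$ be a probabilistically weak solution. I would first build a map $\Phi : \wt\Omega \to \wh\Omega$ by setting
\begin{equation*}
\Phi(\wt\omega)(t) := \begin{cases} (x_0, y_0), & 0 \le t \le s_0, \\ (\wt v(t,\wt\omega), \wt w(t,\wt\omega)), & t \ge s_0, \end{cases}
\end{equation*}
which takes values in $\scrx_{[0,\infty),\loc} \times \scry_{[0,\infty),\loc}$ thanks to the trajectory regularity provided by $\wt{(M1)}$--$\wt{(M3)}$ (continuity of $\wt w$ in $\calw$, weak continuity in $\calh$ and $L^2_{\loc}\calh_{\loc}$ integrability of $\wt v$ follow from the energy bound plus the equation). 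Then define $P_{s_0,z_0} := \Phi_\#\wt{\mathbb{P}}$. Since $\Phi$ is Borel measurable into $\wh\Omega$, $P_{s_0,z_0} \in \mathscr{P}(\wh\Omega)$. The initial condition part of $(M1)$ is immediate; the integral identity in $(M1)$ and the energy inequality/bound in $(M2)$ follow because every quantity involved is a Borel functional of the trajectory $z(\cdot)$, so expectations transfer from $\wt{\mathbb{P}}$ to $P_{s_0,z_0}$. For $(M3)$, being a $\mathcal{Q}$-Wiener process starting from $y_0$ at time $s_0$ under $P_{s_0,z_0}$ is a property of the joint finite-dimensional distributions of the increments $y(t)-y(s)$ for $s_0 \le s \le t$, which coincide with those of $\wt w(t)-\wt w(s)$ under $\wt{\mathbb{P}}$ by construction.

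For the reverse direction (martingale $\Rightarrow$ weak), I would take the canonical realization: $\wt\Omega = \wh\Omega$, $\wt{\mathbb{P}} = P_{s_0,z_0}$, $\wt v = x$, $\wt w = y$ (the canonical processes), and $\wt{\mathcal{F}}_t := \mathcal{B}_t = \bigcap_{s>t}\mathcal{B}_s^0$. Adaptedness of $\wt v$, $\wt w$ is built into the canonical filtration. Conditions $\wt{(M1)}$ and $\wt{(M2)}$ are then literal restatements of $(M1)$ and $(M2)$. The subtle point is $\wt{(M3)}$: $(M3)$ only asserts that $(y(t))$ is a $\mathcal{Q}$-Wiener process under $P_{s_0,z_0}$ (with respect to its natural filtration $\mathcal{B}_t^0$), whereas $\wt{(M3)}$ demands adaptedness with respect to the right-continuous enlargement $\mathcal{B}_t$. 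I would handle this by invoking the standard fact that for a continuous process with independent stationary Gaussian increments, being a Wiener process with respect to the natural filtration implies the same with respect to its right-continuous augmentation, since for each fixed $s \ge s_0$ the independence of $y(t)-y(s)$ from $\mathcal{B}_{s+}^0$ follows from approximating by $\mathcal{B}_{s+\delta}^0$, using continuity of characteristic functions and monotone class.

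The main obstacle I expect is the filtration issue in the reverse direction: ensuring that the Wiener process property is preserved after passing to the right-continuous canonical filtration on the infinite-dimensional path space $\wh\Omega$. This is exactly the point addressed in \cite{HZZ19} for the infinite-dimensional setting, and I would simply quote their argument. A secondary technical issue, in the forward direction, is verifying that $\Phi(\wt\omega)$ lies in the correct subspace for almost every $\wt\omega$; this requires using $\wt{(M2)}$ to upgrade the raw continuity in $\calu'$ to weak continuity in $\calh$ and to give $L^2_{\loc}\calh_{\loc}$ integrability, but this is routine once the energy bound is in hand.
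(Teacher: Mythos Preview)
Your proposal is correct and takes essentially the same approach as the paper: push forward $\wt{\mathbb{P}}$ via $(\wt v,\wt w)$ to obtain $P_{s_0,z_0}$ in the forward direction, and take the canonical realization $(\wh\Omega,(\mathcal{B}_t),P_{s_0,z_0},x,y)$ in the reverse direction. The paper's proof is a two-line sketch stating exactly these constructions without elaboration; your additional discussion of trajectory regularity and the right-continuous filtration issue (with the reference to \cite{HZZ19}) fills in details the paper leaves implicit.
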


\begin{proof}
Suppose that $(\wt \Omega, (\widetilde{\mathcal{F}}_t), \wt{\mathbb{P}}, \wt v, \wt w)$
is a probabilistically weak solution to \eqref{Hyper-SNSE}
with the initial condition $z_0$ at time $s_0$,
then $P_{s_0, z_0}:=\wt{\mathbb{P}} \circ (\wt v, \wt w)^{-1}$
is the desired martingale solutions to \eqref{Hyper-SNSE}
satisfying $(M1)$-$(M3)$ in Definition \ref{def-mart-1}.

Conversely,
suppose that $P_{s_0, z_0} \in \mathscr{M}(s_0, z_0, C_{\cdot})$ is
a martingale solution to \eqref{Hyper-SNSE},
then,
$(\wh \Omega, (\mathcal{B}_t(\wh \Omega)), P_{s_0, z_0}, (x(t)), (y(t)))$
is a probabilistically weak solution to \eqref{Hyper-SNSE}
satisfying $\wt{(M1)}$-$\wt{(M3)}$ in Definition \ref{def-weak}.
\end{proof}

\subsection{Global existence}

This subsection mainly contains the proof of  Proposition \ref{Prop-GWP-Mart}
concerning the existence of global martingale solutions
to \eqref{Hyper-SNSE}.

Let us first show the integrability of the stochastic forcing term,
which is important for the subsequent compactness arguments.

\begin{lemma} [Integrability of forcing term] \label{Lem-f-L2}
Let $w$ be the $\mathcal{Q}$-Wiener process \eqref{w-BM}
and set
\begin{equation*}
    f(\cdot, w):=\pt\bar{u}+(-\Delta)^\al \bar{u}+ \bar{u}\cdot\na \bar{u}+
            \div\lc\bar{u}\otimes w+w\otimes\bar{u}\rc +
            \div(w\otimes w)+(-\Delta)^\al w,
\end{equation*}
where $\alpha\in [1,5/4)$, $\ol{u}$ is related to the background profile $\ol{U}$
via the self-similar transform in \eqref{sim-va}.
Then, for any $T\in (0,\infty)$,
\begin{equation}\label{f-reg}
    f|_{[0,T]} \in L^2({\Omega},L^2(0,T; \calv'))\cap L^2({\Omega},L^1(0,T; L^2)).
\end{equation}
\end{lemma}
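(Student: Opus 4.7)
The plan is to split $f$ into its deterministic background part $f_{det}:=\pt\bar u+(-\Delta)^\al\bar u+\bar u\cdot\na\bar u$ and the random remainder $f_{stoch}:=\div(\bar u\otimes w+w\otimes\bar u)+\div(w\otimes w)+(-\Delta)^\al w$, and to estimate each piece in $L^2(\Omega;L^1(0,T;L^2))$ and $L^2(\Omega;L^2(0,T;\calv'))$ separately.

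For $f_{det}$ I would exploit the self-similar form $\bar u(x,t)=t^{1/(2\al)-1}\bar U(t^{-1/(2\al)}x)$ with $\bar U\in C_c^\infty$. A direct chain-rule calculation gives an identity of the shape $f_{det}(x,t)=t^{1/(2\al)-2}G_{det}(t^{-1/(2\al)}x)$ for a fixed Schwartz profile $G_{det}$ independent of $t$. Standard $L^2$-rescaling then yields $\|f_{det}(t)\|_{L^2_x}\lesssim t^{5/(4\al)-2}$, so $f_{det}\in L^1_t(0,T;L^2)$ precisely when $\al<5/4$. This rate is not good enough to give $L^2_t L^2$ once $\al\ge 5/6$; to treat the $\calv'$-estimate I would use the continuous embedding $H^{-\al}\hookrightarrow\calv'$ together with the Fourier-side scaling identity
\begin{equation*}
\|H(t^{-1/(2\al)}\cdot)\|_{\dot H^{-\al}}\lesssim t^{1/2+3/(4\al)}\|H\|_{\dot H^{-\al}},
\end{equation*}
valid for any $H\in\dot H^{-\al}$ when $\al<3/2$. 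Applied to $G_{det}$ this produces $\|f_{det}(t)\|_{\calv'}\lesssim t^{5/(4\al)-3/2}$, which lies in $L^2_t(0,T)$ iff $\al<5/4$; the Lions threshold reappears exactly as the integrability boundary.

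For $f_{stoch}$ I would use that $w$ is a $\mathcal{Q}$-Wiener process with values in $\calw=H^N\cap L^2_\sigma$, hence $\bbe\|w(t)\|_{H^N}^{2k}\lesssim t^k$ uniformly on $[0,T]$ for every $k\ge 1$. Since $N>5/2\ge 2\al$ and $H^N\hookrightarrow W^{1,\infty}(\R^3)$, one has $\|(-\Delta)^\al w\|_{L^2}\lesssim\|w\|_{H^N}$ and $\|\div(w\otimes w)\|_{L^2}\lesssim\|w\|_{H^N}^2$. Using $\div\bar u=0$ I rewrite $\div(\bar u\otimes w+w\otimes\bar u)=\bar u\cdot\na w+w\cdot\na\bar u$ and bound the two pieces by $t^{1/(2\al)-1}\|w\|_{H^N}$ and $t^{3/(4\al)-1}\|w\|_{H^N}$ respectively, both $L^1_t(0,T)$-integrable near $0$. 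Combining these pointwise-in-$\omega$ bounds with the moment estimate above via Cauchy-Schwarz in $(t,\omega)$ gives the required $L^2(\Omega;\cdot)$ integrability in both target spaces; the $\calv'$-side piggy-backs on the $L^2$-side using $\|\cdot\|_{\calv'}\lesssim\|\cdot\|_{L^2}$ throughout.

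The principal obstacle is the $L^2_t\calv'$ control of $f_{det}$: the naive $L^2_x$-estimate fails throughout $\al\in[5/6,5/4)$, and only the finer $H^{-\al}$-refinement—which reflects the self-similar concentration of $f_{det}$ at the origin in space-time—recovers sharp integrability up to, but not past, the Lions exponent $\al=5/4$.
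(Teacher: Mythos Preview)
Your approach is correct and follows essentially the same decomposition and scaling strategy as the paper. Two minor technical differences are worth flagging.

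First, for the $\calv'$-estimate of $f_{det}$, the paper does not invoke the homogeneous $\dot H^{-\alpha}$ scaling directly; instead it computes $\|I_1\|_{H^{-\alpha}}^2$ by rescaling the frequency variable and splitting the integral at $|\tilde\eta|=1$. Your route via $\|\cdot\|_{H^{-\alpha}}\le\|\cdot\|_{\dot H^{-\alpha}}$ is cleaner and gives the identical rate $t^{5/(4\alpha)-3/2}$; you should just make explicit that $G_{det}\in\dot H^{-\alpha}$ requires $\alpha<3/2$, which you already note.

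Second, for the cross term your pairing $\|\bar u\cdot\nabla w\|_{L^2}\le\|\bar u\|_{L^\infty}\|\nabla w\|_{L^2}\lesssim t^{1/(2\alpha)-1}\|w\|_{H^N}$ is \emph{not} in $L^2_t$ by itself once $\alpha\ge 1$, so the phrase ``piggy-backs on the $L^2$-side'' is slightly misleading: you genuinely need the moment decay $\bbe\|w(t)\|_{H^N}^2\lesssim t$ (via Fubini/Minkowski rather than Cauchy--Schwarz) to close the $L^2(\Omega;L^2_t\calv')$ bound. The paper avoids this by exploiting the divergence structure in $\calv'$, namely $\|\div(\bar u\otimes w+w\otimes\bar u)\|_{H^{-\alpha}}\lesssim\|\bar u\otimes w\|_{L^2}\lesssim\|\bar u\|_{L^2}\|w\|_{L^\infty}\lesssim t^{5/(4\alpha)-1}\|w\|_{H^N}$, which is already $L^2_t$ without any help from $w(t)\to 0$. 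Either route closes the estimate; yours is legitimate once you make the use of the moment bound explicit at that step.
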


\begin{proof}
	For simplicity,
	let us write $f = I_1 + I_2 + I_3$,
	where
	$ I_1:= \pt\bar{u}+(-\Delta)^\al \bar{u}+ \bar{u}\cdot\na \bar{u}$,
	$I_2:= \div\lc\bar{u}\otimes w+ w\otimes\bar{u}\rc$
	and $I_3:= \div(w\otimes w)+(-\Delta)^\al w$.
	
	Since $\bar{u}(x, t)={t^{\frac{1}{2\al}-1}} \bar{U}(\xi)$
	with $\xi = t^{-\frac{1}{2\alpha}}x$,
	one has
	\begin{equation*}
		I_1=t^{\frac{1}{2\al}-2}g(\xi),
	\end{equation*}
	where $g$ is smooth and compactly supported with respect to $\xi$.
	The Fourier transformation of $g(\xi)$ is
	\begin{equation*}
		\begin{aligned}
			\F(g(\frac{\cdot}{t^{1/2\al}})) (\eta)
			:&=\int_{\R^3} g(\frac{\cdot}{t^{1/2\al}})e^{-i\eta\cdot  x}\dif x
			= t^{\frac{3}{2\al}}\F(g(\cdot)) (t^{\frac{1}{2\al}}\eta).
		\end{aligned}
	\end{equation*}
	Then we have
	\begin{equation*}
		\|I_1\|_{L^2}
		=t^{\frac{1}{2\al}-2}\cdot t^{\frac{3}{4\al}}\|g(\cdot)\|_{L^2}
		\leq C t^{\frac{5}{4\al}-2},
	\end{equation*}
	and
	\begin{equation*}
		\begin{aligned}
			\norm{I_1}_{\calv'}^2 &=t^{\frac{1}{\al}-4}\int_{\R^3}(1+|\eta|^2)^{-\al}|\F(g(\frac{\cdot}{t^{1/2\al}})) (\eta)|^2\dif \eta \\
			&=t^{\frac{4}{\al}-4}\int_{\R^3}(1+|\eta|^2)^{-\al}|\F(g(\cdot)) (t^{1/{2\al}}\eta)|^2\dif \eta \\
			&\!\!\!\!\!\!\!\stackrel{\wt\eta=t^{\frac{1}{2\al}}\eta}{=}t^{\frac{5}{2\al}-3}\int_{\R^3}(t^{\frac{1}{\al}}+|\wt\eta|^2)^{-\al}|\F(g(\cdot)) (\wt\eta)|^2\dif \wt\eta\\
			&\leq t^{\frac{5}{2\al}-3}\left( \int_{|\wt\eta|\leq 1}|\wt\eta|^{-2\al}|\F(g(\cdot)) (\wt\eta)|^2\dif \eta+ \int_{|\wt\eta|>1}|\F(g(\cdot)) (\wt\eta)|^2\dif \wt\eta\right)\\
			&\leq t^{\frac{5}{2\al}-3} (\norm{|\wt\eta|^{-2\al}}_{L^\frac{6}{5}(|\wt\eta|\leq 1)}\norm{\F(g)}_{L^{12}}^2 +\norm{\F(g)}_{L^2}^2)\leq C t^{\frac{5}{2\al}-3},
		\end{aligned}
	\end{equation*}
	where the last inequality is due to the fact that $\F(g)$ belongs to Schwartz class.
	
	Regarding $I_2$ we have
	\begin{equation*}
		\begin{aligned}
			\norm{I_2}_{L^2}=\norm{\bar{u}\cdot\na w+ w\cdot\na\bar{u}}_{L^2}
			&\leq C\norm{w}_{H^N}(\norm{\bar{u}}_{L^2}+\norm{\na\bar{u}}_{L^2})\\
			&\leq C  \norm{w}_{H^N}(t^{\frac{5}{4\al}-1}+t^{\frac{3}{4\al}-1})\leq C t^{\frac{3}{4\al}-1} \norm{w}_{H^N},
		\end{aligned}
	\end{equation*}
	and
	\begin{align}  \label{J2}
		\norm{I_2}_{\calv'}^2 &=\int_{\R^3}(1+|\eta|^2)^{-\al}|
		\F\lc{\div\lc\bar{u}\otimes w+w\otimes\bar{u}\rc}(\cdot,t)\rc (\eta)|^2\dif \eta \notag  \\
		&=\int_{\R^3} \frac{|\eta|^2}{(1+|\eta|^2)^\al}|\F\lc{\lc\bar{u}\otimes w
			+w\otimes\bar{u}\rc}(\cdot,t)\rc(\eta)|^2\dif \eta   \notag   \\
		&\leq C\norm{\bar{u}\otimes w + w\otimes \bar{u} }_{L^2}^2
		\leq C\norm{w}_{L^\infty}^2 \norm{\bar{u}}_{L^2}^2
		\leq C t^{\frac{5}{2\al}-2} \norm{w}_{H^N}^2.
	\end{align}
	
	Similarly, for the last term $I_3$,
	we have
	\begin{align*}
		\norm{I_3}_{L^2}=\norm{w\cdot\na w+ (-\Delta)^\al w}_{L^2}\leq \norm{w}_{H^{N}}^2
		+\norm{w}_{H^{N}},
	\end{align*}
and
	\begin{align}  \label{J3}
		\norm{I_3}_{\calv'}^2 &=\int_{\R^3}(1+|\eta|^2)^{-\al}|\F\lc\div(w\otimes w)(\cdot,t)+(-\Delta)^\al w(\cdot,t)\rc (\eta)|^2\dif\eta \notag  \\
		&\leq C\int_{\R^3} \frac{|\eta|^2}{(1+|\eta|^2)^\al} |\F(w\otimes w(\cdot,t)) (\eta)|^2
		+\frac{|\eta|^{4\al}}{(1+|\eta|^2)^\al} |\F(w(\cdot,t)) (\eta)|^2  \dif \eta \notag \\
		&\leq C \int_{\R^3} |\F(w\otimes w(\cdot,t)) (\eta)|^2
		+|\eta|^{2\al} |\F(w(\cdot,t)) (\eta)|^2  \dif \eta \notag\\
		&\leq C (\norm{w}_{L^4}^4
		+ \norm{w}_{H^{\al}}^2)
		\leq C (\norm{w}_{H^{N}}^4
		+\norm{w}_{H^{N}}^2).
	\end{align}
	
	Thus,
	combining the above estimates and taking into account the integrability
	$t^{\frac{5}{4\al}-2}$, $t^{\frac{5}{2\alpha} -3}, t^{\frac{3}{4\al}-1} \in L^1_t$ when $\alpha \in [1,5/4)$
	and
	$ \mathbb{E} \sup_{t\leq T} \|w(t)\|_{H^N}^4  <\infty $
	we obtain \eqref{f-reg}.
\end{proof}

The following tightness criteria generalizes Corollary 3.9 of \cite{Brz13}
to the whole time regime.

\begin{lemma} [Tightness criteria] \label{Lem-Tight}
Let $(v_n)_{n\in \bbn}$ be a sequence of  $\calu'$-valued continuous processes
satisfying that for any $T\in (0,\infty)$,
\begin{enumerate}
  \item[$(i)$]
  $ \sup\limits_{n\in \bbn} \mathbb{P}
   (\sup\limits_{t\in [0,T]}\|v_n(s)\|_{\calh}^2
    + \|v_n\|_{L^2(0,T; \calv)}^2 )
   \leq C_T<\infty$;
  \item[$(ii)$]  $(v_n)_{n\in \bbn}$ satisfies that
  for any $T\in (0,\infty)$
  and for any $\ve, \eta>0$,
  there exists $\delta>0$ such that
  \begin{align*}
       \sup\limits_{n\in\bbn}
       \bbp (m_T(v_n, \delta):= \sup\limits_{s,t\in [0,T], |s-t|\leq \delta}
        \|v_n(t) - v_n(s)\|_{\calu'} \geq \eta) \leq \ve,
  \end{align*}
  where $m_T(v_n, \delta)$ is called the modulus of continuity of $v_n$ on $[0,T]$.
\end{enumerate}
Then, $(\bbp\circ v_n^{-1})$ is tight on $\calz_{[0,\infty),\loc}$.
\end{lemma}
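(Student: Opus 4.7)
My plan is to reduce the global tightness to finite-horizon tightness on each $\calz_{[0,T]}$ (where $T\in\bbn$), and to handle the finite-horizon case by exhibiting, for any $\ve>0$, a deterministic set that is compact in the intersection topology and captures the mass of $(v_n)$ up to $\ve$. Since $\calz_{[0,\infty),\loc}$ carries the locally uniform topology, the restriction maps $\rho_T: \calz_{[0,\infty),\loc}\to\calz_{[0,T]}$ are continuous, and a standard diagonal/Cantor argument shows that tightness on every $\calz_{[0,T]}$ implies tightness on $\calz_{[0,\infty),\loc}$.

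For fixed $T\in(0,\infty)$, I would choose, for a given $\ve>0$, constants $R=R(\ve)$ and $\delta_k=\delta_k(\ve)$ such that, by Markov's inequality together with hypotheses $(i)$ and $(ii)$, the set
\begin{equation*}
K_T(R,\delta_\cdot):=\Bigl\{v\in\calz_{[0,T]}:\ \|v\|_{L^\infty(0,T;\calh)}\le R,\ \|v\|_{L^2(0,T;\calv)}\le R,\ m_T(v,\delta_k)\le 1/k\ \forall k\in\bbn\Bigr\}
\end{equation*}
satisfies $\sup_n\bbp(v_n\notin K_T)\le\ve$. The proof then reduces to checking that $K_T$ is relatively compact in each of the four topologies $\calt_1,\dots,\calt_4$ whose supremum defines $\calz_{[0,T]}$.

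The verification runs as follows. In $C([0,T];\calu')$ I would apply Arzelà--Ascoli: the bound $\|v(t)\|_\calh\le R$ combined with the compact embedding $\calh\hookrightarrow\calu'$ (which follows from the compactness of $\calu\hookrightarrow\calw\hookrightarrow\calh$ by duality) gives relative compactness of $\{v(t):v\in K_T,\,t\in[0,T]\}$ in $\calu'$, while the modulus-of-continuity bound $m_T(v,\delta_k)\le 1/k$ gives uniform equicontinuity. For $L^2_w(0,T;\calv)$, the bound $\|v\|_{L^2_t\calv}\le R$ yields relative weak compactness via Banach--Alaoglu (closed balls of the Hilbert space $L^2(0,T;\calv)$ are weakly sequentially compact and metrizable). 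For $C([0,T];\calh_w)$, I would combine the uniform $L^\infty_t\calh$ bound with the $C([0,T];\calu')$ continuity: any sequence in $K_T$ has a subsequence whose $\calu'$-continuous limit, being $\calh$-valued and bounded, is automatically weakly continuous into $\calh$ (a classical lemma, e.g.\ Lions--Temam; one tests against the dense subset $\calu\subseteq\calh$).

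The main obstacle is $L^2(0,T;\calh_{\loc})$ because, unlike in the torus or bounded-domain settings, the embedding $\calv\hookrightarrow\calh$ is not compact on $\R^3$. I would resolve this by localizing to balls $B_R$: for each fixed $R\in\bbn$, the triple $\calv\hookrightarrow L^2(B_R)\hookrightarrow\calu'$ has the first embedding compact (via Rellich--Kondrachov applied to the restriction, since $v\in\calv$ restricts to $H^\alpha(B_R)\Subset L^2(B_R)$) and the second continuous. An Aubin--Lions-type argument, fed by the $L^2_t\calv$ bound and the $\calu'$-equicontinuity in $K_T$, then yields relative compactness in $L^2(0,T;L^2(B_R))$ for every $R$; by a further diagonal extraction this gives compactness in $L^2(0,T;\calh_{\loc})$ under the seminorm topology. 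Combining the four compactness conclusions, $K_T$ is relatively compact in $\calz_{[0,T]}$, which completes the finite-horizon step and hence, by the reduction of the first paragraph, the lemma.
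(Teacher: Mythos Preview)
Your proposal is correct and follows essentially the same strategy as the paper. The paper constructs a single global set $K=\bigcap_{k}\bigl(K_k^1\cap\bigcap_j K_{k,j}^2\bigr)$ directly in $C([0,\infty);\calu')$ (with $R_k$ and $\delta_{jk}$ chosen so the complement probabilities are summable in $k,j$), then invokes \cite[Lemma~3.3]{Brz13} for the relative compactness of each finite-horizon slice in $\calz_{[0,k]}$ and a diagonal argument to conclude; you instead reduce first to finite $T$ and spell out the four-topology compactness verification that the paper outsources to that reference, but the substance is the same.
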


\begin{proof}
Fix any $\ve >0$.
For every $k,j\in \bbn$,
let
\begin{align*}
  & K_{k}^1:= \{x\in C([0,\infty); \calu'): \sup_{t\in [0,k]} \|x(t)\|_{\calh}^2
      + \|x\|_{L^2(0, k; \calv)}^2 \leq R^2_k\},  \\
  & K^2_{k,j}:= \{x\in C([0,\infty); \calu'): m_k(x,\delta_{jk}) \leq 1/j\},
\end{align*}
where $R_k^2, \delta_{jk}>0$.
By the properties $(i)$ and $(ii)$,
there exist $R_k$ large enough
and $\delta_{jk}$ small enough
such that
\begin{align*}
   \sup\limits_{n\geq 1} \bbp\circ v_n^{-1} ((K_{k}^1)^c) \leq \frac{\ve}{2^{k+1}},  \ \
   \sup\limits_{n\geq 1} \bbp\circ v_n^{-1} (( K^2_{k,j})^c) \leq \frac{\ve}{2^{j+k+1}}
\end{align*}
for any $k,j\geq 1$.
Then, letting
\begin{align*}
   K:=  \bigcap_{k\in \bbn}
      \big(K^1_{k} \cap \big(\bigcap_{j\in \bbn}K^2_{k,j}\big) \big),
\end{align*}
we have
\begin{align*}
   \sup\limits_{n\geq 1}  \bbp\circ v_n^{-1} (K^c) \leq \ve.
\end{align*}
Moreover,
by the compactness in \cite[Lemma 3.3]{Brz13},
for every $k\geq 1$,
$K^1_{k} \cap \big(\bigcap_{j\in \bbn}K^2_{k,j}\big)$
is relatively compact in $\calz_{[0,k]}$.
Thus,
by diagonal arguments,
$K$ is compact in $\calz_{[0,\infty),\loc}$.
\end{proof}

We are now ready to prove Proposition \ref{Prop-GWP-Mart}.

{\bf Proof of Proposition \ref{Prop-GWP-Mart}.}
By virtue of Proposition \ref{Prop-equi-mart},
we only need to prove the existence of
probabilistically weak solutions to \eqref{Hyper-SNSE}
as in the sense of Definition \ref{def-weak}.
Without loss of generality
we may let $s_0=0$ and $z_0=0$.
The proof proceeds in the following four steps.

{\it Step 1. Approximations of stochastic Navier-Stokes equations.}
As in \cite[Section 2]{Brz13} one can take an orthonormal basis  $\{e_i\} \subseteq \calu$ of $\calh$,
and
define the operator $P_n: \calu' \to \calu$ by
\begin{align*}
   P_n v:= \sum\limits_{i=1}^n
   \ _{\calu'}(v,  e_i)_{\calu} e_i, \ \ v\in \calu'.
\end{align*}
The operator $P_n$ satisfies the following properties:

$(i)$ The restriction of $P_n$ to $\calh$ is an orthonormal projection from $\calh$
onto $\calh_n:= span\{e_1, \cdots, e_n\}$, i.e.,
\begin{align*}
   P_n u = \sum\limits_{i=1}^n (u,e_i)_\calh e_i, \ \ u\in \calh.
\end{align*}

$(ii)$ For any $v_1 \in \calu'$ and $v_2\in \calu$,
\begin{align*}
    _{\calu'}(v_1, P_n v_2)_{\calu}
   = (v_2, P_n v_1)_\calh.
\end{align*}

$(iii)$ For any $u\in \calu$,
\begin{align*}
     P_n v \to v \ \ in\ \calu,
\end{align*}
and
\begin{align} \label{Pnv-v-calv}
     P_n v \to v \ \ in\ \calv.
\end{align}

For every $n\in \bbn$,
consider the approximate equation
\begin{align}  \label{equa-vn}
   \dif v_n = P_n[- (-\Delta)^\alpha v_n - \div (v_n\otimes v_n) + f(t,w_t)] \dif t +  \dif w^{(n)}(t)
\end{align}
with the initial condition $v_n(0) = 0$,
where $w^{(n)}(t) = \sum_{j=1}^n \sqrt{\lambda_j} \epsilon_j \beta_j(t)$,
$t\geq 0$, with $\{(\lambda_j, \epsilon_j)\}$ as in \eqref{w-BM}.

Then,  by the Krylov theory of monotone-type stochastic differential equations (cf. \cite{LR15}, Section 3),
there exists a unique solution $v_n\in C([0,T]; \calh_n)$ $\mathbb{P}$-a.s.,
satisfying that for any $0\leq t<T<\infty$,
\begin{align} \label{Energy-ineq-SDE}
		\bbe \|v_n(t)\|_\calh^2 + 2 \bbe \int_0^t \|(-\Delta)^{\frac \alpha2} v_s(s)\|_\calh^2 \dif s
		\leq 2 \bbe \int_0^t (f(s,w(s)), v_n) \dif s + \sum_{j=1}^{n}\lambda_j\|\epsilon_j\|^2_{\calh} t.
\end{align}

In particular, one has the uniform energy bound
\begin{align}   \label{vn-CtL2-L2V-bdd}
    \sup\limits_{n\geq 1} \bbe (  \|v_n \|_{C([0,T];\calh)}^2
     +  \|v_n \|^2_{L^2(0,T; \calv)} ) <\infty.
\end{align}
(See Appendix \ref{App-C} for the detailed proof.)

{\it Step 2. Tightness.}
We claim that $(\bbp\circ v_n^{-1})$ is tight on $\calz_{[0,\infty),\loc}$.
In view of \eqref{vn-CtL2-L2V-bdd},
the tightness criteria in Lemma \ref{Lem-Tight}
and \cite[Lemma 3.8]{Brz13},
it suffices to prove that for any $T\in (0,\infty)$,
$(v_n)$ satisfies the Aldous condition in $\calu'$,
that is,
for any $\ve>0$ and $\eta>0$,
there exists $\delta>0$ such that for every sequence $\{\tau_n\}_{n\in \mathbb{N}}$
of stopping times with $0 \leq \tau_n, \tau_n+\theta \leq T$ one has
\begin{align} \label{vn-Aldous}
   \sup\limits_{n\in \bbn} \sup\limits_{0\leq \theta\leq \delta}
   \bbp ( \|v_n(\tau_n+\theta) - v_n(\tau_n) \|_{\calu'} \geq \eta) \leq \ve.
\end{align}

In order to prove \eqref{vn-Aldous},
we infer from \eqref{equa-vn} that
for any $t \geq 0$,
\begin{align*}
   v_n(t) =&  - \int_0^t P_n (-\Delta)^\alpha v_n(s) \dif s
              - \int_0^t P_n (\div (v_n(s)\otimes v_n(s)) )  \dif s \\
          & + \int_0^t P_n f(s,w(s)) \dif s + w^{(n)}(t)
          =: J_1^n + J_2^n + J_3^n + J_4^n.
\end{align*}
Since $(-\Delta)^\alpha \in L(\calv, \calv')$
and $\calv' \hookrightarrow \calu'$,
one has
\begin{align*}
   \bbe \|J_1^n(\tau_n+\theta) - J_1^n(\tau_n)\|_{\calu'}
   \leq C \bbe \int_{\tau_n}^{\tau_n+\theta}  \|v_n\|_{\calv} \dif s
   \leq C\theta^\frac 12 (\bbe \|v_n\|^2_{L^2(0,T; \calv)})^{\frac{1}{2}}.
\end{align*}
Moreover,
by the embedding $\calu \hookrightarrow W^{1,\infty}$,
for any $u\in \calu$,
\begin{align*}
   |\<\div (v\otimes v), u\>|
   = |\< v\otimes v, \na u\>|
   \leq C \|v\|_{\calh}^2 \|\na u\|_{L^\infty}
   \leq C \|v\|_{\calh}^2 \|u\|_{\calu}
\end{align*}
with $C$ independent of $n$,
which yields that
\begin{align*}
   \|\div (v\otimes v)\|_{\calu'}
   \leq C \|v\|_{\calh}^2.
\end{align*}
It then follows that
\begin{align*}
    \bbe \|J_2^n(\tau_n+\theta) - J_2^n(\tau_n)\|_{\calu'}
    \leq C \bbe \int_{\tau_n}^{\tau_n+\theta} \|v_n\|_{\calh}^2 \dif s
    \leq C \theta \bbe \|v_n\|^2_{C([0,T]; \calh)} .
\end{align*}
Regarding $J_3$,
Lemma \ref{Lem-f-L2}
together with the embedding $\calv' \hookrightarrow \calu'$
yield that
\begin{align*}
   \bbe \|J_3^n(\tau_n+\theta) - J_3^n(\tau_n)\|_{\calu'}
   \leq  C \bbe \int_{\tau_n}^{\tau_n+\theta}   \|f(s,w(s))\|_{\calv'} \dif s
   \leq  C \theta^\frac 12 (\bbe \|f\|^2_{L^2(0,T; \calv')})^\frac{1}{2 }.
\end{align*}
For the last term,
since $\calw \hookrightarrow \calu'$
and $(\epsilon_j)$ is an orthonormal basis of $\calw$,
one has
\begin{align*}
    \bbe \|J_4^n(\tau_n+\theta) - J_4^n(\tau_n)\|_{\calu'}
    \leq& (\bbe \|w^{(n)}(\tau_n+\theta) - w^{(n)}(\tau_n)\|^2_{\calw})^{\frac 12} \\
    =& (\sum\limits_{j=1}^n \lambda_j \mathbb{E} |\beta_j(\tau_n+\theta) - \beta_j(\tau_n)|^2)^{\frac 12}
    \leq  \|\lambda\|_{\ell^1}^{\frac 12} \theta^{\frac 12},
\end{align*}
where $C$ is independent of $n$.
Thus, combining these estimates altogether we
obtain that for any $\eta>0$ and $\theta\in [0,\delta]$,
\begin{align*}
   \bbp ( \|v_n(\tau_n+\theta)- v_n(\tau_n)\|_{\calu'} \geq \eta)
   \leq \eta^{-1} \sum\limits_{i=1}^4 \bbe \|J_i^n(\tau_n+\theta) - J_i^n(\tau_n)\|_{\calu'}
   \leq C \frac{\delta^{\frac 12}}{\eta}
   \leq \ve
\end{align*}
provided $\delta \leq C^{-2} (\varepsilon \eta)^2$.
This yields \eqref{vn-Aldous}, as claimed.

{\it Step 3. Skorokhod representation.}
Since $(\bbp\circ v_n^{-1})$ is tight on $\calz_{[0,\infty),\loc}$,
so is $(\bbp\circ (v_n, w)^{-1})$
on  $\calz_{[0,\infty),\loc} \times \scry_{[0,\infty),\loc}$,
we can apply the Jakubowski's version of Skorokhod representation theorem (cf. \cite[Theorem 3.11]{Brz13})
to derive that
there exist a probability space $(\wt \Omega, \wt{\bbf}, \wt \bbp)$
and a subsequence (still denoted by $\{n\}$) of random variables $(\wt v_{n}, \wt w_{n})$
and $(\wt v, \wt w)$ in $\calz_{[0,\infty),\loc} \times \scry_{[0,\infty),\loc}$,
such that
\begin{align}  \label{wtvn-wtWn-wtv-wtW-law}
   \wt \bbp \circ (\wt v_{n}, \wt w_{n})^{-1} = \bbp \circ (v_n, w)^{-1},\ \ \forall n\geq 1,
\end{align}
and
\begin{align} \label{wtvn-wtWn-limit}
   (\wt v_{n}, \wt w_{n}) \to (\wt v, \wt w)  \ \ {\rm in}\ \calz_{[0,\infty),\loc}\times \scry_{[0,\infty),\loc}, \ \ {\rm as}\ n \to \infty,\ \wt{\bbp}-a.s.
\end{align}

Let $\wt{\calf}_t := \sigma(\wt v_n(s), \wt w_n(s), n\geq 1, s\leq t)$,
$t\geq 0$.
Then,
$\wt w$ is an $(\wt{\calf}_t)$-adapted $\mathcal{Q}$-Wiener process starting from $0$ at time zero under $\wt \bbp$.
Moreover, estimate  \eqref{vn-CtL2-L2V-bdd} holds
for $(\wt v_{n})$ under the expectation $\bbe^{\wt \bbp}$, $n\geq 1$.
In particular,
along a further subsequence if necessary,
\begin{align}
    &  \wt v_n \stackrel{w^*}{\rightharpoonup} \wt v\ \ {\rm in}\ L^2(\wt \Omega; L^\infty(0,T; \calh)), \label{vn-v-CtH} \\
    &  \wt v_n \rightharpoonup  \wt v\ \ {\rm in}\ L^2(\wt\Omega; L^2(0,T; \calv)). \label{vn-v-L2V}
\end{align}

Thus, it follows that
\begin{align}  \label{wtv-wtvn-calh}
     \|\wt v\|_{L^2(\wt \Omega; L^\infty(0,T; \calh))}
     \leq \liminf\limits_{n\to \infty} \|\wt v_n\|_{L^2(\wt \Omega; L^\infty(0,T; \calh))}
     \leq C_T,
\end{align}
and
\begin{align}   \label{wtv-wtvn-calv}
     \|\wt v\|_{L^2(\wt \Omega; L^2(0,T; \calv))}
     \leq \liminf\limits_{n\to \infty} \|\wt v_n\|_{L^2(\wt \Omega; L^2(0,T; \calv))}
     \leq C_T.
\end{align}

It also follows from equation \eqref{equa-vn}
and the identical joint distributions \eqref{wtvn-wtWn-wtv-wtW-law} that
for any $t\geq 0$ and any test function $\psi \in \calc_0^\infty$,
\begin{align}  \label{equa-wtvn-psi}
   (\wt v_n(t), \psi)
   =& \int_0^t (-P_n(-\Delta)^\alpha \wt v_n
       -  P_n \div (\wt v_n \otimes \wt v_n)
      +  P_n f(\wt w_n), \psi) \dif s
     + (\ol{P}_n \wt w_n(t), \psi)  \notag \\
   =&   - \int_0^t ((-\Delta)^\alpha \wt v_n, P_n\psi) \dif s
       -  \int_0^t (\div (\wt v_n \otimes \wt v_n) , P_n \psi) \dif s \notag \\
    &  + \int_0^t (f(\wt w_n), P_n\psi) \dif s
      + (\ol{P}_n  \wt w_n(t),  \psi),\ \ \wt{\bbp}-a.s.
\end{align}
where we abuse the notation
$(\cdot, \cdot)$ for the inner product in $\calh$
and also for the dual pair between $\calv'$ and $\calv$,
$\ol{P}_n$ is the orthogonal projection from $\calw$ to
$\calw_n:=span\{\epsilon_1, \cdots, \epsilon_n\}$.
Note that for any $w\in \calw$,
\begin{align}  \label{Pnw-w}
     \ol{P}_n w \to w\ \ {\rm in}\ \calw.
\end{align}

{\it Step 4. Passing to the limit.}
Since by \eqref{wtvn-wtWn-limit},
$\wt v_n \rightarrow \wt v\ \text{in} \ L_w^2(0,T;\calv)$, in particular,
$\wt v\in L^2(0,T;\calv)$, and the sequence $(\wt v_n)$ is bounded in $L^2(0,T;\calv)$, we have
\begin{align} \label{Deltavn-limit}
	&|\int_0^t ((-\Delta)^\alpha \wt v_n(s), P_n\psi) - ((-\Delta)^\alpha \wt v(s), \psi) \dif s|  \notag  \\
 \leq  & |\int_0^t ((-\Delta)^\alpha (\wt v_n-\wt v), \psi) \dif s|+ |\int_0^t ((-\Delta)^\alpha \wt v_n(s), P_n\psi-\psi) \dif s|  \notag  \\
 \leq&  \|\wt v_n - v\|_{L^2(0,t; \calv)}\|\psi\|_{L^2(0,t; \calv)}
         + \|P_n\psi - \psi\|_{\calv} \sup_n \|\wt v_n\|_{L^2(0,t; \calv)} T^\frac 12
         \to 0  ,\ \ \wt{\bbp}-a.s.
\end{align}
where we also used the convergence \eqref{Pnv-v-calv}.

Regarding the nonlinear term, again by the boundedness of $(\wt v_n)$ in $L^2(0,T;\calv)$, we have
\begin{align*}
	  |\int_0^t (\div (\wt v_n(s) \otimes \wt v_n(s)) , P_n\psi-\psi) \dif s|
	 =& |\int_0^t (\wt v_n(s) \otimes \wt v_n(s) , \nabla (P_n\psi-\psi)) \dif s|\\
	 \leq& \int_0^t \norm{\wt v_n}_{L^4}^2 \norm{P_n\psi-\psi}_\calv \dif s  \\
     \leq& C\norm{P_n\psi-\psi}_\calv  \sup\limits_{n\geq 1} \int_0^t \norm{\wt v_n}_{\calv}^2 \dif s \to 0,\ \ \wt{\bbp}-a.s.
\end{align*}
where the last inequality is due to the Sobolev embedding $\calv \hookrightarrow L^4$.
Moreover, using the convergence $\wt v_n \to \wt v$ in $ L^2(0,T; \calh_R)$,
due to \eqref{wtvn-wtWn-limit},
where $R>0$ is such that ${\rm supp} \psi \subseteq B_R$,
we have
\begin{align}\label{div-limit}
     & | \int_0^t (\div (\wt v_n(s) \otimes \wt v_n(s)) - \div (\wt v(s) \otimes \wt v(s)), \psi) \dif s |  \notag \\
    =& | \int_0^t ( (\wt v_n(s) - \wt v(s) )\otimes \wt v_n(s) + \wt v(s) \otimes (\wt v_n(s)-\wt v(s)), \nabla\psi) \dif s | \notag \\
   \leq& \|\wt v_n - \wt v\|_{L^2(0,T; \calh_R)}
        (\sup_{n\geq 1} \|\wt v_n \|_{L^2(0,T; \calh_R)}
         + \|\wt v\|_{L^2(0,T; \calh_R)}) \|\nabla\psi\|_{L^\infty}
   \to 0,\ \wt{\bbp}-a.s.
\end{align}
The above two estimates thus lead to the convergence
\begin{align}  \label{divvn-divv-limit}
	&  \int_0^t (\div (\wt v_n(s) \otimes \wt v_n(s)) , P_n\psi) \dif s
	\to  \int_0^t (\div (\wt v(s) \otimes \wt v(s)) , \psi) \dif s, \ \ \wt\bbp-a.s.
\end{align}

Since by \eqref{wtvn-wtWn-limit}, $\wt w_n \to \wt w$ in $\scry_{[0,\infty),\loc}$,
using \eqref{Pnw-w} one also has
\begin{align}   \label{Pnwn-w-limit}
      (\ol{P}_n \wt w_n,  \psi) \to (\wt w, \psi), \ \wt\bbp-a.s.
\end{align}

Regarding the stochastic forcing term,
computing as in \eqref{J2} yields
\begin{align*}
    \| \div (\bar u \otimes \wt w_n) - \div (\bar u \otimes \wt w)\|_{\calv'}
   \lesssim  s^{\frac{5}{4\al}-1} \norm{\wt w_n-\wt w}_{H^N}.
\end{align*}
Taking into account \eqref{wtvn-wtWn-limit} and
the fact that $s^{\frac{5}{4\al}-1} \in L^1(0,T)$ when $\alpha \in [1,5/4)$
we obtain that $\wt{\bbp}-a.s.$ as $n\to \infty$,
\begin{align*}
     \int_0^t \left(\div (\bar u \otimes \wt w_n) - \div (\bar u \otimes \wt w), \psi \right)\dif s\leq \int_0^{t}\| \div (\bar u \otimes \wt w_n) - \div (\bar u \otimes \wt w)\|_{\calv'} \norm{\psi}_{\calv} \dif s
     \to 0,
\end{align*}
Moreover, using \eqref{wtvn-wtWn-limit} we get
$\wt{\bbp}-a.s.$ as $n\to \infty$,
\begin{align*}
    &|\int_0^t \left( \div (\wt w_n \otimes \wt w_n- \wt w \otimes \wt w),\psi\right)  \dif s|\\
    \leq& \|\nabla\psi\|_{L^\infty}
    \int_0^t (\sup_{n\in\mathbb{N}}\norm{\wt w_n}_{H^N}+\norm{\wt w}_{H^N})\norm{\wt w_n-\wt w}_{H^N} \dif s\\
    \leq& \|\nabla\psi\|_{L^\infty}
    t (\sup_{n\in\mathbb{N}}\norm{\wt w_n}_{C([0,T];H^N)}+\norm{\wt w}_{C([0,T];H^N)})\norm{\wt w_n-\wt w}_{C([0,T];H^N)}
    \to 0.
\end{align*}
Hence, it follows that
\begin{align*}
   \int_0^t (f(s,\wt w_n(s)),\psi) \dif s
   \to \int_0^t (f(s,\wt w(s)), \psi) \dif s,\ \ \wt{\bbp}-a.s.
\end{align*}
Taking into account \eqref{J2}-\eqref{J3} and $\wt w_n \to \wt w$ in $\scry_{[0,\infty),\loc}$,
we have the uniform boundedness of $\norm{f(\cdot, \wt w_n)}_{L^2(0,T;{\calv'})}$,
$n\geq 1$,
which along with \eqref{Pnv-v-calv} leads to
\begin{align}  \label{fwn-fw-limit}
   \int_0^t (f(s,\wt w_n(s)),P_n\psi) \dif s
   \to \int_0^t (f(s,\wt w_s), \psi) \dif s,\ \ \wt{\bbp}-a.s.
\end{align}

Therefore, combining \eqref{Deltavn-limit}, \eqref{divvn-divv-limit}, \eqref{Pnwn-w-limit},
\eqref{fwn-fw-limit} and passing to the limit in \eqref{equa-wtvn-psi}
we obtain that $\wt \bbp$-a.s. for any $t\geq 0$,
\begin{align}\label{limit-wtv}
   (\wt v(t), \psi)
   =& \int_0^t ( -  (-\Delta)^\alpha \wt v(s)
       -   \div (\wt v(s) \otimes \wt v(s))
      +   f(s, \wt w(s)), \psi) \dif s
      + (\wt w(t), \psi).
\end{align}

Below, we prove the energy inequality for $\wt v$.
Note that,
by \eqref{wtvn-wtWn-limit},  $\wt \bbp$-a.s for $t\in[0,T]$,

$$\wt v_n(t)\stackrel{w}{\rightharpoonup}\wt v(t) \quad\text{in}\,\, \calh,$$
which yields
\begin{equation} \label{Evnt-Evt}
	\bbe^{\wt \bbp} \|\wt v(t)\|_{\calh}^2
	\leq \bbe^{\wt \bbp} \liminf_{n\to \infty} \|\wt v_n(t)\|_{\calh}^2
	\leq \liminf_{n\to \infty} \bbe^{\wt \bbp}  \|\wt v_n(t)\|_{\calh}^2.
\end{equation}

Moreover, since $\wt v_n\to\wt v$ in $L^2_w(0,T;\calv)$
and $f(\wt w_n)\to f(\wt w)$ in $L^2(0,T;\calv')$, $\wt{\bbp}$-a.s.,
we see that
\begin{equation*}
	\int_0^t ( f(\wt w_n), \wt v_n) \dif s \to \int_0^t (f(\wt w), \wt v) \dif s,\ \ \wt{\bbp}-a.s.
\end{equation*}
It also holds the uniform integrability, that is, for some $p>0$,
\begin{equation*}
	\sup_{n\in \bbn} \bbe^{\wt\bbp}|\int_0^t (f(\wt w_n), \wt v_n)\dif s|^{1+p}<\infty.
\end{equation*}
To this end,
by \eqref{vn-CtL2-L2V-bdd}, \eqref{wtvn-wtWn-wtv-wtW-law} and Lemma \ref{Lem-f-L2},
we derive that for $0<p<\frac{5-4\al}{8\al-5}$,
\begin{equation*}
	\begin{aligned}
		\sup_{n\in \bbn}\bbe^{\wt\bbp}|\int_0^t (f(\wt w_n),\wt v_n) \dif s|^{1+p}
		&\les \sup_{n\in \bbn}\bbe^{\wt\bbp} \int_0^t  \norm{f(\wt w_n)}_{\calv'}^{p+1} \norm{\wt v_n}_\calv^{p+1} \dif s\\
		&=\sup_{n\in \bbn}\bbe^{\bbp} \int_0^t  \norm{f( w)}_{\calv'}^{p+1} \norm{v_n}_\calv^{p+1} \dif s\\
		&\les \sup_{n\in \bbn} ( \bbe^{\bbp} \int_0^t \norm{f( w)}_{\calv'}^{\frac{2(p+1)}{1-p}}\dif s)^\frac{1-p}{2}
                (\bbe^{\bbp} \int_0^t \norm{v_n}_\calv^{2} \dif s)^{\frac{1+p}{2}}
         <\infty.
	\end{aligned}
\end{equation*}
Hence,
we get
\begin{align}  \label{Efwn-Efw-limit}
  \bbe^{\wt\bbp} \int_0^t (f(\wt w_n), \wt v_n) \dif s \to  \bbe^{\wt\bbp} \int_0^t (f(\wt w), \wt v) \dif s.
\end{align}

Thus, using \eqref{Energy-ineq-SDE}, \eqref{vn-v-L2V}, \eqref{Evnt-Evt} and \eqref{Efwn-Efw-limit}
we obtain the energy inequality
\begin{align}  \label{energy-ineq-wtv}
		\bbe^{\wt \bbp} \|\wt v(t)\|_{\calh}^2 + 2 \bbe^{\wt \bbp} \int_0^t \|(-\Delta)^{\frac \alpha 2} \wt v(s)\|_{\calh}^2 \dif s
		\leq& \liminf\limits_{n\to \infty} \left(\bbe^{\wt \bbp} \|\wt v_n(t)\|_{\calh}^2
              + 2 \bbe^{\wt \bbp} \int_0^t \|(-\Delta)^{\frac \alpha 2} \wt v_n(s)\|_{\calh}^2 \dif s\right)  \notag \\
		\leq& 2 \liminf\limits_{n\to \infty}  \bbe^{\wt \bbp} \int_0^t (f(\wt w_n),\wt v_n) \dif s + \sum_{j=1}^{\infty}\lambda_j\|\epsilon_j\|^2_{\calh} t  \notag \\
		\leq&  2 \bbe^{\wt \bbp} \int_0^t ( f(\wt w), \wt v) \dif s + \sum_{j=1}^{\infty}\lambda_j\|\epsilon_j\|^2_{\calh} t.
\end{align}

Therefore, taking into account \eqref{wtv-wtvn-calh}, \eqref{wtv-wtvn-calv}, \eqref{limit-wtv} and \eqref{energy-ineq-wtv}
we conclude that
$(\wt \Omega, \wt{\bbf}, \wt \bbp, \wt v, \wt w)$ is
a probabilistically weak solution to \eqref{Hyper-SNSE}.
The proof is complete.
\hfill $\square$

We note that the above arguments also give the
compactness of martingale solutions $\mathscr{M}(s_0, z_0, C_{\cdot})$
in $\mathscr{P}(\widehat{\Omega})$.
Specifically, we have

\begin{corollary} [Compactness of $\mathscr{M}(s_0, z_0, C_{\cdot})$] \label{Cor-Mart-Comp}
For any $s_0\geq 0$, $z_0\in \calh \times \calw$
and any positive increasing function $t\mapsto C_t$,
$\mathscr{M}(s_0, z_0, C_{\cdot})$ is compact in $\mathscr{P}(\wh \Omega)$.
\end{corollary}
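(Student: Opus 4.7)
The plan is to establish Corollary \ref{Cor-Mart-Comp} in two stages: tightness of the family $\mathscr{M}(s_0,z_0,C_\cdot)\subset \mathscr{P}(\wh\Omega)$, and closedness under weak convergence. Both stages will re-use the machinery already deployed in the proof of Proposition \ref{Prop-GWP-Mart}; the key point is that all the relevant estimates depend only on the fixed energy profile $t\mapsto C_t$, on $z_0$, and on the law of the $\mathcal{Q}$-Wiener process.

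For tightness, I would first observe that the energy bound in $(M2)$ gives
\[
  \sup_{P\in \mathscr{M}(s_0,z_0,C_\cdot)} \bbe^P\Big(\sup_{s_0\le s\le t}\|x(s)\|_\calh^2 + \int_{s_0}^t\|x(s)\|_\calv^2\,\dif s\Big)\le C_t(\|x_0\|_\calh^2+1),
\]
uniformly over $P$. To verify the Aldous condition for the $x$-marginal under each $P$, I would decompose $x(t)-x(s)$ via $(M1)$ into viscous, nonlinear, forcing, and noise increments, and estimate each term by the same bounds used in Step 2 of the proof of Proposition \ref{Prop-GWP-Mart}: $(-\Delta)^\al\in L(\calv,\calv')$, the embedding $\calu\hookrightarrow W^{1,\infty}$ giving $\|\div(x\otimes x)\|_{\calu'}\lesssim \|x\|_\calh^2$, Lemma \ref{Lem-f-L2} controlling $\|f(\cdot,y)\|_{\calv'}$ in terms of moments of $\|y\|_{C([0,T];\calw)}$, and the standard increment estimate for the $\mathcal{Q}$-Wiener process via $(M3)$. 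Lemma \ref{Lem-Tight} then yields tightness of the $x$-marginals on $\scrx_{[0,\infty),\loc}$. The $y$-marginal is fixed as the law of the $\mathcal{Q}$-Wiener process started from $y_0$ at $s_0$, so tightness of the joint laws follows immediately.

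For closedness, let $(P_n)\subset \mathscr{M}(s_0,z_0,C_\cdot)$ with $P_n\to P$ in $\mathscr{P}(\wh\Omega)$. By the Jakubowski version of the Skorokhod representation theorem on the non-metric space $\scrx_{[0,\infty),\loc}\times \scry_{[0,\infty),\loc}$, I would pass to an auxiliary probability space carrying $(\wt x_n,\wt y_n)$ with laws $P_n$ converging almost surely to some $(\wt x,\wt y)$ of law $P$, and repeat Step 4 of the proof of Proposition \ref{Prop-GWP-Mart} verbatim: the convergences $\wt x_n\to \wt x$ in $L^2_{\loc}(\calh_{\loc})$ and weakly in $L^2_{\loc}(\calv)$, together with $\wt y_n\to \wt y$ in $\scry_{[0,\infty),\loc}$, permit passage to the limit in every term of $(M1)$; for $(M2)$, weak lower semicontinuity of the $\calh$- and $\calv$-norms, combined with the uniform $L^{1+p}$-integrability of $\int_{s_0}^t(f(s,\wt y_n),\wt x_n)\,\dif s$ established in \eqref{Efwn-Efw-limit}, yields the energy inequality in the limit; $(M3)$ is preserved since the law of a $\mathcal{Q}$-Wiener process is characterized by finite-dimensional characteristic functions, which are stable under weak convergence. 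The initial condition $z(s_0)=z_0$ is trivially preserved by continuity in the $\calu'$ and $\calw$ components.

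The main obstacle, as in the proof of Proposition \ref{Prop-GWP-Mart}, is handling the nonlinear term $\div(\wt x_n\otimes \wt x_n)$ in the limit: this genuinely requires local strong convergence, which is exactly why the topology on $\scrx_{[0,\infty),\loc}$ includes the $L^2(0,T;\calh_{\loc})$ component and why the auxiliary space $\calu$ (with compact embedding $\calu\hookrightarrow\calw$) was introduced. Once that compactness is available, the remaining steps are direct adaptations of the limit-passage arguments already written out in Step 4.
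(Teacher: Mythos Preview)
Your proposal is correct and follows essentially the same route the paper indicates: the remark preceding Corollary \ref{Cor-Mart-Comp} simply says ``the above arguments also give the compactness,'' referring to the tightness estimates (Step~2) and the limit-passage machinery (Steps~3--4) in the proof of Proposition \ref{Prop-GWP-Mart}, and your two-stage plan (tightness via the uniform $(M2)$ bound and the Aldous condition, closedness via Skorokhod representation and the limit arguments of Step~4) is exactly this. The only cosmetic difference is that here the initial data $(s_0,z_0)$ are fixed rather than approximated, so the closedness half is in fact a special case of the later stability result Proposition \ref{Prop-Stab-Mart}.
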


\subsection{Stability}

In this subsection, we mainly prove the stability
of martingale solutions
which is important in the next gluing procedure.

\begin{proposition} [Stability of martingale solutions]   \label{Prop-Stab-Mart}
Let $z_n = (x_{0,n}, y_{0,n}), z_0 = (x_0, y_0) \in \calh \times \calw$,
$P_n \in \mathscr{M}(s_n, z_n, C_{\cdot})$
for a common positive increasing function $t\mapsto C_t$, $n\geq 1$.
Assume that $(s_n, z_n) \to (s_0, z_0)$ in $[0,\infty) \times \calh \times \calw$.
Then, there exists a subsequence $\{n_k\}$ and
$P\in \mathscr{M}(s_0, z_0, C_{\cdot})$
such that $P_{n_k}$ convergence weakly to $P$
as $n_k\to \infty$.
\end{proposition}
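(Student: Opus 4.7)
The plan is to follow the classical tightness/Prokhorov/Skorokhod scheme and then identify the limit as a martingale solution by passing to the limit in (M1)--(M3), with all uniform bounds coming from the common energy function $t\mapsto C_t$. The assumed convergence $z_n\to z_0$ in $\calh\times \calw$ guarantees in particular that $\|x_{0,n}\|_{\calh}$ is bounded, so (M2) for $P_n$ yields the uniform energy bound
\begin{equation*}
  \sup_{n} \bbe^{P_n}\!\Big(\sup_{s\in[0,T]}\|x(s)\|_{\calh}^{2}+\int_{0}^{T}\|x(s)\|_{\calv}^{2}\dif s\Big)\le C_T(\sup_n\|x_{0,n}\|_{\calh}^{2}+1)<\infty.
\end{equation*}
Combining this with an Aldous-type estimate on the $\calu'$-modulus of continuity, performed exactly as in Step~2 of the proof of Proposition \ref{Prop-GWP-Mart} (using (M1) to split $x(t)$ into the viscous, convective, forcing and noise integrals and estimating each separately via Lemma \ref{Lem-f-L2} and the embedding $\calw\hookrightarrow \calu'$), Lemma \ref{Lem-Tight} gives tightness of the $\scrx_{[0,\infty),\loc}$-marginals. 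Tightness of the $\scry_{[0,\infty),\loc}$-marginals is immediate since by (M3) each $y$ is a time-shifted $\mathcal{Q}$-Wiener process from $y_{0,n}$ at $s_n$, and $(s_n,y_{0,n})\to (s_0,y_0)$. Hence $\{P_n\}$ is tight on $\mathscr{P}(\wh\Omega)$, and Prokhorov's theorem extracts a subsequence (still denoted $P_n$) with $P_n\rightharpoonup P$.

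Next I apply the Jakubowski--Skorokhod representation theorem to obtain, on some auxiliary probability space, processes $(\tilde x_n,\tilde y_n)$ with law $P_n$ and a limit $(\tilde x,\tilde y)$ with law $P$, such that $(\tilde x_n,\tilde y_n)\to(\tilde x,\tilde y)$ almost surely in $\scrx_{[0,\infty),\loc}\times \scry_{[0,\infty),\loc}$. I now verify (M1)--(M3) for $P$. Property (M3) is standard: since $\tilde y_n$ is a $\mathcal Q$-Wiener process starting from $y_{0,n}$ at time $s_n$, passing to the limit in the joint characteristic functionals (or, equivalently, in the martingale characterization of the Wiener process applied to cylindrical test functionals supported on $t\ge s_0$) shows that $\tilde y$ is a $\mathcal Q$-Wiener process starting from $y_0$ at $s_0$. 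For (M1), I pass to the limit in
\begin{equation*}
   (\tilde x_n(t)-x_{0,n},\psi)=\int_{s_n}^{t}\bigl(-(-\Delta)^{\alpha}\tilde x_n-\div(\tilde x_n\otimes\tilde x_n)+f(s,\tilde y_n),\psi\bigr)\dif s+(\tilde y_n(t)-y_{0,n},\psi),
\end{equation*}
for $\psi\in\calc_0^\infty$. The linear viscous term passes by the weak $L^2_t\calv$ convergence extracted from the uniform energy bound above; the nonlinear term by the a.s.~convergence $\tilde x_n\to\tilde x$ in $L^2_{\loc}(0,\infty;\calh_R)$ with $R$ such that $\supp\psi\subseteq B_R$, exactly as in \eqref{div-limit}; the forcing term by combining the a.s.~convergence $\tilde y_n\to\tilde y$ in $C_{\loc}([0,\infty);\calw)$ with the explicit structure of $f(\cdot,y)$ in \eqref{f-force-def} and the estimates established in Lemma \ref{Lem-f-L2}. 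The time-shift issue coming from $s_n\to s_0$ is handled by the continuity of trajectories in $\calu'$ and $\calw$: $\tilde x_n(s_n)=x_{0,n}\to x_0=\tilde x(s_0)$ and analogously for $\tilde y_n$.

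For (M2), the energy inequality and energy bound follow by lower semicontinuity. Indeed, since $\tilde x_n(t)\rightharpoonup \tilde x(t)$ in $\calh$ almost surely for every $t$ (a consequence of the a.s.~convergence in $C_{\loc}([0,\infty);\calu')$ together with the uniform $\calh$-bound), Fatou's lemma yields
\begin{equation*}
  \bbe\|\tilde x(t)\|_{\calh}^2+2\bbe\int_{s_0}^{t}\|(-\Delta)^{\alpha/2}\tilde x(s)\|_{\calh}^2\dif s\le\liminf_{n\to\infty}\Big(\bbe\|\tilde x_n(t)\|_{\calh}^2+2\bbe\int_{s_n}^{t}\|(-\Delta)^{\alpha/2}\tilde x_n\|_{\calh}^2\dif s\Big),
\end{equation*}
while the right-hand side of the energy inequality converges, because $\|x_{0,n}\|_{\calh}^2\to\|x_0\|_\calh^2$, $\sum_j\lambda_j\|\epsilon_j\|_\calh^2(t-s_n)\to \sum_j\lambda_j\|\epsilon_j\|_\calh^2(t-s_0)$, and $\bbe\!\int_{s_n}^{t}(f(s,\tilde y_n),\tilde x_n)\dif s\to\bbe\!\int_{s_0}^{t}(f(s,\tilde y),\tilde x)\dif s$ by the uniform integrability argument in \eqref{Efwn-Efw-limit} (which uses only Lemma \ref{Lem-f-L2} and the uniform $L^2_t\calv$-bound). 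The energy bound propagates to $P$ with the same constant $C_t$, since it is preserved under liminf.

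The main obstacle is the joint treatment of the stochastic forcing $f(s,y)$ under limits: $f$ contains quadratic-in-$y$ terms like $\div(y\otimes y)$ which are not Lipschitz in $\calw$, so stability in (M1) and the convergence in (M2) rely crucially on the $C_{\loc}([0,\infty);\calw)$-regularity of $\tilde y_n$ together with the $\alpha<5/4$-specific integrability of $\|\bar u(t)\|_\calh\sim t^{5/(4\alpha)-1}$ established in Lemma \ref{Lem-f-L2}. This is precisely where the hyper-viscous threshold enters, and is the reason the range $\alpha\in[1,5/4)$ is preserved throughout.
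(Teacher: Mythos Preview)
Your overall scheme (tightness $\Rightarrow$ Prokhorov $\Rightarrow$ Jakubowski--Skorokhod $\Rightarrow$ pass to the limit in (M1)--(M3)) matches the paper's, and the limit-identification arguments you sketch for (M1)--(M3) are essentially correct. However, there is a genuine gap in the tightness step which the paper addresses with two dedicated lemmas that you bypass.

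You invoke Lemma~\ref{Lem-Tight} directly, but hypothesis~(i) of that lemma asks for a uniform bound on $\int_{0}^{T}\|x(s)\|_{\calv}^{2}\dif s$. The energy bound in (M2) only controls $\int_{s_n}^{T}\|x(s)\|_{\calv}^{2}\dif s$; on $[0,s_n]$ the canonical process is constant equal to $x_{0,n}$, which lies in $\calh$ but in general \emph{not} in $\calv$, so your displayed uniform bound is not justified and Lemma~\ref{Lem-Tight} does not apply. This is exactly why the paper proves Lemmas~\ref{Lem-Pnx-tight} and~\ref{Lem-Pny-tight}: the compact set there is a \emph{union over $n$} of sets on which the $\calv$-control is imposed only on $[s_n,k]$, and the compactness proof treats sequences $x_m\in\Omega_m$ by a delicate splitting around the moving initial layer $[s_0-\delta_0,s_0+\delta_0]$ (using only the uniform $\calh$-bound there, and the Lions lemma away from it). Your claim that $y$-tightness is ``immediate'' has the same defect---the Wiener law is only known for increments after $s_n$---and Lemma~\ref{Lem-Pny-tight} again relies on shifted H\"older norms and a union-over-$n$ compact set.

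A related point: you handle the viscous term in (M1) via ``weak $L^2_t\calv$ convergence extracted from the uniform energy bound,'' but that bound is again only on $[s_n,t]$. The paper avoids this by proving tightness on the larger space $\wh\calz_{[0,\infty),\loc}$ (including $C([0,T];\calh_w)$), and then passes the viscous term by pairing $\wt v_n-\wt v$ against $(-\Delta)^{\alpha}\psi\in L^2$ using pointwise weak-$\calh$ convergence, together with $\bigl|\int_{s_0}^{s_n}(\wt v,(-\Delta)^{\alpha}\psi)\dif s\bigr|\to 0$ to absorb the shift in the lower limit. Similarly, in (M2) the paper works with $\wt v_n\,I_{[s_n,t]}\rightharpoonup \wt v\,I_{[s_0,t]}$ in $L^2(\wt\Omega;L^2(0,T;\calv))$, rather than a global $L^2_t\calv$ bound.
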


In order to prove Proposition \ref{Prop-Stab-Mart},
let us first show the tightness of $(P_n\circ x^{-1})$
and $(P_n\circ y^{-1})$, respectively,
which are the contents of Lemmas \ref{Lem-Pnx-tight} and \ref{Lem-Pny-tight} below.

\begin{lemma} [Tightness of $(P_n\circ x^{-1})$]  \label{Lem-Pnx-tight}
Assume the conditions in Proposition \ref{Prop-Stab-Mart} to hold.
Then,
	$(P_n\circ x^{-1})$ is tight on $\wh\calz_{[0,\infty),\loc}$.
\end{lemma}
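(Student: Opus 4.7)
The plan is to verify the two conditions of the tightness criterion in Lemma~\ref{Lem-Tight}, which yields tightness on $\calz_{[0,\infty),\loc}$, and then to note that the topology on $\wh\calz_{[0,\infty),\loc}$ is coarser than that on $\calz_{[0,\infty),\loc}$ (the former merely omits the $L^2_w(0,T;\calv)$ constraint), so compact sets for the stronger topology remain compact for the weaker one and tightness transfers automatically. Hence it suffices to establish the uniform energy bound and the Aldous-type modulus estimate for $(P_n\circ x^{-1})$.

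The uniform energy bound is immediate from property $(M2)$ in Definition~\ref{def-mart-1}: with the common function $t\mapsto C_t$, and using $\sup_n \|x_{0,n}\|_\calh < \infty$ (which follows from $x_{0,n}\to x_0$ in $\calh$), one obtains
\begin{equation*}
\sup_{n} \mathbb{E}^{P_n}\!\Bigl(\sup_{s\in[0,T]}\|x(s)\|_\calh^2 + \int_0^T \|x(s)\|_\calv^2 \dif s\Bigr) \leq C_T',
\end{equation*}
where for $s<s_n$ the process equals $x_{0,n}$ by $(M1)$. Chebyshev's inequality then converts this to condition~(i) of Lemma~\ref{Lem-Tight}.

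For the Aldous condition, I would decompose the identity in $(M1)$ over the random interval $[\tau_n,\tau_n+\theta]$, for stopping times $\tau_n$ with $0\leq\tau_n,\tau_n+\theta\leq T$, into four contributions $J_1+J_2+J_3+J_4$ corresponding respectively to the hyper-viscous dissipation, the convective term, the stochastic forcing $f(\cdot,y)$, and the Wiener increment $y(\tau_n+\theta)-y(\tau_n)$. Each piece is bounded in $\calu'$ exactly as in Step~2 of the proof of Proposition~\ref{Prop-GWP-Mart}: the embedding $\calv'\hookrightarrow\calu'$ combined with the energy bound yields $\mathbb{E}^{P_n}\|J_1\|_{\calu'}\lesssim \theta^{1/2}$; the bound $\|\div(v\otimes v)\|_{\calu'}\lesssim\|v\|_\calh^2$ gives $\mathbb{E}^{P_n}\|J_2\|_{\calu'}\lesssim\theta$; and since, by $(M3)$, $(y(t))$ is a $\mathcal{Q}$-Wiener process under every $P_n$ with the same covariance, $\mathbb{E}^{P_n}\|J_4\|_{\calu'}\lesssim\theta^{1/2}\|\lambda\|_{\ell^1}^{1/2}$.

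The main point to check carefully is the forcing term $J_3$. Because $y$ has the same law (that of a $\mathcal{Q}$-Wiener process starting at $y_{0,n}$) under every $P_n$ and $\|y_{0,n}\|_{\calw}$ is uniformly bounded, the estimates in Lemma~\ref{Lem-f-L2} apply to $f(\cdot, y)$ uniformly in $n$, yielding $\sup_n \mathbb{E}^{P_n}\|f(\cdot,y)\|_{L^2(0,T;\calv')}^2 \leq C_T$ and therefore $\mathbb{E}^{P_n}\|J_3\|_{\calu'}\lesssim \theta^{1/2}$. Assembling the four bounds and applying Markov's inequality produces the Aldous estimate, and Lemma~\ref{Lem-Tight} then delivers the desired tightness on $\calz_{[0,\infty),\loc}$, from which tightness on $\wh\calz_{[0,\infty),\loc}$ follows.
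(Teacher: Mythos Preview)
Your approach has a genuine gap at the very first step: the uniform energy bound in condition~(i) of Lemma~\ref{Lem-Tight} cannot be verified here. The energy bound in $(M2)$ of Definition~\ref{def-mart-1} only controls $\int_{s_n}^T\|x(s)\|_\calv^2\,\dif s$, while on $[0,s_n]$ the canonical process is identically $x_{0,n}$ by $(M1)$. Since $x_{0,n}$ is assumed merely in $\calh$, not in $\calv$, the integral $\int_0^{s_n}\|x(s)\|_\calv^2\,\dif s = s_n\|x_{0,n}\|_\calv^2$ need not be finite at all. Hence, as soon as $s_n>0$ and $x_{0,n}\notin\calv$, the measure $P_n\circ x^{-1}$ is not even supported on $L^2(0,T;\calv)$, let alone on $\calz_{[0,T]}$, and Lemma~\ref{Lem-Tight} is inapplicable. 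Passing afterwards to the coarser topology of $\wh\calz$ does not help, because you never had tightness on $\calz$ to begin with.

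This is precisely why the paper abandons Lemma~\ref{Lem-Tight} and builds the candidate compact set by hand: the sets $K^1_{n,k}$ impose the $\calv$-bound only on $[s_n,k]$, and the compactness of $K=\bigcup_n\bigcap_k(K^1_{n,k}\cap\bigcap_j K^2_{n,k,j})$ in $\wh\calz_{[0,\infty),\loc}$ is then proved directly. The delicate part is the $L^2(0,k;\calh_R)$ convergence of a sequence $(x_m)$ with $x_m\in\Omega_m$: on $[0,s_0-\delta_0]$ one uses $x_m\equiv x_{0,m}\to x_0$ in $\calh$; a thin strip $[s_0-\delta_0,s_0+\delta_0]$ is handled by the uniform $L^\infty_t\calh$ bound; and only on $[s_0+\delta_0,k]$, where the $\calv$-bound is available, does one invoke the Lions interpolation $\|{\cdot}\|_{\calh_R}\le \varepsilon\|{\cdot}\|_\calv + C_\varepsilon\|{\cdot}\|_{\calu'}$ together with the already established convergence in $C([0,k];\calu')$. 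Your Aldous estimates for $J_1,\dots,J_4$ are fine and are indeed part of the argument, but they do not circumvent the missing $\calv$-regularity on the initial segment.
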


\begin{proof}
First note that,
by Definition \ref{def-mart-1} $(M2)$
and $x_{0,n}\to x_n$ in $\calh$,
for every $k\geq 1$,
\begin{align}  \label{Pn-x-CL2-L2V-bdd}
  & \sup\limits_{n\in \bbn} \bbe^{P_n} (\sup\limits_{t\in [0,k]}\|x(t)\|_{\calh}^2
  + \|x\|_{L^2(s_n,k;\calv)} ) <\infty,
\end{align}
and  for any $\ve, \eta>0$,
there exists $\delta>0$ such that
\begin{align}  \label{Pn-Aldous}
    \sup\limits_{n\in \bbn} P_n
   (m_k(x, \delta) \geq \eta)
  \leq \ve,
\end{align}
where $m_k(x, \delta)$ is the modulus of continuity of $x$ on $[0,k]$ as in Lemma \ref{Lem-Tight}.

Let $\Omega_n:= \{x\in C([0,\infty);\calu'): x|_{[0,s_n]} \equiv x_{0,n} \}$,
\begin{align*}
  & K_{n,k}^1:= \{x\in \Omega_n: \sup\limits_{t\in [0,k]} \|x(t)\|_{\calh}^2 + \|x\|_{L^2(s_n, k; \calv)}^2 \leq R^2_k\},  \\
  & K^2_{n,k,j}:= \{x\in \Omega_n: m_k(x,\delta_{jk}) \leq 1/j\},
\end{align*}
where $n,k,j\in \bbn$, $R_k, \delta_{jk}>0$.
By \eqref{Pn-x-CL2-L2V-bdd} and \eqref{Pn-Aldous},
there exist $R_k$ large enough
and $\delta_{jk}$ small enough
such that for any $n,k,j\geq 1$,
\begin{align*}
   \sup\limits_{n\geq 1} P_n ((K_{n,k}^1)^c) \leq \frac{\ve}{2^{k+1}},  \ \
   \sup\limits_{n\geq 1} P_n (( K^2_{n,k,j})^c) \leq \frac{\ve}{2^{j+k+1}}.
\end{align*}
Then, setting
\begin{align*}
   K:= \bigcup_{n\in \bbn} \bigcap_{k\in \bbn}
      \big(K^1_{n,k} \cap \big(\bigcap_{j\in \bbn} K^2_{n,k,j}\big) \big)
\end{align*}
one has
\begin{align*}
   \sup\limits_{n\geq 1}  P_n (K^c) \leq \ve.
\end{align*}

Below we prove that $K$ is a compact set in $\wh\calz_{[0,\infty),\loc}$.
Take any sequence $(x_m)\subseteq K$,
it suffices to prove that
for every $k\in \bbn$,
$(x_m|_{[0,k]})$ has a subsequence convergent in $\wh\calz_{[0,k]}$.
Then,
an application of diagonal arguments permits to extract
a uniform subsequence of $(x_m)$ convergent in $\wh\calz_{[0,\infty),\loc}$.

To this end,
let us first see that if there exist infinitely many $x_m$ in a set $\Omega_M$ for some $M\geq 1$,
then by the compactness result in \cite[Lemma 3.3]{Brz13},
there exists a subsequence of $(x_m)$ convergent in $\wh\calz_{[0,k]}$.

Below, without loss of generality we may assume that $x_m\in \Omega_m$, $m\geq 1$.
Note that,
\begin{align}
    & \sup\limits_{m\geq 1} ( \sup\limits_{t\in [0,k]}\|x_m(t)\|_{\calh}
    +  \|x_m\|_{L^2(s_m, k; \calv)}) \leq R_k, \label{xm-CtL2-L2V-bdd}  \\
    & \sup\limits_{m\geq 1} m_k(x_m, \delta_{jk}) \leq 1/j,\ \  \forall j\geq 1.   \label{xm-Aldoux-bdd}
\end{align}
For simplicity,
we write $x_m$ for $x_m|_{[0,k]}$ below.
In particular,
there exists a subsequence, still denoted by $(x_m)$,
such that
\begin{align}
   & x_m \stackrel{w^*}{\rightharpoonup} x^*\ \ in\ L^\infty(0,k; \calh),   \label{xm-CL2-limit}  \\
   & x_m I_{[s_m,k]} \stackrel{w}{\rightharpoonup}  x^* I_{[s_0,k]} \ \ {\rm in}\ L^2(0, k;\calv). \label{xm-L2V-limit}
\end{align}

Since for any $t<s_0$, $x_m|_{[0,t]} \equiv x_{0,m}$ for $m$ large enough,
taking into account \eqref{xm-CL2-limit} and the convergence
\begin{align}\label{x0m-x0-limit}
   x_{0,m} \to x_0\ \ {\rm in}\ \calh,
\end{align}
we have
\begin{align*}
     x^*|_{[0,s_0)} \equiv x_0,\ \ dt\otimes dx-a.e.
\end{align*}
Hence,
we may modify $x^*$ on a null set such that
$x^*|_{[0,s_0]} \equiv x_0$
and still preserve the limits \eqref{xm-CL2-limit} and \eqref{xm-L2V-limit}.

For every $m\geq 1$, by \eqref{xm-CtL2-L2V-bdd},
$x_m(t) \in \calv$ for a.e. $t\in (s_m, k)$.
Taking into account $s_m\to s_0$,
we can take a uniform null set $\caln$ of $[s_0, k]$
such that for any $t\in \mathcal{N}^c$,
$x_{m(t)}(t) \in \calv$ for $m(t)$ large enough,
which, via the compact embedding $\calv \hookrightarrow \calu'$,
yields that there exists a subsequence of $(x_{m(t)}(t))$  (depending on $t$)
convergent in $\calu'$.

Moreover, fix $\delta_0(>0)$ small enough.
For any $t\in [s_0-\delta_0, s_0)$,
since $x_m(t) = x_{0,m}$ for $m$ large enough,
by \eqref{x0m-x0-limit}
and the embedding $\calh \hookrightarrow \calu'$,
one has $x_m(t)$ converges in $\calu'$.

Hence, taking a dense subset $(t_i)$ of $[s_0-\delta_0, k]$,
using diagonal arguments
one can extract a uniform subsequence,
still denoted by $(x_m)$,
such that for every $i\geq 1$, $(x_m(t_i))$ converges in $\calu'$.

Then,  the Aldous condition \eqref{xm-Aldoux-bdd} permits to give that
\begin{align*}
   x_m|_{[s_0-\delta_0,k]} \to x^*|_{[s_0-\delta_0,k]} \ \ {\rm in}\ C([s_0-\delta_0, k]; \calu').
\end{align*}
Actually,
for any $\ve'>0$,
take $j$ large enough such that $1/j\leq \ve' /3$.
The compact interval $[s_0-\delta_0, k]$
can be covered by finitely many open balls
$B(t_i, \delta_j)$, $1\leq i\leq M$,
for some $M\geq 1$.
Then, for any $t \in [s_0-\delta_0, k]$,
there exists $1\leq i\leq M$ such that $t\in B(t_i, \delta_j)$,
and
\begin{align*}
  \|x_n(t) - x_m(t)\|_{\calu'}
  \leq& \|x_n(t) - x_n(t_i)\|_{\calu'} + \|x_m(t) - x_m(t_i)\|_{\calu'}
         + \|x_n(t_i) - x_m(t_i) \|_{\calu'} \\
  \leq& m_k(x_n, \delta_{jk}) + m_k(x_m, \delta_{jk})
        + \|x_n(t_i) - x_m(t_i)\|_{\calu'},
\end{align*}
which along with \eqref{xm-Aldoux-bdd}
and $1/j\leq \ve'/3$
yields that
\begin{align*}
   \|x_n - x_m\|_{C([s_0-\delta_0, k]; \calu')}
   \leq \frac{2\ve'}{3} + \sum\limits_{i=1}^M \|x_n(t_i) - x_m(t_i)\|_{\calu'}
   \leq \ve'
\end{align*}
for $n,m$ large enough.
This yields that $\{x_m|_{[s_0-\delta_0, k]}\}$ is a Cauchy sequence in
$C([s_0-\delta_0,k]; \calu')$.
Taking into account
\begin{align}  \label{xmsm-xm}
   x_m|_{[0,s_0-\delta_0]} \equiv x_{0,m}
   \to x^*|_{[0,s_0-\delta_0]} \equiv x_0 \ \ {\rm in}\ \calh
\end{align}
and $\calh \hookrightarrow \calu'$
we obtain
\begin{align} \label{xm-CU'-limit}
   x_m \to x^* \ \ {\rm in}\ C([0,k]; \calu').
\end{align}

Below we show that $\{x_m\}$ converges in $L^2(0, k; \calh_{\loc})$,
or equivalently, for every $R\geq 1$,
\begin{align}  \label{xm-L2Hloc-limit}
   x_m \to x^* \ \  {\rm in}\ L^2(0, k; \calh_R), \ \ {\rm as}\ m\to \infty.
\end{align}

To this end,
by \eqref{xmsm-xm},
\begin{align}  \label{xm-x-L2-0s0}
   x_m \to x^*\ \ {\rm in}\ L^2(0,s_0-\delta_0; \calh_R).
\end{align}
Hence,
we focus on the temporal regime $[s_0-\delta_0, k]$ below.

Take $\delta_0$ possibly smaller such that
$ \delta_0^{1/2} \leq  \ve' /(10 R_k)$.
Then,
\begin{align}  \label{xm-x-L2-s0s*}
       & \sup\limits_{m\geq 1} \|x_m-x^*\|_{L^2(s_0-\delta_0, s_0+\delta_0; \calh_R)}  \notag \\
   \leq& (\sup\limits_{m\geq 1} \|x_m\|_{L^\infty(s_0-\delta_0, s_0+\delta_0; \calh)}
          + \|x^*\|_{L^\infty(s_0-\delta_0, s_0+\delta_0; \calh)})\sqrt{2\delta_0}  \notag \\
   \leq& 4 R_k \delta_0^{\frac 12} \leq \frac{\ve'}{2}.
\end{align}

Moreover,
for any $t\in [s_0+\delta_0, k]$,
since the embedding $\calv \hookrightarrow \calh_R$ is compact,
and the embedding $\calh_{R} \hookrightarrow \calu'$ is continuous, using the Lions Lemma we get
\begin{align*}
   \|x_m(t) - x^*(t)\|^2_{\calh_R}
   \leq& \frac{\ve'}{10 R_k^2} \|x_m(t) - x^*(t)\|_{\calv}^2
        + C_{\ve', R_k} \|x_m(t) - x^*(t)\|^2_{\calu'}
\end{align*}
for some constant $C_{\ve', R_k}$ independent of $t$.
This along with \eqref{xm-CU'-limit} yields that
\begin{align} \label{xm-x-L2-s*k}
    \|x_m - x^*\|^2_{L^2(s_0+\delta_0, k; \calh_R)}
    \leq&  \frac{\ve'}{10 R_k^2} \|x_m - x^*\|_{L^2(s_0+\delta_0, k; \calv)}^2  \notag  \\
        &  + C_{\ve', R_k} (k-s_0-\delta_0) \|x_m - x^*\|^2_{C([s_0+\delta_0,k];\calu')} \notag  \\
    \leq& \frac{\ve'}{5} +  C_{\ve', R_k} k \|x_m - x^*\|^2_{C([s_0+\delta_0,k];\calu')}
    \leq \frac{\ve'}{2}
\end{align}
for $m$ large enough.

Thus, combining  \eqref{xm-x-L2-s0s*}  and \eqref{xm-x-L2-s*k}
we lead to
\begin{align*}
    \|x_m - x^*\|^2_{L^2(s_0-\delta_0, k; \calh_{R})}
    \leq \ve'
\end{align*}
for $m$ large enough.
This together with \eqref{xm-x-L2-0s0} yield
\eqref{xm-L2Hloc-limit}.

At last,
in view of \cite[Lemma 3.2]{Brz13},
the uniform boundedness \eqref{xm-CtL2-L2V-bdd}
together with
the convergence \eqref{xm-CU'-limit}
yield that
\begin{align}   \label{xm-CHw-limit}
   x_m \to x \ \ {\rm in}\ C([0, k]; \mathbb{B}_w), \ \ {\rm as}\ m\to \infty,
\end{align}
where $ \mathbb{B}_w$ is a metrizable subspace of $C([0,k]; \calh_w)$,
with the metric comparable with the weak topology
(see \cite[p.1641]{Brz13} for more details).
Hence, $(x_m)$ is also convergent in $C([0,k]; \calh_w)$.

Therefore,
it follows from \eqref{xm-CU'-limit}, \eqref{xm-L2Hloc-limit}
and \eqref{xm-CHw-limit} that $K$ is compact in $\wh\calz_{[0,\infty),\loc}$.
\end{proof}

Recall that $\scry_{[0,T]} = C([0,T]; \calw)$,
$\mathcal{C}^\kappa_{[0,T]} = C^\kappa([0,T]; \calw)$
and the global counterparts
$\scry_{[0,\infty), \loc} = C_{\loc}  ([0,\infty); \calw)$,
$\mathcal{C}^\kappa_{[0,\infty),\loc} = C^\kappa_{\loc} ([0,\infty); \calw)$,
where $\kappa\in (0,1/2)$.

\begin{lemma} [Tightness of $(P_n\circ y^{-1})$] \label{Lem-Pny-tight}
Assume the conditions in Proposition \ref{Prop-Stab-Mart} to hold.
Then,
$(P_n\circ y^{-1})$ is tight on $\scry_{[0,\infty),\loc}$.
\end{lemma}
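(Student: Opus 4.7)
The plan is to exploit the rigid structure of $y$ under each $P_n$. By properties $(M1)$ and $(M3)$ of Definition \ref{def-mart-1} (transported to $[0, \infty)$ via the convention $y(t) \equiv y_{0,n}$ for $t \leq s_n$), the law of $y$ under $P_n$ coincides with that of
\begin{equation*}
\wt y^{n}(t) := y_{0,n} + \wt w\bigl((t-s_n)\vee 0\bigr), \qquad t\geq 0,
\end{equation*}
where $\wt w$ is a canonical $\mathcal{Q}$-Wiener process (on some auxiliary space) with values in $\calw$. In other words, $P_n \circ y^{-1}$ is the pushforward of the standard Wiener measure on $\scry_{[0,\infty),\loc}$ under the continuous map $\Phi_{s_n,y_{0,n}}: \scry_{[0,\infty),\loc} \to \scry_{[0,\infty),\loc}$ defined by $\Phi_{s,y_0}(w)(t) := y_0 + w((t-s)\vee 0)$. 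This reduces the question entirely to the tightness of a \emph{single} Wiener measure together with continuity properties of $\Phi$.

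First I would verify tightness of $(P_n \circ y^{-1})$ on $C([0,T];\calw)$ for each fixed $T$. By the Gaussian character of $\wt w$ and the trace-class condition $\sum_j \lambda_j <\infty$, for every integer $p\geq 1$ one has the uniform moment bound
\begin{equation*}
\bbe^{P_n}\, \|y(t)-y(s)\|_{\calw}^{2p} \leq C_p \Bigl(\sum_{j\geq 1} \lambda_j \|\epsilon_j\|_{\calw}^2\Bigr)^{p} |t-s|^{p},\quad 0\leq s,t \leq T,
\end{equation*}
which yields the Aldous/Kolmogorov condition uniformly in $n$ (independent of $s_n$, since the ``frozen'' regime $t\leq s_n$ contributes zero increments). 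Combined with the fact that $(y_{0,n})$ is bounded in $\calw$ and $y(t)-y_{0,n}$ under $P_n$ is a centered Gaussian random variable in $\calw$ whose law is a Radon, hence tight, measure on $\calw$, we obtain the pointwise tightness of $\{P_n \circ y(t)^{-1}\}_n$ in $\calw$. A standard tightness criterion for Banach-valued continuous processes (Kolmogorov tightness criterion together with Arzelà--Ascoli applied in the metrizable space $\calw$) then yields tightness on $C([0,T];\calw)$.

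Finally, a diagonal/exhaustion argument across $T=k\in \bbn$ upgrades this to tightness on $\scry_{[0,\infty),\loc}=C_{\loc}([0,\infty);\calw)$: given $\varepsilon>0$, one picks compacts $\tw K_{k}\subseteq C([0,k];\calw)$ with $\sup_n P_n(y|_{[0,k]}\notin \tw K_k)\leq \varepsilon /2^k$ and takes $K:= \{w\in \scry_{[0,\infty),\loc}:\, w|_{[0,k]}\in \tw K_k\ \forall k\in \bbn\}$, which is compact by definition of the locally uniform topology. I anticipate no substantial obstacle here; the mildly delicate point is accounting for the varying starting times $s_n$, but this is absorbed cleanly because the map $t\mapsto (t-s)\vee 0$ is $1$-Lipschitz uniformly in $s$, so no uniformity in $n$ is lost in either the moment bounds or the pointwise Gaussian tightness.
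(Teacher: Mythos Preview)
Your overall strategy is sound, but there is a gap in the pointwise tightness step. You argue that each $y(t)-y_{0,n}$ under $P_n$ has a Radon, hence tight, law on $\calw$, and from this conclude tightness of the family $\{P_n\circ y(t)^{-1}\}_n$. That each individual law is tight is automatic on a Polish space; what you need is \emph{uniform} tightness over $n$, and in the infinite-dimensional Hilbert space $\calw$ this does not follow from the moment bound (which only gives $\sup_n\bbe^{P_n}\|y(t)\|_\calw^2<\infty$, i.e.\ boundedness, not relative compactness). Without uniform pointwise tightness the Kolmogorov--Arzel\`a--Ascoli machinery for $C([0,T];\calw)$ does not close.

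The repair is already implicit in your opening observation. Since $P_n\circ y^{-1}=\mu\circ\Phi_{s_n,y_{0,n}}^{-1}$ with $\mu$ the Wiener law on $\scry_{[0,\infty),\loc}$, and $(s,y_0,w)\mapsto\Phi_{s,y_0}(w)$ is jointly continuous (the reparametrization $t\mapsto(t-s)\vee 0$ is $1$-Lipschitz and $w$ is uniformly continuous on compact time intervals), take any compact $K_\varepsilon\subset\scry_{[0,\infty),\loc}$ with $\mu(K_\varepsilon)\geq 1-\varepsilon$; then the continuous image $\Phi\bigl(\overline{\{(s_n,y_{0,n}):n\geq 1\}}\times K_\varepsilon\bigr)$ is compact and has $P_n\circ y^{-1}$-mass $\geq 1-\varepsilon$ for every $n$. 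This bypasses Kolmogorov entirely. If you prefer to keep the Kolmogorov route, the pointwise step is fixed by noting that $y(t)-y_{0,n}$ under $P_n$ has the law of $\sqrt{c_n}\,\wt w(1)$ with $c_n=(t-s_n)\vee 0\in[0,t]$, so the family consists of scalings by factors in $[0,\sqrt{t}\,]$ of a single tight law. By contrast, the paper builds an explicit candidate compact from H\"older balls and verifies sequential compactness by a direct $\varepsilon$-argument with a case analysis on the drifting start times $s_n$; your pushforward route, once the gap is closed, is considerably more economical.
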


\begin{proof}
Fix $\kappa \in (0,1/2)$.
Set $\ol{\Omega}_n:= \{y\in \scry_{[0,\infty), \loc}: y|_{[0,s_n]}\equiv y_{0,n}\}$,
\begin{align*}
   K^3_{n,k}:= \{y\in \Omega_n: \|y(\cdot+s_n) - y_{0,n}\|_{\calc^\kappa_{[0,k]}}^2  \leq R^2_k\},
\end{align*}
where $n,k\in \bbn$, $R_k>0$,
and set
\begin{align*}
   \ol{K}:= \bigcup_{n\in \bbn} \bigcap_{k\in \bbn} K^3_{n,k}.
\end{align*}

Since for the Wiener measure $\mu$, for every $k\geq 1$,
\begin{align*}
   \bbe^\mu \|y\|_{\calc^\kappa_{[0,k]}}^2
   <\infty,
\end{align*}
one can take an increasing large sequence $(R_k)$ such that
\begin{align*}
   \sup\limits_{n\in \bbn} P_n ((K^3_{n,k})^c)
   =& \sup\limits_{n\in \bbn} \mu (y\in \scry_{[0,\infty), \loc}: \|y\|_{\calc_{[s_n,s_n+k]}^\kappa}^2  >R_k^2 ) \notag \\
   \leq& \mu (y\in \scry_{[0,\infty), \loc}: \|y\|_{\calc_{[0, \sup_n s_n+k]}^\kappa}^2  >R_k^2 )
   \leq \frac{\ve}{2^k},
\end{align*}
which yields that
\begin{align*}
   \sup\limits_{n\in \bbn} P_n (\ol{K}^c) \leq  \ve.
\end{align*}

It remains to show that $\ol{K}$ is a compact set in
$\scry_{[0,\infty),\loc}$.
Take any sequence $(y_m) \subseteq \ol{K}$,
by the diagonal arguments,
it suffices to prove that
$(y_m|_{[0,k]})$ has a subsequence convergent in $\scry_{[0,k]}$.
Below we fix $k\geq 1$
and write $y_m$ for $y_m|_{[0,k]}$
for simplicity.

This is true if there exist infinitely many $y_m$ in
the same set $\ol{\Omega}_M$ for some $M\geq 1$.

Actually,
write
\begin{align*}
   h_m(t):= y_m(t+s_M) - y_{0,M},\ \ t\in [0,k].
\end{align*}
One has
\begin{align*}
   \sup\limits_{m\in \bbn}
   \|h_m\|_{\calc^\kappa_{[0,k]}} \leq R_k.
\end{align*}
By the Arzel\`{a}-Ascoli theorem,
there exists a subsequence (still denoted by $(h_m)$)
convergent in $\scry_{[0,k]}$,
and so does $(y_m)$.

Below,
without loss of generality,
we assume $x_m\in \Omega_m$ below, $m\geq 1$.
In particular,
if
\begin{align*}
   g_m(t) := y_m(t+s_m) - y_{0,m},\ \ t\in [0,\infty),
\end{align*}
we have
\begin{align*}
   \sup\limits_{m\in \bbn}
   \|g_m\|_{\calc^{\kappa}_{[0,k]}}
   \leq   R_k.
\end{align*}
Then, the above arguments show that
there exists a subsequence, still denoted by $(g_m)$,
convergent in $\scry_{[0,k]}$.

Then,
rewrite
\begin{align}  \label{ym-gm}
   y_m(t) = g_m(t-s_m) + y_{0,m}, \ \ t\in [s_m, \infty),
\end{align}
and $y_m(t) = y_{0,m}$ for $t\in [0,s_m]$, $m\geq 1$.
Without loss of generality
we may assume that $s_0<s_m<s_n<k$ below.

For any $\ve'>0$ fixed,
take any $m,n\in \bbn$ large enough
such that
\begin{align*}
   |s_m - s_n|^{\kappa} + \|y_{0,m} - y_{0,n}\|_{\calw}
   + \|g_m-g_n\|_{\scry_{[0,k]}}
   \leq  \frac{\ve'}{10 R_{k}}.
\end{align*}

For $t\in [0, s_m]$,
since $y_m|_{[0,s_m]} \equiv y_{0,m}$
and $y_n|_{[0,s_m]} \equiv y_{0,n}$,
one has for $m,n$ large enough,
\begin{align} \label{ym-yn-Cs0sm}
   \| y_m  - y_n \|_{\scry_{[0,s_m]}}
   = \|y_{0,m}- y_{0,n}\|_{\calw}
   \leq \frac{\ve'}{10}
\end{align}
For $t\in [s_m, s_n]$,
we see that
\begin{align*}
    y_m(t) - y_n(t)
   = g_m(t-s_m) + y_{0,m} - y_{0,n},
\end{align*}
which yields that
\begin{align}  \label{ym-yn-Csmsn}
   \|y_m - y_n \|_{\scry_{[s_m,s_n]}}
   \leq&  \|g_m\|_{C^{\kappa}_{[0,s_n-s_m]}} (s_n-s_m)^{\kappa} + \|y_{0,m} - y_{0,n}\|_{\calw} \notag \\
   \leq& R_{k}(s_n- s_m)^{\kappa} + \|y_{0,m} - y_{0,n}\|_{\calw}
   \leq \frac{\ve'}{5}.
\end{align}
At last, for $t\in [s_n, k]$,
by \eqref{ym-gm},
\begin{align*}
   \|y_m(t) - y_n(t)\|_{\calw}
    =& \| g_m(t-s_m) - g_n(t-s_n) + y_{0,m} - y_{0,n} \|_{\calw} \\
    \leq& \| g_m(t-s_m) - g_m(t-s_n)  \|_{\calw}
           + \| g_m(t-s_n) - g_n(t-s_n)  \|_{\calw}  \\
        &   + \|y_{0,m} - y_{0,n} \|_{\calw} \\
    \leq& \|g_m\|_{C^{\kappa}_{[0,k]}} (s_n-s_m)^{\kappa}
         + \|g_m-g_n\|_{\scry_{[0,k]}}
         +  \|y_{0,m} - y_{0,n} \|_{\calw}
    \leq \frac{3\ve'}{10}.
\end{align*}
Hence, we get that
\begin{align}  \label{ym-yn-Csmk}
   \|y_m - y_n \|_{\scry_{[s_n,k]}}
    \leq \frac{3\ve'}{10}.
\end{align}

Thus,
it follows from \eqref{ym-yn-Cs0sm}, \eqref{ym-yn-Csmsn} and \eqref{ym-yn-Csmk} that
for $m,n$ large enough
\begin{align*}
   \|y_m - y_n \|_{\scry_{[0,k]}} \leq \ve'.
\end{align*}
This yields that $\{y_m\}$
is a Cauchy sequence in $\scry_{[0,k]}$,
and so converges in $\scry_{[0,k]}$.
Therefore,
by diagonal arguments,
$\ol{K}$ is a compact set in $\scry_{[0,\infty),\loc}$.
\end{proof}

{\bf Proof of Proposition \ref{Prop-Stab-Mart}.}
By virtue of Lemmas \ref{Lem-Pnx-tight} and \ref{Lem-Pny-tight},
$(P_n)$ is tight on $\scrx_{[0,\infty),\loc} \times \scry_{[0,\infty),\loc}$.
Thus, there exists a subsequence (still denoted by $(n)$)
such that
$(P_n)$ converges weakly to $P$.

It remains to prove that $P\in \mathscr{M}(s_0,z_0, C_{\cdot})$
for some positive increasing function $(C_t)$.

For this purpose,
applying
the Jakubowski version of Skorokhod representation theorem
we have
a probability space $(\wt \Omega, \wt{\bbf}, \wt \bbp)$
and, along a further subsequence if necessary,
random variables $(\wt v_n, \wt w_n)$, $(\wt v, \wt w)$
	in $\wh\calz_{[0,\infty),\loc} \times \scry_{[0,\infty),\loc}$,
	$n \geq 1$,
	such that
	\begin{align}  \label{wtvn-wtWn-wtv-wtW}
		\wt \bbp \circ (\wt v_n, \wt w_n)^{-1} = P_n,\ \
		\wt  \bbp \circ (\wt v, \wt w)^{-1} = P,
	\end{align}
	and
	\begin{align} \label{wtvn-wtWn-limit2}
		(\wt v_n, \wt w_n) \to (\wt v, \wt w) \ \ {\rm in}\ \wh\calz_{[0,\infty),\loc} \times \scry_{[0,\infty),\loc},\ \ {\rm as}\ n\to \infty, \ \wt \bbp-a.s.
	\end{align}

It also follows from the definition of $\mathscr{M}(s_n, z_n, C_{\cdot})$
and \eqref{wtvn-wtWn-wtv-wtW} that
for every $n\geq 1$ and any $\psi\in \calc_0^\infty$,
\begin{align*}
     (\wt v_n(t)-x_{0,n}, \psi)
     =& \int_{s_n}^t (  -(-\Delta)^\alpha \wt v_n(s)
          - \div (\wt v_n(s) \otimes \wt v_n(s))
       + f(s, \wt w_n(s)), \psi)  \dif s  \\
  &  +  (\wt w_n(t) - y_{0,n},\psi),\ \ \wt \bbp-a.s.
\end{align*}
Now we pass to the limit term by term.
Since $\wt v\in C([0,T];\calh_w)$,
using the pathwise convergence $\wt v_n\to \wt v$ in $C([0,T];\calh_w)$, due to \eqref{wtvn-wtWn-limit2},
one has
\begin{align*}
	&|\int_{s_n}^t (- (-\Delta)^\al \wt v_n , \psi)\dif s-
	\int_{s_0}^t (- (-\Delta)^\al \wt v , \psi)\dif s|\\
	\leq& |\int_{s_n}^t (\wt v_n-\wt v, (-\Delta)^\al \psi)\dif s|+ |\int_{s_0}^{s_n} (\wt v,(-\Delta)^\al \psi)\dif s| \to 0,\ \ \wt \bbp-a.s.
\end{align*}
where we also used the convergence
\begin{align} \label{snzn-s0z0}
    (s_n, z_{n}) \to (s_0, z_0)\ \ {\rm in}\ [0,T] \times \calh \times \calw
\end{align}
and the fact that $(-\Delta)^\al \psi\in L^2$, which can be verified by the following argument:
since $\calf(\psi) \in \cals$, one has $|\calf(\psi)(\eta)|\leq C|\eta|^{-2\al-2}$ when $|\eta|>1$, thus
\begin{align*}
	\norm{(-\Delta)^\al \psi}_{L^2}^2
    =& \norm{|\eta|^{4\al}|\calf(\psi)(\eta)|^2}_{L^1} \notag \\
    \leq& C+ \norm{|\eta|^{4\al}|\calf(\psi)(\eta)|^2}_{L^1(|\eta|>1)}
    \leq C+C\norm{|\eta|^{-4}}_{L^1(|\eta|>1)} <\infty.
\end{align*}

Similarly, as in the proof of \eqref{div-limit},
since $\supp \psi\in B_R$ for some $R>0$,
we have
\begin{align*}
	&|\int_{s_n}^t( \div(\wt v_n\otimes \wt v_n), \psi)\dif s-\int^{t}_{s_0}( \div(\wt v\otimes \wt v), \psi)\dif s|\\
	=&|\int_{s_n}^t(\wt v_n\otimes \wt v_n-\wt v\otimes\wt v,\nabla\psi)\dif s-\int_{s_0}^{s_n}(\wt v\otimes\wt v, \nabla\psi)\dif s| \\
	\leq&\norm{\nabla\psi}_{L^\infty}\|\wt v_n - \wt v\|_{L^2(0,T; \calh_R)}
	(\sup_{n\geq 1} \|\wt v_n \|_{L^2(0,T; \calh_R)}
	+ \|\wt v\|_{L^2(0,T; \calh_R)})\\
	&+\norm{\nabla\psi}_{L^\infty}\norm{\wt v}_{L^2(s_0,s_n; \calh_R)}^2 \to 0,\ \ \wt \bbp-a.s.
\end{align*}

Moreover,
since $\wt v_n\to \wt v$ in $C([0,T];\calh_w)$ and $\wt w_n \to \wt w$ in $\scry_{[0,\infty),\loc}$,
using \eqref{snzn-s0z0} again we have
\begin{align*}
(\wt v_n(t)-x_{0,n}, \psi) -(\wt w_n(t) - y_{0,n},\psi)\to (\wt v(t)-x_{0}, \psi) -(\wt w(t) - y_{0},\psi),\ \ \wt \bbp-a.s.
\end{align*}

The treatment for the random forcing term is similar to that as in the proof of Proposition \ref{Prop-GWP-Mart}.

Thus, putting altogether we conclude that $\wt \bbp$-a.s. for any $t> s_0$,
\begin{align*}
	(\wt v(t)-x_{0}, \psi) =& \int_{s_0}^t ( - (-\Delta)^\alpha \wt v(s)
	  -  \div (\wt v(s) \otimes \wt v(s)
	  +  f(s, \wt w(s)), \psi)  \dif s
	 +  (\wt w(t) - y_{0},\psi),
\end{align*}
which, via \eqref{wtvn-wtWn-wtv-wtW},
yields that
$P$ verifies $(M1)$ of Definition \ref{def-mart-1}.

Regarding the energy bound and inequality in $(M2)$,
since
\begin{align*}
   \sup\limits_{n\in \bbn}
   \bbe^{\wt \bbp}
   \sup\limits_{s\in [0,t]} \|\wt v_n\|_{\calh}^2
   = \sup\limits_{n\in \bbn}
   \bbe^{P_n} \sup\limits_{s\in [0,t]} \|x\|_{\calh}^2
   \leq C_t (\|x_0\|_\calh^2 +1),
\end{align*}
taking into account \eqref{wtvn-wtWn-limit2}
we infer that
\begin{align*}
		\wt v_n  \stackrel{w^*}{\rightharpoonup} \wt v\ \
		{\rm in}\ L^2(\wt \Omega; L^\infty(0,t;\calh)),
\end{align*}
which yields that
\begin{align*}
    \bbe^{P} \|x\|^2_{L^\infty(0,t; \calh)}
   =\bbe^{\wt \bbp} \|\wt v\|^2_{L^\infty(0,t; \calh)}
   \leq \liminf_{n\to \infty}
   \bbe^{\wt \bbp} \|\wt v_n\|^2_{L^\infty(0,t; \calh)}
   \leq C_t (\|x_0\|_\calh^2 +1).
\end{align*}

Similarly, one has
\begin{align*}
   \sup\limits_{n\in \bbn}
   \bbe^{\wt \bbp} \|\wt v_n I_{[s_n,t]}\|^2_{L^2(0,t; \calv)}
   = \sup\limits_{n\in \bbn}
   \bbe^{P_n} \|x I_{[s_n,t]}\|^2_{L^2(0,t; \calv)}
   \leq  C_t (\sup\limits_{n\in \bbn}\|x_{0,n}\|_\calh^2 +1)<\infty,
\end{align*}
it follows that along a further subsequence if necessary,
\begin{align} \label{wtvn-wtv-L2V}
    \wt v_n I_{[s_n,t]}   \stackrel{w}{\rightharpoonup} \wt v I_{[s_0,t]}\ \
    {\rm in}\ L^2(\wt \Omega; L^2(0,T; \calv)),
\end{align}
and thus
\begin{align*}
   \bbe^{P} \|x\|_{L^2(s_0,t; \calv)}
   = \bbe^{\wt \bbp} \|\wt v I_{[s_0,t]}\|_{L^2(0,T; \calv)}
   \leq \liminf_{n\to \infty}
   \bbe^{\wt \bbp} \|\wt v_n  I_{[s_n,t]}\|_{L^2(0,T;\calv)}
   \leq C_t (\|x_0\|_\calh^2 +1).
\end{align*}

Since $P_n$ satisfies the energy inequality $(M2)$, by \eqref{wtvn-wtWn-wtv-wtW},
\begin{align*}
		\bbe^{\wt \bbp} \|\wt v_n(t)\|_{\calh}^2
		+ 2 \bbe^{\wt \bbp} \int_{s_n}^t \|(-\Delta)^{\frac \alpha 2} \wt v_n(s)\|_\calh^2 \dif s
		\leq& \|x_{0,n}\|_\calh^2 + 2 \bbe^{\wt \bbp} \int_{s_n}^t (f(\wt w_n), \wt v_n) \dif s  \\
            & + \sum_{j=1}^{\infty}\lambda_j\|\epsilon_j\|^2_{\calh} (t-s_n).
\end{align*}

Thus, by the similar argument as in the proof of \eqref{energy-ineq-wtv}, we derive the following
\begin{equation*}
		\begin{aligned}
			\bbe^{\wt \bbp} \|\wt v(t)\|_{\calh}^2
			+ 2 \bbe^{\wt \bbp} \int_{s_0}^t \|(-\Delta)^{\frac \alpha 2} \wt v(s)\|_\calh^2 \dif s
			\leq& \|x_{0}\|_\calh^2 + 2\liminf_{n\to \infty}\bbe^{\wt \bbp} \int_{s_n}^t (f(\wt w_n),\wt v_n) \dif s  \\
                &+ \sum_{j=1}^{\infty}\lambda_j\|\epsilon_j\|^2_{\calh} (t-s_0).
		\end{aligned}
\end{equation*}

In order to pass to the limit, we write
\begin{equation*}
	\begin{aligned}
		&\bbe^{\wt \bbp} \int_{s_n}^t (f(\wt w_n),\wt v_n)\dif s -\int_{s_0}^t(f(\wt w), \wt v )\dif s\\
		=&   \bbe^{\wt \bbp} \int_{s_0}^t(f(\wt w),\wt v_n I_{[s_n,t]}-\wt v)\dif s
            + \bbe^{\wt \bbp} \int_{s_0}^t ( f(\wt w_n)-f(\wt w), \wt v_n I_{[s_n,t]})\dif s
	\end{aligned}
\end{equation*}
Note that, the first term tends to $0$ due to \eqref{wtvn-wtv-L2V} and Lemma \ref{Lem-f-L2}.
Moreover,
the second term tends to $0$
by using similar proof as in  \eqref{Efwn-Efw-limit},
based on the pathwise convergence
$$\|f(\wt w_n)-f(\wt w)\|_{L^2(0,T;L^2)}\to 0,\ \ \wt \bbp-a.s,$$
the uniform boundedness of $\norm{\wt v_n}_{L^2(0,T;L^2)}$
together with the uniform integrability.

Thus, passing to the limit we derive the energy inequality
\begin{equation*}
		\begin{aligned}
			\bbe^{\wt \bbp} \|\wt v(t)\|_{\calh}^2
			+ 2 \bbe^{\wt \bbp} \int_{s_0}^t \|(-\Delta)^{\frac \alpha 2} \wt v(s)\|_\calh^2 \dif s
			\leq \|x_{0}\|_\calh^2 + 2\bbe^{\wt \bbp} \int_{s_0}^t (f(\wt w),\wt v) \dif s + \sum_{j=1}^{\infty}\lambda_j\|\epsilon_j\|^2_{\calh} (t-s_0),
		\end{aligned}
\end{equation*}

which, via \eqref{wtvn-wtWn-wtv-wtW}, verifies $(M2)$ for $P$.

In order to verify $(M3)$,
it is equivalent to prove that
for any $s_0 < t_1 <  t_2<  \cdots <  t_m<  s<t<\infty$,
$m\in \bbn$,
any bounded continuous function $g$ on $\mathbb{R}^m$
and $\xi \in \calw$,
\begin{align} \label{P-yt-ys}
   \bbe^P(e^{i \<y{(t-s_0)} - y_0,\xi\> + \frac 12 (t-s_0) \<\mathcal{Q}\xi, \xi\>} g_s )
    = \bbe^P(e^{i \<y{(s-s_0)} - y_0, \xi\> + \frac 12 (s-s_0)\<\mathcal{Q}\xi, \xi\>} g_s ),
\end{align}
where $g_s:=g(y{(t_1)},\cdots, y{(t_m)}) $
and $\<\cdot, \cdot\>$ denotes the inner product in $\calw$.
We also write $\wt g_s:=g(\wt w{(t_1)},\cdots, \wt w{(t_m)})$ below.

To this end,
since $(y(t))$ is a standard Wiener process starting from $z_n$ at time $s_n$ under $P_n$,
we infer that
$\{e^{i \<y{(t-s_n)} - y_{0,n}, \xi\> + \frac 12(t-s_n) \<\mathcal{Q}\xi, \xi\>}\}_{t\geq s_n}$
is a continuous martingale under $P_n$
and thus
for $n$ large enough,
\begin{align*}
   \bbe^{P_n}(e^{i \<y{(t-s_n)} - y_{0,n}, \xi \> + \frac 12 (t-s_n)\<\mathcal{Q}\xi, \xi\>} g_s )
    = \bbe^{P_n}(e^{i \<y{(s-s_n)} - y_{0,n},\xi\> + \frac 12  (s-s_n)\<\mathcal{Q}\xi, \xi\>} g_s),
\end{align*}
which along with \eqref{wtvn-wtWn-wtv-wtW} yields  that
\begin{align}  \label{Pntgs-Pnsgs}
   \bbe^{\wt \bbp}(e^{i \<\wt w_n{(t-s_n)} - y_{0,n},\xi \> +\frac 12  (t-s_n)\<\mathcal{Q}\xi, \xi\>} \wt g_s )
    = \bbe^{\wt \bbp}(e^{i  \<\wt w_n{(s-s_n)} - y_{0,n}, \xi \> + \frac 12 (s-s_n)\<\mathcal{Q}\xi, \xi\>} \wt g_s ).
\end{align}
Note that
\begin{align*}
      & |\<\wt w_n(t-s_n) - y_{0,n}, \xi\> -  \<\wt w(t-s_0) - y_0, \xi\>|  \\
  \leq& |\<\wt w_n(t-s_n), \xi \> - \<\wt w(t-s_n), \xi\>|
       + |\<\wt w(t-s_n), \xi \> - \<\wt w(t-s_0), \xi\>|
        + |\<y_{0,n}-y_0, \xi\>|  \\
  \leq& \|\wt w_n - \wt w\|_{C([0,t]; \calw)} \|\xi\|_{\calw}
         + \|\wt w\|_{C^{\frac 13}([0,t]; \calw)} \|\xi\|_{\calw}  (s_n-s_0)^{\frac 13}
         + \|y_{0,n} - y_0\|_\calw \|\xi\|_\calw.
\end{align*}
By \eqref{wtvn-wtWn-limit2} and \eqref{snzn-s0z0},
it follows that
\begin{align*}
   \<\wt w_n(t-s_n) - y_{0,n}, \xi \> - \<\wt w(t-s_0) - y_0, \xi\>
   \to 0,\ \ \wt{\bbp}-a.s.
\end{align*}
Similar results also holds when $t$ is replaced by $s$.
As
$|e^{i \<\wt w_n(t-s_n)- y_{0,n}, \xi\>}| = |e^{i \<\wt w_n(s-s_n)- y_{0,n}, \xi\>}| =1$,
applying the bounded convergence theorem to \eqref{Pntgs-Pnsgs}
we get
\begin{align*}
   \bbe^{\wt \bbp}(e^{i \<\wt w{(t-s_0)} - y_0, \xi \> + \frac 12  (t-s_0) \<\mathcal{Q}\xi, \xi\>} \wt g_s )
    = \bbe^{\wt \bbp}(e^{i \<\wt w{(s-s_0)} - y_0, \xi \> + \frac 12 (s-s_0) \<\mathcal{Q}\xi, \xi\>} \wt g_s ),
\end{align*}
which, via \eqref{wtvn-wtWn-wtv-wtW}, yields \eqref{P-yt-ys}
and so verifies $(M3)$ for $P$.
The proof is complete.
\hfill $\square$

\subsection{Gluing procedure}  \label{Subsec-Glue-Martingale}

The aim of this subsection is to  glue two martingale solutions together,
as stated in Proposition \ref{Prop-Glue} below.

\begin{proposition} [Gluing martingale solutions]  \label{Prop-Glue}
Let $z_0:= (x_0, y_0) \in \calh \times \calw$
and $P \in \mathscr{M}(0,z_0,C_{\cdot}, \tau)$
as in the sense of Definition \ref{def-mart-2},
where $\tau$ is a $(\mathcal{B}_t)_{t \geq 0}$-stopping time.
Suppose that there exist a Borel set $\mathcal{N} \subset \wh \Omega_{\tau}$
and $(Q_\omega)\subseteq \mathscr{P}(\wh \Omega)$,
such that $P(\mathcal{N})=0$ and for every $\omega \in \mathcal{N}^c$ it holds
\begin{equation}\label{qw2}
    Q_\omega (\omega^{\prime} \in \wh{\Omega}; \tau (\omega^{\prime} )=\tau(\omega))=1 .
\end{equation}
Then, the probability measure
$P \otimes_\tau R$ defined by
\begin{align*}
P \otimes_\tau R(\cdot):=\int_{\wh \Omega} Q_\omega(\cdot) P(d \omega)
\end{align*}
satisfies
\begin{align} \label{PR-P-before}
   P \otimes_\tau R=P \ \ on\ \wh{\Omega}_\tau,
\end{align}
and
\begin{align}  \label{PR-Mart}
   P\otimes_\tau R \in \mathscr{M}(0,z_0, C_{\cdot})
\end{align}
for some positive increasing function $t \mapsto C_t$.
\end{proposition}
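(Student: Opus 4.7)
The strategy is the classical Stroock--Varadhan gluing procedure (as in \cite{SV79} and its adaptation to the infinite dimensional SPDE setting in \cite{HZZ19,HZZ23}), adapted to our product canonical space $\wh\Omega=\scrx_{[0,\infty),\loc}\times\scry_{[0,\infty),\loc}$. Throughout I assume, as is implicit for the gluing to make sense, that $P$-a.s.\ on $\mathcal{N}^c$ the measure $Q_\omega$ is a martingale solution starting from $\omega(\tau(\omega))$ at time $\tau(\omega)$ in the sense of Definition \ref{def-mart-1}, with the common rate function $t\mapsto C_t$.

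First I would establish \eqref{PR-P-before}. For any $A\in\mathcal{B}(\wh\Omega_\tau)$, the condition \eqref{qw2} together with the assumption that $Q_\omega$ is concentrated on trajectories that coincide with $\omega$ on $[0,\tau(\omega)]$ gives $Q_\omega(A)=\mathbf{1}_A(\omega)$ for every $\omega\in\mathcal{N}^c$, and hence $P\otimes_\tau R(A)=P(A)$. In particular $P\otimes_\tau R(z(0)=z_0)=1$. This step is routine but it fixes the measurability framework (right-continuity of the filtration $(\mathcal{B}_t)$ on the canonical space) that will be used throughout, and it is where the need for Borel measurability of $\omega\mapsto Q_\omega$ enters.

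Next I would verify (M1) of Definition \ref{def-mart-1}. Write, for $\psi\in\calc_0^\infty$,
\begin{equation*}
    M^\psi_t := (x(t)-x_0,\psi) - \int_0^t (-(-\Delta)^\alpha x - \div(x\otimes x) + f(s,y(s)),\psi)\,ds - (y(t)-y_0,\psi).
\end{equation*}
We must show $M^\psi_t=0$ for all $t\geq 0$, $P\otimes_\tau R$-a.s. For $t\leq\tau$ this follows from \eqref{PR-P-before} and $(M1')$ for $P$. For $t\geq\tau$ we split $M^\psi_t = M^\psi_\tau + (M^\psi_t - M^\psi_\tau)$; the first term vanishes by the local property of $P$, while the increment $M^\psi_t-M^\psi_\tau$ vanishes $Q_\omega$-a.s.\ because $Q_\omega\in\mathscr{M}(\tau(\omega),\omega(\tau(\omega)),C_\cdot)$ satisfies (M1) starting from time $\tau(\omega)$. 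Integrating over $\omega$ closes the step. The same splitting yields (M3): the Wiener property under $P\otimes_\tau R$ is most cleanly obtained by the martingale characterization, that is, by showing that for any $\xi\in\calw$ the process $\exp(i\langle y(t)-y_0,\xi\rangle + \tfrac12 t\langle \mathcal{Q}\xi,\xi\rangle)$ is a $P\otimes_\tau R$-martingale, splitting before/after $\tau$ exactly as above and using the Wiener property under $P$ and under each $Q_\omega$.

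The main obstacle, as usual in the Leray setting, is the verification of the energy inequality and energy bound in (M2), since these are inequalities (not equalities) that must survive the concatenation at $\tau$. I would proceed as follows: for $t\geq 0$ fixed, decompose
\begin{equation*}
    \|x(t)\|_{\calh}^2 = \|x(t\wedge\tau)\|_{\calh}^2 + \big(\|x(t)\|_{\calh}^2 - \|x(t\wedge\tau)\|_{\calh}^2\big)\mathbf{1}_{\{t\geq\tau\}},
\end{equation*}
and similarly split the dissipation integral and the forcing integral. Taking $\bbe^{P\otimes_\tau R}$ and applying Fubini, the first pieces are bounded using $(M2')$ for $P$, while the second pieces equal $\int_{\mathcal{N}^c}\bbe^{Q_\omega}[\cdots]P(d\omega)$ and are bounded using (M2) for each $Q_\omega$ starting from $\omega(\tau(\omega))$ at time $\tau(\omega)$. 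Adding the two inequalities, the ``boundary'' terms $\|x(\tau)\|_\calh^2$ cancel and the noise-variance contribution $\sum_j\lambda_j\|\epsilon_j\|_\calh^2(t-s)$ is additive in $s$, yielding the desired inequality with initial datum $\|x_0\|_\calh^2$. For the energy bound, one first bounds $\bbe^{Q_\omega}\sup_{s\leq t}\|x(s)\|_\calh^2$ by $C_t(\|\omega(\tau(\omega))\|_\calh^2+1)$, then takes $\bbe^P$ and uses the energy bound for $P$ to control $\bbe^P\|x(\tau\wedge t)\|_\calh^2$ by $C_t(\|x_0\|_\calh^2+1)$; after possibly enlarging $C_t$, the two combine into a single increasing function, completing (M2) and hence \eqref{PR-Mart}.
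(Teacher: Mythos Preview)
Your proposal is correct and follows essentially the same route as the paper: the identity \eqref{PR-P-before} via \eqref{qw2} and the concentration of $Q_\omega$ on $\omega|_{[0,\tau(\omega)]}$, the verification of $(M1)$ by splitting the functional at $\tau$, the energy inequality in $(M2)$ by decomposing before/after $\tau$ and observing that the boundary term $\|x(\tau)\|_\calh^2$ cancels, the energy bound by iterating the two $C_t$-estimates, and $(M3)$ via the exponential-martingale characterization of the $\mathcal{Q}$-Wiener process. The only place where the paper is substantially more detailed is $(M3)$, where one must actually split according to the three regimes $s<t\le\tau$, $s\le\tau<t$, $\tau<s<t$ (since the martingale increment involves both $s$ and $t$ and the test functional $g_s$ may or may not be $\mathcal{B}_\tau$-measurable), but your outline clearly points in that direction.
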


Like the definition of $\sigma$ in \eqref{sigma-def},
we define the following stopping times on the canonical spaces:
\begin{align*}
	\tau^n(\omega):= (20C(N,\kappa))^{\frac{-1}{a-\ve_0}}\wedge (20C(N,\kappa))^{\frac{-1}{1+\kappa-\frac{5}{4\al}-\ve_0}} \wedge \inf\{t>0:\norm{y(\omega)}_{C^{\kappa}([0,t]; H_{x}^{N})} > 1-\frac1n \},
\end{align*}
and its limit
\begin{align*}
    \tau^\infty(\omega):=\lim_{n\to\infty} \tau^n(\omega),\ \ w\in \wh \Omega.
\end{align*}
As in \cite[Lemma 3.5]{HZZ19},
$(\tau^n)$ are $(\mathcal{B}_t)_{t \geq 0}$-stopping times, and so is $\tau^\infty$.
Using the stability result in Proposition \ref{Prop-Stab-Mart}
and similar arguments as in \cite{HZZ19},
the following result holds which permits
to glue a single trajectory with global martingale solutions.

\begin{lemma}\label{Lem-Glue-Qomega}
Let $\tau$ be a bounded $(\mathcal{B}_t)_{t \geq 0}$-stopping time.
Then, for every $\omega \in \wh \Omega$
there exists $Q_\omega \in \mathscr{P} (\wh \Omega)$ such that
\begin{equation}\label{Qw-tau-before}
   Q_\omega (\omega^{\prime} \in \wh \Omega ; \omega'|_{[0,\tau(\omega)]} =\omega|_{[0,\tau(\omega)]})=1,
\end{equation}
and
\begin{equation} \label{Qw-tau-after}
    Q_\omega(A)=R_{\tau(\omega), z(\tau(\omega), \omega)}(A), \ \ \forall A \in \mathcal{B}^{\tau(\omega)},
\end{equation}
where $R_{\tau(\omega), z(\tau(\omega), \omega)} \in \mathscr{M}(\tau(\omega), z(\tau(\omega),\omega)), C_{\cdot})$
with a positive increasing function $(C_t)$.

In addition, for every $B \in \mathcal{B}(\wh \Omega)$ the mapping $\omega \mapsto Q_\omega(B)$ is $\mathcal{B}_\tau$-measurable.
\end{lemma}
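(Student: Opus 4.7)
The plan is to follow the classical Stroock--Varadhan construction, adapted to the non-metric canonical space $\wh\Omega = \scrx_{[0,\infty),\loc} \times \scry_{[0,\infty),\loc}$: first produce a measurable selector of martingale solutions indexed by the initial datum, then push it forward along a concatenation map determined by the deterministic past $\omega|_{[0,\tau(\omega)]}$.

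First, I would construct a Borel measurable map $(s,z) \mapsto R_{s,z} \in \mathscr{P}(\wh\Omega)$ with $R_{s,z} \in \mathscr{M}(s,z,C_\cdot)$. The three inputs are: non-emptiness from Proposition \ref{Prop-GWP-Mart}, compactness of $\mathscr{M}(s,z,C_\cdot)$ in $\mathscr{P}(\wh\Omega)$ from Corollary \ref{Cor-Mart-Comp}, and upper semi-continuity of the multifunction $(s,z) \rightrightarrows \mathscr{M}(s,z,C_\cdot)$ from the stability result Proposition \ref{Prop-Stab-Mart}. Together these give a closed-graph compact-valued multifunction to which a Kuratowski--Ryll-Nardzewski type measurable selection applies.

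Next, for each $\omega \in \wh\Omega$, I would define the concatenation operator $\Psi_\omega : \wh\Omega \to \wh\Omega$ by
$$\Psi_\omega(\omega')(t) := \omega(t)\,\mathbf{1}_{[0,\tau(\omega)]}(t) + \omega'(t)\,\mathbf{1}_{(\tau(\omega),\infty)}(t),$$
and set $Q_\omega := (\Psi_\omega)_* R_{\tau(\omega),\, z(\tau(\omega),\omega)}$. By $(M1)$ of Definition \ref{def-mart-1}, $R_{\tau(\omega),\, z(\tau(\omega),\omega)}$--a.s.\ one has $\omega'(\tau(\omega)) = z(\tau(\omega),\omega) = \omega(\tau(\omega))$, so $\Psi_\omega(\omega')$ is continuous at $\tau(\omega)$ (hence lies in $\wh\Omega$) and coincides with $\omega'$ on $[\tau(\omega),\infty)$. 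Property \eqref{Qw-tau-before} is then immediate from the definition of $\Psi_\omega$ on $[0,\tau(\omega)]$, and \eqref{Qw-tau-after} follows because $\Psi_\omega^{-1}(A) = A$ for $A \in \mathcal{B}^{\tau(\omega)}$ up to an $R$-null set. For the $\mathcal{B}_\tau$-measurability, one writes
$$Q_\omega(B) = \int_{\wh\Omega} \mathbf{1}_B(\Psi_\omega(\omega'))\, R_{\tau(\omega),\, z(\tau(\omega),\omega)}(d\omega'),$$
verifies joint $\mathcal{B}_\tau \otimes \mathcal{B}(\wh\Omega)$-measurability of the integrand on cylinder sets (using $\mathcal{B}_\tau$-measurability of $\omega \mapsto (\tau(\omega), z(\tau(\omega),\omega))$ and of $\omega \mapsto \omega|_{[0,\tau(\omega)]}$), extends by a monotone class argument, and concludes via Fubini together with the Borel selector from Step 1.

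The principal obstacle will be the measurable selection step, because $\wh\Omega$ carries the non-metric locally uniform topology coming from $\scrx_{[0,\infty),\loc}$ (which mixes the $\calu'$-uniform, $L^2_{\loc}$-strong, and $\calh_w$-weak topologies), so off-the-shelf Polish-space selection theorems do not apply. I would circumvent this by exhausting $\wh\Omega$ via the compact sets $K$ and $\overline K$ constructed in Lemmas \ref{Lem-Pnx-tight} and \ref{Lem-Pny-tight}: on each such compactum the induced topology is metrizable, hence Polish, and Kuratowski--Ryll-Nardzewski applies; the global selector is then pieced together using tightness. A secondary subtlety is that $\tau$ is adapted to the right-continuous filtration $\mathcal{B}_t$ rather than the natural filtration $\mathcal{B}_t^0$; this is handled by approximating $\tau$ from above by simple dyadic stopping times and passing to the limit using the continuity in initial data afforded by Proposition \ref{Prop-Stab-Mart}, exactly as in \cite[Lemma 3.5]{HZZ19}.
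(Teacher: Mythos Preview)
Your approach is correct and matches the paper's own strategy, which does not give a detailed proof but simply invokes the stability result Proposition~\ref{Prop-Stab-Mart} together with the arguments of \cite{HZZ19}; the Stroock--Varadhan scheme you outline (measurable selector via non-emptiness, compactness, and closed-graph property of $\mathscr{M}(s,z,C_\cdot)$, followed by concatenation) is exactly that route.

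One correction, however, which removes what you call the ``principal obstacle'': the canonical space $\wh\Omega = \scrx_{[0,\infty),\loc} \times \scry_{[0,\infty),\loc}$ is in fact Polish. By definition \eqref{whcalz-def}, $\scrx_{[0,T]} = C([0,T];\calu') \cap L^2(0,T;\calh_{\loc})$ involves only the $\calu'$-uniform and $L^2_{\loc}$-strong topologies, both of which are separable and completely metrizable; the weak component $C([0,T];\calh_w)$ that you list is part of $\wh\calz_{[0,T]}$, not of $\scrx_{[0,T]}$. The non-metric spaces $\calz_{[0,T]}$ and $\wh\calz_{[0,T]}$ enter only in the tightness and stability analysis (where Jakubowski's Skorokhod theorem is needed), not in the definition of the canonical space on which martingale solutions live. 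Consequently $\mathscr{P}(\wh\Omega)$ is Polish under the weak topology, and standard Kuratowski--Ryll-Nardzewski selection applies directly without the compact-exhaustion workaround you propose. This simplifies your Step~1 considerably; the remainder of your argument (concatenation and the right-continuous-filtration issue handled as in \cite{HZZ19}) is fine as written.
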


{\bf Proof of Proposition \ref{Prop-Glue}.}
Identity \eqref{PR-P-before} follows from \eqref{qw2} and \eqref{Qw-tau-before}.
Below we prove \eqref{PR-Mart},
that is,
$P\otimes_\tau R$ satisfies the properties $(M1)$-$(M3)$ in Definition \ref{def-mart-1}.

{\it $(i)$. Verification of $(M1)$.}
Let us first verify $(M1)$ in Definition \ref{def-mart-1}.
By the continuity of canonical process,
it suffices to prove that
\begin{align}  \label{M1-PR}
   P \otimes_\tau R (A_{t,0}) =1
\end{align}
for every  $0<t<\infty$,
where we set for $0\leq s<t<\infty$,
\begin{align*}
   A_{t,s} :=& \big\{x_t-x_s= \int_s^t P_H[-(-\Delta)^\alpha x_r - \div (x_r\otimes x_r) + f(r, y(r))]\dif s
              + (y_t - y_s) \big\}.
\end{align*}

To this end, we see that
\begin{align}  \label{PR-Ats}
    P \otimes_\tau R (A_{t,0})
    = P \otimes_\tau R (A_{t,0}, t<\tau) + P \otimes_\tau R (A_{t,0}, t\geq \tau).
\end{align}
Since $P \in \mathcal{M}(0,z_0, C_\cdot, \tau)$,
by $(M1')$,
$A_{t,0}$ occurs $P$-almost surely for $t<\tau$,
and thus by  \eqref{PR-P-before},
\begin{align} \label{PR-At-1}
   P \otimes_\tau R (A_{t,0}, t<\tau)
   = P (A_{t,0}, t<\tau)
   = P(t<\tau).
\end{align}
Moreover, since
$P \otimes_\tau R (A_{\tau,0})=1$,
by \eqref{qw2} and \eqref{Qw-tau-after},
\begin{align*}
  P \otimes_\tau R (A_{t,0}, t\geq \tau)
  = P \otimes_\tau R (A_{t,\tau}, t \geq \tau)
     =  \int_{\wh \Omega} R_{\tau(\omega), z(\tau(\omega),\omega)} (A_{t,\tau(\omega)}) I_{t\geq \tau(\omega)} P(\dif\omega).
\end{align*}
Taking into account
$R_{\tau(\omega), z(\tau(\omega),\omega)} \in \mathscr{M}(\tau(\omega),z(\tau(\omega),\omega), C_{\cdot})$,
and so,
$R_{\tau(\omega), z(\tau(\omega),\omega)} (A_{t,\tau(\omega)}) =1,$
we get
\begin{align} \label{PR-At-2}
   P \otimes_\tau R (A_{t,0}, t\geq \tau)  = P(t\geq \tau).
\end{align}
Thus, plugging \eqref{PR-At-1} and \eqref{PR-At-2} into \eqref{PR-Ats}
we get \eqref{M1-PR}, as claimed.

{\it $(ii)$ Verification of $(M2)$.}
Note that, for any $t>0$,
by \eqref{PR-P-before} and \eqref{Qw-tau-after},
\begin{align*}
	  & \bbe^{P\otimes_\tau R} (\|x\|_{C([0,t];\calh)}^2 + \|x\|_{L^2(0,t; \calv)}^2)  \\
	\leq& \bbe^{P} (\|x\|_{C([0,t\wedge \tau];\calh)}^2 + \|x\|_{L^2(0,t\wedge \tau; \calv)}^2)   \\
	& + \int_{\wh \Omega} \bbe^{R_{\tau(\omega), z(\tau(\omega),\omega)}}
       (\|x\|_{C([\tau(\omega), t];\calh)}^2
	+ \|x\|_{L^2(\tau(\omega),t; \calv)}^2) I_{\tau(\omega)<t} P(\dif\omega).
\end{align*}
Since $P$ and $R_{\tau(\omega), z(\tau(\omega), \omega)}$
satisfies the energy bounds, by \eqref{qw2}, one has
\begin{align*}
	& \bbe^{P\otimes_\tau R} (\|x\|_{C([0,t];\calh)}^2 + \|x\|_{L^2(0,t; \calv)}^2)  \\
	\leq& C_{t}(\norm{x_0}_{\calh}^2+1)
         + \int_{\wh \Omega} C_{\tau(\omega)}(\|x(\tau(\omega))\|_\calh^2+1)I_{\tau(\omega) < t}  P(\dif\omega)\\
	\leq& (2C_t+C_t^2)(\norm{x_0}_{\calh}^2+1).
\end{align*}
Hence, the energy bound is verified
with $C_t^2+C_t$ replacing $C_t$.

Below we verify the energy inequality for the glued measure.
To this end, we rewrite
\begin{align*}
     & \bbe^{P\otimes_\tau R}  (\|x(t)\|_\calh^2 + 2 \int_0^t \|(-\Delta)^{\frac \alpha 2} x(s)\|_\calh^2 \dif s )  \notag \\
   =&  \bbe^{P\otimes_\tau R} [ (\|x(t)\|_\calh^2 + 2 \int_0^t \|(-\Delta)^{\frac \alpha 2} x(s)\|_\calh^2 \dif s ) I_{t\leq \tau} ]
    + 2 \bbe^{P\otimes_\tau R} [\int_0^\tau \|(-\Delta)^{\frac \alpha 2} x(s)\|_\calh^2 \dif s I_{t>\tau}] \notag \\
      &  + \bbe^{P\otimes_\tau R}  [ (\|x(t)\|_\calh^2 + 2 \int_\tau^t \|(-\Delta)^{\frac \alpha 2} x(s)\|_\calh^2 \dif s ) I_{t>\tau}].
\end{align*}
By \eqref{qw2}, \eqref{Qw-tau-after} and
energy inequality $(M3)$ for $R_{\tau(\omega), z(\tau(\omega),\omega)}$,
the third term on the right-hand side above equals to
\begin{align*}
   & \int \bbe^{R_{\tau(\omega), z(\tau(\omega),\omega)}}
        (\|x(t)\|_\calh^2 + 2 \int_{\tau(\omega)}^t \|(-\Delta)^{\frac \alpha2} x\|_\calh^2 \dif s)I_{t>\tau(\omega)} P(\dif\omega)  \notag \\
   \leq&  \int \big(\|x(\tau(\omega))\|_\calh^2
           + 2 \bbe^{R_{\tau(\omega), z(\tau(\omega),\omega)}}  \int_{\tau(\omega)}^t (f,x) \dif s
           + \|\lambda\|_{\ell^1}(t-\tau(\omega) \big) I_{t>\tau(\omega)} P(\dif\omega)  \notag \\
   =& \bbe^{P\otimes_\tau R} [(\|x(\tau)\|_\calh^2
           + 2 \int_{\tau}^t (f,x) \dif s
           + \sum_{j=0}^{\infty}\lambda_j\|\epsilon_j\|^2_{\calh}(t-\tau)) I_{t>\tau}].
\end{align*}
Taking into account
\begin{align*}
   & \|x(t)\|_\calh^2 I_{t\leq \tau} + \|x_\tau\|_\calh^2 I_{t>\tau}
   + 2 \int_0^t \|(-\Delta)^{\frac \alpha2} x\|_\calh^2 \dif s  I_{t\leq \tau}
   + 2 \int_0^\tau \|(-\Delta)^{\frac \alpha2} x\|_\calh^2 \dif s  I_{t>\tau}  \\
   =&  \|x(t\wedge \tau)\|_\calh^2
       + 2 \int_0^{t\wedge \tau} \|(-\Delta)^{\frac \alpha2} x\|_\calh^2 \dif s,
\end{align*}
we lead to
\begin{align*}
     & \bbe^{P\otimes_\tau R} (\|x(t)\|_\calh^2 + 2 \int_0^t \|(-\Delta)^{\frac \alpha 2} x\|_\calh^2 \dif s )   \\
   =&   \bbe^{P\otimes_\tau R}  ( \|x(t\wedge \tau)\|_\calh^2
       + 2 \int_0^{t\wedge \tau} \|(-\Delta)^{\frac \alpha2} x\|_\calh^2 \dif s) \notag
   +  \bbe^{P\otimes_\tau R} (2\int_\tau^t (f,x)\dif s  + \sum_{j=0}^{\infty}\lambda_j\|\epsilon_j\|^2_{\calh}(t-\tau)) I_{t>\tau}).
\end{align*}
Then, using \eqref{qw2}, \eqref{PR-P-before}
and energy inequality $(M3')$ for $P$
we have
\begin{align*}
    & \bbe^{P\otimes_\tau R}  ( \|x(t\wedge \tau)\|_\calh^2
       + 2 \int_0^{t\wedge \tau} \|(-\Delta)^{\frac \alpha2} x\|_\calh^2 \dif s)  \\
   =& \bbe^{P}  ( \|x(t\wedge \tau)\|_\calh^2
       + 2 \int_0^{t\wedge \tau} \|(-\Delta)^{\frac \alpha2} x\|_\calh^2 \dif s) \\
   \leq& \|x_0\|_\calh^2 + 2 \bbe^P  \int_0^{t\wedge \tau} (f,x) \dif s +  \bbe^P   \|\lambda\|_{\ell^1} (t\wedge \tau) \\
   =& \|x_0\|_\calh^2 +  2 \bbe^{P\otimes_\tau R}  \int_0^{t\wedge \tau} (f,x) \dif s +  \sum_{j=0}^{\infty}\lambda_j\|\epsilon_j\|^2_{\calh} (t\wedge \tau).
\end{align*}
Thus, plugging this into the above identity we arrive at
\begin{align*}
    \bbe^{P\otimes_\tau R} (\|x(t)\|_\calh^2 + 2 \int_0^t \|(-\Delta)^{\frac \alpha 2} x\|_\calh^2 \dif s )
   \leq \|x_0\|_\calh^2 +  2 \bbe^{P\otimes_\tau R}  \int_0^{t} (f,x) \dif s + \sum_{j=0}^{\infty}\lambda_j\|\epsilon_j\|^2_{\calh} t,
\end{align*}
thereby justifying $(M2)$.

{\it $(iii)$. Verification of $(M3)$.}
It order to verify $(M3)$,
it is equivalent to prove that
for any $0\leq t_1<\cdots < t_n< s<t<\infty$,
$g= \prod_{i=1}^n g_i$, $g_i \in C_b(\mathbb{R})$
and $\xi \in \calw$,
\begin{align*}
   \bbe^{P \otimes_\tau R} (e^{i \<y(t)-y_0, \xi\> + \frac 12 t \<\mathcal{Q}\xi, \xi \>} g(y_{t_1},\cdots, y_{t_n}))
   = \bbe^{P \otimes_\tau R} (e^{i \<y(s) -y_0, \xi\> + \frac 12 s \<\mathcal{Q}\xi, \xi \>} g(y_{t_1},\cdots, y_{t_n})).
\end{align*}
For simplicity
we write
$g_s := g(y({t_1}),\cdots, y({t_n}))$.
Since $y_0$ is deterministic,
it is equivalent to prove that
\begin{align}  \label{M2-PR}
   \bbe^{P \otimes_\tau R} (e^{i \<y(t),\xi\> + \frac 12 t \<\mathcal{Q}\xi, \xi\>} g_s)
   = \bbe^{P \otimes_\tau R} (e^{i \<y(s),\xi\> + \frac 12 s \<\mathcal{Q}\xi, \xi\>} g_s).
\end{align}

In order to prove \eqref{M2-PR},
let us decompose
\begin{align*}
    & \bbe^{P \otimes_\tau R}(e^{i \<y(t),\xi\> + \frac 12 t (\mathcal{Q}\xi, \xi)} g_s)
   = \int_{\wh \Omega} \bbe^{Q_\omega} (e^{i \<y(t),\xi\> + \frac 12  t \<\mathcal{Q}\xi, \xi\>} g_s) P(\dif\omega) \\
   =& \int_{\wh \Omega} \bbe^{Q_\omega} (e^{i \<y(t), \xi\> + \frac 12 t \<\mathcal{Q}\xi, \xi\>} g_s) I_{s<t\leq \tau(\omega)} P(\dif\omega)
     + \int_{\wh \Omega} \bbe^{Q_\omega} (e^{i \<y(t), \xi\> + \frac 12 t \<\mathcal{Q}\xi, \xi\>} g_s) I_{s\leq \tau(\omega) <t} P(\dif\omega)  \\
    & + \int_{\wh \Omega} \bbe^{Q_\omega} (e^{i \<y(t), \xi\> + \frac 12 t \<\mathcal{Q}\xi, \xi\>} g_s) I_{\tau(\omega)< s<t} P(\dif\omega) \\
   =&: \wt  J_1 + \wt  J_2 + \wt  J_3.
\end{align*}

For the first term $\wt J_1$,
by \eqref{PR-P-before},
the property $(M3')$ of $P$
and the facts that
$\tau$ is $(\mathcal{G}_t)$-stopping time,
$(e^{i \<y(t),\xi\> + \frac 12 t  \<\mathcal{Q}\xi, \xi\>})$ is a continuous martingale under $P$,
we see that
\begin{align*}
     \bbe^{\mu}(e^{i \<y(\tau),\xi\> + \frac 12 \tau  \<\mathcal{Q}\xi, \xi\>} g_s I_{s<t\leq \tau})
     =&  \bbe^{P}(g(s) I_{s<t\leq \tau}  \bbe^{P} (e^{i\<y(\tau),\xi\> + \frac 12 \tau  \<\mathcal{Q}\xi, \xi\>} |\mathcal{G}_t))  \\
     =&  \bbe^{P}(g(s) I_{s<t\leq \tau}  \bbe^{P} (e^{i \<y(\tau), \xi\> + \frac 12 \tau  \<\mathcal{Q}\xi, \xi\>} |\mathcal{G}_{t\wedge \tau}))  \\
     =&  \bbe^{P}(g(s) I_{s<t\leq \tau}    e^{i \<y({t\wedge \tau}), \xi\> + \frac 12 (t\wedge \tau) \<\mathcal{Q}\xi, \xi\>})   \\
     =&  \bbe^{P}(g(s) I_{s<t\leq \tau}     e^{i \<y({t}),\xi\> + \frac 12 t   \<\mathcal{Q}\xi, \xi\>}).
\end{align*}
This yields that
\begin{align}  \label{K1}
    \wt  J_1
    =    \bbe^{P}( e^{i \<y({\tau}),\xi\> + \frac 12 \tau \<G\xi, \xi\>} g_s I_{s<t\leq \tau} ).
\end{align}

Regarding  the second term $\wt  J_2$, we use \eqref{Qw-tau-before} to rewrite
\begin{align*}
   \wt  J_2 = \int_{\wh \Omega}
        \bbe^{Q_\omega}  (e^{i \<y(t),\xi\> + \frac 12 t \<\mathcal{Q}\xi, \xi\>}) g_s(\omega) I_{s\leq \tau(\omega) <t} P(\dif\omega).
\end{align*}
Note that, for $t>\tau(\omega)$,
by \eqref{Qw-tau-after} and
the property $(M3)$ of $R_{\tau(\omega), z(\tau(\omega),\omega)}$,
\begin{align*}
  \bbe^{Q_\omega} (e^{i \<y(t),\xi\> + \frac 12t \<\mathcal{Q}\xi, \xi\>} )
  =& \bbe^{R_{\tau(\omega), z(\tau(\omega),\omega)}} (e^{i\<y(t),\xi\> + \frac 12 t \<\mathcal{Q}\xi, \xi\>} ) \\
  =& \bbe^{\mu} (e^{i \<y({\tau(\omega)},\omega) + y({t-\tau(\omega)}), \xi\> + \frac 12 t \<\mathcal{Q}\xi, \xi\>} ) \\
  =&e^{i \<y({\tau(\omega)},\omega), \xi\> + \frac 12 \tau(\omega) \<\mathcal{Q}\xi, \xi\> }
      \bbe^{\mu} (e^{i  \<y({t-\tau(\omega)}), \xi\> + \frac 12 (t-\tau(\omega)) \<\mathcal{Q}\xi, \xi\>} )  \\
  =& e^{i \<y(\tau(\omega), \omega), \xi\> + \frac 12 \tau(\omega) \<\mathcal{Q}\xi, \xi\>},
\end{align*}
where $\mu$ is the standard Wiener measure,
and the last step is to the fact that
for $\omega$ fixed,
$(e^{i \<y({t-\tau(\omega)}), \xi\> + \frac 12 (t-\tau(\omega)) \<\mathcal{Q}\xi, \xi\>})_{t-\tau(\omega)\geq 0}$
is a continuous martingale under $\mu$,
and so
\begin{align*}
    \bbe^{\mu} (e^{i  \<y({t-\tau(\omega)}), \xi\> + \frac 12 (t-\tau(\omega)) \<\mathcal{Q}\xi, \xi\>} ) =1.
\end{align*}
This yields that
\begin{align} \label{K2}
   \wt J_2  =&  \int_{\wh \Omega}  e^{i \<y({\tau(\omega)},\omega\>, \xi)
          + \frac 12  \tau(\omega) \<\mathcal{Q}\xi, \xi\>}
           g_s(\omega) I_{s\leq \tau(\omega) <t} P(\dif\omega)  \notag  \\
           =&  \bbe^{P} (e^{i \<y({\tau}), \xi\> + \frac 12  \tau \<\mathcal{Q}\xi, \xi\> } g_s  I_{s\leq \tau <t} ).
\end{align}

For the last term $\wt  J_3$,
by \eqref{Qw-tau-before} and \eqref{Qw-tau-after},
\begin{align*}
   \wt  J_3 =& \int_{\wh \Omega} \bbe^{R_{\tau(\omega), z(\tau(\omega), \omega)}}
              \big(e^{i \<y(t),\xi\> + \frac 12  t \<\mathcal{Q}\xi, \xi\>} \prod_{i=i_0+1}^n g(y(t_i)) \big)
               \prod_{i=1}^{i_0} g(y(t_i)(\omega))   I_{\tau(\omega)< s<t} P(\dif\omega) \\
       =& \int_{\wh \Omega} \bbe^{\mu}
           \big(e^{i \<y({\tau(\omega)},\omega) + y({t-\tau(\omega)}),\xi\>
                 + \frac 12 t \<\mathcal{Q}\xi, \xi\>} \prod_{i=i_0+1}^n g(y(\tau(\omega))+y(t_i-\tau(\omega))) \big ) \\
        & \qquad    \times \prod_{i=1}^{i_0} g(y(t_i)(\omega))
           I_{\tau(\omega)< s<t} P(\dif\omega),
\end{align*}
where $i_0\in \{1,\cdots, n\}$ satisfies $\tau(\omega) \in [t_{i_0}, t_{i_0+1})$
(we take $t_{n+1} =s$).
Again, by the martingale property of
$(e^{i \<y({t-\tau(\omega)}), \xi\> + \frac 12 (t-\tau(\omega)) \<\mathcal{Q}\xi, \xi\>})_{t - \tau(\omega)\geq 0}$
under $\mu$,
\begin{align*}
   \wt  J_3 =&  \int_{\wh \Omega} \bbe^{\mu}
              \big(e^{i \<y({\tau(\omega)},\omega) + y({s-\tau(\omega)}\>, \xi)
          + \frac 12 s \<\mathcal{Q}\xi, \xi\>} \prod_{i=i_0+1}^n g(y(\tau(\omega))+y(t_i-\tau(\omega))) \big)   \\
          &\qquad \times \prod_{i=1}^{i_0} g(y(t_i)(\omega))
          I_{\tau(\omega) < s<t} P(\dif\omega) \\
       =&  \int_{\wh \Omega} \bbe^{R_{\tau(\omega), z(\tau(\omega), \omega)}}
           \big(e^{i\<y(s),\xi \> + \frac 12 s \<\mathcal{Q}\xi, \xi\>} \prod_{i=i_0+1}^n g(y(t_i)) \big)
            \prod_{i=1}^{i_0} g(y(t_i)(\omega))
            I_{\tau(\omega) < s<t} P(\dif\omega),
\end{align*}
which, via \eqref{qw2}, \eqref{Qw-tau-before} and \eqref{Qw-tau-after}, yields that
\begin{align} \label{K3}
   \wt  J_3
     =   \int_{\wh \Omega} \bbe^{Q_\omega}
        (e^{i \<y(s),\xi\> + \frac 12 s \<\mathcal{Q}\xi, \xi\>} g_s) I_{\tau(\omega)<s<t} P(\dif\omega)
     =  \bbe^{P\otimes_\tau R} (e^{i \<y(s),\xi\> + \frac 12 s \<\mathcal{Q}\xi, \xi\>} g_s  I_{\tau<s<t} )
\end{align}

Now, a combination of \eqref{K1}, \eqref{K2} and \eqref{K3} together
leads to
\begin{align}  \label{PRht-PRhs.1}
    \bbe^{P \otimes_\tau R}(e^{i\<y(t),\xi\> + \frac 12  t \<\mathcal{Q}\xi, \xi\>} g_s)
    =&   \bbe^{P} (e^{i  \<y(\tau),\xi\> + \frac 12  \tau \<\mathcal{Q}\xi, \xi\>} g_s I_{s\leq \tau} ) \notag  \\
     & + \bbe^{P\otimes_\tau R} (e^{i \<y(s),\xi\> + \frac 12  s \<\mathcal{Q}\xi, \xi\>} g_s I_{\tau < s} ).
\end{align}
Note that, for the first term on the right-hand side above,
{\begin{align*}
    \bbe^{P} (e^{i \<y(\tau), \xi\> + \frac 12 \tau \< \mathcal{Q}\xi, \xi\>} g_s I_{s\leq \tau} )
     =& \bbe^{P} (  g_s I_{s\leq \tau}
         \bbe^P (e^{i\<y(\tau),\xi \> + \frac 12 \tau \<\mathcal{Q}\xi, \xi\>} |\mathcal{G}_{s}) )  \\
    =& \bbe^{P} (  g_s I_{s\leq \tau}
         \bbe^P (e^{i \<y(\tau),\xi\> + \frac 12 \tau \<\mathcal{Q}\xi, \xi\>} |\mathcal{G}_{s\wedge \tau}) ).
\end{align*}
By \eqref{PR-P-before} and the martingale property of $(e^{i \<y(t), \xi\> + \frac 12 t \< \mathcal{Q} \xi, \xi\>} )$ under $P$,
the right-hand side above equals to
\begin{align*}
       \bbe^{P} (  g_s I_{s\leq \tau}
            e^{i \<y({s\wedge \tau}),\xi\> + \frac 12 (s\wedge \tau) \<\mathcal{Q}\xi, \xi\>}  )
    =& \bbe^{P} (  g_s I_{s\leq \tau}  e^{i \<y(s),\xi \> + \frac 12 s \<\mathcal{Q}\xi, \xi\>}  ) \\
    =&  \bbe^{P\otimes_\tau R} ( e^{i \<y(s),\xi\> + \frac 12  s \<\mathcal{Q}\xi, \xi\>}   g_s I_{s\leq \tau}  ).
\end{align*}
Hence,
plugging this into \eqref{PRht-PRhs.1} we arrive at
\begin{align*}
     & \bbe^{P \otimes_\tau R}(e^{i \<y(t),\xi\> + \frac 12  t \<\mathcal{Q}\xi, \xi\>} g_s)  \\
    =&  \bbe^{P\otimes_\tau R} ( e^{i \<y(s),\xi\>  + \frac 12 s \<\mathcal{Q}\xi, \xi\> }  g_s I_{s \leq \tau}   )
      + \bbe^{P\otimes_\tau R} (e^{i \<y(s),\xi\>  + \frac 12 s \<\mathcal{Q}\xi, \xi\> } g_s I_{\tau < s} ) \\
    =&  \bbe^{P\otimes_\tau R} ( e^{i \<y(s),\xi\>  + \frac 12 s \<\mathcal{Q}\xi, \xi\> } g_s  ),
\end{align*}
which yields \eqref{M2-PR},
and so $(M3)$ follows.
Therefore,
the proof is complete.

\section{Proof of main results}  \label{Sec-Proof-Main}

\subsection{Non-uniqueness in law: below the Lions exponent}  \label{Subsec-Nonuniq-Lions}

We are now ready to prove the main result of this paper,
concerning the joint non-uniqueness in law of
Leray solutions to forced stochastic Navier-Stokes equations.

First,
by virtue of Theorem \ref{Thm-Nonuniq-Local},
for the given filtrated probability space $(\Omega, (\mathcal{F}_t), \bbp)$
and $\mathcal{Q}$-Wiener process $w$,
there exist two distinct Leray solutions $v_1, v_2$ given by \eqref{v1-def}
and \eqref{v2-def}, respectively, to
the forced stochastic equation \eqref{SNSE}
on $[0,\sigma]$,
where the forcing term $f$ is given by \eqref{f-force-def}
and $\sigma$ is an $(\mathcal{F}_t)$-stopping time given by \eqref{sigma-def},
satisfying
\begin{align} \label{bbp-sigma-1}
   \bbp(0<\sigma<\infty) =1.
\end{align}

We note that
$v_1 = \bar{u} + w$
is a global probabilistically strong solution to \eqref{SNSE},
and thus
$P_1 = \bbp \circ (v_1(\cdot), w(\cdot) )^{-1}$
is a global martingale solution to \eqref{SNSE} starting from zero at time zero,
that is,
\begin{align*}
    P_1 \in \mathscr{M}(0,0,C_{\cdot}).
\end{align*}

Below we construct a different global martingale solution to \eqref{SNSE} based on $v_2$.
For this purpose,
define the probability measure $P_2$ on the canonical space $\wh{\Omega}$ by
\begin{align} \label{P2-pviw-def}
   P_2:= \bbp \circ (v_2(\cdot \wedge \sigma), w(\cdot))^{-1}.
\end{align}
Then,
\begin{align*}
   P_2(0<\tau^\infty<\infty)
   = \bbp(0<\sigma<\infty) =1.
\end{align*}

Since $v_2$ is a probabilistically strong Leray solution to \eqref{SNSE} on $[0,\sigma]$,
it is also a weak solution to \eqref{SNSE} on $[0,\sigma]$
in the sense of Definition \ref{def-weak}.
As in Proposition \ref{Prop-equi-mart},
we infer that $P_2\in \mathscr{M}(0,0,C_{\cdot},\tau)$
for some positive increasing function $(C_t)$,
that is, $P_2$ is a local martingale solution to \eqref{SNSE}
on $[0,\sigma]$ with the initial condition zero at time $t=0$.

Then, in view of Lemma \ref{Lem-Glue-Qomega},
for every $\omega\in \wh{\Omega}$,
there exist $Q_{2, \omega} \in \mathcal{P}(\wh{\Omega})$
satisfying \eqref{Qw-tau-before} and \eqref{Qw-tau-after}.

Thus,
in order to construct a global martingale solution to \eqref{SNSE},
in view of the gluing result in Proposition \ref{Prop-Glue},
it remains to prove that \eqref{qw2} holds,
i.e., for a null set $\mathcal{N} \subseteq \wh{\Omega}$ with $P_2(\mathcal{N}) =0$
and for every $\omega \in \mathcal{N}^c$,
\begin{equation}  \label{Qomega-tau-1}
	Q_{2,\omega}
	(\omega^{\prime} \in \wh{\Omega};\  \tau^\infty(\omega^{\prime})=\tau^\infty(\omega))=1.
\end{equation}
This can be proved by using analogous arguments as in \cite{HZZ19,HZZ23}.

More precisely,
one has
\begin{align*}
	P_2(\omega\in\wh\Omega: y(\cdot\wedge \tau^\infty(\omega))\in C^{\kappa}_{\loc} H_{x}^{N})=1
	\end{align*}
and for all $\omega \in \wh \Omega$,
\begin{align*}
	Q_{2,\omega}(\omega'\in\wh\Omega: y(\omega')\in C^{\kappa}_{\loc} H_{x}^{N}) = 1,
\end{align*}
which gives a $P_2$-measurable null set $\mathcal{N}\subset \wh\Omega_{\tau^\infty}$ such that for all $\omega\in\mathcal{N}^c$,
\begin{align*}
	y(\cdot\wedge \tau^\infty(\omega)) \in C^{\kappa}_{\loc} H_{x}^{N}.
\end{align*}
Moreover, for all $\omega\in\mathcal{N}\cap \{x(\tau)\in \calh\}$, there exists a $Q_{2,\omega}-$measurable null set $N_\omega$ such that for all $\omega'\in N_\omega^c$, it holds $y(\omega') \in C^{\kappa}_{\loc} H_{x}^{N}$.
Therefore, for all $\omega'\in N_\omega^c$,
it holds that $\tau^\infty(\omega')=\wt\tau(\omega')$,
where
\begin{align*}
	\wt\tau(\omega'):=(20C(N,\kappa))^{\frac{-1}{a-\ve_0}}\wedge (20C(N,\kappa))^{\frac{-1}{1+\kappa-\frac{5}{4\al}-\ve_0}} \wedge \inf\{t>0:\norm{y(\omega')}_{C^{\kappa}([0,t]; H_{x}^{N})} \geq 1 \}.
\end{align*}
Hence,
\begin{equation*}
   \begin{aligned}
   	\{\omega'\in N_\omega^c: \wt\tau(\omega')\leq t\}
   	=&\{ \omega'\in N_\omega^c: \sup_{s_<s_2\in \mathbb{Q}\cap[0,t]}\frac{\|y(\omega',s_1)-y(\omega',s_2)\|_{H^N}}{(s_2-s_1)^\kappa} \geq 1  \}\\
   	& \cup \{\omega'\in N_\omega^c:(20C(N,\kappa))^{\frac{-1}{a-\ve_0}}\leq t \}\\
   	& \cup \{\omega'\in N_\omega^c: (20C(N,\kappa))^{\frac{-1}{1+\kappa-\frac{5}{4\al}-\ve_0}} \leq t \}\\
   	=&: N_\omega^c \cap A_t \subseteq N_\omega^c \cap \mathcal{B}_t^0,
   \end{aligned}
	\end{equation*}
which yields
\begin{equation*}
		\{\omega'\in N_\omega^c: \wt\tau(\omega') = \tau^\infty(\omega)\} = N_\omega^c \cap (A_{\tau^\infty(\omega)} \backslash \cup_{n=1}^{\infty}A_{\tau^\infty(\omega)-\frac1n} ) \in N_\omega^c \cap \mathcal{B}_{\tau^\infty(\omega)}^0.
	\end{equation*}
Thus, for all $\omega\in \mathcal{N}^c\cap \{x(\tau)\in \calh\}$ with $P_2(x(\tau)\in \calh)=1$,
\begin{equation*}
\begin{aligned}
		&Q_{2,\omega}
	(\omega^{\prime} \in \wh{\Omega}; \tau^\infty(\omega^{\prime})=\tau^\infty(\omega))\\
	=&Q_{2,\omega}
	(\omega^{\prime} \in N_\omega^c; \tau^\infty(\omega^{\prime})=\tau^\infty(\omega))\\
	=& Q_{2,\omega}
	(\omega^{\prime} \in N_\omega^c; \omega'\in (A_{\tau^\infty(\omega)} \backslash \cup_{n=1}^{\infty}A_{\tau^\infty(\omega)-\frac1n}))\\
	=&  Q_{2,\omega}
	(\omega^{\prime} \in N_\omega^c; \omega'\in (A_{\tau^\infty(\omega)} \backslash \cup_{n=1}^{\infty}A_{\tau^\infty(\omega)-\frac1n}), \omega'|_{[0,\tau^\infty(\omega)]}=\omega|_{[0,\tau^\infty(\omega)]} )\\
	=& 1,
\end{aligned}
\end{equation*}
which yields \eqref{Qomega-tau-1}.

Now, an application of Proposition \ref{Prop-Glue}
	gives a global martingale solution
	$P_2\otimes_{\tau^\infty} R \in \mathscr{M}(0,0,C_{\cdot})$
	with a positive increasing function $(C_t)$.
	Since by the proof of Theorem \ref{Thm-Nonuniq-Local},
	$v_1$ and $v_2$ have difference decay rates near the initial time $0$,
	it follows that their laws are different,
	and thus, $P_1 \not = P_2\otimes_{\tau^\infty} R$.	
	Therefore,
	the proof of Theorem \ref{Thm-Nonuniq} is complete.

\subsection{Strong well-posedness: above the Lions exponent} \label{Subsec-GWP-Lions}

We shall show that in the case with viscosity above the Lions exponent,
the stochastic Navier-Stokes system \eqref{Hyper-SNSE} is globally well-posed
in the probabilistically strong sense.

Below let us still use the space $\calh, \calv, \calv'$ as in Section \ref{Sec-Global-Mart},
but with $\alpha \geq 5/4$.

We shall apply the local monotonicity framework
developed in \cite{LR15}
to prove Theorem \ref{Thm-GWP-HyperSNSE}.
We also would like to refer to the recent progress \cite{RSZ22}.
Note that,
the spaces $\calh, \calv, \calv'$ form a Gelfand triple
\begin{align*}
    \calv \subset \calh \equiv \calh' \subset \calv'.
\end{align*}

Define the operator $A: [0,T]\times  \calv \times \Omega \mapsto \calv'$ by
\begin{align*}
   A(t,v, \omega):= P_{H} [-(-\Delta)^\alpha v(t)- \div (v(t)\otimes v(t)) + f(t,\omega)],
\end{align*}
where $t\in [0,T]$,
$v\in \calv$, $\omega\in \Omega$,
$P_H$ denotes the Helmhotz-Leray projection,
and the forcing term $f \in L^2(\Omega; L^2(0,T; H^{-\alpha}_x))$.

\begin{lemma}
For any   $v_i\in \calv$, $i=1,2,3$,
one has
\begin{align}   \label{div-vv-0}
     \< \div (v_1\otimes v_2), v_1 \> =0,
\end{align}
and
\begin{align} \label{div-vvv-bdd}
     |\<\div (v_1\otimes v_2), v_3\>|
     \leq \|v_1\|_{\calv} \|v_2\|_{\calh} \|v_3\|_{\calv}.
\end{align}
In particular,
\begin{align} \label{div-vv-bdd}
     \|\div  (v_1\otimes v_2)\|_{\calv'} \leq \|v_1\|_{\calv} \|v_2\|_{\calh}.
\end{align}
\end{lemma}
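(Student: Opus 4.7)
All three bounds are standard trilinear/divergence estimates for Navier--Stokes in the regime $\alpha\ge 5/4$, and in each case the argument is: first justify a formal integration by parts on smooth, divergence-free, compactly supported test fields, then extend by density in $\calv$. Since $\calc_0^\infty$ is dense in $\calv$ by construction, it is enough to establish \eqref{div-vv-0}, \eqref{div-vvv-bdd}, \eqref{div-vv-bdd} when the $v_i$ lie in $\calc_0^\infty$.

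For the antisymmetry identity \eqref{div-vv-0} I would first write, componentwise,
\[
    \<\div(v_1\otimes v_2),v_1\>
      = \int_{\R^3} \partial_j(v_1^i v_2^j)\, v_1^i \,\dif x
      = -\int_{\R^3} v_1^i v_2^j \partial_j v_1^i \,\dif x
      = -\tfrac{1}{2}\int_{\R^3} v_2 \cdot \nabla |v_1|^2 \,\dif x,
\]
and then integrate by parts once more to pick up $\tfrac{1}{2}\int \div(v_2)|v_1|^2\,\dif x = 0$, using $\div v_2=0$. This is the only place where the divergence-free condition built into $\calv$ is essential.

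The trilinear bound \eqref{div-vvv-bdd} is the core analytic step and is precisely where the Lions threshold $\alpha = 5/4$ enters. After integrating by parts,
\[
    |\<\div(v_1\otimes v_2),v_3\>| = \Bigl|\int_{\R^3} v_1^i v_2^j \partial_j v_3^i \,\dif x\Bigr|
    \leq \|v_1\|_{L^{12}}\,\|v_2\|_{L^2}\,\|\nabla v_3\|_{L^{12/5}},
\]
by H\"older with exponents $(12,2,12/5)$. The plan is then to invoke the 3D Sobolev embeddings $H^{5/4}(\R^3)\hookrightarrow L^{12}(\R^3)$ and $H^{1/4}(\R^3)\hookrightarrow L^{12/5}(\R^3)$, both of which hold since $3/12 = 1/2 - 5/4 \cdot (1/3) + 0$ and analogously for the second. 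These give $\|v_1\|_{L^{12}}\lesssim \|v_1\|_{H^{5/4}}$ and $\|\nabla v_3\|_{L^{12/5}}\lesssim \|v_3\|_{H^{5/4}}$, yielding \eqref{div-vvv-bdd} when $\alpha=5/4$; for $\alpha>5/4$ the same chain works with better exponents, and in fact the estimate improves (which is consistent with the well-posedness result). The main obstacle is that these embeddings are sharp exactly at $\alpha=5/4$, with no room to spare — this is the analytic shadow of the Lions exponent, and any attempt to lower $\alpha$ would break the H\"older triple.

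Finally, \eqref{div-vv-bdd} is immediate from \eqref{div-vvv-bdd}: by definition of the dual norm,
\[
    \|\div(v_1\otimes v_2)\|_{\calv'}
      = \sup_{\substack{v_3\in\calv\\ \|v_3\|_\calv = 1}} |\<\div(v_1\otimes v_2),v_3\>|
      \leq \|v_1\|_\calv\|v_2\|_\calh.
\]
The density extension at the very end is routine once \eqref{div-vvv-bdd} is established on $\calc_0^\infty$, since both sides are continuous in $(v_1,v_2,v_3)$ in the $\calv\times\calh\times\calv$ topology thanks precisely to \eqref{div-vvv-bdd} itself.
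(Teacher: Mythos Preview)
Your proof is correct and follows essentially the same route as the paper: integration by parts for \eqref{div-vv-0}, then H\"older with exponents $(12,2,12/5)$ followed by the sharp Sobolev embeddings $H^{5/4}\hookrightarrow L^{12}$ and $H^{1/4}\hookrightarrow L^{12/5}$ for \eqref{div-vvv-bdd}, and duality for \eqref{div-vv-bdd}. (Your parenthetical Sobolev arithmetic is slightly garbled---the critical relation is $1/p = 1/2 - s/3$, so $1/12 = 1/2 - (5/4)/3$---but the embeddings themselves are correctly stated and applied.)
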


\begin{proof}
Identity \eqref{div-vv-0} follows from integration-by-part formula.
Regarding \eqref{div-vvv-bdd},
using
the integration-by-parts formula and
H\"older's inequality we get
\begin{align*}
   |\<\div (v_1\otimes v_2), v_3\>|
   =|\<v_1\otimes v_2, \na v_3\>|
   \leq \|v_1\|_{L^{12}} \|v_2\|_{L^{2}} \|\na v_3\|_{L^{\frac{12}{5}}} ,
\end{align*}
which, via the Sobolev embeddings
$H^\alpha \hookrightarrow H^{\frac 54} \hookrightarrow L^{12}$
and $ H^\alpha \hookrightarrow H^{\frac 14} \hookrightarrow L^{\frac{12}{5}}$
when $\alpha \geq 5/4$,
yields that
\begin{align*}
   |\<\div (v_1\otimes v_2), v_3\>|
    \leq \|v_1\|_{H^{\frac 54}} \|v_2\|_{L^2} \|v_3\|_{H^{\frac 14}}
   \leq \|v_1\|_{\calv} \|v_2\|_{\calh} \|v_3\|_{\calv},
\end{align*}
thereby proving \eqref{div-vvv-bdd}.
Estimate \eqref{div-vv-bdd} then follows from duality.
\end{proof}

{\bf Proof of Theorem \ref{Thm-GWP-HyperSNSE}.}
By virtue of Theorem \cite[Theorem 5.13]{LR15}
in the local monotonicity framework,
it suffices to prove that
the operator $A$ satisfies the following
conditions:
\begin{enumerate}
  \item[(H1)] Hemicontinuity: The map
  $\lambda \mapsto _{\calv'}(A(t,u_1+\lambda u_2), u_3)_{\calv}$
  is continuous on $\mathbb{R}$.

  \item[(H2)] Local monotonicity:
  \begin{align*}
     _{\calv'}(A(t, u) - A(t, v), u-v)_{\calv}
     \leq (g(t) + \rho(v)) \|u-v\|_{\calh}^2,
  \end{align*}
  where $g\in L^1(\Omega\times [0,T])$,
  $\rho: \calv \to [0,\infty)$ is a measurable hemicontinuous function
  and satisfies $\rho(v) \leq C(1+\|v\|_{\calv}^2)(1+\|v\|_{\calh}^2)$.

  \item[(H3)] Coercivity:
  \begin{align*}
     2 _{\calv'} (A(t,v), v)_\calv + \|\lambda\|_{\ell^1} t
      \leq C_0 \|v\|_{\calh}^2 - \theta \|v\|_{\calv}^2 + g(t),
  \end{align*}
  where $C_0, \theta>0$.

  \item[(H4)] Growth:
  \begin{align*}
       \|A(t,v)\|_{\calv'}^2 \leq (g (t) + C_0\|v\|_{\calv}^2) (1+ \|v\|_{\calh}^2).
  \end{align*}
\end{enumerate}

It is clear that the operator $A$ satisfies the hemicontinuity condition $(H1)$.

For the local monotonicity,
using \eqref{div-vv-0} and \eqref{div-vvv-bdd} we have
\begin{align*}
      _{\calv'} (A(\cdot, u) - A(\cdot, v), u-v )_{\calv}
   =&   _{\calv'}(-(-\Delta)^\alpha(u-v) - \div ( v \otimes (u-v)), u-v)_{\calv}  \\
   \leq& -\|u-v\|_{\calv}^2 + C \|u-v\|_{\calh} \|v\|_{\calv} \|u-v\|_{\calv}+C\|u-v\|_{\calh}^2,
\end{align*}
which via the Cauchy inequality yields that
\begin{align*}
     _{\calv'} (A(\cdot, u) - A(\cdot, v), u-v )_{\calv}
    \leq -\frac 12 \|u-v\|_{\calv}^2 + C\|v \|_{\calv}^2 \|u-v\|_{\calh}^2+C\|u-v\|_{\calh}^2.
\end{align*}
Thus, the local monotonicity $(H2)$ is verified
with $\rho(v) = C(1+ \|v \|_{\calv}^2)$.

Regarding the coercivity property,
identity \eqref{div-vv-0} and the Cauchy inequality
yield that for any $v\in \calv$,
\begin{align*}
   _{\calv'}(A(v), v)_{\calv}
   \leq - \|v\|_{\calv}^2 + \|f\|_{\calv'} \|v\|_{\calv}+C\|v\|_{\calh}^2
   \leq - \frac 12  \|v\|_{\calv}^2 + C \|f\|_{\calv'}^2+C\|v\|_{\calh}^2,
\end{align*}
which verifies the coercivity condition $(H3)$ with
$\theta = 1/2$ and $g = C(1+ \|\lambda\|_{\ell^1} t+ \|f\|_{\calv'}^2) \in L^1(\Omega\times [0,T])$
because $f\in L^2(\Omega; L^2_{\loc} H^{-\alpha})$.

At last, by estimate \eqref{div-vv-bdd},
\begin{align*}
    \|A(\cdot, v)\|_{\calv'}
    \leq& C(\|v\|_{\calv} + \|v\|_{\calh}\|v\|_{\calv} + \|f\|_{\calv'})  \notag \\
    \leq& C(1+\|f\|_{\calv'} + \|v\|_{\calv}) (1+ \|v\|_{\calh}).
    \end{align*}
Hence, the growth condition $(H4)$ follows as $g = C(1+ \|\lambda\|_{\ell^1} t+ \|f\|_{\calv'}^2)$ .

Thus,
applying Theorem 5.1.3 of \cite{LR15} we
obtain the global well-posedness of \eqref{Hyper-SNSE}.
The energy inequality and bound can be proved standardly
by using It\^o's formula.
\hfill $\square$

\subsection{Well-posedness with high probability: small force}

We first rewrite the Duhamel formulation of \eqref{SNSEdelta}:
\begin{equation*}
	v(t) =  - \int_0^t e^{(t-s)\Delta} P_H \div(v \otimes v)\, \dif s
	+\delta \int_0^t e^{(t-s)\Delta} P_H f(s) \dif s
	+ \int_0^t e^{(t-s)\Delta} \dif w(s).
\end{equation*}
To shorten notation we set
\begin{equation*}
	g_1(x,t) := \int_0^t e^{(t-s)\Delta} P_H f(s) \dif s \, , \quad
	g_2(x,t) := \int_0^t e^{(t-s)\Delta} \dif w(s)\, .
\end{equation*}
We recall standard heat flow estimates that will play a role in our proof.
\begin{equation}\label{eq: heat est}
	\| \nabla^k e^{t\Delta} u \|_{L^q}
	\le C(k,q,p) t^{-\frac{k}{2} + \frac{3}{2}\left(\frac{1}{p} - \frac{1}{q}\right)}\| u \|_{L^p}\, .
\end{equation}

{\bf Proof of Theorem \ref{thm:well-posedness small force}. }
	Fix $\delta<1$ and $t<1/2$.
	By means of \eqref{eq: heat est}, we estimate
	\begin{align*}
		\| g_1(\cdot, t) \|_{L^4} & \le \int_0^t (t-s)^{-\frac 3 8} \| f(\cdot, s)\|_{L^2}\dif s
		\\
		&\le C\int_0^t (t-s)^{- \frac 3 8} s^{- \frac 3 2} s^{\frac 3 4} \|F \|_{L^\infty((-\infty,1),L^2_\xi)}\, \dif s
		\\
		&\le C t^{-\frac 1 8} \|F \|_{L^\infty((-\infty,1),L^2_\xi)}\, .
	\end{align*}
	Hence,
	\begin{equation*}
		\bbe \left[ \sup_{t\in (0,1/2)} t^{\frac 1 8}\| g_1(\cdot,t)\|_{L^4} \right] \le C\cdot C_1 \, .
	\end{equation*}
	Moreover, since $\bbe \|g_2\|_{C([0,T]; H^N)}<\infty$,
taking into account Sobolev's embedding we have
	\begin{equation*}
		\bbe \left[ \sup_{t\in (0,T)} t^{\frac 1 8}\| g_2(\cdot,t)\|_{L^4} \right]
        \leq C' T^{\frac 18} \bbe \sup\limits_{t\in(0,T)} \|g_2(\cdot,t)\|_{H^N}
		\le C T^{\frac 1 8}.
	\end{equation*}

	We now define
	\begin{equation*}
		E:=
		\left\lbrace
		\omega \in \Omega\, :
		\sup_{t\in (0,1/2)} t^{\frac 1 8}\|g_1(\cdot, t)\|_{L^4}\le \delta^{- \frac 1 2} \, , \quad
		\sup_{t\in (0,\delta^2)} t^{\frac 1 8}\|g_2(\cdot, t)\|_{L^4}\le \delta^{\frac 18}
		\right\rbrace \, .
	\end{equation*}
	It follows from the previous estimates that $\bbp(\Omega \setminus E)< \epsilon$ if $\delta\le \delta(\epsilon, C_1)$.

	Fix $\omega\in E$.
	We prove that \eqref{SNSEdelta} is pathwise well-posed in the space defined by the norm
	\begin{equation*}
		\| v \|_X := \sup_{t\in (0,\delta^2)} t^{\frac 1 8} \| v(\cdot, t)\|_{L^4}\, ,
	\end{equation*}
	provided $\delta$ is small enough. We apply a fixed point argument with the operator
	\begin{equation*}
		\mathcal{T} v(t) := - \int_0^t e^{(t-s)\Delta} P_H \div(v \otimes v)\, \dif s
		+ \delta g_1(t) + g_2(t)\, .
	\end{equation*}
	Wa aim to show that $\mathcal{T} $ is a contraction on a ball of $X$ centered at the origin.
Being a quadratic operator, it is enough to show that
	\begin{itemize}
		\item[(i)] The bilinear form
		\begin{equation*}
			B(u,v) := - \int_0^t e^{(t-s)\Delta} P_H \div(v \otimes u)\, \dif s
		\end{equation*}
		is bounded, i.e. $\| B \|_{X\times X \to X} \le C_2<\infty$.
		
		\item[(ii)] $\| \delta g_1 + g_2\|_{X} \le \gamma$ for some $\gamma\le \gamma(C_2)$ small enough.
	\end{itemize}
	Condition (ii) is easy to check. For every $\omega\in E$ we have
	\begin{equation*}
		\|\delta g_1 + g_2\|_X \le \delta \|g_1\|_X + \|g_2\|_X \le \delta^{\frac 1 2} + \delta^{\frac 18} \, ,
	\end{equation*}
	therefore it is enough to choose $\delta$ small enough.

	To prove (i), we need to use once more the fact that $X$ is a critical space and the heat flow estimate \eqref{eq: heat est}:
	\begin{align*}
		\| B(u,v)(\cdot, t)\|_{L^4
		} & \le \int_0^t \| e^{(t-s)\Delta} P_H \div(u\otimes v)\|_{L^4}\dif s
		\notag \\&
		\le C\int_0^t (t-s)^{-\frac{1}{2} - \frac{3}{2}\left(\frac{1}{2} - \frac{1}{4}\right)} \| u \otimes v\|_{L^2}\dif s
		\notag \\&
		\le C \| u \|_X \cdot \|v\|_X \int_0^t (t-s)^{-\frac{1}{2} - \frac{3}{2}\left(\frac{1}{2} - \frac{1}{4}\right)} s^{-\frac{1}{4}}\dif s
		\notag \\&
		\le C t^{-\frac 1 8} \| u \|_X \cdot \|v\|_X \, .
	\end{align*}
Therefore, the proof is complete.
\hfill $\square$

\section*{Appendix}
\appendix
This section contains several technical proofs
and a brief introduction
of the ``background'' profile $\bar{U}$
constructed in \cite{ABC21}.

\section{Unstable velocity profile}   \label{App-Vel-Profile}

The unstable profile $\bar{U}$ constructed in \cite{ABC21} is an axisymmetric velocity field without swirl, taking the shape of a vortex ring. Its cross-section is a slight perturbation of the unstable vortex constructed by Vishik in \cite{Vishik2018}.
The latter, is a smooth, radially symmetric vorticity profile
$\bar{\omega}=\bar{\omega}(\varrho)$,
which decays at infinity. Importantly,
the associated linearized 2D Euler operator
\begin{align*}
	-\boldsymbol{A} \omega : =\bar{u} \cdot \nabla \omega+u \cdot \nabla \bar{\omega}\, , \quad
	\bar{u} = \operatorname{BS}_{2d}(\bar{\omega})
\end{align*}
admits an eigenvalue $\lambda$ with $\operatorname{Re} \lambda>0$.

As demonstrated in \cite[Prop. 2.2, Cor. 2.3]{ABC21}, the truncated vortex $\bar u_R = \phi_R \bar{u}$, $\bar{\omega}_R = \operatorname{curl} \bar u_R$, where $\phi_R \in C_0^{\infty}(B_R)$ is a radially symmetric cutoff function such that $\phi_R \equiv 1$ in $B_{R/2}$, remains unstable when $R$ is sufficiently large. More precisely, the operator
\begin{align*}
	-\boldsymbol{L}_{R}(\omega)
	=
	\bar u_R \cdot \nabla \omega+u \cdot \nabla \bar \omega_R\, ,
\end{align*}
admits an eigenvalue $\lambda_R$ with positive real part such that $\lambda_R \to \lambda$ as $R\to \infty$.

%
%
%
%

\medskip

The three-dimensional vortex ring is formed by employing the compactly supported vortex $\bar u_R(x, y) = \bar u_R^x(x,y) e_x + \bar u_R^y(x,y) e_y$ as a cross-section for a three-dimensional axisymmetric velocity field.
More precisely, in cylinder coordinates $(r\cos \theta, r\sin \theta, z)\in \mathbb{R}^3$ we define
\begin{align*}
	\widetilde{u}(r, z) :=u_R^x(r-\ell,z) e_r + \bar u_R^y(r-\ell,z)e_z \, ,
\end{align*}
where $\ell\ge 2R$ is a big parameter. After correcting the divergence with a smooth perturbation $v_\ell$ that tends to zero in $C^k(B_{R})$ as $\ell\to\infty$ for all $k>0$, we get the final profile:
\[
\widetilde{u}_{\ell}: =\widetilde{u}+v_{\ell}, \quad
\widetilde{\omega}_{\ell}=\operatorname{curl}_{\ell} \widetilde{u}_{\ell}
:=-\partial_z \widetilde{u}_{\ell}^r+\partial_r \widetilde{u}_{\ell}^z.
\]
The associated linearized 3D axisymmetric-no-swirl Euler operator
then has the form
\[
-\boldsymbol{L}_{\ell} \omega:=\widetilde{u}_{\ell} \cdot \nabla \omega+u \cdot \nabla \widetilde{\omega}_{\ell}-\frac{\widetilde{u}_{\ell}^r}{r+\ell} \omega-\frac{u^r}{r+\ell} \widetilde{\omega}_{\ell}\, ,
\]
and admits an unstable eigenvalue
$\lambda_\ell  {\to} \lambda_R$ when $\ell$ is big enough, see \cite[Prop.2.6]{ABC21}.

We can finally fix $\ell$ big enough so that $\boldsymbol{L}_{\ell}$ is unstable and set $\bar U = \tilde u_\ell$.

\section{Approximate stochastic NSE}  \label{App-C}

In this section we prove the energy inequality \eqref{Energy-ineq-SDE} and
the energy bound \eqref{vn-CtL2-L2V-bdd}
for the finite-dimensional approximate stochastic NSE \eqref{equa-vn}.

We may regard $v= \sum_{i=1}^n v^i e_i \in \calh_n$
as a vector $x = (v^1, \cdots, v^n)$ in $\mathbb{R}^n$.
Then,
for any $x\in \bbr$, $t\geq 0$,
set $b_n(t,x) := (b^1_n(t,x), \cdots, b^n_n(t,x))$
with
\begin{align*}
   b_n^i(t,x):= _{\calv'}(- (-\Delta)^\alpha v - \div (v\otimes v) + f(t,w(t)), e_i)_{\calv},
  \ \ 1\leq i\leq n.
\end{align*}
For the diffusion coefficients,
we set $\sigma_n = (\sigma_{n,ij})$
with
\begin{align*}
    \sigma_{n,ij}
    := \sqrt{\lambda_j} (\epsilon_j, e_i)_{\calh},
    \ \ 1\leq i,j\leq n.
\end{align*}

Let $X_n:= (v_n^1,\cdots, v_n^n)$
and let $B_n(t) := (\beta_1(t), \cdots, \beta_n(t))$,
$t\geq 0$,
be the $n$-dimensional Brownian motion.
Then, equation \eqref{equa-vn} can be formulated as
\begin{align}  \label{equa-X-SDE}
   dX_n(t) = b_n(t,X_n(t)) dt + \sigma_n  dB_n(t).
\end{align}

For any $x,y \in \bbr^n$,
$|x|, |y|\leq R$,
let $u:= \sum_{i=1}^n x^i e_i$
and $v:= \sum_{i=1}^n y^i e_i$.
Then,
$\|u\|_\calh, \|v\|_\calh \leq R$.
We have
\begin{align*}
   (x-y, b_n(t,x) - b_n(t,y))
   =&  _{\calv'}( - (-\Delta)^\alpha (u-v) - \div ((u-v)\otimes u)  - \div (v\otimes (u-v)), u-v )_{\calv}  \\
   \leq& -\|u-v\|_\calv^2 + C(n) R\|u-v\|_\calh^2,
\end{align*}
where we used the identity
\begin{align*}
    _{\calv'}(\div (u-v)\otimes v, u-v) =0
\end{align*}
and the estimate
\begin{align*}
    _{\calv'}(\div  v\otimes (u-v), u-v)_{\calv}
    =&   ( (u-v) \cdot \na v, u-v)_{\calh}   \\
    \leq& \|\na v\|_{L^\infty} \|u-v\|_\calh^2
    \leq \|v\|_{\calu}  \|u-v\|_\calh^2
    \leq C(n) R  \|u-v\|_\calh^2,
\end{align*}
due to the embedding $\calu \hookrightarrow W^{1,\infty}$
and
\begin{align*}
    \|v\|_\calu^2
    \leq (\sum\limits_{i=1}^n \|e_i\|_\calu^2) |x|^2
    =:C^2(n) |x|^2.
\end{align*}

Moreover,
since $_{\calv'}(\div(u\otimes u), u)_{\calv} =0$,
\begin{align*}
   (x,b_n(t,x))
   =&  _{\calv'}( - (-\Delta)^\alpha u  - \div (u\otimes v)  + f(t,w(t)) , u )_{\calv}  \\
   =& _{\calv'}( - (-\Delta)^\alpha u   -f(t,w(t)), u)_{\calv} \\
   \leq& -\|u\|_{\dot{H}^\alpha}^2 + \|u\|_\calv \|f(t,w(t))\|_{\calv'} \\
   \leq&  (1+ \|f(t,w(t))\|_{\calv'}^2 )(1+\|u\|_{\calu}^2) \\
   \leq& (1+C^2(n) )(1+ \|f(t,w(t))\|_{\calv'}^2 ) (1+|x|^2),
\end{align*}
and
\begin{align*}
   \|\sigma_n\|^2
   = \sum\limits_{i=1}^n  \sum\limits_{j=1}^n
      \lambda_j  (\epsilon_j, e_j)_{\calh}^2
   \leq  \sum\limits_{j=1}^\infty \lambda_j<\infty.
\end{align*}
It follows that
\begin{align*}
   2(x,b_n(t,x)) +  \|\sigma_n\|^2
   \leq 2(1+C^2(n))(1+ \|f(t,w(t))\|_{\calv'}^2 + \|\lambda\|_{\ell^1} )
          (1+|x|^2).
\end{align*}

Thus,
in view of the $L^2$-integrability of the forcing term,
we can apply Theorem 3.1.1 of \cite{LR15}
to obtain that a unique $(\calf_t)$-adapted and continuous solution
$X_n$ to \eqref{equa-X-SDE},
which gives the unique $(\calf_t)$-adapted solution $v_n$ to \eqref{equa-vn},
satisfying $v_n\in C([0,T]; \calh_n)$, $\bbp$-a.s.

We are left to prove the uniform estimate
\eqref{vn-CtL2-L2V-bdd}.
To this end,
an application of the It\^{o} formula gives
\begin{align}
   \frac 12 \|v_n(t)\|_{\calh}^2
    =& \int_0^t  \ _{\calv'}( - (-\Delta)^\alpha v_n + \div (v_n\otimes v_n) + f(s,w(s)), v_n)_{\calv} \dif s
       + \frac12\|\sigma_n\|^2 t  \notag   \\
     &  + \int_0^t (v_n, dw^{(n)}(s))  \label{Ito-vn-equa} \\
    \leq& -\frac 12 \int_0^t \|v_n\|_\calv^2 \dif s
          + 2 \int_0^t \|f(s,w(s))\|_{\calv'}^2 \dif s
          + \int_0^t \|v_n\|_\calh^2 \dif s
          + \frac12\|\sigma_n\|^2 t   \notag \\
      & +  \int_0^t (v_n,  dw^{(n)}(s)).\label{Ito-vn-L2}
\end{align}
Taking expectation of both sides of \eqref{Ito-vn-equa} we get
\begin{align*}
		\bbe  \|v_n(t)\|_{\calh}^2 + 2 \bbe \int_0^t \|(-\Delta)^{\frac \alpha 2} v_n(s)\|_\calh^2 \dif s
		\leq & \|x_0\|_\calh^2 + 2 \bbe \int_0^t \ _{\calv'}(f(s,w(s)), v_n(s))_{\calv} \dif s  \\
		& + \sum_{j=1}^{\infty}\lambda_j\|\epsilon_j\|^2_{\calh} t,
\end{align*}
which yields \eqref{Energy-ineq-SDE}.

Moreover, by the Burkholder-Davis-Gundy inequality,
\begin{align*}
   \bbe \sup\limits_{s\in [0,t]}
   |\int_0^s (v_n, dw^{(n)}(s))|
   \leq& C \bbe ( \int_0^t \|v_n\|_\calh^2  \|\sigma_n\|^2 \dif s)^{\frac 12} \\
   \leq& \frac 14 \bbe  \sup\limits_{s\in [0,t]}  \|v_n(s)\|_\calh^2
        + 4 C^2 \|\sigma_n\|^2 t.
\end{align*}
Taking into account \eqref{Ito-vn-L2}
we obtain
\begin{align*}
   & \bbe \sup\limits_{s\in [0,t]}  \|v_n(s)\|_\calh^2
   +  \bbe \int_0^t \|v_n\|_\calv^2 \dif s  \\
    \leq&  C' ( \int_0^t \bbe \sup\limits_{r\in [0,s]} \|v_n\|_\calh^2 \dif s
          + \bbe \int_0^t \|f(s,w(s))\|_{\calv'}^2 \dif s
          + \sum_{j=1}^{\infty}\lambda_j\|\epsilon_j\|^2_{\calh} t),
\end{align*}
where $C'$ is independent of $n$.
Thus, an application of Gronwall's inequality yields \eqref{vn-CtL2-L2V-bdd}.

\subsection*{Acknowledgments}
	The authors thank Martina Hofmanov\'a, Rongchan Zhu and Xiangchan Zhu
	for pointing out the work \cite{HZZ23}.
	E. Bru\'{e} would like to express gratitude for the financial support received from Bocconi University.
	Y. Li thanks the support by NSFC (No.11831011, 12161141004).
	D. Zhang  thanks the supports by NSFC (No.12271352, 12322108, 12161141004)
	and Shanghai Rising-Star Program 21QA1404500.
	Y. Li and D. Zhang are also grateful for the supports by
	Institute of Modern Analysis--A Shanghai Frontier Research Center.

\end{document}